\title{Algebraic models of change of groups functors in (co)free rational equivariant spectra}
\date{}
\tikzset{
	labl/.style={anchor=south, rotate=270, inner sep=.5mm}
}
\numberwithin{equation}{section}
\theoremstyle{plain}
\newtheorem{thm}[equation]{Theorem}
\newtheorem{prop}[equation]{Proposition}
\newtheorem{lem}[equation]{Lemma}
\newtheorem{cor}[equation]{Corollary}
\theoremstyle{definition}
\newtheorem{defn}[equation]{Definition}
\newtheorem{rem}[equation]{Remark}
\newtheorem{nota}[equation]{Notation}
\newtheorem{eg}[equation]{Example}
\newtheorem{egs}[equation]{Examples}
\newtheorem{hyp}[equation]{Hypothesis}
\newcommand{\mc}{\mathcal}
\renewcommand{\lim}[1]{\mathop{\text{lim}}_{#1}}
\renewcommand{\phi}{\varphi}
\renewcommand{\epsilon}{\varepsilon}
\newcommand{\dual}{\mathbb{D}}
\renewcommand{\L}{\mathbb{L}}
\newcommand{\R}{\mathbb{R}}
\newcommand{\res}[1][\theta]{{#1}^*}
\newcommand{\ext}[1][\theta]{{#1}_*}
\newcommand{\psext}[1][\theta]{{#1}_{(*)}}
\newcommand{\coext}[1][\theta]{{#1}_!}
\newcommand{\twext}[1][\theta]{{#1}^\dagger}
\newcommand{\ind}{i_*}
\newcommand{\coind}{i_!}
\newcommand{\forget}{i^*}
\newcommand{\reskg}{\kappa_G^*}
\newcommand{\extkg}{\kappa^G_*}
\newcommand{\reskh}{\kappa_H^*}
\newcommand{\extkh}{\kappa^H_*}
\newcommand{\exta}{\phi_*}
\newcommand{\twexta}{\phi^\dagger}
\newcommand{\resa}{\phi^*}
\newcommand{\psexta}{\phi_{(*)}}
\renewcommand{\mod}[1]{\mathrm{Mod}_{#1}}
\newcommand{\calg}[1]{\mathrm{CAlg}_{#1}}
\newcommand{\symsp}{\mathrm{Sp}^\Sigma}
\newcommand{\sset}{\mathrm{sSet}}
\newcommand{\sq}{\mathrm{s}\mathbb{Q}\text{-}\mathrm{mod}}
\newcommand{\chq}{\mathrm{Ch}_\mathbb{Q}^+}
\newcommand{\Sp}{\mathrm{Sp}}
\newcommand{\D}{\mathscr{D}}
\newcommand{\cell}{\mathrm{Cell}}
\newcommand{\N}{\mathscr{N}}
\newcommand{\C}{\mathscr{C}}
\newcommand{\E}{\mathscr{E}}
\newcommand{\Dsf}{\mathsf{D}}
\begin{document}
\author{Jordan Williamson}
\address[Williamson]{Department of Algebra, Faculty of Mathematics and Physics, Charles University in Prague, Sokolovsk\'{a} 83, 186 75 Praha, Czech Republic}
\email{williamson@karlin.mff.cuni.cz}
\subjclass[2020]{55P91, 55U35, 55P92}
\begin{abstract}
	Free and cofree equivariant spectra are important classes of equivariant spectra which represent equivariant cohomology theories on free equivariant spaces. Greenlees-Shipley~\cite{GreenleesShipley11, GreenleesShipley14} and Pol and the author~\cite{PolWilliamson} have given an algebraic model for rational (co)free equivariant spectra. In this paper, we extend this framework by proving that the Quillen functors of induction-restriction-coinduction between categories of (co)free rational equivariant spectra correspond to Quillen functors between the algebraic models in the case of connected compact Lie groups. This is achieved using new abstract techniques regarding correspondences of Quillen functors along Quillen equivalences, which we expect to be of use in other applications.
\end{abstract}
	\maketitle
	\setcounter{tocdepth}{1}
	\tableofcontents 

\section{Introduction}
An equivalence between homotopy theories (for example, a Quillen equivalence) gives a powerful method of translating homotopical information between different categories. In many cases, one is interested not just in a single Quillen equivalence, but rather a family of Quillen equivalences $\C(i) \simeq_Q \D(i)$. In this setting, one would then like to understand how relations between the $\C(i)$'s correspond to relations between the $\D(i)$'s under the Quillen equivalences. More precisely, given a Quillen adjunction $\C(i) \rightleftarrows \C(j)$, one would like to find another Quillen adjunction $\D(i) \rightleftarrows \D(j)$ which corresponds to the original adjunction under the Quillen equivalence; diagrammatically, we want to find a diagram
\begin{center}
	\begin{tikzcd}
	\C(i) \arrow[rr, "\simeq_Q"] \arrow[dd, xshift=1mm] & & \D(i) \arrow[ll] \arrow[dd, xshift=1mm] \\
	& & \\
	\C(j) \arrow[rr, "\simeq_Q"']  \arrow[uu, xshift=-1mm] & & \D(j) \arrow[ll] \arrow[uu, xshift=-1mm] 
	\end{tikzcd}
\end{center}
which commutes homotopically. We refer the reader to later on in this introduction for a more precise definition, but remark now that such a result requires care since Quillen equivalences often take the form of zig-zags, leading to compositions of left and right Quillen functors which in general are poorly behaved. 

In the first part of this paper, we develop some abstract machinery which gives techniques to prove such correspondences, with a particular focus on the case of correspondences of adjoint triples. We use the theory of mates to give sufficient conditions to prove a correspondence of Quillen functors, and then verify these conditions in several general cases of interest; for example, for correspondences of change of rings adjunctions along strong or weak monoidal Quillen equivalences. In the second part of the paper, we then apply these results to a case of interest coming from rational equivariant stable homotopy theory. We shall now turn to giving a more detailed overview of the motivation and setup, focusing on the example from equivariant homotopy theory to give a concrete illustration of the abstract formalism.

\subsection{Rational equivariant cohomology theories}
Rational equivariant cohomology theories are represented by objects called rational $G$-spectra. Greenlees~\cite{Greenlees06} conjectured that for each compact Lie group $G$, there is an abelian category $\mc{A}(G)$ and a zig-zag of Quillen equivalences $$\Sp_G \simeq_Q d\mc{A}(G)$$ between rational $G$-spectra and differential objects in $\mc{A}(G)$. The conjecture has been proved in many cases: $G$ finite~\cite{Barnes09}, $G=SO(2)$~\cite{Shipley02}, $G=O(2)$~\cite{Barnes17}, $G=SO(3)$~\cite{Kedziorek17} and $G$ a torus of any rank~\cite{GreenleesShipley18}.

Rather than focusing on a particular group $G$, we are interested in certain classes of rational $G$-spectra: those of free and cofree $G$-spectra. These are the $G$-spectra for which the natural map $EG_+ \wedge X \to X$ or the natural map $X \to F(EG_+,X)$ are equivalences respectively. There are several reasons why these objects are of particular interest. Firstly, they represent equivariant cohomology theories on free $G$-spaces. In addition, these cases provide insight into the general case, where the algebraic models are built from contributions at each closed subgroup, where the model resembles that of the free case. Greenlees-Shipley~\cite{GreenleesShipley11, GreenleesShipley14} constructed an algebraic model for free $G$-spectra and Pol and the author have given an algebraic model for cofree $G$-spectra~\cite{PolWilliamson}, where $G$ is any compact Lie group.

\subsection{Change of groups}
The inclusion of a subgroup $i\colon H \to G$ in a compact Lie group gives rise to an adjoint triple. The restriction functor $\forget\colon \Sp_G \to \Sp_H$ has both a left adjoint $\ind = G_+ \wedge_H -$ called induction and a right adjoint $\coind = F_H(G_+,-)$ called coinduction. Moreover the adjoint triple
\begin{center}
	\begin{tikzcd}
	\Sp_G \arrow[rr, "\forget" description] & & \Sp_H \arrow[ll, yshift=3mm, "\ind" description] \arrow[ll, yshift=-3mm, "\coind" description] 
	\end{tikzcd}
\end{center}
is a Quillen adjoint triple. In other words, both adjunctions are Quillen with respect to the \emph{same} model structures. Such a situation is a rare occurrence since it forces the middle functor to be both left and right Quillen. In particular, such a functor must preserve all of the classes of maps in the model structure, including the weak equivalences.

In a setting where we have algebraic models for $G$-spectra and $H$-spectra, it is a natural question to ask what functors between the algebraic models correspond to this adjoint triple. Diagrammatically, we want to find functors
\begin{center}
	\begin{tikzcd}
	\Sp_G \arrow[rr, "\simeq_Q"] \arrow[dd, "\forget" description] & & d\mc{A}(G) \arrow[ll] \arrow[dd, "?" description] \\
	& & \\
	\Sp_H \arrow[rr, "\simeq_Q"'] \arrow[uu, xshift=-4mm, "\ind" description] \arrow[uu, xshift=4mm, "\coind" description]& & d\mc{A}(H) \arrow[ll] \arrow[uu, xshift=-4mm, "?" description] \arrow[uu, xshift=4mm, "?" description] 
	\end{tikzcd}
\end{center}
which are algebraic counterparts of the functors in topology. We emphasise that our approach to this is model categorical; we view the functors as Quillen functors and we want to show that there are natural maps at the point-set level which realise the correspondence of functors. At the homotopy category level, the correspondence of functors has been studied by Greenlees-Shipley~\cite{GreenleesShipley11} and Greenlees~\cite{Greenleeschangeofgroups} in the free case. We also note that in the non-free case, Greenlees~\cite{Greenlees99} has given an account of the correspondence of change of groups functors between $SO(2)$-spectra and $H$-spectra for $H$ a subgroup of $SO(2)$, again at the derived level.

If $G$ is connected, the algebraic model for free $G$-spectra is $I$-power torsion modules over the polynomial ring $H^*BG$~\cite[1.1]{GreenleesShipley11} where $I$ is the augmentation ideal of $H^*BG$. The algebraic model for cofree $G$-spectra is $L$-complete modules over $H^*BG$~\cite[8.4]{PolWilliamson}, where a $H^*BG$-module $M$ is said to be $L$-complete if the natural map $M \to L_0^IM$ is an isomorphism, where $L_0^I$ is the zeroth left derived functor of the $I$-adic completion. The inclusion $i\colon H \to G$ gives rise to a ring map $\theta\colon H^*BG \to H^*BH$, and therefore an adjoint triple between the categories of modules. The adjoint triple
\begin{center}
	\begin{tikzcd}
	\mod{H^*BG} \arrow[rr, yshift=3.2mm, "\ext" description] \arrow[rr, yshift=-3.2mm, "\coext" description]  & & \mod{H^*BH} \arrow[ll, "\res" description] 
	\end{tikzcd}
\end{center}
is given by the restriction of scalars $\res$, extension of scalars $\ext = H^*BH \otimes_{H^*BG} -$ and coextension of scalars $\coext = \mathrm{Hom}_{H^*BG}(H^*BH,-)$. 

Despite this, one notices that it is not routine to write down the algebraic models as there is a mismatch in directions. In topology, two of the functors go from $H$-spectra to $G$-spectra, but in algebra only one functor goes in this direction. Therefore, one must construct extra functors in algebra to model the adjoint triple in topology. 

Before we can state the main theorem of this paper, we must explain what we mean by a correspondence of Quillen functors. We do this by considering the following example. The notation we have chosen is suggestive of the special case we have in mind. In particular, we are not assuming the existence of any group actions on the model categories in this general framework. Suppose that we have a diagram 
\begin{center}
\begin{tikzcd}
\C_G \arrow[r, "F_G", yshift=1mm] \arrow[d, "L"', xshift=-1mm] & \D_G \arrow[l, "U_G", yshift=-1mm] \arrow[r, "U_G'"', yshift=-1mm] & \E_G \arrow[l, "F_G'"', yshift=1mm] \arrow[d, "L'"', xshift=-1mm] \\
\C_H \arrow[r, "F_H", yshift=1mm] \arrow[u,"R"', xshift=1mm] & \D_H \arrow[l, "U_H", yshift=-1mm] \arrow[r, "U_H'"', yshift=-1mm] & \E_H \arrow[l, "F_H'"', yshift=1mm] \arrow[u,"R'"', xshift=1mm]
\end{tikzcd}
\end{center}
of model categories where each of the horizontal adjunctions is a Quillen equivalence, and $(L,R)$ and $(L',R')$ are Quillen adjunctions. We say that $(L',R')$ corresponds to $(L,R)$ if there exists a Quillen adjunction $L'': \D_G \rightleftarrows \D_H:R''$ together with natural weak equivalences $F_HL \simeq L''F_G$ and $L''F_G' \simeq F_H'L'$ on cofibrant objects. Such a correspondence of Quillen adjunctions gives natural isomorphisms of derived functors $$\R U_H' \circ \L F_H \circ \L L \cong \L L' \circ \R U_G'\circ \L F_G$$ and $$\R U_G' \circ \L F_G \circ \R R \cong \R R' \circ \R U_H' \circ \L F_H$$ using the theory of mates. For more details, see Section~\ref{sec:compare}. In a similar way one can define correspondences of Quillen adjunctions along zig-zags of Quillen equivalences of any length. We note that this is a particularly structured form of correspondence since the intermediate steps are required to be Quillen adjunctions too.

We can now state the main theorem of this paper. Recall that the restriction functor $\forget$ is both left and right Quillen. Therefore, there are two functors which correspond to it in algebra; one as a left Quillen functor and one as a right Quillen functor. This can be seen in the diagram below where there are four functors in algebra rather than the three in topology.
\begin{thm}\label{thm:main}
	Let $i\colon H \to G$ be the inclusion of a connected subgroup into a connected compact Lie group. We have the following correspondence of Quillen functors
	\begin{center}
		\begin{tikzcd}
		\text{free $G$-spectra} \arrow[rr, "\simeq_Q"] \arrow[dd, "\forget" description] & & \text{derived torsion $H^*BG$-modules} \arrow[ll] \arrow[dd, "\Sigma^{-a}\coext" description, xshift=-5mm] \arrow[dd, "\ext" description, xshift=5mm] \\
		& & \\
		\text{free $H$-spectra} \arrow[rr, "\simeq_Q"'] \arrow[uu, xshift=-6mm, "\ind" description] \arrow[uu, xshift=6mm, "\coind" description]& & \text{derived torsion $QH^*BH$-modules} \arrow[ll] \arrow[uu, xshift=-15mm, "\Sigma^a\res" description] \arrow[uu, xshift=15mm, "\res" description] 
		\end{tikzcd}
	\end{center}
	where $a=\mathrm{dim}(G/H)$ and $QH^*BH$ is a cofibrant replacement of $H^*BH$ as a commutative $H^*BG$-algebra. In other words, $(\ind, \forget)$ corresponds to $(\Sigma^a\res, \Sigma^{-a}\coext)$ and $(\forget, \coind)$ corresponds to $(\ext, \res).$ Similarly, when the induction, forgetful functor and coinduction functors are viewed as functors between the categories of cofree spectra, they correspond to the same functors as in the free case, now viewed as functors between the categories of derived complete modules.
\end{thm}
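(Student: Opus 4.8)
The plan is to use the abstract correspondence machinery developed in the first part of the paper (Section~\ref{sec:compare}), which reduces the claim to checking that the relevant change-of-rings adjunctions intertwine with the change-of-rings adjunctions along the zig-zag of monoidal Quillen equivalences underlying the Greenlees--Shipley and Pol--Williamson algebraic models. I would begin by recalling the shape of the equivalence: $\text{free }G\text{-spectra}\simeq_Q \mod{H^*BG}\text{-tors}$ is obtained by a zig-zag passing through modules over a commutative ring spectrum (the function spectrum $F(EG_+,\mathbb{S})$ or $DEG_+$), a formality/rigidity step identifying this with a commutative DGA equivalent to $H^*BG$, and then a cellularization/torsion step. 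The ring map $\theta\colon H^*BG\to H^*BH$ is realised topologically by the map $EG_+\to EH_+$ (or rather the induced map on function spectra) coming from $i\colon H\to G$, together with the identification $H^*BG\simeq\pi_*F(EG_+,\mathbb{S})^\vee$. So at each stage of the zig-zag one has a commutative diagram of ring (spectra/DGA/ring) maps, and the induction/restriction/coinduction triple on $G$- and $H$-spectra corresponds stage-by-stage to the extension/restriction/coextension triple of scalars.

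Next I would carry out the three key reductions in order. \emph{First}, identify the topological change-of-groups triple with a change-of-rings triple: the point here is that on free spectra, $\ind$, $\forget$, $\coind$ become, under the equivalence $\text{free }G\text{-sp}\simeq\mod{F(EG_+,\mathbb{S})}$ (and likewise for $H$), the extension, restriction and coextension of scalars along $F(EG_+,\mathbb{S})\to F(EH_+,\mathbb{S})$ --- this uses the Wirthm\"uller isomorphism / Adams isomorphism to identify induction and coinduction up to the shift $\Sigma^a$, $a=\dim(G/H)$, which is exactly the source of the $\Sigma^{\pm a}$ twists in the statement. \emph{Second}, transport the change-of-rings triple along the commutative-ring-spectrum-to-DGA rigidity equivalence: here I invoke the abstract results in the excerpt on correspondences of change of rings adjunctions along (weak or strong) monoidal Quillen equivalences, applied to the commutative square of rings; the cofibrant replacement $QH^*BH$ of $H^*BH$ as a commutative $H^*BG$-algebra appears because the extension of scalars functor is only homotopically meaningful along a cofibration of algebras, forcing one to replace $\theta$ by a cofibration $H^*BG\to QH^*BH$. \emph{Third}, pass through the torsion/cellularization (resp. $L$-completion) localisation step: here the observation is that restriction, extension and coextension of scalars all interact correctly with $I$-power torsion (resp. $L$-complete) modules --- extension and restriction preserve torsion modules essentially formally, and coextension must be composed with the derived torsion functor / cellularization, which is harmless since we work with derived functors on the homotopy categories; for the cofree case one dualises, using that $L$-completion is smashing-dual to torsion and the same triple of scalar functors descends.

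\emph{The main obstacle} I expect is the second step: the middle functor $\forget$ is simultaneously left and right Quillen, so I must produce \emph{two} algebraic models for it --- $\Sigma^a\res$ as a left adjoint and $\res$ as a right adjoint --- and check that the intermediate functor $L''$ (resp. $R''$) in the definition of correspondence can be chosen compatibly at every stage of the zig-zag. Concretely, restriction of scalars along $\theta$ is automatically both left and right Quillen for the projective model structures, but matching it up correctly with the topological $\forget$ requires tracking the Wirthm\"uller shift through the entire zig-zag, and in particular checking that the natural weak equivalences $F_HL\simeq L''F_G$ and $L''F_G'\simeq F_H'L'$ of the abstract framework can be assembled from the stage-wise equivalences without a sign or suspension discrepancy. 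Verifying this amounts to a careful bookkeeping of the dualising object $\Sigma^{-a}$ appearing in the Wirthm\"uller isomorphism against the shift built into the formality equivalence $H^*BG\simeq C^*(BG)$, and confirming that on the torsion side the composite $\R U_H'\circ\L F_H\circ\L L\cong\L L'\circ\R U_G'\circ\L F_G$ supplied by the mates calculus of Section~\ref{sec:compare} is indeed the asserted isomorphism $\ind$-corresponds-to-$\Sigma^a\res$.
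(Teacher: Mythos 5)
Your overall strategy is indeed the paper's: invoke the abstract correspondence machinery (the mates calculus and Theorem~\ref{thm:compare}) and verify, stage by stage along the Greenlees--Shipley/Pol--Williamson zig-zag (change of rings along $S^0\to DEG_+$, fixed points, change of model, Shipley's algebraicization, formality), that at each stage one has an extension--restriction--coextension triple corresponding to the previous one, with the Wirthm\"uller isomorphism/relative Gorenstein condition supplying the shift $a$. However, your final reduction is misdirected for this theorem. In the general connected case $QH^*BH$ is a commutative DGA with nonzero differential (Remark~\ref{rem:equalranks}), so there is no abelian category of torsion or $L$-complete $QH^*BH$-modules, and the claim that ``extension and restriction preserve torsion modules essentially formally'' has no content on the $H$-side; that abelian-category step is precisely what is deferred to Theorem~\ref{thm:main2} under the equal-rank hypothesis. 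For Theorem~\ref{thm:main} the last step consists only of identifying the cellularization (resp.\ localization) at $\mathbb{Q}$ with derived torsion (resp.\ derived complete) modules; the substantive work you omit is instead the repeated verification, after every localization and cellularization in the zig-zag, that the vertical adjunctions are still Quillen adjunctions --- this is what Theorems~\ref{thm:quillenafterloc} and~\ref{thm:quillenaftercell} and Lemma~\ref{lem:bousfieldclasses} are for, and it occupies a large part of the argument. Moreover, ``working with derived functors on the homotopy categories'' is not available as an escape hatch: the asserted correspondence is a model-categorical one, realized by point-set natural transformations.

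Second, the passage from equivariant to nonequivariant modules is more delicate than ``transport along the rigidity equivalence.'' One first needs the coinduction Quillen equivalence $\mod{DEH_+}(\Sp_H)\simeq \mod{F_H(G_+,DEH_+)}(\Sp_G)$ (Proposition~\ref{prop:descentequivalence}) to convert the topological triple $(\ind,\forget,\coind)$ into extension/restriction/coextension along a single map of commutative ring $G$-spectra $DEG_+\to QF_H(G_+,DEH_+)$, and it is here (Propositions~\ref{prop:fixedpoints1} and~\ref{prop:fixedpoints2}) that the Wirthm\"uller shift enters at the point-set level. Then, because categorical $G$-fixed points is not right Quillen for the flat model structure on a complete universe (Remark~\ref{rem:notQuillen}), one must restrict to a trivial universe and toggle between the free and cofree localizations (Remark~\ref{rem:freevscofree}); in particular the cofree statement is not obtained by ``dualising'' at the end, but is carried in parallel throughout and is forced to interact with the free case at exactly this stage. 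These are genuine steps of the proof rather than routine bookkeeping, so as it stands your plan has gaps there, even though its skeleton matches the paper's.
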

\begin{rem}\label{rem:notleftadjoint}
	The functors $\ext$ and $\Sigma^{-a}\coext$ are not isomorphic in general at the model categorical level. However, at the derived level this is true. For more details, see Proposition~\ref{prop:functorsetup}.
\end{rem}

\begin{rem}
It would be interesting to investigate whether the natural isomorphism $\L\theta_* \cong \R\Sigma^{-a}\coext$ in algebra corresponds to the natural isomorphism $\L\forget \cong \R\forget$ in topology. However, since the natural isomorphism $\L\theta_* \cong \R\Sigma^{-a}\coext$ is a zig-zag of equivalences due to the relative Gorenstein condition, it is not clear how to approach this at a model categorical level. For more details on these natural isomorphisms see Section~\ref{sec:functors}. We also note that the existing applications do not require this additional correspondence.
\end{rem}

If the ranks of $G$ and $H$ are equal there is a stronger statement, see Remark~\ref{rem:equalranks} and Proposition~\ref{prop:cofibrantrank}. In this case Remark~\ref{rem:notleftadjoint} does not apply, and the restriction of scalars is both left and right adjoint (without a shift) to the extension of scalars along the ring map $\theta\colon H^*BG \to H^*BH$.
\begin{thm}\label{thm:main2}
	Let $i\colon H \to G$ be the inclusion of a connected subgroup into a connected compact Lie group and assume that $\mathrm{rk}G=\mathrm{rk}H$. Then we have a correspondence of Quillen functors 
	\begin{center}
	\begin{tikzcd}
	\text{free $G$-spectra} \arrow[rr, "\simeq_Q"] \arrow[dd, "\forget" description] & & \text{$I$-power torsion $H^*BG$-modules} \arrow[ll] \arrow[dd, "\ext" description] \\
	& & \\
	\text{free $H$-spectra} \arrow[rr, "\simeq_Q"'] \arrow[uu, xshift=-6mm, "\ind" description] \arrow[uu, xshift=6mm, "\coind" description]& & \text{$J$-power torsion $H^*BH$-modules} \arrow[ll] \arrow[uu, xshift=-6mm, "\res" description] \arrow[uu, xshift=6mm, "\res" description] 
	\end{tikzcd}
	\end{center}
	where $I$ and $J$ are the augmentation ideals of $H^*BG$ and $H^*BH$ respectively. Similarly, when the induction, forgetful functor and coinduction are viewed as functors between the categories of cofree spectra, they correspond to $\res$, $\ext$ and $\res$ respectively, between the categories of $L$-complete modules.	
\end{thm}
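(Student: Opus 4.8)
The plan is to obtain Theorem~\ref{thm:main2} as a refinement of Theorem~\ref{thm:main}: the equal-rank hypothesis lets us dispense both with the cofibrant replacement $QH^*BH$ and with the suspensions $\Sigma^{\pm a}$, and what is left is exactly the statement. Recall how Theorem~\ref{thm:main} is established. The Quillen equivalence between (co)free rational $G$-spectra and derived (co)torsion $H^*BG$-modules is assembled from the zig-zag of~\cite{GreenleesShipley11, GreenleesShipley14} (and~\cite{PolWilliamson} in the cofree case), which factors through modules over the commutative ring spectrum $C^*(BG;\mathbb{Q})$, uses the rational formality of $C^*(BG;\mathbb{Q})$ (valid since $G$ is connected, so $H^*(BG;\mathbb{Q})$ is a polynomial algebra and hence intrinsically formal) to replace it by $H^*(BG;\mathbb{Q})$, and then cellularises (respectively completes) to land in (co)torsion modules. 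The inclusion $i$ induces $Bi\colon BH \to BG$, hence $(Bi)^*\colon C^*BG \to C^*BH$ and, after formality, the ring map $\theta\colon H^*BG \to H^*BH$; the abstract results of Section~\ref{sec:compare} on correspondences of change-of-rings adjunctions along strong and weak monoidal Quillen equivalences then transport the change-of-groups adjoint triple stepwise to the change-of-rings adjoint triple, up to the twist by $\Sigma^{\pm a}$ encoding the relative Gorenstein (Wirthm\"uller) duality of $H^*BH$ over $H^*BG$ (Proposition~\ref{prop:functorsetup}).

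Granting Theorem~\ref{thm:main}, the extra input needed is purely algebraic. When $\mathrm{rk}G = \mathrm{rk}H$, choose a common maximal torus $T$; Borel's theorem identifies $H^*BG = (H^*BT)^{W_G}$ and $H^*BH = (H^*BT)^{W_H}$ and shows that $H^*BH$ is a finitely generated free $H^*BG$-module of rank $|W_G|/|W_H|$. In particular $\theta$ is flat, so $\ext = H^*BH \otimes_{H^*BG}(-)$ and $\coext = \hom_{H^*BG}(H^*BH,-)$ are exact and coincide with their derived functors, and by Proposition~\ref{prop:cofibrantrank} the algebra $H^*BH$ is cofibrant as a commutative $H^*BG$-algebra up to weak equivalence, so one may take $QH^*BH = H^*BH$ in Theorem~\ref{thm:main}; this replaces ``derived torsion $QH^*BH$-modules'' by $J$-power torsion $H^*BH$-modules and removes the distinction between extension of scalars along $H^*BG \to QH^*BH$ and along $\theta$. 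The elimination of the shift $\Sigma^{\pm a}$ is then Remark~\ref{rem:equalranks}, so that $\res$ serves simultaneously as a left and a right adjoint of $\ext$ in the correspondence. Feeding this back into the diagram of Theorem~\ref{thm:main} gives precisely the stated correspondence, with $\ind$, $\forget$, $\coind$ corresponding to $\res$, $\ext$, $\res$ between the $I$- and $J$-power torsion modules.

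The cofree statement is formally dual: one runs the same zig-zag equipped with the completed model structures of~\cite{PolWilliamson}, in which the $L$-complete modules play the role the torsion modules play in the free case; the forgetful functor --- still both left and right Quillen --- corresponds to $\ext$ on $L$-complete modules and its two adjoints to $\res$, again with no shift in the equal-rank case by the same argument.

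The step I expect to require the most care is the interaction of the change-of-groups functors with the two non-formal stages of the zig-zag: one must check that the formality quasi-isomorphism for $C^*(BG)$ can be chosen compatibly with $(Bi)^*$, so that the algebraicised map really is $\theta$ and not some other $H^*BG$-algebra map, and that cellularisation/completion is compatible with restriction, induction and coinduction --- here one uses that $G/H$ is a closed manifold, hence a finite complex, so that $H^*BH$ is finitely generated (indeed, under the equal-rank hypothesis, free) over $H^*BG$, which is what keeps the algebraic partner of induction under control. A secondary subtlety, which is the content of Proposition~\ref{prop:cofibrantrank} together with the choice of cofibrant replacement, is to check that the shift disappears already at the point-set level and not merely after passing to derived functors.
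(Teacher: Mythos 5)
Your proposal does not reach the actual content of Theorem~\ref{thm:main2}. The categories appearing there are the \emph{abelian} categories of $I$-power torsion $H^*BG$-modules and $J$-power torsion $H^*BH$-modules (resp.\ $L_0^I$-complete modules), whereas Theorem~\ref{thm:main} lands in the cellularization $\cell_\mathbb{Q}\mod{H^*BG}$ (``derived torsion modules''), resp.\ the localization $L_\mathbb{Q}\mod{H^*BG}$. Passing from the latter to the former is an extra Quillen equivalence ($i \dashv T_I$ in the torsion case, $L_0^I \dashv i$ in the complete case, Quillen equivalences by~\cite[8.7]{GreenleesShipley11} and~\cite[6.10]{PolWilliamson}), and one must again prove that the change-of-rings triple along $\theta$ corresponds across it. This is precisely the content of the squares~(\ref{eq:torsion}) and~(\ref{eq:Lcomplete}) and Propositions~\ref{prop:torsioncorrespondence} and~\ref{prop:Lcompletecorrespondence}, which the paper combines with Theorem~\ref{thm:main}; they in turn rest on the facts that $\ext$, $\res$ and $\coext$ preserve torsion and $L$-complete modules (the base change results Lemmas~\ref{lem:local homology base change} and~\ref{lem:local cohomology base change}, using $\sqrt{I\cdot H^*BH}=J$ via Venkov's theorem) and that the adjunctions are Quillen for the injective model structure on torsion modules (semi-flatness). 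None of this appears in your argument: the sentence ``this replaces derived torsion $QH^*BH$-modules by $J$-power torsion $H^*BH$-modules'' silently conflates the cellularized dg-module category with the abelian torsion category, which is exactly the step requiring proof.

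There are also two misattributions that matter. Proposition~\ref{prop:cofibrantrank} gives cofibrancy of $H^*BH$ as an $H^*BG$-\emph{module} (equivalently freeness), not as a commutative $H^*BG$-algebra; Remark~\ref{rem:equalranks} explicitly notes that the commutative-algebra cofibrant replacement $QH^*BH$ has nonzero differential, so you cannot ``take $QH^*BH = H^*BH$'' in Theorem~\ref{thm:main}. The paper never modifies Theorem~\ref{thm:main}; it glues on additional squares formed along $\theta$ itself, which is where the equal-rank hypothesis (via Proposition~\ref{prop:cofibrantrank}) is used to make the unreplaced adjunctions Quillen. Likewise the disappearance of the shift is not Remark~\ref{rem:equalranks} (which says nothing about shifts) but Corollary~\ref{cor:extension isomorphic to coextension}: freeness gives $\ext \cong \coext$, which is what allows $\res$ to serve as both adjoints; flatness and exactness of $\ext$ and $\coext$, which is all you invoke, does not identify them. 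Finally, your closing paragraph about choosing the formality quasi-isomorphism compatibly with $(Bi)^*$ concerns the proof of Theorem~\ref{thm:main}, which you are granting, so it does not address the additional content of Theorem~\ref{thm:main2}.
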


\subsection{Summary of the method}\label{sec:summary of method}
The first part of the paper is dedicated to producing the required setup to prove the main theorem. In particular, the results are not restricted to this example, and we expect that they can be applied in other cases, such as the disconnected case or the non-free case, and moreover in examples beyond equivariant stable homotopy theory. We consider diagrams of model categories and Quillen functors of the form
\begin{center}
	\begin{tikzcd}
	\quad\C_G\quad \arrow[dd, xshift=2.5mm, "\ext" description] \arrow[dd, xshift=-2.5mm, "\psext" description] \arrow[rr, yshift=1.7mm, "F_G"' description] & & \quad\D_G\quad \arrow[dd, xshift=2.75mm, "\exta" description] \arrow[dd, xshift=-2.75mm, "\psexta" description] \arrow[ll, yshift=-1.7mm, "U_G"' description] \\
	& & \\
	\quad\C_H\quad \arrow[uu, xshift=7.5mm, "\res"' description] \arrow[uu, xshift=-7.5mm, "\twext" description] \arrow[rr, yshift=1.7mm, "F_H"' description] & & \quad\D_H\quad \arrow[uu, xshift=8mm, "\resa"' description] \arrow[uu, xshift=-8mm, "\twexta" description] \arrow[ll, yshift=-1.7mm, "U_H"' description]
	\end{tikzcd}
\end{center}
where $\R\psext \cong \L\ext$ and $\R\psext[\phi] \cong \L\ext[\phi]$, and $F_G \dashv U_G$ and $F_H \dashv U_H$ are Quillen equivalences. As described above, in this general framework the notation is purely suggestive and does not indicate the existence of any group actions.

This encompasses all of the cases that arise in our proof of the correspondence for (co)free spectra and connected groups. There are eight squares of derived functors which one would like to show commute:
\begin{enumerate}
\item $\L\exta \circ\L F_G \cong \L F_H \circ \L\ext$,
\item $\R \res \circ\R U_H \cong \R U_G \circ \R\resa$,
\item $\L F_G \circ\L \twext \cong \L\twexta \circ \L F_H$,
\item $\R\psext \circ \R U_G \cong \R U_H \circ \R \psexta$,
\item $\L\twext \circ \R U_H \cong\R U_G \circ \L\twexta$, 
\item $\L F_G \circ \R \res \cong \R\resa \circ \L F_H$,
\item $\R \psexta \circ \L F_G \cong \L F_H \circ \R \psext$,
\item $\L\ext \circ \R U_G \cong \R U_H \circ \L \exta$.
\end{enumerate} 
By virtue of the natural isomorphisms $\R\psext \cong \L\ext$ and $\R\psext[\phi] \cong \L\ext[\phi]$, we note that $(7)$ is equivalent to $(1)$ and $(8)$ is equivalent to $(4)$. Since the functors in $(1)$ and $(4)$ have the same handedness, these conditions are the ones which are more natural to check. Therefore, from now on we refer to \emph{six} derived squares, rather than eight. 

We show that instead of checking that all six squares commute, it is sufficient to prove that only two do, see Theorem~\ref{thm:compare}. Moreover, we check these conditions at a model categorical level; that is, we construct natural weak equivalences which realize the isomorphisms of derived functors. 

We then verify that these squares do commute in many general settings. In particular, we treat the cases where the horizontals are strong monoidal Quillen adjunctions (Proposition~\ref{prop:comparestrong}), weak monoidal Quillen equivalences (Proposition~\ref{prop:compareweak}) and Quillen equivalences arising from Quillen invariance of modules (Proposition~\ref{prop:ringsquare}), and the vertical functors are given by change of rings adjunctions. The proof of our main theorem then reduces to showing that the zig-zag of Quillen equivalences between (co)free equivariant spectra and their algebraic models satisfy the relevant hypotheses. This requires the construction of twelve different squares of the form shown above which satisfy the relevant hypotheses. See Section~\ref{sec:strategy} for a more comprehensive discussion. We note that the monoidality of the functors is not relevant to the statement of the main theorems; however, the monoidality of the functors is heavily used in the proof of the main theorems through the application of Propositions~\ref{prop:comparestrong},~\ref{prop:ringsquare} and~\ref{prop:compareweak}.

\subsection{Conventions}
We follow the convention of writing the left adjoint on the left in an adjoint pair displayed vertically, and on top in an adjoint pair displayed horizontally. In the second part of this paper, everything is rationalized without comment. In particular, $H^*(-)$ denotes the (unreduced) cohomology with rational coefficients. We write $q\colon QX \to X$ for cofibrant replacement. All monoidal model categories are assumed to be \emph{symmetric} monoidal as in~\cite{SchwedeShipley00}. We use the standard convention that `subgroup' means `closed subgroup'.

\subsection*{Acknowledgements}
I am grateful to John Greenlees for his comments on this paper and many helpful discussions. I would also like to thank Brooke Shipley and Sarah Whitehouse for many useful conversations and suggestions.

\part{The formal setup}
\section{Background}\label{sec:background}
In this section we provide the necessary background. In particular, we recap flat model structures and their importance for our method. We also recall key facts about monoidal structures and Quillen pairs from~\cite{SchwedeShipley03}.
\subsection{Flat model structures}\label{sec:flat}
Let $\C$ be a monoidal model category, and let $S$ be a monoid in $\C$. We write $\mod{S}(\C)$ for the category of (left) $S$-modules. If $S$ is in addition commutative, we may consider the category of commutative $S$-algebras which we denote by $\calg{S}(\C)$. If the underlying category is clear, we will often omit it from the notation. When they exist, we will equip the categories $\calg{S}(\C)$ and $\mod{S}(\C)$ with the model structure whose weak equivalences and fibrations are created by the forgetful functor to $\C$, and we call these model structures \emph{right lifted}. For general conditions under which the right lifted model structures on $\calg{S}(\C)$ and $\mod{S}(\C)$ exist, we refer the reader to~\cite{SchwedeShipley00} and~\cite{White17}. We next record the key examples for this paper.
\begin{egs}\label{rem:existence}
	There is a right lifted model structure on modules in each of the following cases:
	\begin{itemize}
		\item dg-modules over a DGA in the projective model structure~\cite[3.3]{BarthelMayRiehl14};
		\item modules over a ring spectrum in the stable model structure~\cite[III.7.6]{MandellMaySchwedeShipley01} and flat model structure, see~\cite[2.6]{Shipley04} and~\cite[3.2.1]{PavlovScholbach18};
		\item modules over a ring $G$-spectrum in the stable model structure~\cite[III.7.6]{MandellMay02} and flat model structure~\cite[2.3.33]{Stolz11}.
	\end{itemize}
	Similarly, there is a right lifted model structure on commutative algebras in each of the following cases:
	\begin{itemize}
	\item commutative DGAs over a field of characteristic zero in the projective model structure~\cite[\S 5.1]{White17};
	\item commutative algebra spectra in the positive stable model structure and in the positive flat model structure, see~\cite[3.2]{Shipley04} and~\cite[4.1]{PavlovScholbach18};
	\item equivariant commutative algebra spectra in the positive stable model structure and in the positive flat model structure, see~\cite[III.8.1]{MandellMay02} and~\cite[2.3.40]{Stolz11}.
\end{itemize}	
\end{egs}

\begin{defn}\label{defn:convenientmodelstructure}
	Let $\C$ be a monoidal model category and suppose that there exists another monoidal model structure $\widetilde{\C}$ on the same underlying category as $\C$, which has the same weak equivalences and for which the identity functor $\widetilde{\C} \to \C$ is left Quillen. Suppose in addition that the right lifted model structures on $\calg{S}(\widetilde{\C})$ and $\mod{S}(\C)$ exist. We say that $(\C, \widetilde{\C})$ is \emph{convenient} if the forgetful functor $\calg{S}(\widetilde{\C}) \to \mod{S}(\C)$ preserves cofibrant objects, for all commutative monoids $S \in \C$. 
\end{defn}

The following lemma summarizes the key feature of a pair $(\C, \widetilde{\C})$ of convenient model structures.
\begin{lem}\label{lem:convenient}
Suppose that $(\C,\widetilde{\C})$ is convenient and let $\theta\colon S \to R$ be a map of commutative monoids in $\C$. A cofibrant replacement of $R$ as a commutative $S$-algebra in the model structure right lifted from $\widetilde{\C}$ is cofibrant as an $S$-module in the model structure right lifted from $\C$.
\end{lem}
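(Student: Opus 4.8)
The plan is to unwind the definition of convenience directly; the content of the statement is entirely carried by Definition~\ref{defn:convenientmodelstructure}. Since $\theta\colon S \to R$ is a map of commutative monoids in $\C$, the monoid $R$ acquires the structure of a commutative $S$-algebra, so it makes sense to form a cofibrant replacement $q\colon QR \to R$ in $\calg{S}(\widetilde{\C})$ equipped with the model structure right lifted from $\widetilde{\C}$; this model structure exists by the standing hypothesis of Definition~\ref{defn:convenientmodelstructure}, as does the model structure on $\mod{S}(\C)$ right lifted from $\C$. By construction $q$ is a weak equivalence and the structure map $S \to QR$ is a cofibration in $\calg{S}(\widetilde{\C})$.

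The first step is to observe that $S$, with its identity map, is the initial object of $\calg{S}(\widetilde{\C})$, so the condition that the structure map $S \to QR$ be a cofibration says precisely that $QR$ is a \emph{cofibrant object} of $\calg{S}(\widetilde{\C})$. The second and final step is to apply the convenience hypothesis to this particular commutative monoid $S$: the forgetful functor $\calg{S}(\widetilde{\C}) \to \mod{S}(\C)$ preserves cofibrant objects, so the underlying $S$-module of $QR$ is a cofibrant object of $\mod{S}(\C)$. That is, $QR$ is cofibrant as an $S$-module in the model structure right lifted from $\C$, which is exactly the assertion of the lemma.

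I do not expect a genuine obstacle here; the only points requiring any care are the (standard) identification of the initial commutative $S$-algebra with $S$ itself and the observation that a cofibrant replacement of $R$ as a commutative $S$-algebra is in particular a cofibrant object of $\calg{S}(\widetilde{\C})$, both of which are immediate once the relevant model structures are named correctly. The purpose of isolating this lemma is that it lets us use a single commutative $S$-algebra $QR$ which is simultaneously cofibrant in $\calg{S}(\widetilde{\C})$ (hence computes derived functors of algebra-level constructions) and cofibrant in $\mod{S}(\C)$ (hence computes derived functors of module-level constructions such as $\ext$ and $\coext$ along $\theta$), and the proof above is arranged precisely to make that simultaneous cofibrancy transparent.
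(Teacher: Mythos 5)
Your proposal is correct, and it is exactly the argument the paper has in mind: the lemma is an immediate unwinding of Definition~\ref{defn:convenientmodelstructure} (the paper in fact states it without a separate proof), noting that a cofibrant replacement of $R$ in $\calg{S}(\widetilde{\C})$ is a cofibrant object there and then invoking the convenience hypothesis that the forgetful functor to $\mod{S}(\C)$ preserves cofibrant objects. Your additional remarks on the initial object of $\calg{S}(\widetilde{\C})$ being $S$ are accurate and harmless.
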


Before turning to the examples, we describe the importance of this property. The restriction of scalars functor along a map of commutative monoids $\theta\colon S \to R$ in a monoidal model category $\C$ is always right Quillen, but it is not left Quillen in general. If the monoidal unit of the underlying category is cofibrant, then restriction of scalars is left Quillen if and only if $R$ is cofibrant as an $S$-module, see Proposition~\ref{prop:extrescoext}. Since a key step in the proof of algebraic models is a formality argument based on the fact that polynomial rings are intrinsically formal as commutative DGAs, one needs to be able to replace $R$ in such a way that it is still a commutative $S$-algebra, and is cofibrant as an $S$-module. If $(\C,\widetilde{\C})$ is convenient, then this is possible by replacing $R$ as a commutative $S$-algebra in the model structure right lifted from $\widetilde{\C}$.

The pair of model structures $(\mathrm{stable}, \mathrm{positive~stable})$ on spectra is \emph{not} convenient. To rectify this, Shipley~\cite{Shipley04} constructs the flat and positive flat model structures on symmetric spectra which are convenient. This has since been generalised by Stolz~\cite{Stolz11} to equivariant spectra and by Pavlov-Scholbach~\cite{PavlovScholbach18} to symmetric spectra in general model categories. It is important to note that in the flat model structures on (equivariant) spectra, the weak equivalences are the same as in the stable model structure. Therefore for each of these flat model structures, the identity functor is a right Quillen equivalence from it to the stable model structure.

In summary, we have the following crucial result. 
\begin{thm}
	The following pairs of model categories are all convenient in the sense of Definition~\ref{defn:convenientmodelstructure}:
	\begin{itemize}
		\item $(\text{projective}, \text{projective})$ on chain complexes over a field of characteristic zero;
		\item $(\text{flat}, \text{positive~flat})$ on symmetric spectra;
		\item $(\text{flat}, \text{positive~flat})$ on symmetric spectra in simplicial $\mathbb{Q}$-modules;
		\item $(\text{flat}, \text{positive~flat})$ on symmetric spectra in non-negatively graded chain complexes of $\mathbb{Q}$-modules;
		\item $(\text{flat}, \text{positive~flat})$ on equivariant spectra.
	\end{itemize}
\end{thm}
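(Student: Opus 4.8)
The plan is to verify the three requirements of Definition~\ref{defn:convenientmodelstructure} for each of the five pairs: that $\widetilde{\C}$ is a monoidal model structure on the same underlying category, with the same weak equivalences as $\C$, for which the identity functor $\widetilde{\C}\to\C$ is left Quillen; that the right lifted model structures on $\mod{S}(\C)$ and $\calg{S}(\widetilde{\C})$ exist; and that the forgetful functor $\calg{S}(\widetilde{\C})\to\mod{S}(\C)$ preserves cofibrant objects for every commutative monoid $S\in\C$. The first two requirements are uniform and amount to bookkeeping; all of the content is in the third.

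I would first dispose of the structural conditions. For $(\text{projective},\text{projective})$ on chain complexes over a field of characteristic zero the two model structures coincide, so the identity is trivially left Quillen and the weak equivalences agree. For each pair $(\text{flat},\text{positive~flat})$, the positive flat model structure has the same weak equivalences as the flat one --- the stable equivalences --- by Shipley~\cite{Shipley04}, Stolz~\cite{Stolz11} and Pavlov--Scholbach~\cite{PavlovScholbach18}; and every positive flat cofibration is a flat cofibration, so the identity functor from the positive flat structure to the flat structure preserves cofibrations, and hence, the weak equivalences being the same, also trivial cofibrations. Thus it is left Quillen. Existence of the right lifted model structures on modules and on commutative algebras in all five cases is precisely the list assembled in Examples~\ref{rem:existence}.

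The heart of the argument is that the forgetful functor $\calg{S}(\widetilde{\C})\to\mod{S}(\C)$ preserves cofibrant objects. Here one uses that the right lifted model structure on $\calg{S}(\widetilde{\C})$ is cofibrantly generated, with generating (trivial) cofibrations obtained by applying the free commutative $S$-algebra functor $\mathrm{Sym}_S(-)$ to the generating (trivial) cofibrations of $\mod{S}(\widetilde{\C})$. A cofibrant commutative $S$-algebra is therefore a retract of an $S$-algebra built from $S$ by a transfinite composition of cobase changes of such maps, and since retracts and transfinite compositions of cofibrations of $S$-modules remain cofibrations of $S$-modules, it suffices to control a single such cobase change on underlying $S$-modules. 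Running the standard filtration of such a pushout, whose associated graded pieces are built (after tensoring up over $S$) from the $n$-fold pushout--product powers of a generating cofibration of $\widetilde{\C}$ passed to orbits under the symmetric group $\Sigma_n$, this reduces precisely to the assertion that these ``symmetrized'' pushout--product powers of cofibrations of $\widetilde{\C}$ are cofibrations of $\mod{S}(\C)$ --- that is, to a relative form of White's (strong) commutative monoid axiom relating $\widetilde{\C}$ and $\C$.

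It then remains to supply this input in each family of examples. Over a field of characteristic zero the quotient map from an $n$-fold tensor (or pushout--product) power to its $\Sigma_n$-orbits is split by the averaging idempotent $\tfrac{1}{n!}\sum_{\sigma\in\Sigma_n}\sigma$, so the symmetrized power is a retract of the unsymmetrized one, which is a cofibration by the pushout--product axiom; see White~\cite[\S5.1]{White17}. In the positive flat cases the required statement is exactly the commutative monoid axiom, established for symmetric spectra by Shipley~\cite{Shipley04}, for symmetric spectra in a general base monoidal model category --- and in particular in the two rational base categories appearing in the list --- by Pavlov--Scholbach~\cite{PavlovScholbach18}, and for equivariant spectra by Stolz~\cite{Stolz11}; the precise conclusion one extracts in each case is that a cofibrant commutative $S$-algebra in the positive flat structure has underlying $S$-module cofibrant in the (non-positive) flat structure on $\mod{S}(\C)$. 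The main obstacle throughout is the asymmetry between the two model structures: the algebras are required to be cofibrant in $\widetilde{\C}$, but their underlying modules need only be --- and are only claimed to be --- cofibrant in $\C$ (a cofibrant commutative $S$-algebra need not be cofibrant as an $S$-module in $\widetilde{\C}$ itself), so one must take care to invoke the cited commutative monoid axioms in this relative form, for an arbitrary commutative monoid $S$ rather than merely the monoidal unit, and at the level of cofibrant objects rather than only of cells. Once the pushout filtration above is in place, these inputs are off-the-shelf and no genuinely new computation is needed.
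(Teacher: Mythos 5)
Your proposal is correct and ultimately rests on exactly the same inputs as the paper, whose proof is simply a citation of White~\cite[\S 5.1]{White17}, Shipley~\cite[4.1]{Shipley04}, Pavlov--Scholbach~\cite[4.4]{PavlovScholbach18} (with~\cite[4.8]{FlatnessandShipley'sTheorem} for the two rational base categories) and Stolz~\cite[2.3.40]{Stolz11}; what you add is an unpacking of the standard cell-filtration argument that underlies those references. This is a faithful, slightly more detailed version of the same route, not a different one.
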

\begin{proof}
	The proofs can be found in~\cite[\S 5.1]{White17},~\cite[4.1]{Shipley04}, ~\cite[4.4]{PavlovScholbach18} together with~\cite[4.8]{FlatnessandShipley'sTheorem}, and~\cite[2.3.40]{Stolz11} respectively.
\end{proof}


\subsection{Flat cofibrants}
In a monoidal model category, if $X$ is cofibrant, then $X \otimes -$ is left Quillen and hence preserves weak equivalences between cofibrant objects by Ken Brown's lemma. It is often convenient to work with monoidal model categories which satisfy something stronger. 
\begin{defn}\label{defn:flatcofibrants}
	We say that \emph{cofibrants are flat} in a monoidal model category, if for every cofibrant object $X$, the functor $X \otimes -$ preserves all weak equivalences.
\end{defn}
This is satisfied in all of the model categories of interest in this paper. In particular it holds in the following examples:
\begin{itemize}
	\item \emph{Projective model structure on dg-modules}: The cofibrant objects in this model category are the semi-projective modules $P$, and these have the property that $P \otimes -$ preserves quasiisomorphisms~\cite[11.1.6, 11.2.1]{AvramovFoxbyHalperin03}.
	\item \emph{Equivariant spectra}: Cofibrants are flat in both the stable and flat model structure on $G$-spectra and in modules over a ring $G$-spectrum, see~\cite[7.3, 7.7]{MandellMay02} and~\cite[2.3.40]{Stolz11}.
	\item \emph{Symmetric spectra}: Cofibrant objects are flat in both the stable and flat model structure on symmetric spectra (in general model categories) and in modules over a ring spectrum, see~\cite[5.3.10]{HoveyShipleySmith00} and~\cite[3.5.1]{PavlovScholbach18}.
\end{itemize}

\subsection{Lifting Quillen adjunctions to module categories}\label{sec:liftadjunctionstomodules}
Recall that a Quillen adjunction $F: \C \rightleftarrows \D : U$ between monoidal model categories is said to be a \emph{weak monoidal Quillen adjunction} if the right adjoint $U$ is lax symmetric monoidal (which gives the left adjoint $F$ an oplax symmetric monoidal structure) and the following conditions hold:
\begin{enumerate}
	\item for cofibrant $A$ and $B$ in $\C$, the oplax monoidal structure map $\phi\colon F(A \otimes B) \to FA \otimes FB$ is a weak equivalence in $\D$;
	\item for a cofibrant replacement $c\mathds{1}_\C$ of the unit in $\C$, the map $F(c\mathds{1}_\C) \to \mathds{1}_\D$ is a weak equivalence in $\D$.
\end{enumerate}
If moreover the oplax monoidal structure maps are isomorphisms, we say that it is a \emph{strong monoidal Quillen adjunction}. 

Throughout the paper we will make use of categories of modules and how Quillen pairs are lifted to the categories of modules, for more detail see~\cite[\S 3.3]{SchwedeShipley03}. If $F: \C \rightleftarrows \D : U$ is a weak monoidal Quillen adjunction then $U$ preserves commutative monoids. Let $S$ be a commutative monoid in $\D$. The adjoint lifting theorem~\cite[4.5.6]{Borceux94} shows that the Quillen adjunction lifts to a weak monoidal Quillen adjunction 
\begin{center}\begin{tikzcd} 
	\mod{US}(\C) \arrow[r, "F^S", yshift=1mm] & \mod{S}(\D). \arrow[l, "U", yshift=-1mm]
	\end{tikzcd}\end{center}
If the original pair $(F,U)$ is a strong monoidal Quillen pair, then we have the formula 
$F^SM = S \otimes_{FUS} FM$ and the lifted Quillen pair $(F^S,U)$ is strong monoidal. For example, this arises in the case of the fixed points-inflation adjunction, see Section~\ref{sec:fixedpoints}.
 
In addition, when $(F,U)$ is a strong monoidal Quillen pair, $F$ preserves commutative monoids. It follows that for $S$ a commutative monoid in $\C$, the adjunction lifts to a strong monoidal Quillen adjunction
\begin{center}\begin{tikzcd} 
	\mod{S}(\C) \arrow[r, "F", yshift=1mm] & \mod{FS}(\D). \arrow[l, "U", yshift=-1mm]
\end{tikzcd}\end{center}

\section{Functors between categories of modules}\label{sec:functors}
\subsection{Quillen functors}\label{sec:quillen functors}
In this section we recall various adjunctions between categories of modules which play a crucial role in the remainder of this paper, and give conditions
under which these adjunctions are Quillen adjunctions. 
The results in this subsection are special cases of results of Lewis-Mandell~\cite{LewisMandell} (see Remark~\ref{rem:LewisMandell})
and we refer the reader there for more details and related results.

Let $\C$ be a monoidal model category and $\D$ be a model category whose underlying category is $\C$-enriched. Recall that $\D$ is a $\C$-enriched model category if it satisfies the analogue of Quillen's SM7 axiom, and a unit axiom. We refer the reader to~\cite[\S 1]{Barwick10} for detailed definitions and basic properties of enriched model categories.
\begin{lem}[{\cite[3.9]{LewisMandell}}]\label{lem:enriched}
Let $\C$ be a monoidal model category and let $S$ be a monoid in $\C$. Suppose that the right lifted model structure on $\mod{S}$ exists. Then $\mod{S}$ is a $\C$-enriched model category.
\end{lem}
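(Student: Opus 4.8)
The plan is to verify the two defining conditions of a $\C$-enriched model category: the analogue of Quillen's SM7 axiom for the enriched pushout--product, and the unit axiom. I would begin by recalling the enrichment itself, which is purely categorical and requires no model structure. For a left $S$-module $M$ and an object $X\in\C$, the tensor $M\otimes X$ is the $S$-module on the underlying object $M\otimes X$ with $S$ acting through the $M$-factor (this uses that $\C$ is symmetric monoidal), the cotensor is the internal hom object $\underline{\hom}_\C(X,M)$ with its induced $S$-module structure, and the mapping object $\map_{\mod S}(M,N)\in\C$ is the equalizer of the two evident maps $\underline{\hom}_\C(M,N)\rightrightarrows\underline{\hom}_\C(S\otimes M,N)$. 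The features I will exploit are the natural isomorphisms $U(M\otimes X)\cong (UM)\otimes X$ and $F(X)\otimes Y\cong F(X\otimes Y)$ for the free--forgetful adjunction $F=S\otimes-\colon\C\rightleftarrows\mod S\colon U$; that $U$ creates all colimits; and that, since the right lifted model structure on $\mod S$ exists, $F$ is left Quillen and the generating (acyclic) cofibrations of $\mod S$ are $\{S\otimes i:i\in I_\C\}$ (respectively $\{S\otimes j:j\in J_\C\}$), where $I_\C,J_\C$ generate the (acyclic) cofibrations of $\C$.

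For the SM7 axiom it suffices, by the usual saturation argument, to treat the case where one map is a generating cofibration $i\colon A\to B$ of $\C$ and the other is a generating cofibration $F(i')\colon S\otimes A'\to S\otimes B'$ of $\mod S$, possibly with $i$ or $i'$ acyclic: indeed, the class of morphisms $j$ of $\mod S$ for which $i\,\square\,j$ is a cofibration (respectively an acyclic cofibration) is closed under pushout, coproduct, transfinite composition and retract, hence contains every cofibration of $\mod S$ once it contains the generators, and symmetrically in the $\C$-variable. On generators, using $F(X)\otimes Y\cong F(X\otimes Y)$ and that $F$ preserves pushouts, one computes a natural isomorphism $F(i')\,\square\,i\cong F(i'\,\square\,i)$, where $i'\,\square\,i$ denotes the pushout--product of $i'$ and $i$ formed in $\C$. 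Since $\C$ is a monoidal model category, $i'\,\square\,i$ is a cofibration in $\C$, acyclic whenever $i$ or $i'$ is; applying the left Quillen functor $F$ yields the desired statement in $\mod S$.

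For the unit axiom, fix a cofibrant replacement $q\colon Q\mathds 1_\C\to\mathds 1_\C$ of the unit of $\C$; one must show that $q\otimes M\colon Q\mathds 1_\C\otimes M\to M$ is a weak equivalence for every cofibrant $M$ in $\mod S$. Since the weak equivalences of $\mod S$ are created by $U$ and $U(Q\mathds 1_\C\otimes M)\cong Q\mathds 1_\C\otimes UM$, this is equivalent to $q\otimes UM\colon Q\mathds 1_\C\otimes UM\to UM$ being a weak equivalence in $\C$. I would reduce $M$ to a cell complex on $\{S\otimes i:i\in I_\C\}$ (retracts being harmless by naturality), use that $U$ creates colimits to present $UM$ as the corresponding cell object of $\C$, built by iterated pushouts from free modules $S\otimes B'$, and then induct along the cellular filtration: both $\mathds 1_\C\otimes-$ and $Q\mathds 1_\C\otimes-$ preserve the pushouts and sequential colimits assembling this filtration, so the comparison map is built from the maps $q\otimes(S\otimes B')$ at the attached cells. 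Under the identification $q\otimes(S\otimes B')\cong F(q\otimes B')$ this is the image under the left Quillen functor $F$ of the unit comparison $q\otimes B'\colon Q\mathds 1_\C\otimes B'\to B'$, a weak equivalence in $\C$ between cofibrant objects by the unit axiom there; hence it is a weak equivalence in $\mod S$ and the induction goes through. I expect this unit axiom to be the main obstacle: because $S$ need not be cofibrant in $\C$, one cannot simply argue that $UM$ is cofibrant and invoke the unit axiom of $\C$ directly, and the cellular induction is exactly what circumvents this (and forces a little care that the (co)domains of the generating cofibrations of $\C$ are cofibrant enough for the Ken Brown step to apply). For the remaining bookkeeping the argument parallels Lewis--Mandell~\cite[3.9]{LewisMandell}, to which I would defer.
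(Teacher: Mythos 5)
The paper offers no argument for this lemma at all: it is stated as a citation to Lewis--Mandell~\cite[3.9]{LewisMandell} and used as a black box, so there is no ``paper proof'' for your argument to parallel. Judged on its own terms, your first half is fine: the pushout--product (SM7) axiom does follow by exactly the transfer argument you give, namely reduce the $\mod{S}$-variable to the generating (acyclic) cofibrations $F(i')$ (this implicitly assumes the right-lifted structure is the cofibrantly generated transferred one, with generators $F(I_{\C})$ and $F(J_{\C})$ -- harmless in all the examples at hand, and note you do not even need the symmetric reduction in the $\C$-variable, since $i\,\square\,F(i')\cong F(i\,\square\,i')$ works for an arbitrary cofibration $i$ of $\C$).

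The genuine gap is in the unit axiom, and it is precisely the point you flag parenthetically and then wave past: your cellular induction reduces the statement to the maps $q\otimes A'$ and $q\otimes B'$ for $A'\to B'$ a generating cofibration of $\C$, and to a gluing (cube) lemma at each cell attachment. Ken Brown's lemma gives that $q\otimes B'$ is a weak equivalence only when $B'$ is cofibrant, and the cube lemma needs all corners cofibrant (or left properness); neither is among the hypotheses, and in the situations this lemma is designed for -- Lewis--Mandell's setting of non-cofibrant units, e.g.\ positive (flat) model structures on spectra -- the domains and codomains of the generating cofibrations are genuinely not cofibrant, so the ``Ken Brown step'' simply does not apply and the induction does not close. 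In other words, the cellular induction does not circumvent the obstacle; it relocates it to the cells, where it is exactly the content of Lewis--Mandell's semicofibrancy machinery (cf.\ the paper's remark after Proposition~\ref{prop:extrescoext}), so ``deferring the bookkeeping'' to them is deferring the one non-formal step. A clean repair in the cases this paper actually uses: all the relevant base categories have cofibrant unit, and then the unit axiom is formal -- for $M$ cofibrant in $\mod{S}$ the functor $-\otimes M\colon\C\to\mod{S}$ is left Quillen by the already-established SM7, and $q\colon Q\mathds{1}\to\mathds{1}$ is a weak equivalence between cofibrant objects, so $q\otimes M$ is a weak equivalence by Ken Brown; but as a proof of the lemma in the stated generality your sketch is incomplete.
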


Recall that an $R$-$S$-bimodule is an $R \otimes S^{\mathrm{op}}$-module.
\begin{prop}\label{prop:bimodule}
Let $\C$ be a monoidal model category, $R$, $S$ and $T$ be monoids in $\C$, and $M$ be an $R$-$S$-bimodule. Suppose that the right lifted model structures on $\mod{R}$, $\mod{R \otimes T^\mathrm{op}}$ and $\mod{S \otimes T^\mathrm{op}}$ exist. If $M$ is a cofibrant $R$-module then the tensor-hom adjunction
\[\begin{tikzcd}
\mod{S \otimes T^\mathrm{op}} \arrow[rrr, "M \otimes_S -", yshift=1mm] & & & \mod{R \otimes T^\mathrm{op}} \arrow[lll, "{\mathrm{Hom}_R(M,-)}", yshift=-1mm]
\end{tikzcd}\]
is a Quillen adjunction. 
\end{prop}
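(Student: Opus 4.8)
The plan is to reduce the statement to the adjunction lifting criterion that underlies a right lifted model structure: a left adjoint between right lifted model categories is left Quillen precisely when it preserves generating cofibrations and generating acyclic cofibrations, and the latter reduces by the pushout-product axiom to checking what $M \otimes_S -$ does to the generators. First I would note that by Lemma~\ref{lem:enriched} the categories $\mod{S \otimes T^{\mathrm{op}}}$ and $\mod{R \otimes T^{\mathrm{op}}}$ are $\C$-enriched model categories, and that the right adjoint $\hom_R(M,-)$ visibly preserves fibrations and acyclic fibrations once we know the left adjoint $M \otimes_S -$ sends generating (acyclic) cofibrations to (acyclic) cofibrations; so the whole statement is about the left adjoint on generators.

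The key step is the identification of $M \otimes_S -$ on free modules. A generating cofibration of $\mod{S \otimes T^{\mathrm{op}}}$ has the form $(S \otimes T^{\mathrm{op}}) \otimes i$ for $i$ a generating cofibration of $\C$ (using that the right lifted model structure is cofibrantly generated with generators induced up from $\C$), and applying $M \otimes_S -$ to the free $S \otimes T^{\mathrm{op}}$-module on an object $X \in \C$ yields $M \otimes_{\mathds{1}} X$ with its residual $R \otimes T^{\mathrm{op}}$-action, i.e.\ the free $R \otimes T^{\mathrm{op}}$-module construction applied to $M \otimes X$ is recovered up to the forgetful functor as $M \otimes X$. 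Thus $M \otimes_S ((S \otimes T^{\mathrm{op}}) \otimes i) \cong M \otimes i$ as a map of $R \otimes T^{\mathrm{op}}$-modules whose underlying map in $\C$ is $M \otimes i$. Since $M$ is cofibrant as an $R$-module, in particular $M$ is cofibrant in $\C$ (the forgetful functor $\mod{R} \to \C$ preserves cofibrations because its left adjoint, the free functor, is left Quillen, hence $q$-cofibrant objects of $\mod{R}$ are underlying cofibrant — more carefully, one invokes that $\mod{R}$ is $\C$-enriched so tensoring a cofibrant object with the generating cofibration of $\C$ stays a cofibration, and a cofibrant $R$-module is a retract of a cell object built from such). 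Then $M \otimes i$ is a cofibration in $\C$ by the pushout-product axiom, and if $i$ is additionally a weak equivalence then $M \otimes i$ is a weak equivalence as well — either directly from the pushout-product axiom with $M$ cofibrant, or, when cofibrants are flat in $\C$, without even needing $i$ to be acyclic cofibration versus just acyclic. Because weak equivalences and fibrations in the right lifted structures are detected in $\C$, this shows $M \otimes_S -$ sends generating cofibrations to cofibrations and generating acyclic cofibrations to acyclic cofibrations, so it is left Quillen.

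Finally I would assemble these observations: $M \otimes_S -$ preserves colimits (it is a left adjoint), so preservation of the generating (acyclic) cofibrations propagates to all cofibrations (acyclic cofibrations) by the standard cell-induction/retract argument, and the right lifted model structure's fibrations and weak equivalences being created in $\C$ makes the adjointness condition automatic on the other side. Hence $(M \otimes_S -, \hom_R(M,-))$ is a Quillen adjunction. The main obstacle I anticipate is the bookkeeping with the bimodule structure: one has to be careful that applying $M \otimes_S -$ to a free $S \otimes T^{\mathrm{op}}$-module really produces a free $R \otimes T^{\mathrm{op}}$-module (this uses commutativity of the relevant tensor operations and that the $T^{\mathrm{op}}$-action on $M$ is trivial, so it is carried along freely), and that "cofibrant as an $R$-module" is exactly the hypothesis that makes the underlying-$\C$ pushout-products come out cofibrations; getting the enriched-model-category machinery of~\cite[\S1]{Barwick10} to deliver this cleanly, rather than by an ad hoc generator chase, is where the care is needed.
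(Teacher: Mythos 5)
There is a genuine gap, and it sits at the heart of your argument: you reduce to showing that $M \otimes_S -$ takes generating (acyclic) cofibrations of $\mod{S \otimes T^{\mathrm{op}}}$ to (acyclic) cofibrations of $\mod{R \otimes T^{\mathrm{op}}}$, but what you actually verify is only that the \emph{underlying} map in $\C$ is an (acyclic) cofibration, and then you conclude via ``weak equivalences and fibrations in the right lifted structures are detected in $\C$''. That inference is invalid: in a right lifted model structure only the fibrations and weak equivalences are created in $\C$; the cofibrations are defined by lifting against acyclic fibrations of \emph{modules}, and a module map whose underlying map is a cofibration in $\C$ need not be a cofibration of modules. (Note also that $M \otimes_S ((S\otimes T^{\mathrm{op}})\otimes i)$ is $(M \otimes T^{\mathrm{op}})\otimes i$, not $M\otimes i$, and it is not a free cell unless $M$ is free, so it cannot be recognized as a generating cofibration of the target either.) A secondary problem is your claim that cofibrancy of $M$ in $\mod{R}$ implies cofibrancy of $M$ in $\C$: the justification given is backwards (the forgetful functor is the \emph{right} adjoint of the free functor, so being right Quillen it preserves fibrations, not cofibrations), and the implication genuinely requires extra hypotheses such as $R$ being cofibrant in $\C$, which the proposition does not assume. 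Your appeal to ``cofibrants are flat'' for the acyclicity part is likewise an assumption not present in the statement.

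The fix is to use the enrichment at the module level rather than descending to $\C$, which is exactly what the paper does — and on the right adjoint side: by Lemma~\ref{lem:enriched}, $\mod{R}$ is a $\C$-enriched model category, so the SM7 pushout-product/pullback-hom axiom applied to the cofibration $\emptyset \to M$ of $R$-modules shows that $\mathrm{Hom}_R(M,-)$ sends fibrations (resp.\ acyclic fibrations) of $R$-modules to fibrations (resp.\ acyclic fibrations) in $\C$; since fibrations and weak equivalences of $R$-$T$- and $S$-$T$-bimodules are created in $\C$, this already proves $\mathrm{Hom}_R(M,-)$ is right Quillen, with no cofibrancy in $\C$ needed anywhere. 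If you prefer your left-handed route, it can be repaired in the same spirit: identify $M\otimes_S((S\otimes T^{\mathrm{op}})\otimes i)\cong (M\otimes T^{\mathrm{op}})\otimes i$, observe that $M\otimes T^{\mathrm{op}}$ is a cofibrant $R\otimes T^{\mathrm{op}}$-module (it is the extension of scalars of the cofibrant $R$-module $M$ along $R\to R\otimes T^{\mathrm{op}}$, and that extension is left Quillen because restriction creates fibrations and weak equivalences), and then invoke the $\C$-enriched SM7 axiom \emph{in} $\mod{R\otimes T^{\mathrm{op}}}$ to see that tensoring this cofibrant module with a (generating, acyclic) cofibration $i$ of $\C$ yields an (acyclic) cofibration of $R\otimes T^{\mathrm{op}}$-modules.
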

\begin{proof}
As $\mod{R}$ is a $\C$-enriched model category by Lemma~\ref{lem:enriched}, it follows by lifting properties that for $i\colon N \to N'$ a cofibration in $\mod{R}$ and $p\colon E \to B$ a fibration in $\mod{R}$ that $$\mathrm{Hom}_R(N',E) \to \mathrm{Hom}_R(N,E) \times_{\mathrm{Hom}_R(N,B)} \mathrm{Hom}_R(N',B)$$ is a fibration which is acyclic if either $i$ or $p$ is. Taking $i$ to be the map $\emptyset \to M$ shows that $\mathrm{Hom}_R(M,-)$ preserves fibrations and acyclic fibrations of $R$-modules if $M$ is cofibrant as an $R$-module. Since fibrations and weak equivalences of modules and bimodules are created in $\C$, this in particular shows that if $M$ is a cofibrant $R$-module, then $\mathrm{Hom}_R(M,-)$ sends fibrations (resp., acyclic fibrations) of $R$-$T$-bimodules to fibrations (resp., acyclic fibrations) of $S$-$T$-bimodules, and hence is right Quillen.
\end{proof}

We now give a criterion to force the cofibrancy condition in the previous result.
\begin{lem}\label{lem:cofibrantbimodule}
Let $\C$ be a monoidal model category, $R$ and $S$ be monoids in $\C$. Suppose that the right lifted model structures on $\mod{R}$ and $\mod{R \otimes S^{\mathrm{op}}}$ exist. If $S$ is cofibrant in $\C$, then any cofibrant $R$-$S$-bimodule is cofibrant as an $R$-module. 
\end{lem}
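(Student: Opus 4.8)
The plan is to reduce the statement to the free module case, using the fact that the right lifted model structure on $R$-$S$-bimodules is generated (as a cofibrantly generated model category) by maps obtained by inducing up the generating cofibrations of $\C$. Concretely, I would first identify the generating (acyclic) cofibrations of $\mod{R \otimes S^{\mathrm{op}}}$ as $(R \otimes S^{\mathrm{op}}) \otimes i$ for $i$ a generating (acyclic) cofibration of $\C$, and then show that each such map is a cofibration of $R$-modules. Since a cofibrant $R$-$S$-bimodule is a retract of a transfinite composite of pushouts of generating cofibrations, and cofibrations of $R$-modules are closed under retracts, pushouts and transfinite composition, it suffices to handle the generators.

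Next I would do the generator computation. The forgetful functor $\mod{R \otimes S^{\mathrm{op}}} \to \mod{R}$ sends $(R \otimes S^{\mathrm{op}}) \otimes X$ to $R \otimes (S \otimes X)$ as an $R$-module (the $S^{\mathrm{op}}$-factor now just contributing to the underlying object). So for a generating cofibration $i\colon X \to Y$ of $\C$, the image of the induced bimodule map is the free $R$-module map $R \otimes (S \otimes i)\colon R \otimes S \otimes X \to R \otimes S \otimes Y$. Now the hypothesis that $S$ is cofibrant in $\C$ enters: since $S$ is cofibrant, $S \otimes -$ is left Quillen on $\C$, so $S \otimes i$ is a cofibration of $\C$; and then applying the free functor $R \otimes -\colon \C \to \mod{R}$, which is left Quillen, shows $R \otimes S \otimes i$ is a cofibration of $R$-modules. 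The same argument with acyclic cofibrations shows generating acyclic cofibrations of bimodules map to acyclic cofibrations of $R$-modules, though for the final conclusion only the cofibration statement is needed.

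Finally I would assemble these pieces: a cofibrant $R$-$S$-bimodule $M$ is a retract of an object $M'$ built as a transfinite composition of pushouts of coproducts of generating cofibrations starting from the initial object; the forgetful functor to $\mod{R}$ is a left adjoint (indeed it has a right adjoint $\mathrm{Hom}_R(R \otimes S^{\mathrm{op}}, -)$ or, more simply, is itself both adjoint in a restriction/extension pair) so it preserves colimits, hence preserves this cellular structure; by the previous step each cellular attaching map becomes a cofibration of $R$-modules; so $M'$ is a cofibrant $R$-module, and $M$, being a retract of $M'$ in $\mod{R}$, is cofibrant as an $R$-module too.

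The main obstacle I anticipate is the bookkeeping around the generating cofibrations: one must be careful that the right lifted model structure on bimodules is cofibrantly generated with generators of the expected induced-up form (which follows from the general existence results cited in the excerpt, e.g. via the transfer/lifting theorem), and that the forgetful functor $\mod{R \otimes S^{\mathrm{op}}} \to \mod{R}$ genuinely commutes with the relevant colimits and sends $(R \otimes S^{\mathrm{op}}) \otimes X$ to $R \otimes S \otimes X$ with the evident $R$-action. Once those identifications are in hand, the argument is the routine ``check on generators, then close under cellular constructions and retracts'' pattern, and the cofibrancy of $S$ is exactly what makes $S \otimes i$ a cofibration so that the composite $R \otimes S \otimes i$ lands among cofibrations of $R$-modules.
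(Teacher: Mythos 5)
Your argument is correct in substance, but it takes a genuinely different route from the paper. The paper's proof is a one-line formal argument: the forgetful functor $\mod{R \otimes S^{\mathrm{op}}} \to \mod{R}$ has right adjoint $\mathrm{Hom}(S,-)$, and this adjunction is exactly the tensor--hom adjunction of Proposition~\ref{prop:bimodule} applied to the bimodule $S$ regarded as a module over the unit (cofibrant over the unit precisely because $S$ is cofibrant in $\C$); hence the forgetful functor is left Quillen and in particular preserves cofibrant objects. You instead run a cell-induction: identify the generating cofibrations of the bimodule category as $(R\otimes S^{\mathrm{op}})\otimes i$, observe that their images in $\mod{R}$ are $R\otimes(S\otimes i)$, use cofibrancy of $S$ and the pushout-product axiom to see $S\otimes i$ is a cofibration, apply the left Quillen free functor $R\otimes -$, and then close up under pushouts, transfinite composition and retracts using that the forgetful functor preserves colimits. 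That is all fine, and it is the more elementary argument in that it avoids the enriched-model-category input (Lemma~\ref{lem:enriched}) behind Proposition~\ref{prop:bimodule}; but note the trade-off you yourself flag: your proof needs the right lifted model structure on bimodules to be cofibrantly generated with generating (acyclic) cofibrations of the induced-up form, which is an assumption beyond the lemma's stated hypotheses (mere existence of the right lifted structures). This holds in all the examples the paper cares about, since those structures are produced by the transfer theorem, but the paper's adjunction argument works verbatim from the stated hypotheses and moreover yields the stronger conclusion that the forgetful functor is left Quillen, not just that it preserves cofibrant objects.
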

\begin{proof}
The forgetful functor $\mod{R \otimes S^{\mathrm{op}}} \to \mod{R}$ has a right adjoint given by $\mathrm{Hom}(S,-)$. This is a special case of the tensor-hom adjunction in Proposition~\ref{prop:bimodule} and the result follows from there.
\end{proof}

Given a map $\theta\colon S \to R$ of monoids in a monoidal category $\C$, there is an induced adjoint triple
\[\begin{tikzcd}
\mod{S} \arrow[rr, "\ext" description, yshift=3mm] \arrow[rr, "\coext" description, yshift=-3mm] & & \mod{R} \arrow[ll, "\res" description]
\end{tikzcd} \]
where the functors are defined as follows. For an $R$-module $M$ and an $S$-module $N$ we have:
\begin{itemize}
\item \emph{extension of scalars}: $\ext(N) = R \otimes_S N$;
\item \emph{restriction of scalars}: $\res(M)$ is the $S$-module with underlying object $M$ and action defined by the composite $S \otimes M \to R \otimes M \to M$;
\item \emph{coextension of scalars}: $\coext(N) = \text{Hom}_S(R, N)$.
\end{itemize}

\begin{prop}\label{prop:extrescoext}
Let $\C$ be a monoidal model category, and let $\theta\colon S \to R$ be a map of monoids in $\C$. Suppose that the right lifted model structures on $\mod{R}$ and $\mod{S}$ exist. 
\begin{enumerate}
\item The extension-restriction of scalars adjunction \[\begin{tikzcd}
\mod{S} \arrow[r, "\ext", yshift=1mm] & \mod{R} \arrow[l, "\res", yshift=-1mm]
\end{tikzcd} \] is a Quillen adjunction.
\item If $R$ is a cofibrant $S$-module then the restriction-coextension of scalars adjunction
\[\begin{tikzcd}\mod{S} \arrow[r, "\coext"', yshift=-1mm] & \mod{R} \arrow[l, "\res"', yshift=1mm] \end{tikzcd}\] is a Quillen adjunction. Moreover, if the unit of $\C$ is cofibrant, then the restriction-coextension of scalars adjunction is a Quillen adjunction if and only if $R$ is cofibrant as an $S$-module.
\end{enumerate}
\end{prop}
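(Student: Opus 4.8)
The plan is to handle the two parts separately: part (1) is purely formal, while part (2) reduces to the enriched lifting argument already used for Proposition~\ref{prop:bimodule}, together with a short observation about the monoidal unit.

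For part (1), the goal is to show that $\res\colon \mod{R} \to \mod{S}$ is right Quillen. The key point is that the forgetful functor $\mod{R} \to \C$ factors as $\mod{R} \xrightarrow{\res} \mod{S} \to \C$, and that by definition of the right lifted model structures a map of $R$-modules (resp. of $S$-modules) is a fibration or an acyclic fibration precisely when its underlying map in $\C$ is. Hence a map $f$ in $\mod{R}$ lies in one of these classes if and only if $\res f$ does, so $\res$ preserves (indeed reflects) fibrations and acyclic fibrations and $(\ext,\res)$ is a Quillen adjunction with no further hypotheses. This recovers at the point-set level the standard fact that restriction of scalars is always right Quillen.

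For the first assertion of part (2), I would instead show that the right adjoint $\coext=\hom_S(R,-)$ is right Quillen. By Lemma~\ref{lem:enriched}, $\mod{S}$ is a $\C$-enriched model category, so for any cofibration $i\colon N\to N'$ and fibration $p\colon E\to B$ of $S$-modules the pullback-corner map $\hom_S(N',E)\to \hom_S(N,E)\times_{\hom_S(N,B)}\hom_S(N',B)$ is a fibration which is acyclic if either $i$ or $p$ is. Taking $i$ to be $\emptyset\to R$ — which is a cofibration of $S$-modules exactly because $R$ is cofibrant as an $S$-module — shows that $\hom_S(R,-)$ sends fibrations (resp. acyclic fibrations) of $S$-modules to fibrations (resp. acyclic fibrations) of objects of $\C$, hence of $R$-modules since these are created in $\C$; so $\coext$ is right Quillen. (Equivalently, this is the special case of Proposition~\ref{prop:bimodule} in which $R$ is regarded as an $S$-$R$-bimodule and the auxiliary monoid is taken to be the unit.)

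It remains to prove the converse in the ``moreover'' statement, so assume $\mathds{1}_\C$ is cofibrant and that $(\res,\coext)$ is a Quillen adjunction, i.e. $\res$ is left Quillen. The free $R$-module functor $R\otimes-\colon \C\to \mod{R}$ is left adjoint to the forgetful functor, which is right Quillen by construction of the right lifted model structure; hence $R\otimes-$ is left Quillen and sends the cofibrant object $\mathds{1}_\C$ to a cofibrant object, so $R\cong R\otimes\mathds{1}_\C$ is cofibrant in $\mod{R}$. Since $\res$ is left Quillen it preserves cofibrant objects, and $\res R$ is just $R$ equipped with the $S$-action along $\theta$; thus $R$ is cofibrant as an $S$-module, which closes the equivalence. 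The only points needing care are the variance bookkeeping when matching $\coext$ with a right adjoint from Proposition~\ref{prop:bimodule} (equivalently, choosing $\emptyset\to R$ as the test cofibration), and the observation that the unit-cofibrancy hypothesis is precisely what upgrades ``$R$ is cofibrant as an $R$-module'' into the converse argument; neither is a serious obstacle.
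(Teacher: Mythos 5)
Your proposal is correct and follows essentially the same route as the paper: part (1) is immediate from the right-lifted model structures, part (2) is the special case of the enriched/tensor-hom argument of Proposition~\ref{prop:bimodule} with $M=R$ viewed as an $S$-$R$-bimodule, and the converse uses that $R\cong R\otimes\mathds{1}_\C$ is a cofibrant $R$-module (unit cofibrant, free functor left Quillen) together with the left Quillen functor $\res$ preserving cofibrant objects. The only difference is that you replay the proof of Proposition~\ref{prop:bimodule} rather than citing it, which adds detail but no new content.
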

\begin{proof}
For (1), the restriction of scalars preserves fibrations and weak equivalences since the model structure on modules is right lifted from $\C$. The adjunction in (2) is a special case of the tensor-hom adjunction in Proposition~\ref{prop:bimodule} and the claim that it is Quillen if $R$ is cofibrant as an $S$-module follows immediately from there. Conversely, if $\mathds{1}$ is cofibrant in $\C$, then $R$ is a cofibrant $R$-module. Applying the left Quillen functor $\res$ then shows that $R$ is a cofibrant $S$-module.
\end{proof}

\begin{rem}\label{rem:LewisMandell}
Lewis-Mandell~\cite{LewisMandell} prove more general versions of the above statements by introducing a notion of semicofibrancy, see in particular~\cite[1.7(c)]{LewisMandell}. The relation between their results and those presented in this section can be seen from~\cite[6.1]{LewisMandell} where it is shown that any cofibrant object is semicofibrant, and moreover that if $\C$ has cofibrant unit, then semicofibrancy is equivalent to cofibrancy. 
\end{rem}

\subsection{The derived story}
In this section we use the results of the previous subsection to explore the situation at the derived level. For more details, we direct the reader to~\cite[\S 3.D]{Greenleeschangeofgroups} and~\cite{BDS}, but note that these references work purely in the derived categories.

If $\C$ is a stable, monoidal model category, its homotopy category is a tensor-triangulated category by~\cite[4.3.2, 6.6.4, \S 7.1]{Hovey99}. In other words, the homotopy category of $\C$ is a triangulated category with a compatible closed symmetric monoidal structure, see for instance~\cite[A.2.1]{HoveyPalmieriStrickland97}. It is often convenient to assume that the homotopy category of $\C$ is rigidly-compactly generated, meaning that the small (a.k.a compact) and dualizable objects coincide; see~\cite[2.7]{BDS} for more details. In the language of~\cite{HoveyPalmieriStrickland97}, rigidly-compactly generated tensor-triangulated categories are the unital algebraic stable homotopy categories. If the homotopy category of $\C$ is rigidly-compactly generated, it also follows that for a commutative monoid $S$ in $\C$ that the derived category $\Dsf(S)$ of $S$-modules (when it exists) is a rigidly-compactly generated tensor-triangulated category. 

\begin{rem}
The condition that the homotopy category of $\C$ is rigidly-compactly generated is implied by certain model categorical assumptions. Since this is not relevant for this paper, we refer the interested reader to~\cite[1.2.3.8]{ToenVezzosi08} and~\cite[\S 4]{BalchinGreenlees20} for further discussion.
\end{rem}

Let us now set a hypothesis which will form our standard setup. 
\begin{hyp}\label{hyp:cofibrancy}
Let $\C$ be a stable, monoidal model category whose homotopy category is rigidly-compactly generated,
and let $\theta\colon S \to R$ be a map of commutative monoids in $\C$. Suppose that:
\begin{enumerate}
\item the right lifted model structures on $\mod{R}$, $\mod{S}$ and $\mod{S \otimes R^\mathrm{op}}$ exist and are monoidal;
\item $R$ is a cofibrant $S$-module.
\end{enumerate}
\end{hyp}
In this subsection, we will in fact in addition require that $R$ and $S$ are cofibrant in $\C$. However, in the next subsection we explain why for the remainder of the paper this additional requirement will not be necessary. 

Given a map $\theta\colon S \to R$ of commutative monoids in a stable monoidal model category $\C$, we note that if $\C$ fits into a convenient pair $(\C,\widetilde{\C})$ as in Definition~\ref{defn:convenientmodelstructure} then there exists a weakly equivalent replacement of $\theta$ satisfying Hypothesis~\ref{hyp:cofibrancy} as described in Lemma~\ref{lem:convenient}. In a similar way, by first cofibrantly replacing $S$ in $\mathrm{CMon}(\widetilde{\C})$ one may additionally satisfy the hypothesis that $R$ and $S$ are cofibrant in $\C$. 

Fix a map $\theta\colon S \to R$ of commutative monoids in a stable monoidal model category $\C$ which satisfies Hypothesis~\ref{hyp:cofibrancy}. Since $R$ is a cofibrant $S$-module we have Quillen adjunctions
\[\begin{tikzcd}
\mod{S} \arrow[rr, "\ext" description, yshift=3mm] \arrow[rr, "\coext" description, yshift=-3mm] & & \mod{R} \arrow[ll, "\res" description]
\end{tikzcd} \]
by Proposition~\ref{prop:extrescoext}. We write $\dual R = \mathrm{Hom}_S(R,fS)$ for the relative dualizing complex where $fS$ is a fibrant replacement of $S$ as an $S$-module, and note that this is an $S$-$R$-bimodule. Now assume in addition that $R$ and $S$ are cofibrant in $\C$. We may cofibrantly replace $\dual R$ as an $S$-$R$-bimodule, and this is then both cofibrant as an $R$-module and as an $S$-module by Lemma~\ref{lem:cofibrantbimodule}. For ease of reading, we omit this replacement from the notation. Proposition~\ref{prop:bimodule} then gives us Quillen adjunctions
\[\begin{tikzcd}
\mod{S} \arrow[r, "\theta_{(*)}"', yshift=-1mm] & \mod{R} \arrow[l, "\twext"', yshift=1mm] 
\end{tikzcd} \quad \mathrm{and} \quad \begin{tikzcd}
\mod{S} \arrow[r, "\theta_{(!)}", yshift=1mm] & \mod{R} \arrow[l, "\theta^!", yshift=-1mm] 
\end{tikzcd}
\]
where the functors are defined as follows. For an $R$-module $M$ and an $S$-module $N$ we define
\begin{itemize}
\item $\twext(M) = \dual R \otimes_R M$;
\item $\theta_{(*)}(N) = \text{Hom}_S(\dual R,N)$;
\item $\theta_{(!)}(N) = \dual R \otimes_S N$;
\item $\theta^!(M) = \mathrm{Hom}_R(\dual R, M)$.
\end{itemize}
The functors $\twext$ and $\theta_{(*)}$ will be the most important of these for the remainder of this paper. We call $\twext$ the \emph{twisted extension of scalars} functor, and $\theta_{(*)}$ the \emph{twisted coextension of scalars} functor.

\begin{rem}
The assumption that $R$ is a cofibrant $S$-module is crucial in the consideration of the relative dualizing complex $\dual R$. Without this assumption, $\dual R$ will not represent the correct homotopy type. For example, consider the ring map $\theta\colon \mathbb{Q}[c] \to \mathbb{Q}$ where $|c|=-2$. This example is pertinent since it is the map $H^*BSO(2) \to H^*B1$. Then $\mathrm{Hom}_{\mathbb{Q}[c]}(\mathbb{Q},\mathbb{Q}[c]) = 0$ but $\R\mathrm{Hom}_{\mathbb{Q}[c]}(\mathbb{Q},\mathbb{Q}[c]) \simeq \Sigma\mathbb{Q}$. 
\end{rem}

Under an additional assumption we now explain how to relate these three sets of adjunctions. Recall that an $S$-module $M$ is said to be \emph{(derived) small} if the natural map \[\bigoplus [M, N_i]^S \to [M, \bigoplus N_i]^S\] is an isomorphism for any set $\{N_i\}$ of $S$-modules. In practice this can often be checked using the following result.
\begin{prop}[{\cite[10.2]{Greenlees18c}}]\label{prop:small fin gen} 
Let $S$ be a commutative ring spectrum such that $\pi_*S$ has finite global dimension. Then an $S$-module $M$ is small if and only if $\pi_*M$ is a finitely generated $\pi_*S$-module. 
\end{prop}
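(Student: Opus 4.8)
The plan is to translate the topological notion of smallness into the algebraic notion of finite generation by passing through the thick subcategory generated by $S$. Recall that $\Dsf(S)$ is a compactly generated triangulated category with $S$ itself a compact generator, so by the standard characterization of compact objects (Neeman, Schwede--Shipley) an $S$-module $M$ is small in the sense recalled above if and only if it lies in the thick subcategory $\mathrm{Thick}(S) \subseteq \Dsf(S)$, i.e. if and only if it can be built from finite coproducts of suspensions of $S$ using finitely many cofibre sequences and retracts. In all cases of interest $\pi_* S$ is in addition Noetherian (for instance $\pi_* S = H^*BG$), which keeps the class of finitely generated $\pi_* S$-modules closed under the submodules and subquotients that appear below; finite global dimension is then what makes the relevant towers finite.

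For the implication ``$\pi_* M$ finitely generated $\Rightarrow M$ small'' I would realize a finite free resolution of $\pi_* M$ by an explicit cell tower of $S$-modules. Since $\pi_* S$ has finite global dimension $d$, the finitely generated module $\pi_* M$ admits a resolution $0 \to P_d \to \cdots \to P_0 \to \pi_* M \to 0$ in which each $P_i$ is a finitely generated free graded $\pi_* S$-module. Choosing a finite wedge $W_0 = \bigvee_j \Sigma^{n_j} S$ with $\pi_* W_0 \cong P_0$ and a map $W_0 \to M$ inducing the surjection $P_0 \twoheadrightarrow \pi_* M$, the fibre $M_1$ has $\pi_* M_1$ the first syzygy; iterating produces a tower $M_d \to \cdots \to M_1 \to M$ with $\pi_* M_d \cong P_d$ free, so the process stops with $M_d$ a finite wedge of suspensions of $S$. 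Unwinding the cofibre sequences exhibits $M$ as an object of $\mathrm{Thick}(S)$, hence small. This is the step I expect to be the main obstacle: one must check at each stage that the homotopy of the fibre really is the next syzygy, and one must use finite global dimension to guarantee that the tower terminates after finitely many steps.

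For the converse I would induct over the construction of $\mathrm{Thick}(S)$. The full subcategory of $\Dsf(S)$ consisting of those $M$ with $\pi_* M$ finitely generated over $\pi_* S$ contains $S$ (as $\pi_* S$ is finitely generated over itself), is closed under suspensions and retracts, and --- using the long exact sequence in homotopy of a cofibre sequence together with the Noetherian hypothesis --- is closed under cofibres. Hence it contains $\mathrm{Thick}(S)$, so every small $M$ has $\pi_* M$ finitely generated. When $\pi_* S$ is concentrated in even degrees (as for $H^*BG$ with $G$ connected) one could alternatively deduce the proposition from formality of $S$, using the equivalence $\Dsf(S) \simeq \Dsf(\pi_* S)$ and the description of perfect complexes over $\pi_* S$; but the cell-tower argument requires no formality input.
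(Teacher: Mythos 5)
The paper does not prove this statement at all: it is quoted verbatim from Greenlees~\cite[10.2]{Greenleeschangeofgroups}\footnote{more precisely, from the reference cited as \cite{Greenlees18c} in the statement}, so there is no internal proof to compare yours against. Your cell-tower argument (realize a finite resolution of $\pi_*M$ by attaching finitely many cells $\Sigma^n S$, identify the homotopy of each fibre with the next syzygy via the long exact sequence, and for the converse observe that the modules with finitely generated homotopy form a thick subcategory containing $S$) is exactly the standard proof of this folklore result, and the mechanics you describe — surjectivity of $\pi_*W_0 \to \pi_*M$ killing the connecting maps, termination after $d = \mathrm{gl.dim}$ steps — are correct.

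Two points deserve flagging, both of which you partly acknowledge. First, your argument genuinely uses more than the stated hypothesis of finite global dimension: to know the syzygies are finitely generated, and to know in the converse direction that finitely generated homotopy is preserved under cofibres (the long exact sequence produces a \emph{submodule} of a finitely generated module), you need $\pi_*S$ to be (graded) Noetherian, or at least coherent. You wave at this with ``in all cases of interest $\pi_*S$ is Noetherian''; that is fine for the applications in this paper, where $\pi_*S$ is always of the form $H^*BG$, but it means your proof does not establish the proposition at the stated level of generality, and you should either add the hypothesis or explain how to avoid it. Second, at the top of the tower finite global dimension only gives you that $\pi_*M_d$ is finitely generated \emph{projective}, not free; over a graded-local ring such as $H^*BG$ projective implies free and your map $W_d \to M_d$ realizing an isomorphism on homotopy exists, but for a general $\pi_*S$ one needs an extra step (realize an idempotent, or note that $M_d$ is a retract of a finite wedge of suspensions of $S$) to conclude smallness.
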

By Venkov's theorem~\cite{Venkov} (or alternatively see~\cite[3.10.1]{Bensonbook}), $H^*BH$ is a finitely generated $H^*BG$-module. Since $H^*BG$ is polynomial, it has finite global dimension. Therefore, any map $\theta\colon S \to R$ of commutative ring spectra with $\pi_*S = H^*BG$ and $\pi_*R = H^*BH$ has the property that $R$ is a small $S$-module. The majority of ring maps that we use in this paper fall into this category. 

Before we can summarise the salient points of this framework, we recall the following well-known lemma. Let $M$ be an $S$-$R$-bimodule which is cofibrant as an $S$-module. By Proposition~\ref{prop:bimodule}, the tensor-hom adjunction \[\begin{tikzcd}
\mod{S \otimes R^\mathrm{op}} \arrow[rrr, "M \otimes_S -", yshift=1mm] & & & \mod{S \otimes R^\mathrm{op}} \arrow[lll, "{\mathrm{Hom}_S(M,-)}", yshift=-1mm]
\end{tikzcd}\] is a Quillen adjunction, and therefore we obtain a derived adjunction. We denote the functional dual of $M$ by $\dual M = \R\mathrm{Hom}_S(M,S)$. For any $S$-module $N$, there is a natural map \[\psi_{M,N}\colon \dual M \otimes_S^\L N \to \R\mathrm{Hom}_S(M,N)\] of $S$-$R$-bimodules defined to be the transpose of the map $$\dual M \otimes_S^\L N \otimes_S^\L M \cong \dual M \otimes_S^\L M \otimes_S^\L N \xrightarrow{\mathrm{ev} \otimes 1} S \otimes_S^\L N \cong N.$$ There is also a natural map $\varphi_M\colon M \to \dual^2M$ of $S$-$R$-bimodules defined to be the transpose of the evaluation map $$\dual M \otimes_S^\L M \xrightarrow{\mathrm{ev}} S.$$
\begin{lem}\label{lem:dualizable}
Let $\theta\colon S \to R$ be a map of commutative monoids in $\C$ as in Hypothesis~\ref{hyp:cofibrancy}, and let $M$ be an $S$-$R$-bimodule which is cofibrant and small as an $S$-module.
\begin{enumerate}
\item For any $S$-module $N$, the natural map $\psi_{M,N}\colon \dual M \otimes_S^\L N \to \R\mathrm{Hom}_S(M,N)$ is an isomorphism in the homotopy category of $S$-$R$-bimodules.
\item The natural map $\phi_M\colon M \to \dual^2M$ is an isomorphism in the homotopy category of $S$-$R$-bimodules.
\end{enumerate}
\end{lem}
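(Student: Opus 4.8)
The plan is to reduce both assertions to isomorphism statements in the homotopy category $\Dsf(S)$ of $S$-modules, where they become the standard calculus of dualizable objects, and to track the $S$-$R$-bimodule structure along the way. First I would observe that it suffices to prove that $\psi_{M,N}$ and $\phi_M$ are isomorphisms of underlying $S$-modules: the right lifted model structures on $\mod{S \otimes R^\mathrm{op}}$ and on $\mod{S}$ both create their weak equivalences in $\C$, so a morphism of $S$-$R$-bimodules is a weak equivalence exactly when its underlying morphism of $S$-modules is one, and hence an isomorphism in the homotopy category of bimodules is detected on underlying objects. That $\dual M$, $\dual^2 M$, $\dual M \otimes_S^\L N$ and $\R\mathrm{Hom}_S(M,N)$ carry $S$-$R$-bimodule structures, and that $\psi_{M,N}$ and $\phi_M$ are maps of bimodules, follows from the residual $R$-action on $M$ together with Proposition~\ref{prop:bimodule} applied with $T = R$; cofibrancy of $M$ as an $S$-module ensures these objects compute the intended derived functors.

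The essential input is that $M$ is dualizable — equivalently, since $\Dsf(S)$ is rigidly-compactly generated, compact — as an $S$-module; this holds in every situation in which we invoke the lemma, since there $M$ is a bimodule model of $\dual R$ with $R$ small over $S$ by Venkov's theorem and Proposition~\ref{prop:small fin gen}, and the dual of a dualizable object is dualizable. Granting this, for (1) I would fix $M$ and consider the full subcategory $\mc{T} \subseteq \Dsf(S)$ of those $N$ for which $\psi_{M,N}$ is an isomorphism. The functors $N \mapsto \dual M \otimes_S^\L N$ and $N \mapsto \R\mathrm{Hom}_S(M,N)$ are both exact and preserve arbitrary coproducts — the first trivially, the second because $M$ is compact — and $\psi_{M,-}$ is a natural transformation between them, so $\mc{T}$ is a localizing subcategory. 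It contains $S$, because $\psi_{M,S}$ unwinds to the canonical identification $\dual M \otimes_S^\L S \xrightarrow{\sim} \R\mathrm{Hom}_S(M,S) = \dual M$. Since $S$ generates $\Dsf(S)$ as a localizing subcategory, $\mc{T} = \Dsf(S)$, which proves (1).

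For (2), I would invoke the standard fact that the canonical map from a dualizable object to its double dual is an isomorphism in any closed symmetric monoidal triangulated category (see, e.g., \cite[A.2]{HoveyPalmieriStrickland97}); by construction $\phi_M$ is precisely this biduality map for $M \in \Dsf(S)$, so (2) follows. The only genuinely delicate part of the argument is the bookkeeping in the first step — confirming that $\dual M$, $\psi_{M,N}$ and $\phi_M$ genuinely live among $S$-$R$-bimodules and that detecting isomorphisms may be done after forgetting to $\C$ — since once one works in $\Dsf(S)$ the remaining content is entirely standard. I do not expect any essential difficulty beyond this.
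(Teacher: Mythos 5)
Your reduction step (detect isomorphisms of bimodules on underlying $S$-modules, since the right lifted model structures create weak equivalences in $\C$) and your treatment of (2) via the standard biduality of dualizable objects are exactly the paper's argument: the paper reduces to $\Dsf(S)$, notes that $\Dsf(S)$ inherits rigid-compact generation from $\mathrm{Ho}(\C)$, and invokes the standard theory of dualizable objects, using that dualizable $=$ small in this setting (you are also right that dualizability/smallness of $M$ is the essential input, used implicitly in the paper's proof and verified in all its applications of the lemma).

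However, your argument for (1) has a genuine gap. You conclude $\mc{T}=\Dsf(S)$ from the claim that ``$S$ generates $\Dsf(S)$ as a localizing subcategory''. This is not part of Hypothesis~\ref{hyp:cofibrancy}: rigid-compact generation of $\mathrm{Ho}(\C)$ (and hence of $\Dsf(S)$) means that the compact and dualizable objects coincide and that \emph{some} set of compacts generates; it does not say the unit $S$ generates. Indeed the paper treats ``compactly generated by its unit'' as an extra assumption where it is needed (Proposition~\ref{prop:relGorweak}), and in the motivating examples it can fail: for a ring $G$-spectrum $S$, the category $\Dsf(S)$ is generated by the objects $S \wedge G/H_+$ and not by $S$ alone (the fact that $DEG_+$ generates its modules is a special theorem requiring $G$ connected). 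So as written your localizing-subcategory argument does not prove the lemma under its stated hypotheses. The fix is standard and requires no generation hypothesis at all: once $M$ is dualizable in $\Dsf(S)$, the coevaluation map furnishes an explicit inverse to $\psi_{M,N}\colon \dual M \otimes_S^\L N \to \R\mathrm{Hom}_S(M,N)$ for \emph{every} $N$ (see, e.g., \cite[A.2]{HoveyPalmieriStrickland97}), which is precisely the standard fact the paper is citing when it says the result follows in the rigidly-compactly generated setting because dualizable objects coincide with small ones.
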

\begin{proof}
Since equivalences of bimodules and modules are underlying, it is sufficient to show that each map is an equivalence in $\Dsf(S)$. Since the homotopy category of $\C$ is assumed to be rigidly-compactly generated, it follows that $\Dsf(S)$ is rigidly-compactly generated. The result then follows, since in the rigidly-compactly generated setting an object is dualizable if and only if it is small.
\end{proof}

The following proposition summarises the key features of this setup.
\begin{prop}\label{prop:smallsetup}
Let $\theta\colon S \to R$ be a map of commutative monoids in $\C$ as in Hypothesis~\ref{hyp:cofibrancy} and suppose in addition that $R$ and $S$ are cofibrant in $\C$. If $R$ is a small $S$-module then:
\begin{enumerate} 
\item there are natural isomorphisms $\alpha\colon \L\ext \xrightarrow{\sim} \R\theta_{(*)}$ and $\beta\colon \L\theta_{(!)} \xrightarrow{\sim} \R\coext$ of derived functors;
\item there are adjunctions 
\[\begin{tikzcd}
\Dsf(S) \arrow[rrr, "\L\ext = \R\psext" description, yshift=4mm] \arrow[rrr, "\R\coext = \L\theta_{(!)}" description, yshift=-4mm] & & & \Dsf(R). \arrow[lll, "\L\res = \R\res" description] \arrow[lll, "\L\twext" description, yshift=8mm] \arrow[lll, "\R\theta^{!}" description, yshift=-8mm]
\end{tikzcd} \]
\end{enumerate}
\end{prop}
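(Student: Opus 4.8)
The plan is to extract both parts from Lemma~\ref{lem:dualizable} --- which is precisely where the smallness hypothesis is used --- together with the Quillen adjunctions already recorded in Propositions~\ref{prop:extrescoext} and~\ref{prop:bimodule}; everything else is bookkeeping.

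For part~(1), I would first handle $\beta$. Unwinding definitions, $\L\theta_{(!)}(N) \simeq \dual R \otimes_S^\L N$ and $\R\coext(N) \simeq \R\mathrm{Hom}_S(R,N)$, where one uses that $R$ and (after the bimodule replacement) $\dual R$ are cofibrant as $S$-modules and that $fS$ is a fibrant replacement of $S$, so that $\dual R = \mathrm{Hom}_S(R,fS)$ genuinely computes $\R\mathrm{Hom}_S(R,S)$. The map $\psi_{R,N}$ defined just before Lemma~\ref{lem:dualizable} is then exactly a natural transformation $\L\theta_{(!)} \Rightarrow \R\coext$, and it is an isomorphism by Lemma~\ref{lem:dualizable}(1) applied to the $S$-$R$-bimodule $M = R$, which is dualizable in $\Dsf(S)$ precisely because it is small; this is $\beta$. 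For $\alpha$, the point is that the functional dual $\dual R$ of the small module $R$ is again small, hence dualizable, and is cofibrant as an $S$-module by the replacement set up before the statement (using that $R$ and $S$ are cofibrant in $\C$, via Lemma~\ref{lem:cofibrantbimodule}). Applying Lemma~\ref{lem:dualizable}(1) to $M = \dual R$ gives a natural isomorphism $\psi_{\dual R,-}\colon \dual^2 R \otimes_S^\L - \xrightarrow{\sim} \R\mathrm{Hom}_S(\dual R,-) = \R\psext$, while Lemma~\ref{lem:dualizable}(2) applied to $M = R$ gives a natural isomorphism $\varphi_R\colon R \xrightarrow{\sim} \dual^2 R$. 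I then take $\alpha$ to be the composite natural transformation $\L\ext = R \otimes_S^\L - \to \dual^2 R \otimes_S^\L - \to \R\psext$, where the first map is induced by $\varphi_R$ and the second is $\psi_{\dual R,-}$; both are natural isomorphisms, hence so is $\alpha$.

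For part~(2), I would simply assemble the adjoint string from adjunctions already in hand and then feed in part~(1). Proposition~\ref{prop:extrescoext} supplies the Quillen adjunctions $(\ext,\res)$ and, since $R$ is a cofibrant $S$-module, $(\res,\coext)$, while Proposition~\ref{prop:bimodule} supplies the Quillen adjunctions $(\twext,\psext)$ and $(\theta_{(!)},\theta^!)$, using that $\dual R$ is cofibrant both as an $R$-module and as an $S$-module. Taking total derived functors gives $\L\twext \dashv \R\psext$, $\L\ext \dashv \R\res$, $\L\res \dashv \R\coext$ and $\L\theta_{(!)} \dashv \R\theta^!$ on the derived categories, and since the model structures on modules are right lifted and $\res$ is the identity on underlying objects, $\res$ preserves all weak equivalences, so $\L\res = \R\res$. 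The isomorphism $\alpha$ upgrades $\L\twext \dashv \R\psext$ to $\L\twext \dashv \L\ext$, and the isomorphism $\beta$ upgrades $\L\theta_{(!)} \dashv \R\theta^!$ to $\R\coext \dashv \R\theta^!$; concatenating the four adjunctions then yields precisely the adjoint string $\L\twext \dashv \L\ext \dashv \res \dashv \R\coext \dashv \R\theta^!$ of the statement, with the indicated identifications $\L\ext = \R\psext$ and $\R\coext = \L\theta_{(!)}$.

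I do not expect a genuine obstacle: the mathematical content sits in Lemma~\ref{lem:dualizable}, which has already turned smallness into the reflexivity and evaluation isomorphisms that drive $\alpha$ and $\beta$, and the remainder is gluing together known adjunctions. The step demanding the most care is the bookkeeping in part~(1) --- verifying that the point-set relative dualizing complex $\mathrm{Hom}_S(R,fS)$ and its cofibrant bimodule replacement compute $\R\mathrm{Hom}_S(R,S)$ on the nose, that $\psi$ and $\varphi$ descend to bona fide natural transformations of derived functors, and that the bimodule-level isomorphisms of Lemma~\ref{lem:dualizable} may be read on underlying $S$-modules (as observed in its proof) --- together with the routine point that $R\otimes_S-$ and $\dual R\otimes_S-$ are sufficiently homotopical that $\L\ext$ and $\L\theta_{(!)}$ are computed by the underived tensor products used above.
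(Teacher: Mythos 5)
Your proposal is correct and follows essentially the same route as the paper: $\beta$ is the map $\psi_{R,N}$ and $\alpha$ is the composite of $\varphi_R \otimes 1$ with $\psi_{\dual R,N}$, both isomorphisms by Lemma~\ref{lem:dualizable} using that $R$ (hence $\dual R$) is small, after which part (2) is just the derived tensor-hom adjunctions together with $\L\res \cong \R\res$. The extra bookkeeping you flag (cofibrant bimodule replacement of $\dual R$ computing $\R\mathrm{Hom}_S(R,S)$, equivalences checked on underlying $S$-modules) is exactly what the paper's setup before the proposition and the proof of Lemma~\ref{lem:dualizable} handle.
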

\begin{proof}
The hypotheses ensure that all of the functors are Quillen functors as explained above and hence that all of the derived functors exist. Note that since $R$ is a small $S$-module by assumption, we have that $\dual R$ is also a small $S$-module. We define $\alpha\colon \L\ext \Rightarrow \R\psext$ to be the composite $$\L\ext(N) = R \otimes_S^\L N \xrightarrow{\varphi_R \otimes 1} \dual^2R \otimes_S^\L N \xrightarrow{\psi_{\dual R,N}} \R\text{Hom}_S(\dual R,N) = \R\psext(N).$$ By Lemma~\ref{lem:dualizable} we have that $\alpha$ is an equivalence.
In a similar way, we define $\beta\colon \L\theta_{(!)} \Rightarrow \R\theta_!$ by 
\[\L\theta_{(!)}(N) = \dual R \otimes_S^\L N \xrightarrow{\psi_{R,N}} \R\mathrm{Hom}_S(R,N) = \R\coext(N)\] and this is also an equivalence by Lemma~\ref{lem:dualizable}. This proves (1), and the claim in (2) then follows by the tensor-hom adjunction, noting that $\L\res \cong \R\res$ since $\res$ is both left and right Quillen.
\end{proof}

\subsection{Relatively Gorenstein maps}
In this subsection we revisit the observations of the previous subsection under an additional hypothesis on the map $\theta\colon S \to R$. This additional hypothesis will allow us to show that the derived functors $\L\ext$ and $\R\coext$ are isomorphic up to a shift.
\begin{defn}
Let $\theta\colon S \to R$ be a map of commutative monoids in $\C$ as in Hypothesis~\ref{hyp:cofibrancy}. We say that $\theta$ is \emph{relatively Gorenstein of shift $a$} if $\R\coext(S) \simeq \Sigma^a R$, for some $a \in \mathbb{Z}$. We note that $\coext$ is right Quillen by Proposition~\ref{prop:extrescoext} since $R$ is a cofibrant $S$-module.
\end{defn}

\begin{rem}
The original definition of relatively Gorenstein maps given by Greenlees is made at the derived level, see~\cite{Greenlees18c}. Since we are interested in understanding the model categorical implications of such a statement, our definition is a modified version of that given in~\cite{Greenlees18c}. 
\end{rem}

All the relatively Gorenstein maps that we will use arise from one particular example. Whilst we do not state the following theorem of Greenlees~\cite{Greenlees20} in its full generality, we note that the fact that we work with connected compact Lie groups is vital. Working rationally, we write $DBG_+ = F(BG_+,S^0)$ and note that this is a special case of the cochain spectrum $C^*(BG,k) = F(BG_+,Hk)$ for $k=\mathbb{Q}$, since the rational sphere spectrum is equivalent to $H\mathbb{Q}$. In the following theorem, in order to satisfy Hypotheses~\ref{hyp:cofibrancy}, we should replace the map $\theta$ as in Lemma~\ref{lem:convenient}. For ease of reading we do not record this in the notation.
\begin{thm}[{\cite[6.1]{Greenlees20}}]\label{thm:relgor}
	Let $H$ be a subgroup of $G$ where both $H$ and $G$ are connected compact Lie groups. The ring map $\theta\colon DBG_+ \to DBH_+$ is relatively Gorenstein of shift $d$, where $d$ is the codimension of $H$ in $G$.
\end{thm}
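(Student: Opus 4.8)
The plan is to reduce the statement to a computation of $\R\coext(S) = \R\mathrm{Hom}_S(R,S)$ where $S = DBG_+$ and $R = DBH_+$, and then to identify this computation with a known statement about the homotopy type of $\mathrm{map}(BH_+, BG_+)$-type function spectra, or rather with the Gorenstein duality for the cochains $C^*(BG)$. First I would recall that by definition we must produce an equivalence $\R\mathrm{Hom}_S(R, S) \simeq \Sigma^d R$ of $R$-modules (or of $S$-$R$-bimodules), where $d = \mathrm{codim}(H \subseteq G) = \dim(G/H)$. The starting point is the fibration sequence $G/H \to BH \to BG$, which after applying $DB(-)_+ = F(B(-)_+, S^0)$ rationally becomes a cofibre-type relation exhibiting $DBH_+$ as $DBG_+ \otimes_{?} D(G/H)_+$; more precisely, one uses the Eilenberg--Moore type equivalence $C^*(BH) \simeq C^*(BG) \otimes_{C^*(BG)}^{\mathbb{L}} \cdots$, or one works directly with the fact that $BH \to BG$ has homotopy fibre the closed oriented manifold $G/H$.

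The key computational step is then: since $G$ and $H$ are \emph{connected}, $G/H$ is a simply-connected (indeed $1$-connected need not hold, but at least $\pi_0 G/H = *$) closed manifold of dimension $d$, and crucially it is \emph{rationally} orientable and satisfies rational Poincar\'e duality, so $C^*(G/H;\mathbb{Q})$ is a Poincar\'e duality algebra of formal dimension $d$. One feeds this into the relative situation via the base-change formula $\R\mathrm{Hom}_{C^*(BG)}(C^*(BH), C^*(BG)) \simeq C^*(BH) \otimes_{C^*(BG)}^{\mathbb{L}} \R\mathrm{Hom}_{C^*(BG)}(C^*(BG), C^*(BG))$ — but this is circular, so instead one uses that restriction along the fibre gives $\R\mathrm{Hom}_{C^*(BG)}(C^*(BH), M) \simeq \R\mathrm{Hom}_{\mathbb{Q}}(C_*(G/H), M \otimes_{?} \cdots)$, reducing the relative dualizing complex $\dual R$ to the absolute dualizing complex of the fibre, namely $\R\mathrm{Hom}_{\mathbb{Q}}(C_*(G/H;\mathbb{Q}), \mathbb{Q}) \simeq C^*(G/H;\mathbb{Q})$, which by Poincar\'e duality of the $d$-manifold $G/H$ is $\Sigma^{d} C_*(G/H;\mathbb{Q}) \simeq \Sigma^{d}$ of the $\mathbb{Q}$-linear dual computed the other way. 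Threading the $R$-module structure through, this yields $\R\coext(S) \simeq \Sigma^d R$.

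The main obstacle I expect is keeping track of the $R = DBH_+$-module structure (not merely the underlying $\mathbb{Q}$-module or $S$-module homotopy type) through the base-change and Poincar\'e-duality identifications, and in particular verifying that the shift is exactly $d$ with no additional twist — i.e.\ that the orientation local system on $G/H$ is trivial rationally, which holds precisely because $H$ and $G$ are connected (so $G/H$ is connected and the action of $\pi_1$ on the top rational homology is trivial, as a connected Lie group acts trivially on the homology of its homogeneous space up to homotopy). One must also be careful that we are working with the cofibrantly-replaced map $\theta$ satisfying Hypothesis~\ref{hyp:cofibrancy}, but since $\R\coext$ is the derived functor this replacement does not affect the homotopy type of $\R\coext(S)$. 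Finally, one invokes that $\pi_* DBG_+ = H^{-*}(BG;\mathbb{Q})$ has finite global dimension (it is a graded polynomial ring since $G$ is connected, by Venkov/Borel) so that the small-module hypotheses of the ambient framework are in force and $\R\coext(S)$ indeed lands in the expected category; the equivalence $\R\coext(S) \simeq \Sigma^d R$ is then exactly the assertion that $\theta$ is relatively Gorenstein of shift $d$. Citing~\cite{Greenlees20} for the detailed homotopy-theoretic input, the proof amounts to recording these identifications and the connectedness-driven orientability statement.
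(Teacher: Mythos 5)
First, a point of comparison: the paper does not prove this statement. It is quoted verbatim from Greenlees~\cite{Greenlees20}, and the only role of the surrounding text is to record that connectedness is essential and that $\theta$ must be replaced as in Lemma~\ref{lem:convenient} so that Hypothesis~\ref{hyp:cofibrancy} holds. So there is no internal proof to measure yours against; the comparison has to be with the cited source, and your sketch is indeed aimed at the same circle of ideas --- the fibration $G/H \to BH \to BG$, rational Poincar\'e duality of the closed $d$-manifold $G/H$, and the fact that connectedness of $H$ and $G$ gives rational orientability of $G/H$ and trivial monodromy (since $\pi_1(BG) \cong \pi_0(G) = 0$).

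As a standalone proof, however, your proposal has a genuine gap at exactly its central step. The base-change formula by which you reduce the relative dualizing complex $\R\mathrm{Hom}_{DBG_+}(DBH_+, DBG_+)$ to the $\mathbb{Q}$-linear dual of $C_*(G/H;\mathbb{Q})$ is never actually formulated: it appears with placeholder tensor factors, and the one precise version you write down you yourself flag as circular. That reduction is where the mathematical content lives. To make it work one needs either (i) smallness of $DBH_+$ over $DBG_+$ (Venkov) together with an Eilenberg--Moore identification $\mathbb{Q} \otimes^{\L}_{DBG_+} DBH_+ \simeq C^*(G/H;\mathbb{Q})$, so that the claimed equivalence $\R\coext(S) \simeq \Sigma^d R$ can be detected after base change to the residue field $\mathbb{Q}$ (using that both sides are suitably complete/cellular $DBH_+$-modules --- this completeness check is itself part of the argument and is absent from your sketch); or (ii) the equivariant route implicit elsewhere in this paper: $DBH_+ \simeq (F_H(G_+,DEH_+))^G$ with $F_H(G_+,DEH_+) \simeq F(G/H_+, DEG_+)$, Atiyah--Wirthm\"uller duality for the finite complex $G/H_+$ (rationally an untwisted shift by $d$ precisely because $H$ is connected), and passage to $G$-fixed points. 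Ending by citing~\cite{Greenlees20} ``for the detailed homotopy-theoretic input'' concedes the point at issue: the identifications you would need to record are the proof, and they are not carried out. Your orientability discussion is essentially right, but note the cleaner bookkeeping: connectedness of $G$ makes $BG$ simply connected (no local coefficients), while connectedness of $H$ makes the isotropy representation land in $SO$, hence $G/H$ orientable; keeping these two roles separate would also clarify why the hypothesis on both groups is needed.
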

In the next subsection, we will show that relatively Gorenstein maps are preserved by monoidal Quillen equivalences, see Propositions~\ref{prop:relGorstrong} and~\ref{prop:relGorweak}. This will make precise the above claim that all of the relatively Gorenstein maps which we require in this paper arise from the case of $DBG_+ \to DBH_+$.

The following result summarises the key points of this section. 
\begin{prop}\label{prop:functorsetup}
	Let $\theta\colon S \to R$ be a map of commutative monoids in $\C$ as in Hypothesis~\ref{hyp:cofibrancy}. If $R$ is small as an $S$-module and $\theta$ is relatively Gorenstein of shift $a$ then:
	\begin{enumerate}
	\item $\ext : \mod{S} \rightleftarrows \mod{R} : \res$ and $\res : \mod{R} \rightleftarrows \mod{S} : \coext$ are Quillen adjunctions;
	\item there is a natural isomorphism of derived functors $\L\ext \cong \Sigma^{-a}\R\coext$.	\end{enumerate}
\end{prop}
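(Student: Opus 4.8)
The plan is to deduce Proposition~\ref{prop:functorsetup} directly from Proposition~\ref{prop:smallsetup} together with the relatively Gorenstein hypothesis, with the only subtlety being that Proposition~\ref{prop:smallsetup} was stated with the extra assumption that $R$ and $S$ are cofibrant in $\C$, whereas here we only assume Hypothesis~\ref{hyp:cofibrancy}. First I would dispense with part (1): this is immediate from Proposition~\ref{prop:extrescoext}, since Hypothesis~\ref{hyp:cofibrancy}(2) says $R$ is a cofibrant $S$-module, which makes $(\ext,\res)$ Quillen by part (1) of that proposition and $(\res,\coext)$ Quillen by part (2). No Gorenstein or smallness input is needed for (1).

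For part (2), the key observation is that the relative dualizing complex controls both $\R\coext(S)$ and the discrepancy between $\L\ext$ and $\R\coext$. Recall $\dual R = \mathrm{Hom}_S(R, fS)$, so that $\R\coext(S) \simeq \R\mathrm{Hom}_S(R,S) = \dual R$ in the homotopy category of $S$-$R$-bimodules; thus relative Gorenstein of shift $a$ says precisely $\dual R \simeq \Sigma^a R$ as $S$-$R$-bimodules. Now I would invoke Proposition~\ref{prop:smallsetup}: since $R$ is small as an $S$-module, there is a natural isomorphism $\beta\colon \L\theta_{(!)} \xrightarrow{\sim} \R\coext$, where $\L\theta_{(!)}(N) = \dual R \otimes_S^\L N$. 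Substituting the Gorenstein equivalence $\dual R \simeq \Sigma^a R$ gives a natural isomorphism $\R\coext(N) \simeq \Sigma^a R \otimes_S^\L N \cong \Sigma^a(R \otimes_S^\L N) = \Sigma^a \L\ext(N)$, which rearranges to $\L\ext \cong \Sigma^{-a}\R\coext$ as required. The naturality follows since each step — $\beta$, the suspension isomorphism, and the Gorenstein equivalence tensored with $N$ — is natural in $N$.

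The one point requiring care, and the main obstacle, is that Proposition~\ref{prop:smallsetup} as stated carries the hypothesis that $R$ and $S$ are cofibrant in $\C$, which Proposition~\ref{prop:functorsetup} drops. I would handle this by observing that this extra cofibrancy was only used in Proposition~\ref{prop:smallsetup} to cofibrantly replace $\dual R$ as an $S$-$R$-bimodule so that (via Lemma~\ref{lem:cofibrantbimodule}) it becomes cofibrant both as an $R$-module and as an $S$-module, thereby making the twisted adjunctions Quillen. For the statement of part (2) here, however, we only need the derived natural isomorphism $\L\ext \cong \R\theta_{(*)}$ of Proposition~\ref{prop:smallsetup}(1) — equivalently, we only need Lemma~\ref{lem:dualizable}, which requires only that $M = \dual R$ be cofibrant as an $S$-module, and this already follows from Hypothesis~\ref{hyp:cofibrancy}(2) and Proposition~\ref{prop:bimodule} applied with the $S$-$R$-bimodule $R$ (no cofibrancy of $S$ in $\C$ needed). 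Alternatively, and more cleanly, one can simply cofibrantly replace $\theta$ as in the remark following Hypothesis~\ref{hyp:cofibrancy} and appeal to the fact that all the derived functors and the natural isomorphism $\L\ext \cong \Sigma^{-a}\R\coext$ are invariant under such replacement; I would take whichever route the surrounding text has already set up. Either way the Gorenstein input enters only at the final substitution step, and the rest is formal.
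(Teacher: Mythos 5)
Your proposal is correct and takes essentially the same route as the paper: part (1) is exactly Proposition~\ref{prop:extrescoext}, and part (2) is the same chain of equivalences, namely $\L\ext M = R\otimes_S^\L M \simeq \Sigma^{-a}\dual R\otimes_S^\L M \simeq \Sigma^{-a}\R\mathrm{Hom}_S(R,M) = \Sigma^{-a}\R\coext M$, using the Gorenstein identification $\dual R\simeq \Sigma^aR$ together with Lemma~\ref{lem:dualizable} applied to the bimodule $R$ (which is small and cofibrant as an $S$-module by Hypothesis~\ref{hyp:cofibrancy}(2)). The paper cites Lemma~\ref{lem:dualizable} directly rather than going through Proposition~\ref{prop:smallsetup}, so the extra cofibrancy of $R$ and $S$ in $\C$ that you worry about never enters; the only small slip in your discussion is that the relevant cofibrancy for the map $\psi_{R,N}$ is that of $R$, not of $\dual R$, as an $S$-module, and this is supplied by Hypothesis~\ref{hyp:cofibrancy}(2) itself.
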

\begin{proof}
	The fact that $\ext \dashv \res$ and $\res \dashv \coext$ are Quillen adjunctions follows from Proposition~\ref{prop:extrescoext}. We have natural isomorphisms of derived functors
	\[\L\ext M = R \otimes_S^\L M \simeq \Sigma^{-a}\dual R \otimes_S^\L M \xrightarrow{\sim} \Sigma^{-a}\R\mathrm{Hom}_S(R, M) = \Sigma^{-a}\R\coext M\]
	where the first isomorphism comes from the fact that $\theta$ is relatively Gorenstein, and the second isomorphism from Lemma~\ref{lem:dualizable} as $R$ is small as an $S$-module.
\end{proof}

Therefore given a map $\theta\colon S \to R$ as in Hypothesis~\ref{hyp:cofibrancy} which is relatively Gorenstein of shift $a$, we have Quillen functors
\begin{center}
\begin{tikzcd}
	\quad\mod{S}\quad \arrow[dd, xshift=4mm, "\ext" description] \arrow[dd, xshift=-4mm, "\Sigma^{-a}\coext" description] \\
	\\
	\quad\mod{R}\quad \arrow[uu, xshift=12mm, "\res"' description] \arrow[uu, xshift=-12mm, "\Sigma^a\res" description] 
\end{tikzcd}
\end{center}
such that $\Sigma^{-a}\R\coext \cong \L\ext$. This motivates the general framework in which we will work in Section~\ref{sec:compare}.

\begin{rem}
It is worth noting the relation of the results in the section to the results of~\cite{BDS} and their trichotomy theorem. In their terminology, Proposition~\ref{prop:smallsetup} is a Grothendieck-Neeman duality context as in~\cite[1.7]{BDS}, and Proposition~\ref{prop:functorsetup} is an instance of a Wirthm\"uller isomorphism context as in~\cite[1.9]{BDS}. Note that there is a unfortunate notational clash though; what we denote by $\theta_*$ is their $f^*$, and similar clashes in notation occur for the other functors.
\end{rem}

\subsection{Relatively Gorenstein maps along Quillen equivalences}
In this section, we prove that relatively Gorenstein maps are preserved by monoidal Quillen equivalences. Firstly we recall the following lemma. 
\begin{lem}\label{lem:internalhom}
Let $F: \C \rightleftarrows \D :U$ be an adjunction between closed symmetric monoidal categories, and assume that $F$ is strong symmetric monoidal. For any $X \in \C$ and $Y \in \D$, there is a natural isomorphism $U\underline{\mathrm{Hom}}(FX,Y) \xrightarrow{\sim} \underline{\mathrm{Hom}}(X,UY)$ where $\underline{\mathrm{Hom}}$ denotes the internal hom.
\end{lem}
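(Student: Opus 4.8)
The plan is to establish the isomorphism by a Yoneda argument: I will show that the two functors $U\underline{\mathrm{Hom}}(FX,-)$ and $\underline{\mathrm{Hom}}(X,U-)$ from $\D$ to $\C$ are naturally isomorphic by exhibiting, for every $Z \in \C$, a natural bijection $\C(Z, U\underline{\mathrm{Hom}}(FX,Y)) \cong \C(Z, \underline{\mathrm{Hom}}(X,UY))$ compatible with morphisms in $Z$. First I would unwind the left-hand side: by the $(F,U)$-adjunction, $\C(Z, U\underline{\mathrm{Hom}}(FX,Y)) \cong \D(FZ, \underline{\mathrm{Hom}}(FX,Y))$, and then by the closed monoidal adjunction in $\D$ this is $\D(FZ \otimes FX, Y)$. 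Since $F$ is strong symmetric monoidal, $FZ \otimes FX \cong F(Z \otimes X)$ naturally, so this becomes $\D(F(Z \otimes X), Y) \cong \C(Z \otimes X, UY)$ again by the $(F,U)$-adjunction. Finally the closed monoidal adjunction in $\C$ identifies $\C(Z \otimes X, UY) \cong \C(Z, \underline{\mathrm{Hom}}(X,UY))$, which is the right-hand side.

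Each of these bijections is natural in $Z$ (and in $Y$), so composing them and invoking the Yoneda lemma produces a natural isomorphism $U\underline{\mathrm{Hom}}(FX,Y) \xrightarrow{\sim} \underline{\mathrm{Hom}}(X,UY)$; naturality in $X$ follows by the same diagram chase carried out with a morphism $X' \to X$ inserted, using that the strong monoidal structure map on $F$ is natural. The only point requiring a little care is that the chain of bijections is genuinely natural in all the relevant variables — in particular that the strong monoidal coherence isomorphism $F(Z\otimes X) \cong FZ \otimes FX$ is compatible with the associativity and unit constraints — but this is exactly the definition of a strong (symmetric) monoidal functor, so no extra work is needed. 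I do not expect any serious obstacle here; the statement is formal and the proof is a routine composite of adjunction isomorphisms, so the "hard part" is merely bookkeeping the naturality.
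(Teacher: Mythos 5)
Your proposal is correct and is essentially the paper's own argument: the paper writes down the explicit natural map (adjunct to the composite $F(X \otimes U\underline{\mathrm{Hom}}(FX,Y)) \to FX \otimes \underline{\mathrm{Hom}}(FX,Y) \xrightarrow{\mathrm{ev}} Y$ built from the monoidal structure map and the counit) and then concludes by exactly the standard chain of adjunction bijections and the Yoneda lemma that you spell out. The only difference is presentational — you extract the map from the Yoneda chain rather than exhibiting it first — so no gap.
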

\begin{proof}
The natural map $U\underline{\mathrm{Hom}}(FX,Y) \to \underline{\mathrm{Hom}}(X,UY)$ is adjunct to the natural map $$F(X \otimes U\underline{\mathrm{Hom}}(FX,Y)) \to FX \otimes FU\underline{\mathrm{Hom}}(FX,Y) \xrightarrow{1 \otimes \varepsilon} FX \otimes \underline{\mathrm{Hom}}(FX,Y) \xrightarrow{\mathrm{ev}} Y$$ constructed using the oplax monoidal structure map of $F$. A standard adjunction argument using the Yoneda lemma allows one to conclude.
\end{proof}

\begin{prop}\label{prop:relGorstrong}
Let $F\colon \C \rightleftarrows \D : U$ be a strong monoidal Quillen equivalence between stable monoidal model categories whose homotopy categories are rigidly-compactly generated. Suppose that $\C$ and $\D$ both have cofibrant units, and both satisfy Quillen invariance. Let $\theta\colon S \to R$ be a relatively Gorenstein map of shift $a$ of commutative monoids in $\C$ and suppose that $R$ is a cofibrant $S$-module. Suppose that the right lifted model structures on $\mod{R}$, $\mod{S}$, $\mod{FR}$, $\mod{FS}$ exist and are monoidal. If $F$ preserves weak equivalences then $F\theta\colon FS \to FR$ is relatively Gorenstein of shift $a$.
\end{prop}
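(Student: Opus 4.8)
The plan is to transport the relative Gorenstein condition along the lifted Quillen equivalence of module categories. Since $F$ is strong symmetric monoidal it preserves commutative monoids, so $F\theta\colon FS \to FR$ is a map of commutative monoids in $\D$, and by Section~\ref{sec:liftadjunctionstomodules} the pair $(F,U)$ lifts to a strong monoidal Quillen adjunction $F\colon \mod{S}(\C) \rightleftarrows \mod{FS}(\D):U$. As $\mathds{1}_\D$ is cofibrant, $FS$ is a cofibrant $FS$-module, and as $R$ is a cofibrant $S$-module and the lifted $F$ is left Quillen, $FR$ is a cofibrant $FS$-module. Hence coextension of scalars $(F\theta)_!$ along $F\theta$ is right Quillen by Proposition~\ref{prop:extrescoext}, so $\R(F\theta)_!$ exists, and since the homotopy category of $\D$ is rigidly-compactly generated so is that of $\mod{FS}(\D)$; thus the data defining ``relatively Gorenstein'' is available for $F\theta$, and it remains to produce an equivalence $\R(F\theta)_!(FS) = \R\mathrm{Hom}_{FS}(FR,FS) \simeq \Sigma^a FR$ in $\Dsf(FS)$.

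First I would verify that this lifted adjunction is a Quillen equivalence. This is the step where Quillen invariance enters, since $S$ (hence $FS$) need not be cofibrant as a monoid: choosing a cofibrant replacement $cS \xrightarrow{\sim} S$ of $S$ as a commutative monoid in $\C$, Quillen invariance in $\C$ gives a Quillen equivalence $\mod{cS}(\C) \simeq_Q \mod{S}(\C)$; since $F$ preserves weak equivalences, $F(cS) \xrightarrow{\sim} FS$ is a weak equivalence of commutative monoids in $\D$, so Quillen invariance in $\D$ gives $\mod{F(cS)}(\D) \simeq_Q \mod{FS}(\D)$; and over the cofibrant monoid $cS$ the lifted adjunction $\mod{cS}(\C) \rightleftarrows \mod{F(cS)}(\D)$ is a Quillen equivalence because $F$ is. Composing, the lifted adjunction over $S$ is a Quillen equivalence; in particular $\R U\colon \Dsf(FS) \to \Dsf(S)$ is an equivalence of categories and the derived unit $\mathrm{id} \xrightarrow{\sim} \R U\L F$ is a natural isomorphism.

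Finally I would run the computation by applying $\R U$, which reflects isomorphisms. On one side, the derived unit at the cofibrant $S$-module $R$ gives $\R U(\Sigma^a FR) = \Sigma^a\,\R U(FR) \simeq \Sigma^a R$. On the other side, deriving the natural isomorphism of Lemma~\ref{lem:internalhom} for the lifted strong monoidal pair --- which is legitimate because for $X$ cofibrant both $\mathrm{Hom}_S(X,-)$ and $\mathrm{Hom}_{FS}(FX,-)$ are right Quillen by Proposition~\ref{prop:bimodule}, so each side is computed on cofibrant--fibrant data --- yields
\[
\R U\,\R\mathrm{Hom}_{FS}(FR,FS) \;\simeq\; \R\mathrm{Hom}_S\!\big(R,\,\R U(FS)\big) \;\simeq\; \R\mathrm{Hom}_S(R,S) \;=\; \R\coext(S) \;\simeq\; \Sigma^a R ,
\]
where the middle equivalence again uses the derived unit at the cofibrant $S$-module $S$ (here using that $\mathds{1}_\C$ is cofibrant) and the last equivalence is the hypothesis that $\theta$ is relatively Gorenstein of shift $a$. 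Thus $\R U$ sends $\R(F\theta)_!(FS)$ and $\Sigma^a FR$ to isomorphic objects of $\Dsf(S)$, and since $\R U$ is an equivalence these two objects are isomorphic in $\Dsf(FS)$, as required. The main obstacle is the second step: obtaining an honest Quillen equivalence on modules over the non-cofibrant monoids $S$ and $FS$, where both Quillen invariance and the assumption that $F$ preserves weak equivalences are essential; with that in hand, the remainder is a formal consequence of Lemma~\ref{lem:internalhom} and the relative Gorenstein hypothesis.
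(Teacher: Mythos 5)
Your proof is correct and takes essentially the same route as the paper: transport the condition through the lifted strong monoidal Quillen equivalences of module categories and compute $\R\mathrm{Hom}_{FS}(FR,FS)$ via Lemma~\ref{lem:internalhom}, the derived unit/counit, and the relative Gorenstein hypothesis (your use of conservativity of $\R U$ is just the mirror image of the paper's $FU \simeq \mathrm{id}$ step). The only difference is that where the paper cites~\cite[7.1]{FlatnessandShipley'sTheorem} (cf.~\cite[3.12]{SchwedeShipley03}) for the fact that the lifted adjunctions on modules are Quillen equivalences when $F$ preserves weak equivalences, you re-derive this by hand via Quillen invariance and a cofibrant replacement of $S$ as a commutative monoid, which is indeed exactly where the Quillen invariance hypothesis enters.
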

\begin{proof}
Since $F$ preserves all weak equivalences, by~\cite[7.1]{FlatnessandShipley'sTheorem} (see also~\cite[3.12]{SchwedeShipley03}) the induced Quillen adjunctions
\[\begin{tikzcd}
\mod{S} \arrow[r, "F", yshift=1mm] & \mod{FS} \arrow[l, "U", yshift=-1mm]
\end{tikzcd} \quad \mathrm{and} \quad \begin{tikzcd}
\mod{R} \arrow[r, "F", yshift=1mm] & \mod{FR} \arrow[l, "U", yshift=-1mm]
\end{tikzcd} \]
are strong monoidal Quillen equivalences. As such the derived adjunctions are strong monoidal equivalences. For ease of reading we omit notation for derived functors in the following display; all functors are implicitly derived. We then have
$$(F\theta)_! (FS) \simeq FU\mathrm{Hom}_{FS}(FR,FS) \simeq F\mathrm{Hom}_{S}(R,UFS) \simeq F\mathrm{Hom}_{S}(R,S) \simeq \Sigma^aFR$$ using Lemma~\ref{lem:internalhom}. This shows that $F\theta$ is relatively Gorenstein of shift $a$. 
\end{proof}

\begin{prop}\label{prop:relGorweak}
Let $F\colon \C \rightleftarrows \D : U$ be a strong monoidal Quillen equivalence between stable monoidal model categories whose homotopy categories are rigidly-compactly generated. Suppose that $\C$ and $\D$ both have cofibrant units, and both satisfy Quillen invariance. 
Suppose moreover that $U$ preserves colimits and that the homotopy category of $\C$ is compactly generated by its unit.
Let $\theta\colon S \to R$ be a map of commutative monoids in $\D$ such that $R$ is a cofibrant $S$-module and write $U\theta\colon US \to UR.$
 Suppose that there exists another model structure $\widetilde{\C}$ on the same underlying category as $\C$ so that $(\C, \widetilde{\C})$ is convenient (see Definition~\ref{defn:convenientmodelstructure}). 
 Let $q\colon QUR \to UR$ be a cofibrant replacement of $UR$ in $\calg{US}(\widetilde{\C})$ and write $\psi\colon US \to QUR$ for the unit map of the $US$-algebra structure on $QUR$. 
 Suppose that the right lifted model structures on $\mod{R}(\D)$, $\mod{S}(\D)$, $\mod{UR}(\C)$, $\mod{US}(\C)$ and $\mod{QUS}(\C)$ exist and are monoidal. If $\theta$ is relatively Gorenstein of shift $a$ and $U$ preserves weak equivalences, then $\psi\colon US \to QUR$ is relatively Gorenstein of shift $a$. 
\end{prop}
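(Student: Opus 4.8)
The plan is to transport the relative Gorenstein condition for $\theta$ (a map in $\D$) across the strong monoidal Quillen equivalence to obtain it for $\psi$ (a map in $\C$), following closely the proof of Proposition~\ref{prop:relGorstrong}. The essential difference from that proposition is directional: there the map lives in $\C$ and is pushed forward along $F$, whereas here the map lives in $\D$ and must first be pulled back along $U$ and then cofibrantly replaced, precisely because $UR$ need not be cofibrant as a $US$-module. Note that $QUR$ \emph{is} cofibrant as a $US$-module by Lemma~\ref{lem:convenient}, so $\psi$ satisfies Hypothesis~\ref{hyp:cofibrancy} and it makes sense to ask whether it is relatively Gorenstein.

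First I would lift the Quillen equivalence to module categories. As recalled in Section~\ref{sec:liftadjunctionstomodules}, since $F$ is strong monoidal it induces a strong monoidal Quillen adjunction
\[
\begin{tikzcd}
\mod{US}(\C) \arrow[r, "F^S", yshift=1mm] & \mod{S}(\D) \arrow[l, "U", yshift=-1mm]
\end{tikzcd}
\]
with $F^S M = S \otimes_{FUS} FM$, so in particular $F^S(US) \cong S$. Using that $U$ preserves all weak equivalences and colimits and that the homotopy category of $\C$ is compactly generated by its unit, I would argue that this lifted adjunction is again a strong monoidal Quillen \emph{equivalence}: the derived unit at the unit $US$ is an equivalence because $F^S(US) \cong S$ and $(F,U)$ is a Quillen equivalence, and the generation and colimit hypotheses propagate this to all objects, in the spirit of~\cite[3.12]{SchwedeShipley03}.

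With this in hand, by Lemma~\ref{lem:convenient} the functor $\coext[\psi] = \mathrm{Hom}_{US}(QUR,-)$ is right Quillen by Proposition~\ref{prop:extrescoext}, so $\R\coext[\psi](US) = \R\mathrm{Hom}_{US}(QUR,US)$. Since $(F^S,U)$ is a Quillen equivalence and $U$ preserves weak equivalences, $\R U(R) \simeq UR \simeq QUR$ in $\Dsf(US)$, hence the derived counit gives $\L F^S(QUR) = F^S(QUR) \simeq R$ in $\Dsf(S)$. Applying Lemma~\ref{lem:internalhom} to the strong monoidal pair $(F^S,U)$ with $X = QUR$ and $Y = fS$ a fibrant replacement of $S$ yields a natural isomorphism
\[
U\,\mathrm{Hom}_S\!\bigl(F^S(QUR),\, fS\bigr) \;\cong\; \mathrm{Hom}_{US}\!\bigl(QUR,\, U(fS)\bigr).
\]
The right-hand side computes $\R\coext[\psi](US)$, since $QUR$ is cofibrant and $U(fS)$ is fibrant (as $U$ is right Quillen) and weakly equivalent to $US$. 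The left-hand side is $U\,\R\mathrm{Hom}_S(R,S) \simeq U(\Sigma^a R) \simeq \Sigma^a UR$, using that $\theta$ is relatively Gorenstein of shift $a$ and that $U$ preserves weak equivalences and commutes with suspension (being exact on homotopy categories). Finally, since $q\colon QUR \to UR$ is a weak equivalence of commutative $US$-algebras, restriction of scalars along $q$ is a Quillen equivalence $\mod{UR} \to \mod{QUR}$ by Quillen invariance in $\C$; tracking the $QUR$-module structures through Lemma~\ref{lem:internalhom} we conclude $\R\coext[\psi](US) \simeq \Sigma^a QUR$ in $\Dsf(QUR)$, i.e.\ $\psi$ is relatively Gorenstein of shift $a$.

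The main obstacle I anticipate is the first step. Unlike in Proposition~\ref{prop:relGorstrong}, here it is $U$ and not $F$ that preserves all weak equivalences, so one cannot directly cite~\cite[7.1]{FlatnessandShipley'sTheorem} to conclude that the lifted adjunction on module categories is a Quillen equivalence; this is exactly where the extra hypotheses that $U$ preserves colimits and that the homotopy category of $\C$ is compactly generated by its unit must be used. A secondary, more routine point is the bookkeeping of module structures in the final display, ensuring that the equivalence produced is one of $QUR$-modules and not merely of $US$-modules.
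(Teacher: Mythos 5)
Your proposal is correct and takes essentially the same approach as the paper: lift the adjunction to modules over $US$ and $S$, use the extra hypotheses ($U$ preserves weak equivalences and colimits, the unit generates) to make this lifted adjunction a monoidal Quillen equivalence, and then transport $\R\mathrm{Hom}_S(R,S)\simeq\Sigma^a R$ through $U$ via Lemma~\ref{lem:internalhom}. The only minor differences are organisational: the paper obtains the module-level Quillen equivalence directly from~\cite[3.12(2)]{SchwedeShipley03} (using only that $U$ preserves weak equivalences), spends the colimit and unit-generation hypotheses on verifying the weak monoidal structure via~\cite[3.17]{SchwedeShipley03}, and checks the Gorenstein condition on $UR$ at the derived level, whereas you justify the equivalence by a generator argument, apply Lemma~\ref{lem:internalhom} at the point-set level with explicit (co)fibrant replacements, and handle $QUR$ versus $UR$ by a final Quillen-invariance step.
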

\begin{proof}
As $U$ preserves all weak equivalences, by~\cite[3.12(2)]{SchwedeShipley03} the induced Quillen adjunctions
\[\begin{tikzcd}
\mod{US} \arrow[r, "F^S", yshift=1mm] & \mod{S} \arrow[l, "U", yshift=-1mm]
\end{tikzcd} \quad \mathrm{and} \quad \begin{tikzcd}
\mod{UR} \arrow[r, "F^R", yshift=1mm] & \mod{R} \arrow[l, "U", yshift=-1mm]
\end{tikzcd} \]
are Quillen equivalences. In fact, these are moreover weak monoidal Quillen equivalences as we now argue; this was already observed in a special case in~\cite[Proof of 7.2]{FlatnessandShipley'sTheorem}. Since $U$ is lax monoidal and preserves colimits, for any $M,N \in \mod{S}$ we have a map
\begin{align*}
	UM \otimes_{US} UN &= \mathrm{coeq}(UM \otimes US \otimes UN \rightrightarrows UM \otimes UN) \\
	&\to \mathrm{coeq}(U(M \otimes S \otimes N) \rightrightarrows U(M \otimes N)) \\
	&\cong U(\mathrm{coeq}(M \otimes S \otimes N \rightrightarrows M \otimes N) \\
	&= U(M \otimes_S N)
	\end{align*}
which gives $U\colon \mod{S} \to \mod{US}$ a lax monoidal structure. It remains to argue that the Quillen adjunction $F^S \dashv U$ is a weak monoidal Quillen adjunction. For this we use the criterion given in~\cite[3.17]{SchwedeShipley03}. Since $US$ is a cofibrant $US$-module (as the unit of $\C$ is assumed to be cofibrant), the first condition is that $F^SUS \to S$ is a weak equivalence. Since $U$ preserves all weak equivalences and $US$ is cofibrant, this is the derived counit of the Quillen equivalence $F^S \dashv U$ and as such is a weak equivalence. The second condition we must verify is that $US$ is a generator for the homotopy category of $\mod{US}$. This follows from the fact the unit of $\C$ is assumed to be a generator for the homotopy category of $\C$. Therefore $F^S \dashv U$ is a weak monoidal Quillen equivalence, and similarly, $F^R \dashv U$ is also a weak monoidal Quillen equivalence.

These weak monoidal Quillen equivalences pass to give strong monoidal equivalences of the derived categories. Note that since the relative Gorenstein condition is a derived condition, it is enough to check that $\R\mathrm{Hom}_{US}(UR, US) \simeq \Sigma^a UR$. Omitting notation for derived functors in what follows we then have
\[\mathrm{Hom}_{US}(UR, US) \simeq U\mathrm{Hom}_{S}(F^SUR, S) \simeq U\mathrm{Hom}_S(R,S) \simeq \Sigma^a UR\]
using Lemma~\ref{lem:internalhom}, and hence $\psi$ is relatively Gorenstein of shift $a$ as required.
\end{proof}

We will use the previous results in conjunction with Theorem~\ref{thm:relgor} to show that all of the required maps in the second part of this paper are relatively Gorenstein.

\section{Comparing Quillen functors}\label{sec:compare}
In this section, we set up the general techniques for showing when Quillen functors correspond.

\subsection{The calculus of mates}
We firstly recap the calculus of mates, see~\cite[\S 2]{KellyStreet74} for a comprehensive account.

Consider the diagram
\begin{center}
	\begin{tikzcd}
	\C_G \arrow[r, yshift=1mm, "F_G"] \arrow[d, "L"'] & \D_G \arrow[l, yshift=-1mm, "U_G"] \arrow[d, "L'"] \\
	\C_H \arrow[r, yshift=1mm, "F_H"] & \D_H \arrow[l, yshift=-1mm, "U_H"] 
	\end{tikzcd}
\end{center}
in which $F_G \dashv U_G$ and $F_H \dashv U_H$ are adjunctions and $L$ and $L'$ are functors (not necessarily left adjoints). As stated in the introduction, in this general framework the notation is only suggestive and does not indicate the existence of any group actions.

Given a natural transformation $\alpha: F_HL \Rightarrow L'F_G$, one can define its \emph{mate} $\overline{\alpha}: LU_G \Rightarrow U_HL'$ to be the natural transformation $$LU_G \xRightarrow{\eta LU_G} U_HF_HLU_G \xRightarrow{U_H\alpha U_G} U_HL'F_GU_G \xRightarrow{U_HL'\epsilon} U_HL'.$$ Conversely, given $\beta: LU_G \Rightarrow U_HL'$ one defines its mate $\overline{\beta}: F_HL \Rightarrow L'F_G$ by the composite $$F_HL \xRightarrow{F_HL\eta} F_HLU_GF_G \xRightarrow{F_H\beta F_G} F_HU_HL'F_G \xRightarrow{\epsilon L'F_G} L'F_G.$$ These two operations are inverse to one another by the triangle identities and therefore give a bijection between the two kinds of natural transformation.

In general, the mate of a natural isomorphism need not be a natural isomorphism. However, this does hold in certain cases.
\begin{prop}\label{prop:mates}
	If $F_G \dashv U_G$ and $F_H \dashv U_H$ are adjoint equivalences, then a natural transformation $F_HL \Rightarrow L'F_G$ is a natural isomorphism if and only if its mate is.
\end{prop}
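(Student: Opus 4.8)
The plan is to use the calculus of mates directly, exploiting the triangle identities and the fact that all four units and counits of the two adjunctions are natural isomorphisms. First I would observe that since $F_G \dashv U_G$ and $F_H \dashv U_H$ are adjoint equivalences, the units $\eta\colon \mathrm{id} \Rightarrow U_G F_G$, $\eta\colon \mathrm{id}\Rightarrow U_HF_H$ and counits $\epsilon\colon F_GU_G\Rightarrow \mathrm{id}$, $\epsilon\colon F_HU_H\Rightarrow\mathrm{id}$ are all natural isomorphisms. I would also recall from the discussion preceding the statement that mate formation $\alpha \mapsto \overline{\alpha}$ and $\beta\mapsto\overline{\beta}$ are mutually inverse bijections, so the two directions of the biconditional are really the same statement applied to $\alpha$ and to its mate; it therefore suffices to prove one implication, say that if $\alpha\colon F_HL\Rightarrow L'F_G$ is a natural isomorphism then $\overline{\alpha}\colon LU_G\Rightarrow U_HL'$ is a natural isomorphism.

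The key step is then to inspect the definition
\[
\overline{\alpha}\colon LU_G \xRightarrow{\eta LU_G} U_HF_HLU_G \xRightarrow{U_H\alpha U_G} U_HL'F_GU_G \xRightarrow{U_HL'\epsilon} U_HL'
\]
and note that it is a vertical composite of three $2$-cells, each of which I claim is a natural isomorphism. The middle one, $U_H\alpha U_G$, is a natural isomorphism because $\alpha$ is one and pre/post-composing a natural isomorphism with any functors ($U_H$ on the left, $U_G$ on the right) yields a natural isomorphism. The first, $\eta L U_G$, is a natural isomorphism because $\eta$ is a natural isomorphism (as $F_H\dashv U_H$ is an adjoint equivalence) and whiskering by the functors $L$ and $U_G$ preserves this. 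Likewise the third, $U_HL'\epsilon$, is a natural isomorphism because $\epsilon$ (for $F_G\dashv U_G$) is a natural isomorphism and whiskering by $U_HL'$ preserves this. A vertical composite of natural isomorphisms is a natural isomorphism, so $\overline{\alpha}$ is one. The converse implication follows by the same argument applied to $\overline{\beta}$ using the bijection $\overline{\overline{\alpha}} = \alpha$.

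There is no real obstacle here; the statement is essentially formal once one records that adjoint equivalences have invertible unit and counit. The only point requiring the tiniest care is making sure that the \emph{particular} units and counits appearing in the mate formula are the ones coming from adjunctions that are adjoint equivalences — which they are, by hypothesis — so that each whiskered cell in the displayed composite is invertible. If one wished, one could instead give the slicker argument that under an adjoint equivalence the $2$-functor "post-compose with $(F\dashv U)$" on the relevant functor categories is itself an equivalence of categories and hence reflects and preserves isomorphisms, but the direct verification above is shortest and self-contained.
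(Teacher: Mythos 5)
Your proof is correct and follows essentially the same route as the paper: both arguments rest on the observation that, since the adjunctions are adjoint equivalences, the whiskered units and counits appearing in the mate composite are natural isomorphisms. You spell out the three-fold composite and invoke the inverse bijection of mate-taking for the converse, whereas the paper compresses the same reasoning into an appeal to the 2-out-of-3 property of isomorphisms; the content is the same.
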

\begin{proof}
	Since $F_G \dashv U_G$ and $F_H \dashv U_H$ are adjoint equivalences, the units and counits are natural isomorphisms. The result then follows by the 2-out-of-3 property of isomorphisms.
\end{proof}

Unfortunately, there is no analogous result at the level of model categories, saying that a map is a natural weak equivalence if and only if its mate is. However, the mates correspondence does give us a way to attack questions about the commutativity of derived functors. 

Note that natural weak equivalences of Quillen functors of the same handedness pass to natural isomorphisms of the derived functors by virtue of the natural isomorphism $\L (F\circ F') \cong \L F \circ \L F'$. However, a natural weak equivalence between composites of left and right Quillen functors does not imply that the composites of derived functors are naturally isomorphic. An explicit counterexample can be found in~\cite[0.0.1]{MaySigurdsson06}. 

We give a short overview of how we will use the machinery of mates. Suppose we are given a square 
\begin{center}
	\begin{tikzcd}
	\C_G \arrow[r, yshift=1mm, "F_G"] \arrow[d, "L"'] & \D_G \arrow[l, yshift=-1mm, "U_G"] \arrow[d, "L'"] \\
	\C_H \arrow[r, yshift=1mm, "F_H"] & \D_H \arrow[l, yshift=-1mm, "U_H"] 
	\end{tikzcd}
\end{center}
in which $L$ and $L'$ are left Quillen functors and $F_G \dashv U_G$ and $F_H \dashv U_H$ are Quillen equivalences. Suppose we want to know that $\L L \circ \R U_G \cong \R U_H \circ \L L'$. Instead we can check that there is a natural weak equivalence $L'F_G \simeq F_HL$ on cofibrant objects, and then since they have the same handedness, we have a natural isomorphism $\L L' \circ \L F_G \cong \L F_H \circ \L L$. Since the Quillen equivalences will descend to adjoint equivalences at the level of derived categories, it follows from Proposition~\ref{prop:mates} that taking mates gives the desired natural isomorphism $\L L \circ \R U_G \cong \R U_H \circ \L L'$.

\begin{rem}
	Shulman~\cite{Shulman11} develops a method for comparing composites of left and right derived functors. Since we will be interested in the case where the horizontal adjunctions are Quillen equivalences, we can always compare composites of left and right derived functors by instead comparing their mates, as described above. This allows us to only ever have to consider Quillen functors of the same handedness. Therefore, Shulman's method is unnecessary for our purposes.
\end{rem}

\subsection{Proving commutation of derived functors}
Consider the diagram
\begin{center}
	\begin{tikzcd}
	\quad\C_G\quad \arrow[dd, xshift=2.5mm, "\ext" description] \arrow[dd, xshift=-2.5mm, "\psext" description] \arrow[rr, yshift=1.7mm, "F_G"' description] & & \quad\D_G\quad \arrow[dd, xshift=2.75mm, "\exta" description] \arrow[dd, xshift=-2.75mm, "\psexta" description] \arrow[ll, yshift=-1.7mm, "U_G"' description] \\
	& & \\
	\quad\C_H\quad \arrow[uu, xshift=7.5mm, "\res"' description] \arrow[uu, xshift=-7.5mm, "\twext" description] \arrow[rr, yshift=1.7mm, "F_H"' description] & & \quad\D_H\quad \arrow[uu, xshift=8mm, "\resa"' description] \arrow[uu, xshift=-8mm, "\twexta" description] \arrow[ll, yshift=-1.7mm, "U_H"' description]
	\end{tikzcd}
\end{center}
of model categories and Quillen functors, where $\R\psext \cong \L\ext$ and $\R\psext[\phi] \cong \L\ext[\phi]$, and $F_G \dashv U_G$ and $F_H \dashv U_H$ are Quillen equivalences. We note that we have chosen names in keeping with the example that we have in mind, where the top horizontal is a Quillen equivalence arising in the construction of an algebraic model for $G$-spectra. We are in interested in using a model categorical approach to show that all of the six squares of derived functors listed in Section~\ref{sec:summary of method} commute.
In this section we prove that in order to check that all six of the derived squares commute, it is sufficient to check that only two squares commute. 

\begin{lem}\label{lem:adjoints}
	Let \begin{center}\begin{tikzcd}
		\C_G \arrow[dd, xshift=-2mm, "\ext" description] \arrow[rr, yshift=1.7mm, "F_G"' description] & & \D_G \arrow[dd, xshift=-2mm, "\exta" description] \arrow[ll, yshift=-1.7mm, "U_G"' description] \\
		& & \\
		\C_H \arrow[uu, xshift=2mm, "\res"' description] \arrow[rr, yshift=1.7mm, "F_H"' description] & & \D_H \arrow[uu, xshift=2mm, "\resa"' description] \arrow[ll, yshift=-1.7mm, "U_H"' description]
		\end{tikzcd}\end{center}
	be a diagram of model categories and Quillen functors. 
	\begin{enumerate}
		\item There is a natural transformation $\exta F_G \Rightarrow F_H\ext$ whose restriction to cofibrant objects is a natural weak equivalence if and only if there is a natural transformation $\res U_H \Rightarrow U_G\resa$ whose restriction to fibrant objects is a natural weak equivalence.
		\item There is a natural transformation $F_H\ext \Rightarrow \exta F_G$ whose restriction to cofibrant objects is a natural weak equivalence if and only if there is a natural transformation $U_G\resa \Rightarrow \res U_H$ whose restriction to fibrant objects is a natural weak equivalence.
	\end{enumerate}
\end{lem}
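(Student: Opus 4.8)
The plan is to deduce both parts from the elementary correspondence between natural transformations of left adjoints and natural transformations of their right adjoints, upgraded with one homotopical observation; nothing here needs the horizontal adjunctions to be Quillen equivalences, only Quillen adjunctions.

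First I would record the two Quillen adjunctions obtained by composing. The composite $\exta F_G$ is left Quillen with right adjoint $U_G\resa$, since $\mathrm{Hom}_{\D_H}(\exta F_G X,Y)\cong \mathrm{Hom}_{\C_G}(X,U_G\resa Y)$; similarly $F_H\ext$ is left Quillen with right adjoint $\res U_H$. The standard mates bijection then identifies natural transformations $\exta F_G\Rightarrow F_H\ext$ with natural transformations $\res U_H\Rightarrow U_G\resa$, and natural transformations $F_H\ext\Rightarrow \exta F_G$ with natural transformations $U_G\resa\Rightarrow \res U_H$. Writing $\gamma^\dagger$ for the conjugate of $\gamma$, this conjugate is the unique natural transformation making the square comparing the two adjunction isomorphisms with postcomposition by $\gamma^\dagger$ and precomposition by $\gamma$ commute.

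The crux is a general claim which I would isolate and prove first: if $(L_1,R_1)$ and $(L_2,R_2)$ are Quillen adjunctions $\mathcal{M}\rightleftarrows\mathcal{N}$ and $\gamma\colon L_1\Rightarrow L_2$ has conjugate $\gamma^\dagger\colon R_2\Rightarrow R_1$, then $\gamma_X$ is a weak equivalence for every cofibrant $X$ if and only if $\gamma^\dagger_Y$ is a weak equivalence for every fibrant $Y$. For the forward implication, fix $Y$ fibrant; then $R_1Y$ and $R_2Y$ are fibrant, so $\gamma^\dagger_Y$ is a weak equivalence if and only if it becomes invertible in $\mathrm{Ho}(\mathcal{M})$, if and only if (Yoneda in the homotopy category) $[C,\gamma^\dagger_Y]\colon [C,R_2Y]\to[C,R_1Y]$ is a bijection for every cofibrant $C$ — here $[C,R_iY]$ already computes the derived mapping set since $C$ is cofibrant and $R_iY$ is fibrant. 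Using the adjunctions I would then identify $[C,R_iY]\cong[L_iC,Y]$ naturally (valid since $C$ is cofibrant and $Y$ fibrant, whence $L_iC$ is cofibrant), under which $[C,\gamma^\dagger_Y]$ becomes precomposition $(\gamma_C)^*\colon[L_2C,Y]\to[L_1C,Y]$ by the defining property of the conjugate; since $\gamma_C\colon L_1C\to L_2C$ is a weak equivalence between cofibrant objects, it is invertible in $\mathrm{Ho}(\mathcal{N})$, so $(\gamma_C)^*$ is a bijection. The reverse implication is dual, using that a map between cofibrant objects is a weak equivalence if and only if $[-,D]$ sends it to a bijection for every fibrant $D$.

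With the general claim in hand, part (1) is the instance $(L_1,L_2)=(\exta F_G,F_H\ext)$ (so $R_1=U_G\resa$, $R_2=\res U_H$), and part (2) is the instance $(L_1,L_2)=(F_H\ext,\exta F_G)$. I expect the only real obstacle to be the bookkeeping inside the general claim: checking that the adjunction isomorphisms descend to homotopy classes of maps and do intertwine the two whiskerings of $\gamma$. This is standard — it is the compatibility of a conjugate pair with the adjunction isomorphism pushed through the quotient by the homotopy relation, which is legitimate for a Quillen adjunction because a left Quillen functor carries a good cylinder on a cofibrant object to one, and dually on the fibrant side.
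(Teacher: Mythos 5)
Your proposal is correct and takes essentially the same route as the paper: the paper's entire proof consists of applying Hovey's criterion~\cite[1.4.4(b)]{Hovey99} to the composite Quillen adjunctions $\exta F_G \dashv U_G\resa$ and $F_H\ext \dashv \res U_H$, and that criterion is precisely the general claim you isolate (a natural transformation of left Quillen functors is a weak equivalence on cofibrant objects if and only if its conjugate is a weak equivalence on fibrant objects). The only difference is that you supply a correct from-scratch proof of that cited criterion via the Yoneda argument in the homotopy category instead of quoting it.
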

\begin{proof}
	This follows from applying~\cite[1.4.4(b)]{Hovey99}.
\end{proof}

We now state the theorem which allows us to reduce the problem of checking that all six squares commute, to only checking that two squares commute. We emphasize that in the following result saying that ``there is a natural transformation between $F$ and $G$'' means that there is either a natural transformation $F \Rightarrow G$ or $G \Rightarrow F$. We do \emph{not} permit zig-zags.
\begin{thm}\label{thm:compare}
	Consider the square 
\begin{center}
	\begin{tikzcd}
	\quad\C_G\quad \arrow[dd, xshift=2.5mm, "\ext" description] \arrow[dd, xshift=-2.5mm, "\psext" description] \arrow[rr, yshift=1.7mm, "F_G"' description] & & \quad\D_G\quad \arrow[dd, xshift=2.75mm, "\exta" description] \arrow[dd, xshift=-2.75mm, "\psexta" description] \arrow[ll, yshift=-1.7mm, "U_G"' description] \\
	& & \\
	\quad\C_H\quad \arrow[uu, xshift=7.5mm, "\res"' description] \arrow[uu, xshift=-7.5mm, "\twext" description] \arrow[rr, yshift=1.7mm, "F_H"' description] & & \quad\D_H\quad \arrow[uu, xshift=8mm, "\resa"' description] \arrow[uu, xshift=-8mm, "\twexta" description] \arrow[ll, yshift=-1.7mm, "U_H"' description]
	\end{tikzcd}
\end{center} in which all the adjoint pairs are Quillen, $F_G \dashv U_G$ and $F_H \dashv U_H$ are Quillen equivalences, and suppose that there are natural isomorphisms $\L\ext \cong \R\psext$ and $\L\exta \cong \R\psexta$. Consider the statements:
	\begin{enumerate}[a)]
		\item There is a natural transformation between $\exta F_G$ and $F_H \ext$ which restricts to a natural weak equivalence on cofibrant objects.
		\item There is a natural transformation between $\res U_H$ and $U_G \resa$ which restricts to a natural weak equivalence on fibrant objects.
		\item There is a natural transformation between $F_G\twext$ and $\twexta F_H$ which restricts to a natural weak equivalence on cofibrant objects.
		\item There is a natural transformation between $\psext U_G$ and $U_H \psexta$ which restricts to a natural weak equivalence on fibrant objects.
	\end{enumerate}	
	If either (a) or (b) is true and either (c) or (d) is true, then each of the six derived squares listed in Section~\ref{sec:summary of method} commutes.
\end{thm}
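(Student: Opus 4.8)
The plan is to use the mates correspondence systematically to propagate weak equivalences between the six squares, splitting the six into two ``columns'' of three. First I would observe that the six squares naturally group: squares (1), (2), (6) all involve the functor pair $(\ext, \res, \resa)$ and the Quillen equivalences $F_\bullet \dashv U_\bullet$, while squares (3), (4), (5), (8) involve the pair $(\twext, \psext)$ (recalling that (7) $\equiv$ (1) and (8) $\equiv$ (4) via the hypothesised natural isomorphisms $\L\ext \cong \R\psext$ and $\L\exta \cong \R\psexta$). So it suffices to produce, from one of (a)/(b), natural isomorphisms of derived functors for (1), (2), (6), and from one of (c)/(d), natural isomorphisms for (3), (4), (5).

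The key mechanism is the following chain of reductions, applied in each column. Within the left-Quillen world, a natural weak equivalence on cofibrant objects between two composites of left Quillen functors of the same handedness passes to a natural isomorphism of left-derived functors (via $\L(F\circ F')\cong \L F\circ \L F'$). Since $F_G\dashv U_G$ and $F_H\dashv U_H$ are Quillen equivalences, they descend to \emph{adjoint equivalences} of homotopy categories; hence by Proposition~\ref{prop:mates} the mate of a natural isomorphism of derived functors is again a natural isomorphism. Concretely, in the first column: assume (a), so there is a natural transformation between $\exta F_G$ and $F_H\ext$ which is a weak equivalence on cofibrants. These are both left Quillen composites, so we get $\L\exta\circ\L F_G \cong \L F_H\circ\L\ext$, which is square (1). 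Taking mates across the Quillen equivalences (using Proposition~\ref{prop:mates}) yields square (8) — hence (4)? no: the mate of (1) relates $\ext$ and $\res$. Let me be careful: taking the mate of $\L\exta\circ\L F_G \cong \L F_H\circ\L\ext$ in the direction that turns $F_G, F_H$ into $U_G, U_H$ and turns $\exta,\ext$ into their right adjoints $\resa,\res$ gives $\R\res\circ\R U_H \cong \R U_G\circ\R\resa$, which is square (2). Then square (6), $\L F_G\circ\R\res \cong \R\resa\circ\L F_H$, follows by taking the mate of (2) in the orthogonal direction (turning the $U$'s back into $F$'s), again using that all four functors $F_G,F_H,U_G,U_H$ and the passage $\res\leftrightarrow\ext$ are controlled by adjoint equivalences on homotopy categories. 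If instead we are given (b), Lemma~\ref{lem:adjoints} converts it directly into the corresponding statement about $\exta F_G$ versus $F_H\ext$ (or we simply start the mate-chasing from the $U$-side), so either hypothesis suffices for the whole column.

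The second column is handled identically: from (c), a weak equivalence on cofibrants between $F_G\twext$ and $\twexta F_H$ (both left Quillen composites) gives square (3); its mate across the Quillen equivalences gives square (4), namely $\R\psext\circ\R U_G \cong \R U_H\circ\R\psexta$ (here one uses that $\psext,\psexta$ are the right adjoints of $\twext,\twexta$); and the mate of (4) in the orthogonal direction gives square (5). Again Lemma~\ref{lem:adjoints} shows (c) and (d) are interchangeable as starting points. Finally, the hypothesised natural isomorphisms $\L\ext\cong\R\psext$ and $\L\exta\cong\R\psexta$ identify square (7) with (1) and square (8) with (4), completing all eight (equivalently, six) squares.

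The main obstacle is bookkeeping rather than mathematics: one must check that at each mate-passage the two adjunctions playing the role of $F_G\dashv U_G$, $F_H\dashv U_H$ in Proposition~\ref{prop:mates} really are adjoint \emph{equivalences} of homotopy categories — this is immediate for the derived $F_\bullet\dashv U_\bullet$ since they are Quillen equivalences, but when we instead take mates ``in the vertical direction'' (turning $\res\dashv\ext$ etc.\ into the relevant derived adjunction) we are using the derived adjunctions of the vertical Quillen pairs, which are generally \emph{not} equivalences, so Proposition~\ref{prop:mates} does not apply there. The resolution is to always take mates across the \emph{horizontal} Quillen equivalences only: every one of squares (1)--(6) can be reached from one chosen square in each column by mate-passages in which the two bounding adjunctions are among $\{F_G\dashv U_G,\ F_H\dashv U_H\}$ (possibly after using the natural isomorphisms $\L\ext\cong\R\psext$ to reinterpret which functors occupy the vertices). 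Verifying that such a path of horizontal mate-passages exists for each of the six squares is the one point requiring genuine care, and is where the structural hypothesis that the \emph{horizontal} maps (not the vertical ones) are Quillen equivalences is essential.
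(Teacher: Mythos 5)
Your overall strategy is the paper's: (a)/(b) and (c)/(d) are interchanged by Lemma~\ref{lem:adjoints}, the four same-handedness squares (1)--(4) follow by deriving the given natural weak equivalences, and the two mixed squares (5) and (6) are produced by the mates correspondence of Proposition~\ref{prop:mates}, which applies precisely because the horizontal derived adjunctions are adjoint equivalences. However, your mate bookkeeping fails at one concrete point: square (5) is \emph{not} the horizontal mate of square (4). With verticals $\R\psext$ and $\R\psexta$, the mate of an isomorphism $\R\psext\circ\R U_G \cong \R U_H\circ\R\psexta$ across the horizontal equivalences is an isomorphism $\L F_H\circ\R\psext \cong \R\psexta\circ\L F_G$, which after the identifications $\R\psext\cong\L\ext$ and $\R\psexta\cong\L\exta$ is just square (7)$\,\cong\,$(1) again; it never produces the twisted functors $\twext$, $\twexta$. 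Square (5), $\L\twext\circ\R U_H\cong\R U_G\circ\L\twexta$, is instead the horizontal mate of square (3), i.e.\ of $\L F_G\circ\L\twext\cong\L\twexta\circ\L F_H$, and this is exactly how the paper obtains it (symmetrically, (6) is the mate of (2), as you do state correctly).

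A related imprecision likely caused the slip: the passages you describe from (1) to (2) and from (3) to (4) are not mates across the horizontal Quillen equivalences at all --- the single horizontal mate of (1) is (8), as you first wrote before second-guessing yourself. What you actually use there is the conjugate (total adjoint) correspondence, replacing each composite of left adjoints by its composite of right adjoints; this is legitimate (uniqueness of adjoints, no equivalence hypothesis needed) and is what Lemma~\ref{lem:adjoints} encodes at the point-set level, but it contradicts your closing claim that every passage is a horizontal mate, and that conflation is what produced the faulty derivation of (5). With the one correction --- obtain (5) as the mate of (3) --- your argument coincides with the paper's proof.
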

\begin{proof}
	Statement $(a)$ is equivalent to statement $(b)$ by taking adjoints (see Lemma~\ref{lem:adjoints}), and similarly $(c)$ is equivalent to $(d)$. Since all of the functors in each statement have the same handedness, the natural weak equivalences descend to isomorphisms of derived functors.
	
	It just remains to show that these statements are sufficient to conclude that we have natural isomorphisms $\L\twext \circ \R U_H \cong\R U_G \circ \L\twexta$ and $\L F_G \circ \R \res \cong \R\resa \circ \L F_H$. By statement $(c)$ we have a natural isomorphism $\beta\colon \L F_G \circ \L \twext \cong \L \twexta \circ \L F_H$. Since $F_G \dashv U_G$ and $F_H \dashv U_H$ are Quillen equivalences and hence their derived adjunctions are adjoint equivalences, by Proposition~\ref{prop:mates} we find that the mate of $\beta$ is also a natural isomorphism $\overline\beta\colon \L\twext \circ \R U_H \cong \R U_G \circ \L\twexta$. Similarly, applying Proposition~\ref{prop:mates} to the natural isomorphism $\R U_G\circ\R\resa \cong \R\res\circ\R U_H$ from statement $(b)$ we obtain a natural isomorphism $\L F_G \circ \R \res \cong \R\resa \circ \L F_H$.
\end{proof}

\subsection{Checking commutativity}
In this section we give some lemmas which verify when the hypotheses of Theorem~\ref{thm:compare} hold. These fall into three types: strong monoidal Quillen pairs, weak monoidal Quillen pairs and Quillen equivalences which arise from Quillen invariance of modules. 


Firstly we deal with the case of strong monoidal Quillen pairs.
\begin{prop}\label{prop:comparestrong}
	Let 
	\[\begin{tikzcd}
	\C \arrow[r, "F", yshift=1mm] & \D \arrow[l, "U", yshift=-1mm]
	\end{tikzcd}\] be a strong monoidal adjunction and let $\theta\colon S \to R$ be a map of monoids in $\C$. Write $F\theta = \phi\colon FS \to FR$. There are natural isomorphisms $\res U \cong U\res[\phi]$ and $F\res \cong \res[\phi]F$, i.e., the diagrams 
	\[\begin{tikzcd}
	\mod{S}(\C) & \mod{FS}(\D) \arrow[l, "U"'] \\
	\mod{R}(\C) \arrow[u, "\res"] & \mod{FR}(\D) \arrow[l, "U"] \arrow[u, "{\res[\phi]}"']
	\end{tikzcd} \qquad 
	\begin{tikzcd}
	\mod{S}(\C) \arrow[r, "F"] & \mod{FS}(\D) \\
	\mod{R}(\C) \arrow[r, "F"'] \arrow[u, "\res"] & \mod{FR}(\D) \arrow[u, "{\res[\phi]}"']
	\end{tikzcd} \] commute up to natural isomorphism.
\end{prop}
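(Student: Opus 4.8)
The key observation is that restriction of scalars is the identity on underlying objects, so both claimed isomorphisms reduce to checking that two module structures on one and the same underlying object coincide. The plan is to unwind the definitions of the evident lifts of $F$ and $U$ to module categories (as in the discussion of Section~\ref{sec:liftadjunctionstomodules}), and then run a short diagram chase using naturality.

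First I would treat $F\res \cong \res[\phi] F$. Since $(F,U)$ is strong monoidal, the lifted functor $F\colon \mod{S}(\C)\to\mod{FS}(\D)$ sends an $S$-module $N$ with action $\rho$ to $FN$ equipped with the $FS$-action obtained by composing the (invertible) monoidal structure map $FS\otimes FN \xrightarrow{\sim} F(S\otimes N)$ with $F\rho$. Starting from an $R$-module $N$, the two ways around the right-hand square both produce $FN$ as underlying object, and the two resulting $FS$-actions differ only in whether one applies $F\theta\otimes 1$ (that is, $\phi\otimes 1$) before or after the monoidal structure isomorphism of $F$. These agree by naturality of that structure isomorphism in its first variable applied to $\theta\colon S\to R$, so in fact $F\res N = \res[\phi]FN$ on the nose, naturally in $N$; the identity natural transformation is the required isomorphism.

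The argument for $\res U \cong U\res[\phi]$ is entirely parallel. The lifted functor $U\colon \mod{FS}(\D)\to\mod{S}(\C)$ sends an $FS$-module $M$ to $UM$, equipped with the $UFS$-module structure coming from the lax monoidal structure map of $U$ and then restricted along the unit $\eta_S\colon S\to UFS$ (which is a monoid map). Chasing an $FR$-module $M$ around the left-hand square, both composites have underlying object $UM$, and matching up the two $S$-actions uses exactly two facts: naturality of $\eta$ together with $\phi=F\theta$, to identify $\eta_R\circ\theta$ with $U\phi\circ\eta_S$; and naturality of the lax monoidal structure map of $U$ applied to $\phi\colon FS\to FR$, to interchange the lax structure map with $U\phi\otimes 1$. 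Again the two functors agree on the nose.

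The only care required is in correctly transcribing how the lifted $F$ and $U$ act on module structures --- strong monoidal $F$ contributes an invertible structure map $FA\otimes FB\xrightarrow{\sim}F(A\otimes B)$, while $U$ carries the induced lax structure --- and in using that the unit $\eta$ and the relation $\phi=F\theta$ are precisely what make the two change-of-rings constructions line up. I do not expect a genuine obstacle here; this is the easiest of the three compatibility statements in this subsection because restriction of scalars does not alter underlying objects, in contrast to the weak monoidal and Quillen-invariance cases treated below, which involve genuine comparisons of (co)limits.
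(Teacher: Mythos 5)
Your proposal is correct and follows essentially the same route as the paper: both reduce each square to checking that two module structures on the same underlying object coincide, via a point-set diagram chase using naturality of the unit $\eta$, naturality of the (lax or strong) monoidal structure map, and the relation $\phi = F\theta$. The only cosmetic difference is that the paper writes out the $\res U \cong U\res[\phi]$ square in detail and declares the $F$-square similar, whereas you detail the $F$-square first; the content is identical.
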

\begin{proof}
	Take $N \in \mod{FR}$. The underlying objects of $\res UN$ and $U \resa N$ are the same so it remains to check that the module structures agree. We write $\Phi$ for the lax monoidal structure map on $U$, and $a\colon FR \wedge N \to N$ for the action map.
	\begin{center}
		\begin{tikzcd}
		S \wedge U\resa N \arrow[r, "\eta \wedge 1"] \arrow[dd, "1"'] \arrow[ddr, "\theta \wedge 1" description] & UFS \wedge U\resa N \arrow[r, "\Phi"] \arrow[ddr, "UF\theta \wedge 1" description] & U(FS \wedge \resa N) \arrow[r, "U(\phi \wedge 1)"] \arrow[ddr, "U(F\theta \wedge 1)" description] & U(FR \wedge N) \arrow[r, "Ua"] \arrow[dd, "1"] & UN \arrow[dd, "1"] \\
		& & & & \\
		S \wedge \res UN \arrow[r, "\theta \wedge 1"'] & R \wedge UN \arrow[r, "\eta \wedge 1"'] & UFR \wedge UN \arrow[r, "\Phi"'] & U(FR \wedge N) \arrow[r, "Ua"'] & UN 	\end{tikzcd}
	\end{center}
	The first triangle commutes by definition, the following square by naturality of $\eta$, the following square by naturality of $\Phi$, and the remaining triangle and square by definition. 
	
	The second natural isomorphism can be proved similarly.
\end{proof}

We now turn to the case of Quillen equivalences which arise from Quillen invariance of modules.
\begin{prop}\label{prop:ringsquare}
	Let $\C$ be a monoidal model category. Suppose that we have a commutative square of commutative monoids
	\begin{center}
		\begin{tikzcd}
		S \arrow[d, "\theta"'] & S' \arrow[l, "f"'] \arrow[d, "\psi"] \\ 
		R & R' \arrow[l, "g"] 
		\end{tikzcd}
	\end{center}
	in $\C$.
	\begin{enumerate}
		\item There is a natural isomorphism $\res[f]\res \cong \res[\psi]\res[g]$, i.e., the diagram \[\begin{tikzcd}
	\mod{S} \arrow[r, "{\res[f]}"] & \mod{S'} \\
	\mod{R} \arrow[u, "\res"] \arrow[r, "{\res[g]}"'] & \mod{R'} \arrow[u, "{\res[\psi]}"']
	\end{tikzcd}\] commutes up to natural isomorphism.
		\item Suppose that cofibrants are flat in $\mod{R'}$ (see Definition~\ref{defn:flatcofibrants}) and that there is a natural weak equivalence $S \otimes_{S'}R' \xrightarrow{\sim} R$ of commutative $R'$-algebras. Then there is a natural map $\ext[f]\res[\psi]M \to \res\ext[g]M$ which is a weak equivalence for cofibrant $R'$-modules $M$, i.e., the diagram
\[\begin{tikzcd}
	\mod{S} & \mod{S'} \arrow[l, "{\ext[f]}"'] \\
	\mod{R} \arrow[u, "\res"] & \mod{R'} \arrow[u, "{\res[\psi]}"'] \arrow[l, "{\ext[g]}"]
	\end{tikzcd}\]		
		commutes up to natural weak equivalence on cofibrant objects.
	\end{enumerate}
\end{prop}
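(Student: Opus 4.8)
Part (1) is immediate: restriction of scalars is strictly functorial in composition of monoid maps, so $\res[f]\res$ is restriction along the composite $\theta\circ f\colon S' \to R$ and $\res[\psi]\res[g]$ is restriction along $g\circ\psi\colon S' \to R$. Since the given square of commutative monoids commutes we have $\theta\circ f = g\circ\psi$, and hence the two composite functors $\mod{R} \to \mod{S'}$ agree; indeed they coincide on underlying objects, and the two resulting $S'$-actions are literally equal.

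For part (2) the plan is first to write the comparison map explicitly. For an $R'$-module $M$ one has $\ext[f]\res[\psi]M = S \otimes_{S'} M$, with $M$ regarded as an $S'$-module along $\psi$, while $\res\ext[g]M = R \otimes_{R'} M$, regarded as an $S$-module along $\theta$. Using the unit isomorphism $M \cong R' \otimes_{R'} M$ together with associativity of the relative tensor product, there is a natural isomorphism of $S$-modules $S \otimes_{S'} M \cong (S \otimes_{S'} R') \otimes_{R'} M$, where $S \otimes_{S'} R'$ carries its canonical commutative $R'$-algebra structure and $S$ acts through the left tensor factor. I would then define the comparison map to be this isomorphism followed by $w \otimes_{R'} \mathrm{id}_M$, where $w\colon S \otimes_{S'} R' \xrightarrow{\sim} R$ is the given weak equivalence of commutative $R'$-algebras — necessarily the canonical map induced by $\theta$ and $g$, which in particular sends the structure map $S \to S \otimes_{S'} R'$ to $\theta$, so that $w \otimes_{R'} \mathrm{id}_M$ is genuinely a map of $S$-modules. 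Naturality in $M$ is clear, since both the associativity isomorphism and $- \otimes_{R'} \mathrm{id}_M$ are natural.

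It then remains to see that this composite is a weak equivalence when $M$ is a cofibrant $R'$-module. The first map is always an isomorphism, so it suffices to show $w \otimes_{R'} \mathrm{id}_M$ is a weak equivalence; by symmetry of the monoidal structure this map is $\mathrm{id}_M \otimes_{R'} w$, i.e. the image of $w$ under $M \otimes_{R'} -$. Since cofibrants are flat in $\mod{R'}$ and $M$ is cofibrant, the functor $M \otimes_{R'} -$ preserves all weak equivalences, so $\mathrm{id}_M \otimes_{R'} w$ is a weak equivalence, which completes the argument.

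The bulk of the proof is bookkeeping; the one genuinely homotopical input — and the step I expect to be the main obstacle to state cleanly — is the use of flatness of cofibrant $R'$-modules, which is precisely what lets the weak equivalence $S \otimes_{S'} R' \simeq R$ be tensored with an arbitrary cofibrant $M$ with no fibrant or cofibrant replacement needed. The only other point requiring care is tracking the various module structures so that the comparison map is a map of $S$-modules and not merely of underlying objects, which as noted reduces to the observation that $w$ restricts to $\theta$ along $S \to S \otimes_{S'} R'$.
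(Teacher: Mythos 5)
Your proposal is correct and follows essentially the same route as the paper: part (1) is the observation that both composites are restriction along $\theta f = g\psi$, and part (2) is obtained by tensoring the weak equivalence $S \otimes_{S'} R' \xrightarrow{\sim} R$ with the cofibrant $R'$-module $M$ and invoking flatness of cofibrants in $\mod{R'}$. Your write-up merely makes explicit the associativity identification $(S \otimes_{S'} R') \otimes_{R'} M \cong S \otimes_{S'} M$ and the check that the comparison is an $S$-module map, which the paper leaves implicit.
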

\begin{proof}
	The identity map $\res[f]\res \Rightarrow \res[\psi]\res[g]$ is an $S'$-module map since $\theta f = g\psi$, which gives the claim in (1). The natural weak equivalence in (2) is obtained by applying $- \otimes_{R'} M$ to the natural weak equivalence $S \otimes_{S'}R' \xrightarrow{\sim} R$. We note that $- \otimes_{R'}M$ preserves weak equivalences since $M$ is cofibrant and cofibrants are flat in $\mod{R'}$ by hypothesis.	
\end{proof}

\begin{rem}\label{rem:pushout}
	We now give a few situations in which the hypothesis that there is a natural weak equivalence $S \otimes_{S'}R' \xrightarrow{\sim} R$ is satisfied. Note that in the category of commutative monoids, the pushout of a span $S \leftarrow S' \to R'$ is given by the tensor product $S \otimes_{S'} R'$. Therefore, the condition that there is a natural weak equivalence $S \otimes_{S'}R' \xrightarrow{\sim} R$ holds in the following cases.
	\begin{enumerate}
	\item The square is a pushout of commutative $S'$-algebras.
	\item The map $f$ is an acyclic cofibration of commutative $S'$-algebras and $g\colon R' \to R$ is a weak equivalence. In this case since pushouts preserve acyclic cofibrations, the map $S \otimes_{S'}R' \to R$ is a weak equivalence by the 2-out-of-3 property of weak equivalences.
	\item The maps $f$ and $g$ are weak equivalences, $\psi$ is a cofibration of commutative $S'$-algebras and the model category of commutative $S'$-algebras is left proper.
	\end{enumerate}
\end{rem}

Finally we treat the case of weak monoidal Quillen equivalences. In this case the argument is more involved since the left adjoint at the level of modules is different to the underlying left adjoint.
\begin{prop}\label{prop:compareweak}
	Let \begin{center}\begin{tikzcd}\C \arrow[r, yshift=-1mm, "U"'] &  \arrow[l, yshift=1mm, "F"'] \D\end{tikzcd}\end{center} be a weak monoidal Quillen pair and let $\theta\colon S \to R$ be a map of commutative monoids in $\C$. Write $\phi = U\theta\colon US \to UR.$ Suppose that there exists another model structure $\widetilde{\D}$ on the same underlying category as $\D$ so that $(\D, \widetilde{\D})$ is convenient (see Definition~\ref{defn:convenientmodelstructure}). Let $q\colon QUR \to UR$ be a cofibrant replacement of $UR$ in $\calg{US}(\widetilde{\D})$ and write $\psi\colon US \to QUR$ for the unit map of the $US$-algebra structure on $QUR$.
	\begin{enumerate}
		\item There is a natural isomorphism $\res[\psi]\res[q]U \cong U\res$, i.e., the diagram
		\[\begin{tikzcd}
		\mod{S}(\C) \arrow[rr, "U"] & & \mod{US}(\D) \\
		\mod{R}(\C) \arrow[r, "U"'] \arrow[u, "\res"] & \mod{UR}(\D) \arrow[r, "{\res[q]}"'] & \mod{QUR}(\D) \arrow[u, "{\res[\psi]}"']
		\end{tikzcd} \] commutes up to natural isomorphism.
		\item Suppose that $R$ is cofibrant as an $S$-module and that \[
			\begin{tikzcd}
			\mod{S}(\C) \arrow[r, yshift=-1mm, "U"'] & \arrow[l, yshift=1mm, "F^S"'] \mod{US}(\D)
			\end{tikzcd} \quad \mathrm{and} \quad \begin{tikzcd}
			\mod{R}(\C) \arrow[r, yshift=-1mm, "U"'] & \arrow[l, yshift=1mm, "F^R"'] \mod{UR}(\D)
			\end{tikzcd}
		\]
are Quillen equivalences. There is a natural map $F^S\res[\psi]N \to \res F^R\ext[q]N$ which is a weak equivalence for all cofibrant $QUR$-modules $N$, i.e., the diagram
			\[\begin{tikzcd}
		\mod{S}(\C) & & \mod{US}(\D) \arrow[ll, "F^S"'] \\
		\mod{R}(\C) \arrow[u, "\res"] & \mod{UR}(\D) \arrow[l, "F^R"] & \mod{QUR}(\D) \arrow[u, "{\res[\psi]}"'] \arrow[l, "{\ext[q]}"]
		\end{tikzcd} \] commutes up to natural weak equivalence on cofibrant objects. 
	\end{enumerate}
\end{prop}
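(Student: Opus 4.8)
The plan is to treat the two parts quite differently, since part~(1) is a point-set verification while part~(2) requires a passage to derived functors. For part~(1), note first that $\res[\psi]\res[q]$ is restriction of scalars along the composite $q\psi$, which by construction of $QUR$ as a cofibrant replacement of $UR$ over $US$ equals $U\theta$; writing $\phi = U\theta$, it therefore suffices to produce a natural isomorphism $\res[\phi]U \cong U\res$ of functors $\mod{R}(\C)\to\mod{US}(\D)$. On underlying objects both functors send an $R$-module $M$ to $UM$, so one only has to check that the two resulting $US$-module structures on $UM$ agree: one equips $UM$ with its $UR$-module structure via the lax monoidal structure map of $U$ and then restricts along $\phi$, the other restricts $M$ along $\theta$ first and then applies $U$. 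This is a diagram chase of exactly the shape of the one in the proof of Proposition~\ref{prop:comparestrong}(1), using naturality of the unit of $F\dashv U$, naturality of the lax structure map of $U$, and its compatibility with multiplication; I would transcribe that argument.

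For part~(2) I would first construct the comparison map on the nose. Let $\nu_N\colon N\to\res[q]\ext[q]N$ be the unit of $\ext[q]\dashv\res[q]$; restricting along $\psi$ and applying the left adjoint $F^S$ yields $F^S\res[\psi]N\to F^S\res[\psi]\res[q]\ext[q]N = F^S\res[\phi]\ext[q]N$. Letting $\alpha\colon\res[\phi]U\xrightarrow{\sim}U\res$ be the isomorphism of part~(1), let $\overline{\alpha}\colon F^S\res[\phi]\Rightarrow\res F^R$ be its mate with respect to the Quillen pairs $F^R\dashv U$ and $F^S\dashv U$; composing with $\overline{\alpha}_{\ext[q]N}$ produces the required natural transformation $\gamma_N\colon F^S\res[\psi]N\to\res F^R\ext[q]N$, which is a genuine point-set natural transformation realising the square.

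To prove that $\gamma_N$ is a weak equivalence for cofibrant $N$, I would observe that every functor occurring is left Quillen: $\res[\psi]$ because $QUR$ is cofibrant as a $US$-module (Lemma~\ref{lem:convenient}, using that $(\D,\widetilde{\D})$ is convenient and $QUR$ is a cofibrant commutative $US$-algebra in $\widetilde{\D}$), $\res$ because $R$ is cofibrant as an $S$-module (Proposition~\ref{prop:extrescoext}), and $\ext[q]$, $F^R$, $F^S$ because extension of scalars and the lifted left adjoints are left Quillen. Hence both composites in the square preserve weak equivalences between cofibrant objects, and it suffices to check that the induced map of total left derived functors $\L F^S\circ\L\res[\psi]\Rightarrow\L\res\circ\L F^R\circ\L\ext[q]$ is an isomorphism. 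Applying the inverse equivalences $\R U$ and using part~(1) in the derived setting, together with the Quillen equivalence $\ext[q]\dashv\res[q]$ --- valid since $q$ is a weak equivalence of commutative monoids and cofibrants are flat (Definition~\ref{defn:flatcofibrants}), so that the base-change map $\nu_N$ is a weak equivalence on cofibrant modules --- one identifies this derived map with the identity; that the relevant mate of an isomorphism remains an isomorphism is Proposition~\ref{prop:mates}, applicable since $F^R\dashv U$ and $F^S\dashv U$ descend to adjoint equivalences. Since on cofibrant objects $\gamma$ computes this derived natural transformation, $\gamma_N$ is a weak equivalence for cofibrant $N$.

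The main obstacle is the one flagged just before the statement: the lifted left adjoint $F^S$ is not the underlying functor $F$ but a reflexive coequalizer built from it, so the point-set comparison of module structures available in the strong monoidal case (Proposition~\ref{prop:comparestrong}) cannot be used. This is precisely what forces the detour through the cofibrant commutative $US$-algebra $QUR$ and the passage to the homotopy category. The delicate points are to assemble $\gamma$ out of honest natural transformations only --- the unit $\nu$ and the mate $\overline{\alpha}$ --- so that its derived counterpart can be recognised, and to verify that this derived counterpart is the chain of isomorphisms above; this amounts to checking compatibility of the mate construction with deriving in the presence of the Quillen equivalences $F^R\dashv U$ and $F^S\dashv U$.
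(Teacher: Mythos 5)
Your part (1) and your construction of the comparison map in part (2) coincide with the paper's: the paper's map is exactly $F^S\res[\psi]$ applied to the unit of $\ext[q]\dashv\res[q]$, followed by (what you package as) the mate $\overline{\alpha}$, i.e.\ the unit of $F^R\dashv U$, the isomorphism from part (1), and the counit of $F^S\dashv U$. Your identification $\res[\psi]\res[q]=\res[\phi]$ and your observation that all functors in the square are left Quillen (using convenience for $\res[\psi]$ and the cofibrancy of $R$ over $S$ for $\res$) are also correct and are used in the paper.

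The gap is the step you explicitly defer: verifying that this point-set composite is a weak equivalence on cofibrant $N$, equivalently that its derived transformation is the composite of isomorphisms you describe. This is the technical heart of the proposition, and the decomposition you lean on does not work piecewise. The intermediate object $\res[\phi]\ext[q]N=\res[\psi]\res[q]\ext[q]N$ is in general not cofibrant over $US$ (this is precisely why $QUR$ was introduced), so $F^S$ applied to the weak equivalence $\res[\psi]\nu_N$ need not be a weak equivalence, and the point-set mate $\overline{\alpha}_M$ need not be a weak equivalence even at cofibrant $M$, since the object $\res[\phi]M$ to which $F^S$ is applied need not be cofibrant; Proposition~\ref{prop:mates} only controls the mate formed from \emph{derived} units and counits in the homotopy categories, not the functor obtained by deriving the point-set mate. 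So the asserted identification has to be proved, and proving it is essentially the whole argument. The paper does this by treating the composite as a single map and passing to its adjunct under $F^S\dashv U$: since $\res[\psi]N$ is cofibrant, $\gamma_N$ is a weak equivalence precisely when the adjunct $\res[\psi]N \to U\res F^R\ext[q]N$ becomes one after composing with $U\res r$ for a fibrant replacement $r\colon F^R\ext[q]N \to fF^R\ext[q]N$; that composite is identified with $\res[\psi]\nu_N$ followed by $\res[\phi]$ of the \emph{derived} unit of $F^R\dashv U$ at the cofibrant object $\ext[q]N$ (both weak equivalences, because restriction of scalars preserves all weak equivalences), and one concludes by factoring, recognising the derived counit of $F^S\dashv U$ at the fibrant object $\res fF^R\ext[q]N$, and invoking 2-out-of-3. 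Some argument of this shape—adjointing the whole composite and inserting the fibrant replacement—is what your sketch is missing. (Your remark that $\nu_N$ being a weak equivalence needs a flatness/Quillen-invariance property of $\D$ is fair and matches what the paper implicitly uses; that is not the issue.)
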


\begin{proof}
Part (1) follows in a similar way as Proposition~\ref{prop:comparestrong}.
	
For (2), we first construct the natural map. Given any $QUR$-module $N$ we may construct a map $F^S\res[\psi]N \to \res F^R\ext[q]N$ via the composite
\begin{align*}
F^S\res[\psi]N 
&\xrightarrow{\mathmakebox[4em]{F^S\res[\psi]\eta_N^q}} F^S\res[\psi]\res[q]\ext[q]N \\ 
&\xrightarrow{\mathmakebox[4em]{\cong}} F^S\res[\phi]\ext[q]N \\
&\xrightarrow{\mathmakebox[4em]{F^S\res[\phi]\eta_{\ext[q]N}^R}} F^S\res[\phi]UF^R\ext[q]N \\
&\xrightarrow{\mathmakebox[4em]{\cong}} F^SU\res F^R\ext[q]N \\
&\xrightarrow{\mathmakebox[4em]{\epsilon^S_{\res F^R\ext[q]N}}} \res F^R\ext[q]N\end{align*}
where $\eta^q$ and $\eta^R$ are the units of the $\ext[q] \dashv \res[q]$ and $F^R \dashv U$ adjunctions respectively, and $\epsilon^S$ is the counit of the $F^S \dashv U$ adjunction. The two isomorphisms follow from similar arguments as in Propositions~\ref{prop:comparestrong} and~\ref{prop:ringsquare}. 

We now show that if $N$ is a cofibrant $QUR$-module then this natural map is a weak equivalence. Firstly consider the commutative diagram
\[\begin{tikzcd}
\res[\psi]N \arrow[r, "{\res[\psi]\eta^q_N}"] \arrow[r, "\sim"'] & \res[\psi]\res[q]\ext[q]N \arrow[r, "\cong"] & \res[\phi]\ext[q]N \arrow[rr, "{\res[\phi]\eta^R_{\ext[q]N}}"] \arrow[drr, "\sim"] \arrow[drr, "{\res[\phi]\overline{\eta}^R_{\ext[q]N}}"'] & & \res[\phi]UF^R\ext[q]N \arrow[r, "\cong"] \arrow[d, "{\res[\phi] U r}"] & U\res F^R\ext[q]N \arrow[d, "{U\res r}"] \\
& & & & \res[\phi]UfF^R\ext[q]N \arrow[r, "\cong"'] & U\res fF^R\ext[q]N 
\end{tikzcd} \]
where $r\colon F^R\ext[q]N \xrightarrow{\sim} fF^R\ext[q]N$ denotes a fibrant replacement and $\overline{\eta}$ denotes the derived unit. The map $\eta^q_N\colon N \to \res[q]\ext[q]N$ is a weak equivalence since $N$ is cofibrant and $\res[q]$ preserves all weak equivalences. Since $\res[\psi]$ also preserves all weak equivalences the leftmost map in the diagram is a weak equivalence. The diagonal map is a weak equivalence since the $F^R \dashv U$ adjunction was assumed to be a Quillen equivalence and $\res[\phi]$ preserves all weak equivalences.

We now may factor the composites $\res[\psi]N \to U\res F^R\ext[q]N$ and $\res[\psi]N \to U\res f F^R\ext[q]N$ into acyclic cofibrations followed by fibrations, apply $F^S$, and then postcompose with the counit $\epsilon^S$ of the $F^S \dashv U$ adjunction to obtain the commutative diagram
\[\begin{tikzcd} 
F^S\res[\psi]N \arrow[r, "\sim"] \arrow[dr, "\sim"'] & F^SQU\res F^R\ext[q]N \arrow[r] & F^SU\res F^R \ext[q]N \arrow[rr, "{\epsilon^S_{\res F^R \ext[q]N}}"]  \arrow[d, "{F^SU\res r}"] & & \res F^R\ext[q]N \arrow[d, "{\res r}"] \arrow[d, "\sim"'] \\
& F^SQU\res fF^R\res[q]N \arrow[r] & F^SU\res f F^R\ext[q]N \arrow[rr, "{\epsilon^S_{\res fF^R \ext[q]N}}"] & & \res f F^R \ext[q]N. 
\end{tikzcd}\]
in which the top row is the map we want to show is a weak equivalence. The right hand vertical map is a weak equivalence since $\res$ preserves all weak equivalences. The composite of the bottom row is the derived counit of the $F^S \dashv U$ adjunction on the fibrant object $\res f F^R\ext[q] N$ and as such is a weak equivalence. It follows from the 2-out-of-3 property of weak equivalences that the top row is a weak equivalence as required.
\end{proof}

\subsection{Quillen pairs post localization}
In this section we will give conditions under which the Quillen pairs of interest descend to Quillen pairs between Bousfield localizations.

We first must recall the projection formula.
\begin{defn}\label{defn:projectionformula} Let $F: \C \rightleftarrows \D :U$ be an adjunction between monoidal categories.
	\begin{enumerate}
	\item Suppose that $U$ is strong monoidal. We say that the \emph{left projection formula for $F \dashv U$ holds} if the natural map $p\colon F(U(X) \wedge Y) \to X \wedge F(Y)$ defined by \begin{center}\begin{tikzcd}F (U(X) \wedge Y) \arrow[r, "F (1 \wedge\eta)"] & F (U(X) \wedge UF (Y)) \arrow[r, "\cong"'] \arrow[r, "F\Phi"]& FU(X \wedge F (Y)) \arrow[r, "\varepsilon"] & X \wedge F (Y)\end{tikzcd}\end{center} is an isomorphism for all $X \in \D$ and $Y \in \C$, where $\Phi$ denotes the monoidal structure map of $U$.  
	\item Suppose that $F$ is strong monoidal. We say that the \emph{right projection formula for $F \dashv U$ holds} if the natural map  $p'\colon X \wedge U(Y) \to U(F (X) \wedge Y)$ defined by \begin{center}\begin{tikzcd}X \wedge U(Y) \arrow[r, "\eta"] & UF (X \wedge U(Y)) \arrow[r, "\cong"'] \arrow[r, "U\Phi^{-1}"]& U (F(X) \wedge FU(Y)) \arrow[r, "U(1 \wedge \varepsilon)"] & U(F(X) \wedge Y)\end{tikzcd}\end{center} is an isomorphism for all $X \in \C$ and $Y \in \D$, where $\Phi$ denotes the monoidal structure map of $F$.
	\end{enumerate}
\end{defn}


\begin{eg}
The right projection formula clearly holds for the extension-restriction of scalars adjunction along a map of commutative monoids in a symmetric monoidal category.
\end{eg}

\begin{eg}
Let $H$ be a closed subgroup of a compact Lie group $G$, and consider the induction-restriction-coinduction adjoint triple $\ind \dashv \forget \dashv \coind$ between $G$-spectra and $H$-spectra. The adjunction $\ind \dashv \forget$ satisfies the left projection formula both at the point-set level and in the homotopy category. On the other hand, the adjunction $\forget \dashv \coind$ does not satisfy the right projection formula at the point-set level, although it does hold in the homotopy category. See~\cite[V.2.3]{MandellMay02} for details on the point-set level, and~\cite[II.4.9]{LMS} for the derived version.
\end{eg}

We firstly deal with the case of (homological) left Bousfield localization. Let $\C$ be a stable, monoidal model category and $E$ be an object of $\C$. We say that a map $f\colon X \to Y$ in $\C$ is an $E$-equivalence if $E \otimes^\L f$ is an isomorphism in the homotopy category. The homological localization of $\C$ at $E$ (when it exists), denoted by $L_E\C$, is the model structure on $\C$ in which the weak equivalences are given by the $E$-equivalences, and the cofibrations are the same as the cofibrations in $\C$. One may also view this as a Bousfield localization in the usual sense by inverting the collection $S$ of $E$-equivalences. If the localizations exist, the universal property of $L_S\C$~\cite[3.3.18]{Hirschhorn03} shows that $L_E\C$ and $L_S\C$ are equal as model categories. We direct the reader to~\cite[6.1]{BalchinGreenlees20} for a general existence theorem for homological localizations. We write $\langle E \rangle$ for the Bousfield class of $E$, that is, for the class of objects $X$ for which $E \otimes^\L X \simeq 0$. In what follows, we implicitly use the fact that a functor which is both left and right Quillen preserves all weak equivalences and therefore is equivalent to its derived functor.
\begin{thm}\label{thm:quillenafterloc}
	Let 
	\begin{center}
		\begin{tikzcd}
			\C \arrow[rr, yshift=3.2mm, "\ext" description] \arrow[rr, yshift=-3.2mm, "\coext" description] & & \D \arrow[ll, "\res" description]
		\end{tikzcd}
	\end{center} 
	be a Quillen adjoint triple between stable monoidal model categories.
\begin{enumerate}
\item Suppose that $\ext \dashv \res$ is a weak monoidal Quillen pair and that $\L\ext \dashv \R\res$ satisfies the right projection formula in the homotopy category. Let $E \in \C$ and $E' \in \D$ and suppose that $L_E\C$ and $L_{E'}\D$ exist. Then
	\begin{center}
		\begin{tikzcd}
		L_E\C \arrow[rr, yshift=3.2mm, "\ext" description] \arrow[rr, yshift=-3.2mm, "\coext" description] & & L_{E'}\D \arrow[ll, "\res" description]
		\end{tikzcd}
	\end{center}
	is a Quillen adjoint triple if $\langle E' \rangle = \langle\L\ext (E) \rangle$.
\item Suppose that $\res \dashv \coext$ is a weak monoidal Quillen pair and that $\L\res \dashv \R\coext$ satisfies the left projection formula in the homotopy category. Let $E \in \C$ and $E' \in \D$ and suppose that $L_E\C$ and $L_{E'}\D$ exist. Then
	\begin{center}
		\begin{tikzcd}
		L_E\C \arrow[rr, yshift=3.2mm, "\ext" description] \arrow[rr, yshift=-3.2mm, "\coext" description] & & L_{E'}\D \arrow[ll, "\res" description]
		\end{tikzcd}
	\end{center}
	is a Quillen adjoint triple if $\langle E \rangle = \langle \res E' \rangle$.
\end{enumerate}
\end{thm}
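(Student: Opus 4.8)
The plan is to recall what it means for a Quillen adjoint triple to descend to a localization, and then to handle the two adjunctions $\ext \dashv \res$ and $\res \dashv \coext$ separately, using the relevant projection formula in each case. For a Quillen pair to descend to a left Bousfield localization, by the universal property of localization (see~\cite[3.3.20]{Hirschhorn03}) it suffices to check that the left adjoint sends the new acyclic cofibrations (i.e.\ the $E$-equivalences which are cofibrations) to weak equivalences in the target; equivalently, since cofibrations are unchanged and the left derived functor already preserves the old weak equivalences, it suffices to check that the left derived functor of the left adjoint sends $E$-equivalences to $E'$-equivalences. Dually, the right adjoint descends provided its right derived functor preserves the localized weak equivalences. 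Since $\res$ is both left and right Quillen, in part (1) we must check that $\L\ext$ sends $E$-equivalences to $E'$-equivalences and that $\res$ sends $E'$-equivalences to $E$-equivalences; in part (2) we must check that $\res$ sends $E$-equivalences to $E'$-equivalences and that $\R\coext$ sends $E'$-equivalences to $E$-equivalences. Once both directions are verified, the two descended adjunctions are automatically Quillen (cofibrations and fibrations are detected by the identity functors to $\C$ and $\D$), so we again obtain a Quillen adjoint triple.

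For part (1), first I would observe that a map $f$ is an $E'$-equivalence iff $E' \otimes^\L f$ is an equivalence, and since $\langle E' \rangle = \langle \L\ext(E)\rangle$ this is equivalent to $\L\ext(E) \otimes^\L f$ being an equivalence. Now for the left adjoint: given an $E$-equivalence $g$ in $\C$, I want $\L\ext(g)$ to be an $E'$-equivalence, i.e.\ $\L\ext(E) \otimes^\L \L\ext(g)$ to be an equivalence. Here the right projection formula for $\L\ext \dashv \R\res$ enters: it gives $\L\ext(E) \otimes^\L \L\ext(g) \simeq \L\ext\big(E \otimes^\L \R\res\,\L\ext(g)\big)$ — more precisely one uses the formula $\L\ext(X)\otimes^\L \L\ext(Y)\cong \L\ext(X\otimes^\L\R\res\L\ext(Y))$ derived from the projection formula together with the strong monoidality of $\L\ext$ at the level of homotopy categories coming from the weak monoidal Quillen pair hypothesis — and since $\R\res$ and $\L\ext$ preserve all equivalences (being derived functors of Quillen functors) and $g$ is an $E$-equivalence, $E\otimes^\L(\cdots)$ is an equivalence. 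For the other direction: given an $E'$-equivalence $f$ in $\D$, I want $\res f$ to be an $E$-equivalence, i.e.\ $E\otimes^\L \R\res f$ an equivalence; using the projection formula again $\L\ext(E\otimes^\L\R\res f)\simeq \L\ext(E)\otimes^\L f$, which is an equivalence since $f$ is an $E'$-equivalence and $\langle E'\rangle=\langle\L\ext(E)\rangle$, and then one concludes $E\otimes^\L\R\res f$ is an equivalence because $\L\ext$ reflects equivalences on the subcategory generated by $E$ — or, more cleanly, because $E\otimes^\L\R\res f\simeq \R\res(\L\ext(E)\otimes^\L f)$ by the projection formula with the roles arranged so that $\R\res$ is applied, and $\R\res$ preserves equivalences.

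Part (2) is the mirror image, with the roles of the two ends of the triple swapped and the \emph{left} projection formula for $\L\res \dashv \R\coext$ used in place of the right one; here $\res$ is the left adjoint and $\coext$ the right adjoint, and $\langle E\rangle = \langle\res E'\rangle$ is exactly the compatibility needed. I would spell out the computation that $\res$ sends $E$-equivalences to $E'$-equivalences directly from the Bousfield-class hypothesis together with the monoidality of $\res$ (which is strong monoidal here since $\res\dashv\coext$ is a weak monoidal Quillen pair with $\res$ being the left adjoint — or rather one uses that $\res$ is the right adjoint of $\ext\dashv\res$ and hence lax monoidal, and combines with the left projection formula), and dually that $\R\coext$ sends $E'$-equivalences to $E$-equivalences via the left projection formula $\L\res(X)\otimes^\L Y\simeq \L\res(X\otimes^\L\R\coext Y)$ rearranged appropriately. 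The main obstacle I anticipate is bookkeeping: making sure that the monoidal structure maps available from the weak (rather than strong) monoidal Quillen pair hypothesis are genuinely equivalences on the homotopy-category objects one needs them on, and matching up which projection-formula isomorphism (and in which variance) delivers exactly the statement "$\L\ext$ sends $E$-equivalences to $E'$-equivalences" and its three siblings, so that all four verifications go through cleanly and the descended adjunctions assemble into a Quillen adjoint triple.
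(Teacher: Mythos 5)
Your part (1) follows the paper's proof in all essentials: you reduce to $E'=\L\ext(E)$ via the equality of Bousfield classes, obtain that $\ext$ preserves the localized equivalences from the derived strong monoidality supplied by the weak monoidal Quillen pair hypothesis, and obtain that $\res$ preserves them from the right projection formula together with the fact that $\res$ preserves all weak equivalences; your ``more cleanly'' formulations are exactly the paper's argument. One caveat: your primary route for the $\ext$ half is circular, since knowing that $g$ is an $E$-equivalence does not by itself make $E\otimes^\L\R\res\,\L\ext(g)$ an equivalence (unwinding that claim with the projection formula returns you to the statement being proved); the monoidality identification $\L\ext(E\otimes^\L g)\simeq\L\ext(E)\otimes^\L\L\ext(g)$ is what does the work, and it alone suffices.

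The genuine gap is in part (2), and it originates in your ``dual'' principle that a Quillen pair descends to left Bousfield localizations as soon as the right derived functor of the right adjoint preserves the localized weak equivalences. That principle is false: take the identity adjunction on $\Sp$, localize the source at $H\mathbb{Q}$ and leave the target unlocalized; the right adjoint preserves the localized equivalences, yet the identity is not left Quillen from $L_{H\mathbb{Q}}\Sp$ to $\Sp$. In both parts of the theorem the conditions to verify are the same two left-adjoint conditions: $\ext$ must send $E$-equivalences between cofibrant objects to $E'$-equivalences (so that $\ext\dashv\res$ descends), and $\res$, as the left adjoint of $\res\dashv\coext$, must send $E'$-equivalences to $E$-equivalences. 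In your part (2) you only address the latter (and with the variances written backwards: $\res$ goes from $\D$ to $\C$, so it takes $E'$-equivalences to $E$-equivalences, and $\coext$ goes the other way), while the required condition on $\ext$ is replaced by a condition on $\R\coext$ that is irrelevant to the Quillen-ness of either adjunction. The missing step is precisely the one the paper extracts from the projection-formula hypothesis of (2): after reducing to $E=\res E'$, one must identify $E'\otimes^\L\L\ext(g)$ with the image, under a derived functor that preserves equivalences, of $\res E'\otimes^\L g$, so that $\L\ext(g)$ is an $E'$-equivalence whenever $g$ is an $E$-equivalence. Without this verification, part (2) does not yield a Quillen adjoint triple between the localizations.
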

\begin{proof}
	For (1), as $\langle E' \rangle = \langle\L\ext (E) \rangle$, the left Bousfield localizations $L_{E'}\D$ and $L_{\L\ext (E)}\D$ are \emph{equal} as model categories. Therefore it suffices to prove the result for the case when $E' = \L\ext E$. 
	By Hirschhorn~\cite[3.3.18]{Hirschhorn03}, to verify that $\ext \dashv \res$ is Quillen between the localizations it is enough to check that $\ext$ sends $E$-equivalences between cofibrant objects to $\L\ext (E)$-equivalenes. This follows from the fact that $\ext \dashv \res$ is a weak monoidal Quillen pair; alternatively see~\cite[3.1]{PolWilliamson}. For the $\res \dashv \coext$ adjunction, by Hirschhorn~\cite[3.3.18]{Hirschhorn03}, it suffices to check that $\res$ sends $\L\ext (E)$-equivalences between cofibrant objects to $E$-equivalences. As $\res (\L\ext (E) \otimes^\L X) \cong E \otimes^\L \res X$ for all $X \in \D$ by the projection formula, this follows.
	
	For (2), the argument is completely analogous. The $\res \dashv \coext$ adjunction follows immediately from the fact that $\res \dashv \coext$ is a weak monoidal Quillen pair, and the $\ext \dashv \res$ adjunction follows from the projection formula.
\end{proof}

The following lemma shows that the hypothesis on Bousfield classes in the previous statement transfers along monoidal Quillen equivalences.
\begin{lem}\label{lem:bousfieldclasses}
Let $F\colon \C \rightleftarrows \D :U$ be a weak monoidal Quillen equivalence between stable, monoidal model categories. 
\begin{enumerate}
\item Let $E, E' \in \C$ be such that $\langle E \rangle = \langle E' \rangle$. Then $\langle \L F(E) \rangle = \langle \L F(E') \rangle$.
\item Let $E, E' \in \D$ be such that $\langle E \rangle = \langle E' \rangle$. Then $\langle \R U(E) \rangle = \langle \R U(E') \rangle$.
\end{enumerate}
\end{lem}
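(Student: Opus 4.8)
The plan is to reduce both statements to a single observation: a weak monoidal Quillen equivalence descends to an equivalence of homotopy categories in which the derived functors $\L F$ and $\R U$ are mutually inverse \emph{strong monoidal} functors. That $\L F$ is strong monoidal at the derived level is immediate from the definition of a weak monoidal Quillen adjunction recalled in Section~\ref{sec:liftadjunctionstomodules}: for cofibrant $A,B$ the oplax structure map $F(A\otimes B)\to FA\otimes FB$ is a weak equivalence, and since $A\otimes B$ is cofibrant and $F$ preserves cofibrant objects, both sides compute the relevant derived functors; hence there are isomorphisms $\L F(X\otimes^\L Y)\simeq \L F(X)\otimes^\L \L F(Y)$ in the homotopy category, and likewise $\L F$ preserves the unit. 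As $\R U$ is an inverse equivalence to the strong monoidal equivalence $\L F$, it too carries a strong monoidal structure, so $\R U(X\otimes^\L Y)\simeq \R U(X)\otimes^\L \R U(Y)$ (see~\cite[\S 3]{SchwedeShipley03}). In particular $\L F$ and $\R U$ are both conservative and essentially surjective.

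For (1), by symmetry in $E$ and $E'$ it suffices to prove $\langle\L F(E)\rangle\subseteq\langle\L F(E')\rangle$. Let $Y\in\D$ satisfy $\L F(E)\otimes^\L Y\simeq 0$. Using essential surjectivity of $\L F$, choose $X\in\C$ with $Y\simeq\L F(X)$. Then
\[
\L F(E\otimes^\L X)\;\simeq\;\L F(E)\otimes^\L\L F(X)\;\simeq\;\L F(E)\otimes^\L Y\;\simeq\;0 ,
\]
so conservativity of $\L F$ gives $E\otimes^\L X\simeq 0$, i.e.\ $X\in\langle E\rangle=\langle E'\rangle$. Hence $E'\otimes^\L X\simeq 0$, and applying $\L F$ once more,
\[
\L F(E')\otimes^\L Y\;\simeq\;\L F(E')\otimes^\L\L F(X)\;\simeq\;\L F(E'\otimes^\L X)\;\simeq\;0 ,
\]
so $Y\in\langle\L F(E')\rangle$. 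Exchanging the roles of $E$ and $E'$ gives the reverse inclusion.

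Part (2) is then proved by exactly the same argument with $\R U$ in place of $\L F$, using that $\R U$ is likewise a conservative, essentially surjective, strong monoidal functor. The only real content lies in the preliminary paragraph, and I expect the main (minor) obstacle to be precisely there: one must invoke the \emph{equivalence} hypothesis — not merely that $F\dashv U$ is a weak monoidal Quillen pair — to know that $\L F$ is conservative and essentially surjective and that the strong monoidal structure transports to $\R U$. I would settle this either by citing the monoidal-equivalence statement in~\cite{SchwedeShipley03} or by recording the short argument that an inverse of a strong monoidal equivalence is again strong monoidal; after that, both parts are a routine chase with Bousfield classes.
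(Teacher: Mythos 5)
Your proposal is correct and follows essentially the same route as the paper: the key point in both is that a weak monoidal Quillen equivalence descends to a strong monoidal equivalence of homotopy categories, after which one transports the vanishing condition across the equivalence (the paper writes the test object as $FE\otimes X\simeq F(E\otimes UX)$ for arbitrary $X\in\D$, which is just your choice of preimage made explicit via the counit rather than by essential surjectivity). The paper likewise disposes of part (2) by the symmetric argument, implicitly using, as you do, that the inverse of a strong monoidal equivalence is again strong monoidal.
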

\begin{proof}
Since $F \dashv U$ is a weak monoidal Quillen equivalence, it descends to an equivalence of homotopy categories in which the left adjoint $F$ is strong monoidal. For ease of reading, we omit notation for derived functors in this proof. We first prove (1), so suppose that $\langle E \rangle = \langle E' \rangle$. Then for any object $X$ of $\D$ we have $FE \otimes X \simeq FE \otimes FUX \simeq F(E \otimes UX)$. Therefore $FE \otimes X \simeq 0$ if and only if $E \otimes UX \simeq 0$ since $F$ is an equivalence. As $\langle E \rangle = \langle E' \rangle$, this is the case if and only if $E' \otimes UX \simeq 0$ if and only if $FE' \otimes X \simeq F(E' \otimes UX) \simeq 0$. Hence $\langle FE \rangle = \langle FE' \rangle$ as required. The proof for (2) is analogous.
\end{proof}

Now we turn to the case of cellularizations. Before we state the result let us recall some terminology. Let $\C$ be a stable model category and $K$ be an object of $\C$. A map $f$ of $\C$ is said to be a $K$-cellular equivalence if $[K,f]_*$ is an isomorphism, where $[-,-]_*$ denotes the graded abelian group of maps in the homotopy category. The cellularization of $\C$ at $X$ (when it exists), denoted $\cell_K\C$, is the model structure on $\C$ in which the weak equivalences are the $K$-cellular equivalences and the fibrations are the same as the fibrations in $\C$. The $K$-cellularization of $\C$ exists if $\C$ is right proper and cellular~\cite[5.1.1]{Hirschhorn03}. We recall from~\cite[2.5, 2.6]{GreenleesShipley14} that if $K$ is a small object of $\C$, then the homotopy category of $\cell_K\C$ is compactly generated by $K$. Finally, for $X,Y \in \C$ we say that $X$ \emph{builds} $Y$ if $Y$ is in the localizing subcategory generated by $X$, that is, if $Y$ is in the smallest replete, triangulated full subcategory of $h\C$ which is closed under arbitrary coproducts and contains $X$.
\begin{thm}\label{thm:quillenaftercell}
	Let 
	\begin{center}
		\begin{tikzcd}
		\C \arrow[rr, yshift=3.2mm, "\ext" description] \arrow[rr, yshift=-3.2mm, "\coext" description] & & \D \arrow[ll, "\res" description]
		\end{tikzcd}
	\end{center} be a Quillen adjoint triple between stable model categories.
	\begin{enumerate} 
	\item Let $K \in \D$ and suppose that $\cell_{\res K}\C$ and $\cell_{K}\D$ exist. If $K$ builds $\L\ext\res K$, then 
	\begin{center}
		\begin{tikzcd}
		\cell_{\res K}\C \arrow[rr, yshift=3mm, "\ext" description] \arrow[rr, yshift=-3mm, "\coext" description] & & \cell_{K}\D \arrow[ll, "\res" description]
		\end{tikzcd}
	\end{center}
	is a Quillen adjoint triple.
	\item Let $K \in \C$ and suppose that $\cell_K\C$ and $\cell_{\L\ext K}\D$ exist. If $K$ builds $\res\L\ext K$, then 
	\begin{center}
		\begin{tikzcd}
		\cell_{K}\C \arrow[rr, yshift=3mm, "\ext" description] \arrow[rr, yshift=-3mm, "\coext" description] & & \cell_{\L\ext K}\D \arrow[ll, "\res" description]
		\end{tikzcd}
	\end{center}
	is a Quillen adjoint triple.
	\end{enumerate}
\end{thm}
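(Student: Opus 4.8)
The plan is to prove both parts using the same strategy, exploiting the fact that a functor which is both left and right Quillen preserves all weak equivalences, and applying Hirschhorn's criterion for Quillen adjunctions to descend to cellularizations~\cite[3.3.18]{Hirschhorn03} (dualized for cellularizations, which are right Bousfield localizations). Recall that since the adjoint triple $\ext \dashv \res \dashv \coext$ is Quillen, the middle functor $\res$ is both left and right Quillen, hence preserves \emph{all} weak equivalences; likewise $\L\res \cong \R\res$ at the derived level. This is the key structural input.

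For part (1), the cellularization $\cell_K\D$ is a right Bousfield localization, so by the dual of Hirschhorn's criterion, to check $\res\dashv\coext$ is Quillen from $\cell_{\res K}\C$ to $\cell_K\D$ it suffices to show that the \emph{right} adjoint $\coext$ sends $\res K$-cellular equivalences between fibrant objects to $K$-cellular equivalences — but it is cleaner to check the equivalent condition on left adjoints: that $\ext$ sends $\res K$-cellular equivalences between cofibrant objects to $K$-cellular equivalences, and that $\res$ sends $K$-cellular equivalences between fibrant objects to $\res K$-cellular equivalences. First I would handle $\res$: since $\res$ preserves all weak equivalences of the underlying model categories, and since the homotopy category of $\cell_K\D$ is compactly generated by $K$ (as $K$ is small), a map $f$ in $\D$ is a $K$-cellular equivalence iff $[K,f]_*$ is an iso; then $[\res K, \res f]_*^\C \cong [K, f]_*^\D$ by adjunction (using $\L\res \cong \R\res$ and the adjunction $\L\res \dashv \R\coext$ at the derived level, or more directly $[\res K, \res f]^{h\C} \cong [\L\ext\res K, f]^{h\D}$... ), so $\res f$ is a $\res K$-cellular equivalence. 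For the $\ext$ direction, I would use the hypothesis that $K$ builds $\L\ext\res K$: if $f$ is a $\res K$-cellular equivalence between cofibrant objects, then $[\res K, f]_*$ is an iso, equivalently $\L\ext\res K \otimes$-type statements... more precisely $[\L\ext\res K, \ext f]_*^\D \cong [\res K, \R\res\ext f]_*^\C$, and one shows $\ext f$ is a $\L\ext\res K$-cellular equivalence, hence (since $K$ builds $\L\ext\res K$, so the localizing subcategory generated by $\L\ext\res K$ is contained in that generated by $K$, giving the reverse implication on cellular equivalences) $\ext f$ is a $K$-cellular equivalence. The subtle point is getting the direction of the "builds" implication right: $K$ builds $\L\ext\res K$ means every $\L\ext\res K$-cellular equivalence is a $K$-cellular equivalence, which is exactly what is needed to conclude $\ext f$ is $K$-cellular from it being $\L\ext\res K$-cellular.

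For part (2), the argument is formally dual/analogous: cellularize $\C$ at $K$ and $\D$ at $\L\ext K$. One checks $\ext$ sends $K$-cellular equivalences between cofibrant objects to $\L\ext K$-cellular equivalences — this is essentially automatic since $\ext$ is left Quillen and $[\L\ext K, \ext f]_* \cong [K, \R\res\ext f]_* = [K, \res\ext f]_*$ and one needs this to detect; actually the clean statement is that $[\L\ext K, \L\ext f]^{h\D}_* \cong [K, \R\res\L\ext f]^{h\C}_*$ and since $K$ builds $\res\L\ext K$ one gets the comparison — and that $\res$ sends $\L\ext K$-cellular equivalences between fibrant objects to $K$-cellular equivalences, using $[\res\L\ext K, \res f] \cong [K,\ldots]$ and again the "builds" hypothesis. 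The main obstacle I anticipate is bookkeeping the adjunction isomorphisms at the \emph{derived} level correctly — in particular repeatedly invoking $\L\res \cong \R\res$ to move between the $\ext\dashv\res$ and $\res\dashv\coext$ adjunctions when computing graded homs $[-,-]_*$ in the homotopy categories, and keeping straight which "builds" relation yields which implication between classes of cellular equivalences. Once that is set up, both parts follow from Hirschhorn's localization criterion applied to each of the two adjunctions in the triple.
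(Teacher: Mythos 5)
Your overall skeleton --- Hirschhorn's criterion~\cite[3.3.18]{Hirschhorn03} plus the derived adjunctions plus the ``builds'' hypothesis --- is indeed the paper's strategy, but two of your key steps are wrong. First, you have the direction of the builds implication backwards. The class of objects $X$ for which $[X,f]_*$ is an isomorphism is a localizing subcategory, so if it contains $K$ it contains everything $K$ builds; hence ``$K$ builds $\L\ext\res K$'' yields that every $K$-cellular equivalence is an $\L\ext\res K$-cellular equivalence, \emph{not} the converse that you state (the converse would require $\L\ext\res K$ to build $K$). Second, you misplace where the hypothesis is needed: it enters exactly in your ``$\res$ step''. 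The identification $[\res K,\res f]_*\cong[K,f]_*$ that you propose via $\L\res\dashv\R\coext$ is not an adjunction isomorphism (that adjunction gives $[\res K,\res M]_*\cong[K,\R\coext\res M]_*$); the correct identity is $[\res K,\res f]_*\cong[\L\ext\res K,f]_*$ via $\L\ext\dashv\R\res$ for $f$ between fibrant objects, and one then needs precisely the (correctly oriented) builds hypothesis to pass from ``$f$ is a $K$-cellular equivalence'' to ``$f$ is an $\L\ext\res K$-cellular equivalence''. As written, your first branch would prove the theorem with no hypothesis on $K$ at all, which cannot be right.

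Your treatment of the $\res\dashv\coext$ half is also off. For a cellularization (right Bousfield localization) the descent criterion is a condition on the \emph{right} adjoint, namely that $\coext$ sends $\res K$-cellular equivalences between fibrant objects to $K$-cellular equivalences; your substitute condition on $\ext$ belongs to the other adjunction and is not an equivalent criterion, and your sketch of it (``one shows $\ext f$ is an $\L\ext\res K$-cellular equivalence'') is asserted without argument and does not follow from $f$ being a $\res K$-cellular equivalence. In fact no hypothesis is needed for this half: since $[K,\R\coext M]_*\cong[\L\res K,M]_*\cong[\res K,M]_*$ (as $\res$ preserves all weak equivalences), the condition on $\coext$ is automatic because the cells correspond under the left Quillen functor $\res$ --- this is why the paper disposes of $\res\dashv\coext$ by citing the Cellularization Principle~\cite[2.7]{GreenleesShipley13} and reserves the builds hypothesis for the $\ext\dashv\res$ half, via the adjunction argument above. (Also, smallness of $K$ and compact generation of $\cell_K\D$ are neither assumed in the statement nor needed in the proof.)
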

\begin{proof}
	We prove (1); the proof of (2) is completely analogous. By~\cite[2.7]{GreenleesShipley13}, the adjunction $\res \dashv \coext$ is Quillen between the cellularizations as the objects correspond. To check that the adjunction $\ext \dashv \res$ is Quillen between the cellularizations, we must check that $\res$ sends $K$-cellular equivalences between fibrant objects to $\res K$-cellular equivalences by Hirschhorn~\cite[3.3.18]{Hirschhorn03}.
	
	Suppose that $M \to N$ is a $K$-cellular equivalence between fibrant objects. By adjunction,  $\res M \to \res N$ is a $\res K$-cellular equivalence if and only if $M \to N$ is a $\L\ext\res K$-cellular equivalence. As $K$ builds $\L\ext\res K$, $M \to N$ is an $\L\ext\res K$-cellular equivalence as required.
\end{proof}

\begin{eg}\label{eg:smallbuild}
	If $\theta\colon S \to R$ is a map of commutative ring spectra, then for any $R$-module $K$, $K$ builds $\L\ext\res K$. To see this, note that $S$ builds $R$ and therefore applying $K \otimes_S^\L -$ gives the claim.
\end{eg}
	
\part{The correspondence of functors}
\section{The general strategy}

\subsection{Recap of the algebraic model}
In this section we recap the construction of the algebraic model for (co)free $G$-spectra for $G$ connected. Recall our convention that in this part of the paper, everything is rationalized without comment. 

The algebraic model for free $G$-spectra is due to Greenlees-Shipley~\cite{GreenleesShipley11, GreenleesShipley14}, and the cofree case is work of Pol and the author~\cite{PolWilliamson}. We note that in the free case, we follow the Eilenberg-Moore approach taken in~\cite{GreenleesShipley14}, rather than the approach via Koszul duality taken in~\cite{GreenleesShipley11}. It is important to note that the correspondence of functors we obtain by using the zig-zag of Quillen equivalences in~\cite{GreenleesShipley14} is \emph{different} from the correspondence obtained in~\cite{GreenleesShipley11} by using the Koszul duality approach. In the cofree case we follow the direct approach given in~\cite{PolWilliamson}, rather than the approach given by first passing to free $G$-spectra and then using the equivalence between derived torsion and derived complete modules.

Free $G$-spectra are modelled by the cellularization $\mathrm{Cell}_{G_+}\Sp_G$, and cofree $G$-spectra are modelled by the homological localization $L_{EG_+}\Sp_G$. We now recall the Quillen equivalences used in the construction of the algebraic model. Note that not all of the Quillen adjunctions below are Quillen equivalences, but they all become so after appropriate localization/cellularization.
\begin{enumerate}
	\item \emph{Change of rings:}
	Beginning in $G$-spectra, the first step is to change rings along the map $\kappa\colon S^0 \to DEG_+$ where $DEG_+ = F(EG_+,S^0)$. Therefore the first stage is the extension and restriction of scalars adjunction $$\begin{tikzcd}
	\Sp_G \arrow[rrr, "DEG_+ \wedge -" description, yshift=2mm] & & & \mod{DEG_+}(\Sp_G). \arrow[lll, "{\res[\kappa]}" description, yshift=-2mm]
	\end{tikzcd}$$
	
	\item\emph{Fixed points-inflation adjunction:} The next step is to use categorical fixed points to remove equivariance. More precisely, the next stage is the adjunction
	$$\begin{tikzcd}
	\mod{DEG_+}(\Sp_G) \arrow[rrrr, "(-)^G" description, yshift=-2.2mm] & & & & \mod{DBG_+}(\Sp) \arrow[llll, "DEG_+ \otimes_{DBG_+} -" description, yshift=2.2mm]
	\end{tikzcd}$$
	where we have suppressed notation for the inflation functors in the left adjoint.
	
	\item\emph{Shipley's algebraicization theorem:} The next stage is to use Shipley's algebraicization theorem~\cite{Shipley07} to pass from modules over the commutative ring spectrum $DBG_+$ to modules over a commutative DGA which we denote $\Theta DBG_+$. This is a zig-zag of Quillen equivalences. See Section~\ref{sec:shipley} for more details on the zig-zag of Quillen equivalences.
	
	\item\emph{Formality:} One can next use the fact that polynomial algebras are strongly intrinsically formal as \emph{commutative} DGAs to construct a quasiisomorphism $z\colon H^*BG \to \Theta DBG_+$. This gives a Quillen equivalence 
	$$\begin{tikzcd}
	\mod{\Theta DBG_+} \arrow[rrrr, "{\res[z]}" description, yshift=-2mm] & & & & \mod{H^*BG} \arrow[llll, "{\Theta DBG_+ \otimes_{H^*BG} -}" description, yshift=2mm]
	\end{tikzcd}$$
	via extension and restriction of scalars.
	
	\item\emph{Torsion and completion:} The final step is to identify the resulting localization or cellularization with an abelian model. In the localization case, this is the category of $L_0^I$-complete modules, and in the cellularization case this is the category of torsion modules. 
\end{enumerate}

\subsection{Proving the correspondence of functors}\label{sec:strategy}
In this section we explain the general process for proving that the functors correspond. 

The general setup is that we will have a square
\begin{center}
	\begin{tikzcd}
	\quad\C_G\quad \arrow[dd, xshift=2.5mm, "\ext" description] \arrow[dd, xshift=-2.5mm, "\psext" description] \arrow[rr, yshift=1.7mm, "F_G"' description] & & \quad\D_G\quad \arrow[dd, xshift=2.75mm, "\exta" description] \arrow[dd, xshift=-2.75mm, "\psexta" description] \arrow[ll, yshift=-1.7mm, "U_G"' description] \\
	& & \\
	\quad\C_H\quad \arrow[uu, xshift=7.5mm, "\res"' description] \arrow[uu, xshift=-7.5mm, "\twext" description] \arrow[rr, yshift=1.7mm, "F_H"' description] & & \quad\D_H\quad \arrow[uu, xshift=8mm, "\resa"' description] \arrow[uu, xshift=-8mm, "\twexta" description] \arrow[ll, yshift=-1.7mm, "U_H"' description]
	\end{tikzcd}
\end{center}
of model categories and Quillen functors, where $\R\psext \cong \L\ext$ and $\R\psext[\phi] \cong \L\ext[\phi]$, and $F_G \dashv U_G$ and $F_H \dashv U_H$ are Quillen equivalences. Depending on the type of square we can then apply Theorem~\ref{thm:compare} in conjunction with Propositions~\ref{prop:comparestrong},~\ref{prop:ringsquare} and~\ref{prop:compareweak} to conclude that the vertical Quillen functors correspond, and hence that all six squares of derived functors commute.

In general the vertical functors will arise from the extension-restriction-coextension of scalars functors along a map of commutative ring spectra $\theta\colon S \to R$. In general, $R$ need not be cofibrant as an $S$-module, so that $\res \dashv \coext$ will not be a Quillen adjunction by Proposition~\ref{prop:extrescoext}. To rectify this, we must cofibrantly replace $R$ as a commutative $S$-algebra to obtain $S \to QR$. In a convenient model structure, this implies that $QR$ is also cofibrant as an $S$-module, see Section~\ref{sec:flat}. In all of our examples of interest, Quillen invariance of modules holds and therefore shows that extension and restriction of scalars gives a Quillen equivalence $\mod{R} \simeq_Q \mod{QR}$. 

In summary, each step will consist of:
\begin{enumerate}
	\item Construct a square of Quillen functors. This may involve taking cofibrant replacements of commutative algebras in a convenient model structure. As such, unless stated otherwise all cofibrant replacements of commutative algebras in what follows are performed in a convenient model structure.
	\item Check that under the relevant localizations/cellularizations which make the horizontals Quillen equivalences, the vertical functors are still Quillen. In general, this can be achieved by using Theorem~\ref{thm:quillenafterloc}, Lemma~\ref{lem:bousfieldclasses} and Theorem~\ref{thm:quillenaftercell}.
	\item Use Propositions~\ref{prop:comparestrong},~\ref{prop:ringsquare} and~\ref{prop:compareweak} to verify that the hypotheses of Theorem~\ref{thm:compare} are satisfied and hence deduce that the Quillen functors correspond.
\end{enumerate}

\section{Change of rings}\label{sec:changeofrings}
Let $G$ be a connected compact Lie group and $H$ be a connected closed subgroup of $G$.
\subsection{The setup}
The first step in the series of Quillen equivalences is a change of rings along the maps $S^0 \to DEG_+$ and $S^0 \to DEH_+$. The construction of adjoints between $\mod{DEG_+}(\Sp_G)$ and $\mod{DEH_+}(\Sp_H)$ requires some explanation, because we not only have to change the underlying category between $\Sp_G$ and $\Sp_H$, but also the rings that we take modules over. 

\begin{lem}\label{lem:modulelemma}
	Let $\C$ and $\D$ be monoidal categories and $R$ be a monoid in $\C$. Suppose that we have adjunctions 
	\begin{center}
		\begin{tikzcd}
		\C \arrow[rr, yshift=-3.5mm, "\coind" description] \arrow[rr, yshift=3.5mm, "\ind" description] & & \arrow[ll, "\forget" description] \D
		\end{tikzcd}
	\end{center}
	where $\forget$ is strong monoidal. If the left projection formula for $\ind \dashv \forget$ holds (see Definition~\ref{defn:projectionformula}) then we have adjunctions
	\begin{center}
		\begin{tikzcd}
		\mod{\forget R}(\C) \arrow[rr, yshift=-3.5mm, "\coind" description] \arrow[rr, yshift=3.5mm, "\ind" description] & & \arrow[ll, "\forget" description] \mod{R}(\D)
		\end{tikzcd}
	\end{center}
	between the categories of modules.
\end{lem}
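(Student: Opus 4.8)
The plan is to construct the three functors between the module categories by hand and then promote the two underlying adjunctions, in the style of the diagram chases of Proposition~\ref{prop:comparestrong} and Proposition~\ref{prop:ringsquare}. Write $\Phi$ for the lax monoidal structure map of $\forget$, which is an isomorphism since $\forget$ is strong monoidal, and note that $\forget R$ is then a monoid in $\C$. The functor $\forget\colon\mod{R}(\D)\to\mod{\forget R}(\C)$ is immediate: an $R$-module $M$ with action $a$ becomes a $\forget R$-module via
\[\forget R\wedge\forget M\xrightarrow{\Phi}\forget(R\wedge M)\xrightarrow{\forget a}\forget M,\]
and this is clearly functorial. Dually, for a $\forget R$-module $N$ with action $\nu$, I would equip $\coind N$ with the $R$-action whose adjunct under $\forget\dashv\coind$ is
\[\forget(R\wedge\coind N)\xrightarrow{\Phi^{-1}}\forget R\wedge\forget\coind N\xrightarrow{1\wedge\varepsilon}\forget R\wedge N\xrightarrow{\nu}N,\]
where $\varepsilon\colon\forget\coind\Rightarrow\mathrm{id}$ is the counit; checking the module axioms here is a short diagram chase using only that $\forget$ is strong monoidal.

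All of the content is in the functor $\ind$, for which there is no ``extension of scalars'' construction available: $\ind$ need not be monoidal, and $\ind$ of a monoid need not be a monoid. This is exactly where the left projection formula is used. For a $\forget R$-module $N$ I would give $\ind N$ the $R$-action
\[R\wedge\ind N\xrightarrow{p^{-1}}\ind(\forget R\wedge N)\xrightarrow{\ind\nu}\ind N,\]
where $p$ is the projection formula isomorphism of Definition~\ref{defn:projectionformula}. Conceptually, the left projection formula says that $\ind$ is a map of module categories over $\D$, where $\C$ carries the $\D$-action $X\cdot Y=\forget X\wedge Y$; since $\forget R$-modules in $\C$ are precisely $R$-modules for this action, such a functor automatically sends modules to modules. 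Concretely, the unit axiom reduces to the unit coherence of $p$, and associativity reduces, via naturality of $p$ and associativity of $\nu$, to the coherence identity
\[p_{R\wedge R,\,N}\circ\ind(\Phi\wedge 1_N)=(1_R\wedge p_{R,N})\circ p_{R,\,\forget R\wedge N}\colon\ \ind(\forget R\wedge\forget R\wedge N)\longrightarrow R\wedge R\wedge\ind N.\]
This identity (for all objects of $\D$ and $\C$) follows by unwinding the construction of $p$ given in Definition~\ref{defn:projectionformula} together with the triangle identities for $\ind\dashv\forget$ and the coherence of $\Phi$. I expect this bookkeeping — the coherence of $p$, and the corresponding check in the unit axiom — to be the main obstacle; the rest is routine.

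Finally I would upgrade the two underlying adjunctions. For $\ind\dashv\forget$: given $N\in\mod{\forget R}(\C)$ and $M\in\mod{R}(\D)$, the bijection $\hom_{\D}(\ind N,M)\cong\hom_{\C}(N,\forget M)$ restricts to a bijection between $R$-linear and $\forget R$-linear maps, that is, a map $f\colon\ind N\to M$ is $R$-linear if and only if its adjunct is $\forget R$-linear; this is a diagram chase using the definitions of the two module structures, naturality of $p$, and the triangle identities. The same argument, with $\coind$ in place of $\ind$ and the data of the previous paragraph in place of $p$, gives $\forget\dashv\coind$ on module categories. Note that the left projection formula hypothesis enters in exactly one place, namely to make $\ind N$ into an $R$-module; everything else uses only that $\forget$ is strong monoidal.
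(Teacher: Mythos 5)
Your proposal is correct and follows essentially the same route as the paper: the same module structures on $\forget M$, $\ind N$ (via the left projection formula map $p$) and $\coind N$ (via the map $p'$, equivalently defining the action by its adjunct, which needs no invertibility), with the verifications reducing to exactly the coherence of $p$, the triangle identities and the coherence of $\Phi$ that the paper checks in its appendix. The only cosmetic difference is that you lift the adjunctions by showing the hom-set bijections restrict to module maps, whereas the paper checks that the units and counits are module maps; these are interchangeable.
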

\begin{proof}
	Since $\forget$ is strong monoidal it sends monoids in $\C$ to monoids in $\D$. Moreover, it follows that $\ind$ is oplax monoidal and $\coind$ is lax monoidal. Let $M$ be an $\forget R$-module.
	
	To give $\ind M$ an $R$-module structure we define the action map by $$R \wedge \ind M \xrightarrow[\cong]{p^{-1}} \ind (\forget R \wedge M) \xrightarrow{\ind (a)} \ind M$$ where $p$ is the projection formula map and $a\colon \forget R \wedge M \to M$ is the module structure map for $M$. Similarly, we define an $R$-module structure on $\coind M$ by $$R \wedge \coind M \xrightarrow{p'} \coind (\forget R \wedge M) \xrightarrow{\coind (a)} \coind M$$ where $p'$ is the natural map in the right projection formula for $\coind$, see Definition~\ref{defn:projectionformula}. Note that we do not require that the right projection formula for $\forget\dashv \coind$ holds; we only require the existence of the map $p'$, not that it is an isomorphism.
	
	It remains to check that the action maps defined above are associative and unital. So as to avoid interrupting the flow, we defer the remainder of the proof to Appendix~\ref{sec:appendix}.
\end{proof}

\begin{prop}\label{prop:indrescoextmods}
	There is a Quillen adjoint triple of functors
	\begin{center}
		\begin{tikzcd}
		\mod{DEH_+}(\Sp_H) \arrow[rr, yshift=-3.5mm, "\coind" description] \arrow[rr, yshift=3.5mm, "\ind" description] & & \arrow[ll, "\forget" description] \mod{DEG_+}(\Sp_G)
		\end{tikzcd}
	\end{center}
	where $\forget$ is the restriction from $G$-spectra to $H$-spectra, $\ind = G_+ \wedge_H -$ and $\coind = F_H(G_+,-).$
\end{prop}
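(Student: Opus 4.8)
The plan is to read off the underlying adjoint triple from Lemma~\ref{lem:modulelemma}, and then to upgrade it to a Quillen adjoint triple by exploiting that the model structures on modules over ring $G$- and $H$-spectra are right-lifted, so that fibrations, acyclic fibrations and weak equivalences are detected on underlying spectra.

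First I would assemble the hypotheses of Lemma~\ref{lem:modulelemma}. The restriction functor $\forget\colon \Sp_G \to \Sp_H$ is strong symmetric monoidal; it is the middle term of the adjoint triple $\ind = G_+ \wedge_H - \dashv \forget \dashv \coind = F_H(G_+,-)$; and the left projection formula for $\ind \dashv \forget$ holds at the point-set level, as recorded after Definition~\ref{defn:projectionformula} (see~\cite[V.2.3]{MandellMay02}). Applying Lemma~\ref{lem:modulelemma} with the commutative ring $G$-spectrum $R = DEG_+$ then produces an adjoint triple $\ind \dashv \forget \dashv \coind$ between $\mod{\forget DEG_+}(\Sp_H)$ and $\mod{DEG_+}(\Sp_G)$, in which all three functors are computed on underlying spectra. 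Since $\forget EG_+$ is a free contractible $H$-space, and hence a model for $EH_+$, we have $\forget DEG_+ = F(\forget EG_+,S^0) \simeq DEH_+$; taking $EH := \forget EG$ as our model for $EH$ makes this an equality and identifies the source with $\mod{DEH_+}(\Sp_H)$, giving the functors in the statement.

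It remains to check that this triple is a Quillen adjoint triple. Recall from the introduction that $\ind \dashv \forget \dashv \coind$ on $\Sp_G$ and $\Sp_H$ is already a Quillen adjoint triple, so $\forget\colon \Sp_G \to \Sp_H$ preserves fibrations, acyclic fibrations and weak equivalences, while $\coind\colon \Sp_H \to \Sp_G$ — being right Quillen, since $\forget$ is left Quillen — preserves fibrations and acyclic fibrations. The right-lifted model structures on $\mod{DEG_+}(\Sp_G)$ and $\mod{DEH_+}(\Sp_H)$ exist (Examples~\ref{rem:existence}), and in them fibrations, acyclic fibrations and weak equivalences are created in the underlying categories of $G$- and $H$-spectra. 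As the module-level $\forget$ and $\coind$ are computed on underlying spectra, it follows that $\forget$ on modules preserves fibrations, acyclic fibrations and weak equivalences — so $\ind \dashv \forget$ on modules is a Quillen adjunction — and that $\coind$ on modules preserves fibrations and acyclic fibrations — so $\forget \dashv \coind$ on modules is a Quillen adjunction; hence the triple is a Quillen adjoint triple, and the same argument applies verbatim with the flat model structures of~\cite{Stolz11} in place of the stable ones. There is no serious obstacle here: all of the homotopical input is packaged into Lemma~\ref{lem:modulelemma} and into the fact that the underlying triple is a Quillen adjoint triple, and the only fiddly point — the compatibility of the $R$-module structures that Lemma~\ref{lem:modulelemma} places on $\ind M$ and $\coind M$ with the structures seen on underlying spectra, which is where strong monoidality of $\forget$ and naturality of the projection-formula maps enter — has already been dealt with in the proof of that lemma.
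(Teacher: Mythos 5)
Your proposal is correct and follows essentially the same route as the paper: invoke the left projection formula for $\ind \dashv \forget$, apply Lemma~\ref{lem:modulelemma} with $R = DEG_+$, and then deduce the Quillen property from the fact that fibrations and weak equivalences in the right-lifted module structures are created on underlying spectra, where $\forget$ and $\coind$ are already right Quillen. The only additions beyond the paper's (terser) argument are the explicit identification $\forget DEG_+ \simeq DEH_+$ and the remark about flat model structures, both of which are harmless.
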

\begin{proof}
	Recall that the left projection formula holds for $\ind \dashv \forget$, see~\cite[V.2.3]{MandellMay02}. By Lemma~\ref{lem:modulelemma}, it follows that we have adjunctions as described, so it only remains to check that they are Quillen. Since the weak equivalences and fibrations in the categories of modules are created by the forgetful functors to the underlying equivariant spectra, $\forget$ and $\coind$ are right Quillen, since they are right Quillen when viewed as functors between $G$-spectra and $H$-spectra.
\end{proof}

\subsection{Quillen functors}
In this section we show that all the functors of interest are Quillen after the relevant localizations and cellularizations.
\begin{prop}\label{prop:changeofringsquillen1}
	There are Quillen adjoint triples of functors
	\begin{center}
		\begin{tikzcd}
		L_{EH_+}\Sp_H \arrow[rr, yshift=-3.5mm, "\coind" description] \arrow[rr, yshift=3.5mm, "\ind" description] & & \arrow[ll, "\forget" description] L_{EG_+}\Sp_G
		\end{tikzcd}
	\end{center}
	and
	\begin{center}
		\begin{tikzcd}
		L_{EH_+}\mod{DEH_+}(\Sp_H) \arrow[rr, yshift=-3.5mm, "\coind" description] \arrow[rr, yshift=3.5mm, "\ind" description] & & \arrow[ll, "\forget" description] L_{EG_+}\mod{DEG_+}(\Sp_G).
		\end{tikzcd}
	\end{center}
\end{prop}
\begin{proof}
Since $\forget EG_+ \simeq EH_+$, this follows from Theorem~\ref{thm:quillenafterloc}(2) together with Proposition~\ref{prop:indrescoextmods}.
\end{proof}

\begin{prop}\label{prop:changeofringsquillen2}
	There are Quillen adjoint triples of functors
	\begin{center}
		\begin{tikzcd}
		\cell_{H_+}\Sp_H \arrow[rr, yshift=-3.5mm, "\coind" description] \arrow[rr, yshift=3.5mm, "\ind" description] & & \arrow[ll, "\forget" description] \cell_{G_+}\Sp_G
		\end{tikzcd}
	\end{center}
	and
	\begin{center}
		\begin{tikzcd}
		\cell_{H_+}\mod{DEH_+}(\Sp_H) \arrow[rr, yshift=-3.5mm, "\coind" description] \arrow[rr, yshift=3.5mm, "\ind" description] & & \arrow[ll, "\forget" description] \cell_{G_+}\mod{DEG_+}(\Sp_G).
		\end{tikzcd}
	\end{center}
\end{prop}
\begin{proof}
	Note that $\ind H_+ \simeq G_+$. Since $EH_+ \wedge \forget G_+ \simeq \forget G_+$, we see that $\forget G_+$ is a free $H$-spectrum and hence is built from $H_+$. The claim then follows from Theorem~\ref{thm:quillenaftercell}(2) together with Proposition~\ref{prop:indrescoextmods}.
\end{proof}

\subsection{The corresponding adjoints}
We consider the squares
\begin{equation}\label{eq:changeofrings}	
	\begin{tikzcd}
	L_{EG_+}\mod{S^0}(\Sp_G) \arrow[dd, "\forget" description] \arrow[rr, yshift=2mm, "\extkg" description] 
	& & L_{EG_+}\mod{DEG_+}(\Sp_G) \arrow[dd, "\forget" description] \arrow[ll, yshift=-2mm, "\reskg" description] \\
	& & \\
	L_{EH_+}\mod{S^0}(\Sp_H) \arrow[uu, xshift=-5mm, "\ind" description] \arrow[uu, xshift=5mm, "\coind" description] \arrow[rr, yshift=2mm, "\extkh" description] & & L_{EH_+}\mod{DEH_+}(\Sp_H) \arrow[ll, yshift=-2mm, "\reskh" description] \arrow[uu, xshift=-5mm, "\ind" description] \arrow[uu, xshift=5mm, "\coind" description] 
	\end{tikzcd}
	\end{equation}
and
\begin{equation}\label{eq:changeofrings2}	
 \begin{tikzcd}
	\cell_{G_+}\mod{S^0}(\Sp_G) \arrow[dd, "\forget" description] \arrow[rr, yshift=2mm, "\extkg" description] 
	& & \cell_{G_+}\mod{DEG_+}(\Sp_G) \arrow[dd, "\forget" description] \arrow[ll, yshift=-2mm, "\reskg" description] \\
	& & \\
	\cell_{H_+}\mod{S^0}(\Sp_H) \arrow[uu, xshift=-5mm, "\ind" description] \arrow[uu, xshift=5mm, "\coind" description] \arrow[rr, yshift=2mm, "\extkh" description] & & \cell_{H_+}\mod{DEH_+}(\Sp_H) \arrow[ll, yshift=-2mm, "\reskh" description] \arrow[uu, xshift=-5mm, "\ind" description] \arrow[uu, xshift=5mm, "\coind" description] 
	\end{tikzcd}
\end{equation}
where the horizontals are given by extension and restriction of scalars along the ring maps $\kappa_G\colon S^0 \cong F(S^0, S^0) \to F(EG_+, S^0) = DEG_+$ and $\kappa_H\colon S^0 \to DEH_+$ and the verticals are the change of groups adjunctions of equivariant spectra. 

\begin{prop}\label{prop:changeofrings}
The vertical Quillen functors in~(\ref{eq:changeofrings}) and~(\ref{eq:changeofrings2}) correspond.
\end{prop}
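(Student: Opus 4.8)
The plan is to apply Theorem~\ref{thm:compare} to each of the squares~(\ref{eq:changeofrings}) and~(\ref{eq:changeofrings2}), after matching them to the abstract framework of that theorem as follows. The horizontal Quillen equivalences $F_G \dashv U_G$ and $F_H \dashv U_H$ are taken to be the (localized, respectively cellularized) change of rings adjunctions $\extkg \dashv \reskg$ and $\extkh \dashv \reskh$; that these become Quillen equivalences after the relevant localization or cellularization is part of the construction of the algebraic model (cf.~\cite{GreenleesShipley14, PolWilliamson}). The vertical functors are the change of groups triple $\ind \dashv \forget \dashv \coind$, identified with the abstract functors by $\ext = \psext = \forget$, $\res = \coind$ and $\twext = \ind$ (and likewise for the functors between $\D_G$ and $\D_H$). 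These are Quillen after localization by Proposition~\ref{prop:changeofringsquillen1} and after cellularization by Proposition~\ref{prop:changeofringsquillen2}. Finally, since $\forget$ is simultaneously left and right Quillen it preserves all weak equivalences, so the hypotheses $\R\psext \cong \L\ext$ and $\R\psexta \cong \L\exta$ (both of the form $\R\forget \cong \L\forget$) hold automatically.

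With the setup in place, Theorem~\ref{thm:compare} reduces the problem to verifying two of its four conditions, and I would check~(b) and~(d). Unwinding the identifications, (b) asks for a natural transformation between $\coind \circ \reskh$ and $\reskg \circ \coind$ which is a weak equivalence on fibrant objects, and (d) asks for one between $\forget \circ \reskg$ and $\reskh \circ \forget$. Both are supplied --- in fact as point-set level natural isomorphisms --- by Proposition~\ref{prop:comparestrong}, applied to the strong monoidal adjunction $\forget \dashv \coind$ between $\Sp_G$ and $\Sp_H$ together with the map of commutative monoids $\kappa_G \colon S^0 \to DEG_+$ in $\Sp_G$: the two natural isomorphisms it produces are precisely statements~(b) and~(d), once one records the identification $\forget DEG_+ = DEH_+$ and $\forget \kappa_G = \kappa_H$ coming from the natural isomorphism $\forget F(EG_+, S^0) \cong F(\forget EG_+, S^0)$ and the choice $EH = \forget EG$. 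Since these are isomorphisms before localizing or cellularizing, they remain so afterwards. Theorem~\ref{thm:compare} then yields that all six derived squares commute, that is, that the vertical Quillen functors correspond; the argument for~(\ref{eq:changeofrings2}) is word for word identical, using Proposition~\ref{prop:changeofringsquillen2} in place of Proposition~\ref{prop:changeofringsquillen1}.

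I do not expect a genuine obstacle here, since the content is almost entirely the bookkeeping of the previous paragraph. The one point requiring slight care is that $\forget$ genuinely serves as both $\ext$ and $\psext$: this amounts to the fact that it remains both left and right Quillen after localizing at $EG_+$ (respectively cellularizing at $G_+$), so that $\L\forget \cong \R\forget$ is unambiguous, together with the compatibility of the identification $\forget DEG_+ = DEH_+$ with the unit maps $\kappa_G$ and $\kappa_H$, which is exactly what lets Proposition~\ref{prop:comparestrong} apply on the nose. It is also worth noting that, in contrast to the weak monoidal case treated later in the paper, the natural transformations demanded by Theorem~\ref{thm:compare} are here honest isomorphisms rather than merely weak equivalences on cofibrant or fibrant objects, so the verification is entirely formal.
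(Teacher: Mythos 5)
Your proposal is correct and follows essentially the same route as the paper: both fit the squares into Theorem~\ref{thm:compare} with $\ext=\psext=\forget$, $\res=\coind$, $\twext=\ind$, quote Propositions~\ref{prop:changeofringsquillen1} and~\ref{prop:changeofringsquillen2} for Quillen-ness after localization/cellularization, and verify conditions (b) and (d) via point-set natural isomorphisms $\reskg\coind\cong\coind\reskh$ and $\reskh\forget\cong\forget\reskg$. The only difference is that the paper produces these two isomorphisms by direct diagram chases on the module structures, whereas you deduce them from Proposition~\ref{prop:comparestrong} applied to the strong monoidal adjunction $\forget\dashv\coind$ and $\kappa_G\colon S^0\to DEG_+$; this is legitimate, modulo the routine triangle-identity check (in effect carried out in the paper's chase and in Proposition~\ref{prop:fixedpoints1}) that the $DEG_+$-module structure on $\coind M$ from Lemma~\ref{lem:modulelemma} agrees with the one used in the lifted adjunction underlying Proposition~\ref{prop:comparestrong}.
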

\begin{proof}
The vertical functors in~(\ref{eq:changeofrings}) and~(\ref{eq:changeofrings2}) are Quillen by Propositions~\ref{prop:changeofringsquillen1} and~\ref{prop:changeofringsquillen2}, and the horizontals are Quillen equivalences by~\cite[\S 8 Step 1]{PolWilliamson} and~\cite[\S 3]{GreenleesShipley14} respectively. This puts us in the setup of Theorem~\ref{thm:compare}. 

We now argue that the forgetful functor $\forget$ commutes with the restrictions $\res[\kappa]$. None of the functors change the underlying objects so it is enough to show that the module structures agree. Let $M$ be an object of $\mod{DEG_+}(\Sp_G)$ with module action map $a\colon DEG_+ \wedge M \to M$. We write $\Phi$ for the monoidal structure map of $\forget$. Consider the diagram
\begin{center}
	\begin{tikzcd}
	S^0 \wedge \reskh\forget M \arrow[d, "1"'] \arrow[r, "\kappa_H \wedge 1"] & DEH_+ \wedge \forget M \arrow[r, "1"] & \forget DEG_+ \wedge \forget M \arrow[r, "\Phi"] \arrow[r, "\cong"'] & \forget(DEG_+ \wedge M) \arrow[d, "1"]  \arrow[r, "\forget(a)"] & \forget M \arrow[d, "1"] \\
	S^0 \wedge \forget\reskg M \arrow[r, "1"'] \arrow[ur, "\kappa_H \wedge 1" description] & \forget S^0 \wedge \forget\reskg M \arrow[r, "\Phi"'] \arrow[r, "\cong"] & \forget (S^0 \wedge \reskg M) \arrow[ru, "\forget (\kappa_G \wedge 1)" description] \arrow[r, "\forget(\kappa_G \wedge 1)"'] & \forget (DEG_+ \wedge M) \arrow[r, "i^*(a)"'] & i^*M
	\end{tikzcd}
\end{center}
in which the top row is the module structure defined by $\reskh\forget M$ and the bottom row is the module structure defined by $\forget\reskg M$. This diagram commutes by naturality of $\Phi$ using that $\forget\kappa_G = \kappa_H$, and hence the module structures agree (i.e., the identity is a module map). Therefore we have a natural isomorphism $\reskh\forget \cong \forget\reskg.$ 

We now check that the identity gives a natural isomorphism $\coind\reskh \cong \reskg\coind$. Let $M$ be a $DEH_+$-module with action map $a\colon DEH_+ \wedge M \to M$. Then the underlying objects of $\coind\reskh M$ and $\reskg\coind M$ are the same so it remains to check the module structures agree. Consider the diagram
\begin{center}
	\begin{tikzcd}
	S^0 \wedge \coind\reskh M \arrow[d, "1"'] \arrow[r, "p^{-1}"] \arrow[dr, "\kappa_G \wedge 1" description] & \coind(S^0 \wedge \reskh M) \arrow[r, "\coind(\kappa_H \wedge 1)"] \arrow[rd, "\coind(\kappa_H \wedge 1)" description] & \coind (DEH_+ \wedge M) \arrow[r, "\coind(a)"] & \coind M \arrow[d, "1"] \\
	S^0 \wedge \reskg\coind M \arrow[r, "\kappa_G \wedge 1"'] & DEG_+ \wedge \coind M \arrow[r, "p^{-1}"'] & \coind (DEH_+ \wedge M) \arrow[r, "\coind (a)"'] & \coind M
	\end{tikzcd}
\end{center}
in which the top row is the module structure on $\coind\reskh M$ and the bottom row is the module structure on $\reskg\coind M$. The diagram commutes by the naturality of $p$, and hence we have a natural isomorphism $\coind\reskh \cong \reskg\coind$. It follows from the special case of Theorem~\ref{thm:compare} in which the two downward pointing arrows are the same, that the Quillen functors correspond. 
\end{proof}

\section{The fixed points-inflation adjunction}\label{sec:fixedpoints}
In this section we describe how the passage from equivariant module spectra to non-equivariant module spectra interacts with change of groups functors. Since the categorical fixed points functor is not right Quillen between the flat model structures, extra care is required to ensure that all functors in play are Quillen.

\subsection{Quillen pairs}
Stolz~\cite{Stolz11} constructs a flat model structure on orthogonal $G$-spectra which has the same weak equivalences as the stable model structure, and for which the identity functor is a left Quillen equivalence from the stable model structure to the flat model structure. The flat model structure has all the necessary properties to apply our general results from Sections~\ref{sec:functors} and~\ref{sec:compare}. In particular, the flat model structure on orthogonal $G$-spectra is a monoidal model structure which satisfies the monoid axiom (so that the categories of modules inherit a right lifted model structure), fits into a convenient pair of model structures, and has flat cofibrant objects, see Section~\ref{sec:background} for more details. Before we can show a correspondence of functors we need to prove that certain functors are Quillen in the flat model structure on equivariant spectra. 

We now briefly sketch the construction of the flat model structure on $G$-spectra. For more details see~\cite{Stolz11} and~\cite{BrunDundasStolz}. Given a family $\mc{F}$ of subgroups of a group $G$, the $\mc{F}$-model structure on $G$-spaces is the model structure in which a map $f$ is a weak equivalence (resp., fibration) if and only if $f^H$ is a weak homotopy equivalence (resp., Serre fibration) for each $H \in \mc{F}$. We refer to the weak equivalences (resp., fibrations, resp., cofibrations) in this model structure as the $\mc{F}$-equivalences (resp., $\mc{F}$-fibrations, resp., $\mc{F}$-cofibrations). In the case when the family $\mc{F}$ consists of all the subgroups of $G$, we call these the genuine weak equivalences, genuine fibrations and genuine cofibrations.

In the level flat model structure on $G$-spectra, a map $f\colon X \to Y$ is a weak equivalence (resp., fibration) if and only if in each level $V$, the map $f\colon X(V) \to Y(V)$ is an $\mc{F}$-equivalence (resp., $\mc{F}$-fibration) for a certain family of subgroups of $O(V) \rtimes G$, see~\cite[2.3.3]{Stolz11} for more details. Left Bousfield localizing the level flat model structure in such a way that forces the weak equivalences to be the usual $\underline{\pi}_*$-isomorphisms of $G$-spectra then gives the flat model structure. We say that a map of $G$-spectra $f\colon X \to Y$ is a \emph{flat cofibration} if and only if the latching map $\nu_n(f)\colon L_n(f) \to Y_n$ is a genuine $G\times O(n)$-cofibration for all levels $n$, see~\cite[3.5.6]{Schwedeglobal} for more details on latching maps. The flat cofibrations are the cofibrations in both the level flat and flat model structure on $G$-spectra. It is also important to note that the flat cofibrations are independent of the choice of universe on $G$-spectra, see~\cite[2.3.15]{Stolz11}.

\begin{lem}
	There is a Quillen adjoint triple 
	\begin{center}
		\begin{tikzcd}
		\Sp_G^\mathrm{flat} \arrow[rr, "\forget" description] & & \Sp_H^\mathrm{flat} \arrow[ll, "\ind" description, yshift=3mm] \arrow[ll, "\coind" description, yshift=-3mm]
		\end{tikzcd}
	\end{center}
	where both categories are equipped with the flat model structure.
\end{lem}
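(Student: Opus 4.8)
As recalled in the introduction, the point-set adjoint triple $\ind\dashv\forget\dashv\coind$ is a Quillen adjoint triple for the \emph{stable} model structures~\cite[V.2]{MandellMay02}; the task is to promote this to the flat model structures, i.e.\ to show that $\forget$ is simultaneously left and right Quillen with respect to them, equivalently that both $\ind$ and $\forget$ are left Quillen. Since an acyclic flat cofibration is exactly a flat cofibration which is a weak equivalence, I would prove this by checking that each of $\ind$ and $\forget$ preserves all weak equivalences and all flat cofibrations, and then reading off the two Quillen adjunctions.

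Preservation of weak equivalences is immediate for $\forget$: the weak equivalences of $\Sp_G^{\mathrm{flat}}$ are the $\underline{\pi}_*$-isomorphisms, exactly as in the stable model structure, and restriction to a subgroup clearly preserves them. Preservation of flat cofibrations is the main point, and here I would use the latching characterisation recalled above: $f\colon X\to Y$ is a flat cofibration precisely when each relative latching map $\nu_n(f)\colon L_n(f)\to Y_n$ is a genuine $(G\times O(n))$-cofibration of spaces --- a condition which, crucially, is insensitive to the choice of universe by~\cite[2.3.15]{Stolz11}, so that passing between the $G$- and $H$-universes causes no trouble. Both $\forget$ and $\ind$ are computed levelwise, by restriction and by $G_+\wedge_H(-)$ of based spaces respectively, and at level $n$ these are restriction and induction along $H\times O(n)\hookrightarrow G\times O(n)$; each is a left adjoint on spaces and hence commutes with the formation of (relative) latching objects. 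So it is enough to note that restriction along $H\times O(n)\hookrightarrow G\times O(n)$ sends genuine cofibrations to genuine cofibrations --- an orbit $(G\times O(n))/K$ is, as an $(H\times O(n))$-space, a compact smooth manifold, hence a finite genuine $(H\times O(n))$-CW complex by equivariant triangulation --- while induction along the same inclusion sends the cell $(H\times O(n))/K$ to the cell $(G\times O(n))/K$ and therefore preserves genuine cofibrations. Thus $\forget$ and $\ind$ both preserve flat cofibrations; together with the previous paragraph this shows $\forget$ preserves acyclic flat cofibrations, so $\forget$ is left Quillen and $\forget\dashv\coind$ is a Quillen adjunction.

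The remaining point --- that $\ind$ preserves acyclic flat cofibrations --- is the one place a little care is needed, and I expect it to be the main obstacle. The cleanest route is to quote that $\ind=G_+\wedge_H(-)$ is homotopical on orthogonal spectra (ultimately because $G$ is a free $H$-space, see~\cite[V.2]{MandellMay02}); combined with preservation of flat cofibrations this finishes the proof. If one wishes to avoid this, one can argue on generators instead: $\ind$ preserves colimits and retracts, and the acyclic flat cofibrations are the retracts of relative cell complexes built from a generating set $J^{\mathrm{flat}}_H$, so it suffices to check that $\ind$ carries $J^{\mathrm{flat}}_H$ into the acyclic flat cofibrations of $\Sp_G$; using the identification $\ind\circ F_n^H\cong F_n^G\circ\mathrm{Ind}_{O(n)\times H}^{O(n)\times G}$ of free orthogonal spectrum functors, $\ind$ takes the level-flat generating acyclic cofibrations of $\Sp_H$ to such maps for $\Sp_G$ and the stabilisation maps (which are flat cofibrations) to the stabilisation maps for $\Sp_G$, and these are all acyclic flat cofibrations. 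Either way $\ind$ is left Quillen, so $\ind\dashv\forget$ is a Quillen adjunction and the displayed triple is a Quillen adjoint triple; everything outside this last step is a routine application of the latching description of flat cofibrations.
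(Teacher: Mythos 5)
Your proof is correct and follows the same skeleton as the paper's: show that $\ind$ preserves flat cofibrations and all weak equivalences (hence is left Quillen for the flat structures), and that $\forget$ preserves weak equivalences and flat cofibrations (hence $\forget \dashv \coind$ is Quillen). The difference is only in how the cofibration statements are justified: the paper quotes~\cite[2.6.11]{BrunDundasStolz} that $\ind \dashv \forget$ is Quillen for the \emph{level} flat model structures (so $\ind$ preserves flat cofibrations) and simply asserts the corresponding fact for $\forget$, whereas you reprove these by hand from the latching characterisation of flat cofibrations, using that $\ind$ and $\forget$ are levelwise left adjoints commuting with latching objects, that induction sends orbit cells $(H\times O(n))/K$ to $(G\times O(n))/K$, and that restriction preserves genuine cofibrations via Illman's equivariant triangulation. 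This is essentially the content of the cited reference, so your argument is more self-contained at the cost of length; the appeal to universe-independence of flat cofibrations~\cite[2.3.15]{Stolz11} and the assertion that $\ind$ preserves all $\underline{\pi}_*$-isomorphisms are on the same footing as the paper's own unreferenced use of these facts.

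One small caution: your fallback argument for acyclicity of $\ind$ on generating acyclic cofibrations is glib at the stabilisation maps, since those for $\Sp_H$ are indexed on $H$-representations and their images under $G_+\wedge_H(-)$ are not literally the stabilisation maps for $\Sp_G$; identifying them as acyclic flat cofibrations would need an extra argument (and risks circularity with the homotopical-ness of $\ind$). Since your primary route quotes that $\ind$ preserves all weak equivalences, exactly as the paper does, this does not affect the correctness of the proof.
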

\begin{proof}
	By~\cite[2.6.11]{BrunDundasStolz} the adjunction $\ind \dashv \forget$ is Quillen with respect to the level flat model structures, and therefore $\ind$ preserves flat cofibrations. Since $\ind$ also preserves all weak equivalences, it follows that it is left Quillen in the flat model structure. The restriction functor $\forget\colon \Sp_G \to \Sp_H$ preserves all weak equivalences and flat cofibrations. Therefore $\forget \dashv \coind$ is a Quillen adjunction in the flat model structure.
\end{proof}

\begin{lem}\label{lem:flatfixed}
	There is a Quillen adjunction
	\begin{center}
		\begin{tikzcd}
		\Sp_G^\mathrm{flat, triv} \arrow[rr, "(-)^G" description, yshift=-2mm] & & \Sp^\mathrm{flat} \arrow[ll, yshift=2mm, "\mathrm{inf}" description]
		\end{tikzcd}
	\end{center}
	where the categories are equipped with the flat model structure, and $\Sp_G$ is indexed on a trivial universe.
\end{lem}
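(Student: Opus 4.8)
The plan is to prove that the left adjoint $\inf$ is left Quillen for the flat model structures; since $(-)^G$ is right adjoint to $\inf$ (any $G$-map out of a spectrum with trivial $G$-action factors through the $G$-fixed points), this gives the asserted Quillen adjunction. By the standard criterion (\cite[1.3.4]{Hovey99}) it is enough to check that $\inf$ preserves flat cofibrations and acyclic flat cofibrations, and since $\inf$ will be shown to preserve all weak equivalences, it suffices to check that it preserves flat cofibrations.

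First I would verify that $\inf$ preserves all weak equivalences. On the trivial universe the weak equivalences of the flat model structure on $\Sp_G$ are the $\underline{\pi}_*$-isomorphisms, and $\inf$ acts levelwise by equipping each $X(\mathbb{R}^m)$ with the trivial $G$-action; hence for every closed subgroup $K \leq G$ one has $((\inf X)(\mathbb{R}^m))^K = X(\mathbb{R}^m)$, so that $\pi^K_*(\inf X) \cong \pi_*(X)$ naturally in $X$. Consequently $\inf$ sends $\pi_*$-isomorphisms of spectra to $\underline{\pi}_*$-isomorphisms of $G$-spectra.

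The heart of the argument is that $\inf$ preserves flat cofibrations. A map $f\colon X\to Y$ of $G$-spectra is a flat cofibration exactly when each latching map $\nu_n(f)\colon L_n(f)\to Y_n$ is a genuine $(G\times O(n))$-cofibration (on the trivial universe $G$ acts trivially on each $\mathbb{R}^n$, so the levelwise symmetry group is an honest product $G\times O(n)$). Since $\inf$ is a left adjoint and is moreover computed by a levelwise colimit, it commutes with the latching construction, so $\nu_n(\inf g)$ is just $\nu_n(g)$ equipped with the trivial $G$-action, viewed as a map of $(G\times O(n))$-spaces. It therefore remains to show that a genuine $O(n)$-cofibration, regarded with trivial $G$-action, is a genuine $(G\times O(n))$-cofibration. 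As the functor ``add trivial $G$-action'' preserves retracts and transfinite composites of pushouts, one reduces to generating cofibrations, and the point is the $(G\times O(n))$-equivariant isomorphism $(G\times O(n))/(G\times L)\cong O(n)/L$ (trivial $G$-action) for $L\leq O(n)$: thus $O(n)/L\times (S^{m-1}\hookrightarrow D^m)$ with trivial $G$-action is a generating genuine $(G\times O(n))$-cofibration.

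Putting these together, $\inf$ preserves flat cofibrations and all weak equivalences, hence preserves acyclic flat cofibrations, and is left Quillen; therefore $(\inf, (-)^G)$ is a Quillen adjunction. I expect the cell-by-cell comparison in the third paragraph to be the only real obstacle: once the identification $(G\times O(n))/(G\times L)\cong O(n)/L$ with trivial $G$-action is in hand it is routine, but it is precisely the step where the hypothesis that $\Sp_G$ is indexed on the trivial universe is used in an essential way, and it is the reason one must work with the flat (rather than, say, positive flat) model structure here.
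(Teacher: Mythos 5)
Your proof is correct and follows the same overall strategy as the paper: show that $\inf$ is left Quillen by checking that it preserves flat cofibrations and all weak equivalences. The paper's proof is a compressed version of yours. For cofibrations it simply notes that inflation on spaces preserves genuine cofibrations; your latching-object argument, reducing to generating cofibrations via the identification $(G\times O(n))/(G\times L)\cong O(n)/L$, is exactly the content behind that one-liner. For weak equivalences the paper asserts that the weak equivalences on both sides are the non-equivariant equivalences, whereas you identify the trivial-universe weak equivalences as the $\underline{\pi}_*$-isomorphisms and compute $\pi_*^K(\inf X)\cong\pi_*X$ from triviality of the action; your computation is if anything more careful, and it goes through under either description of the weak equivalences. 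One correction to your closing commentary: flat cofibrations are independent of the choice of universe (the paper states this explicitly), so the cell-by-cell comparison is not where the trivial-universe hypothesis is essential; it is the weak-equivalence step that breaks down for a complete universe (cf.\ Remark~\ref{rem:notQuillen}, where the failure is tied to change-of-universe functors not being Quillen in the flat model structure), and the contrast with the positive flat model structure is not the relevant issue for this lemma.
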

\begin{proof}
	The inflation functor $\mathrm{inf}\colon \Sp \to \Sp_G$ preserves flat cofibrations since the inflation functor on spaces preserves genuine cofibrations. The weak equivalences on both sides are the non-equivariant equivalences, and so $\mathrm{inf}$ preserves weak equivalences too.
\end{proof}

\begin{rem}\label{rem:notQuillen}
If $\Sp_G$ is instead indexed on a complete universe, then 
	\begin{center}
		\begin{tikzcd}
		\Sp_G^\mathrm{flat} \arrow[rr, "(-)^G" description, yshift=-2mm] & & \Sp^\mathrm{flat} \arrow[ll, yshift=2mm, "\mathrm{inf}" description]
		\end{tikzcd}
	\end{center}
is not a Quillen adjunction in general. This follows from the fact that the change of universe functors do not form a Quillen pair in general in the flat model structure, see~\cite[5.1]{Hausmann17}.
\end{rem}

\subsection{A coinduction Quillen equivalence}
The coinduction functor $\coind\colon \Sp_H \to \Sp_G$ is lax symmetric monoidal since the forgetful functor $\forget\colon \Sp_G \to \Sp_H$ is strong symmetric monoidal. Therefore the coinduction functor $\coind$ takes (commutative) ring $H$-spectra to (commutative) ring $G$-spectra. Recall that given a stable model category $\C$, we say that $X \in \C$ \emph{generates} $\C$ if $X$ builds every object of $\C$, that is, $\mathrm{Loc}(X) = h\C$.
\begin{prop}
	Let $H \leq G$ and $R$ be a ring $H$-spectrum. If $R$ generates $\mod{R}(\Sp_H)$, then $\coind R$ generates $\mod{\coind R}(\Sp_G)$.
\end{prop}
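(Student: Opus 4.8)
The plan is to introduce a coinduction functor on module categories, transport the generation hypothesis for $R$ along it, and then recover the standard generating set of $\mod{\coind R}(\Sp_G)$ using the projection formula. Throughout I work in homotopy categories, i.e.\ with derived functors.

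Since $\forget\colon \Sp_G \to \Sp_H$ is strong symmetric monoidal, $\coind = F_H(G_+,-)$ is lax symmetric monoidal, so for an $R$-module $M$ the object $\coind M$ inherits a $\coind R$-module structure from the composite $\coind R \wedge \coind M \to \coind(R\wedge M)\to \coind M$; this defines a functor $\coind\colon \mod{R}(\Sp_H)\to\mod{\coind R}(\Sp_G)$ lying over the underlying coinduction. As fibrations and weak equivalences of modules are created underlying and $\coind\colon \Sp_H\to\Sp_G$ is right Quillen, this functor is right Quillen, so its right derived functor $\R\coind$ exists and is exact. Moreover $\R\coind$ preserves coproducts: $G_+$ is a dualizable object of the homotopy category of $H$-spectra because the relevant $H$-action on $G$ is free with quotient the compact manifold $G/H$, so $F_H(G_+,-)$ preserves coproducts (equivalently, by the Wirthm\"uller isomorphism, $\R\coind$ agrees with $\L\ind$ up to a suspension); and coproducts of modules are computed underlying.

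An exact, coproduct-preserving functor sends localizing subcategories into localizing subcategories, so $\R\coind\big(\mathrm{Loc}(R)\big)\subseteq\mathrm{Loc}(\R\coind R)=\mathrm{Loc}(\coind R)$ inside $h\mod{\coind R}(\Sp_G)$. By hypothesis $\mathrm{Loc}(R)=h\mod{R}(\Sp_H)$, so in fact $\coind R$ builds $\coind M$ for \emph{every} $R$-module $M$. It remains to see that this suffices. The free-module functor $\coind R\wedge(-)\colon \Sp_G\to\mod{\coind R}(\Sp_G)$ preserves coproducts and cofibre sequences and sends the generators $\{(G/K)_+ : K\le G\}$ of $h\Sp_G$ to the family $\{\coind R\wedge(G/K)_+\}$, which therefore generates $h\mod{\coind R}(\Sp_G)$ (every $\coind R$-module is built from free ones, and every free one from this family). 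Finally, the right projection formula for $\forget\dashv\coind$ holds in the homotopy category (see the example following Definition~\ref{defn:projectionformula}, and \cite[II.4.9]{LMS}): it gives a natural equivalence $\coind R\wedge (G/K)_+ \simeq \coind\big(R\wedge\forget(G/K)_+\big)$, which one checks is an equivalence of $\coind R$-modules, the right-hand side being the free $R$-module on $\forget(G/K)_+$. By the previous step $\coind R$ builds $\coind\big(R\wedge\forget(G/K)_+\big)$, hence builds $\coind R\wedge(G/K)_+$; as these objects generate $h\mod{\coind R}(\Sp_G)$, we conclude $\mathrm{Loc}(\coind R)=h\mod{\coind R}(\Sp_G)$, i.e.\ $\coind R$ generates $\mod{\coind R}(\Sp_G)$.

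I expect the main obstacle to be the bookkeeping in the last step: verifying that the homotopy-level projection-formula equivalence is compatible with the $\coind R$-module structures (so that it lives in $h\mod{\coind R}(\Sp_G)$, not merely in $h\Sp_G$), and, relatedly, confirming that $\R\coind$ genuinely preserves coproducts on module categories. Both ultimately reduce to the underlying equivariant facts — the projection formula and the Wirthm\"uller isomorphism for $\forget\dashv\coind$ — together with routine naturality diagrams for the (op)lax monoidal structure maps, in the spirit of Propositions~\ref{prop:comparestrong} and~\ref{prop:ringsquare}.
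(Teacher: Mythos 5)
Your proof is correct, and it rests on the same two ingredients as the paper's argument: the right projection formula for $\forget \dashv \coind$ in the homotopy category, and the fact that derived coinduction on module categories is exact and preserves coproducts (via dualizability of $G_+$ as an $H$-spectrum, equivalently the Wirthm\"uller isomorphism), so that it carries the localizing subcategory generated by $R$ into that generated by $\coind R$. The difference is in the reduction step. The paper uses the retract trick directly: any $\coind R$-module $M$ is a retract of the free module $\coind R \wedge M$ (the action map is split by the unit), so it suffices to show that $\coind R$ builds $\coind R \wedge M$; the projection formula identifies this object with $\coind(R \wedge \forget M)$, and the hypothesis that $R$ builds $R \wedge \forget M$ finishes the argument in three lines. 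You instead reduce to the free modules $\coind R \wedge (G/K)_+$ on the standard cells and apply the projection formula only to those; this works, but it forces the additional step of verifying that this family generates $h\mod{\coind R}(\Sp_G)$ (compact generation by free modules on cells), a step the retract trick renders unnecessary. The issue you flag as the main obstacle --- that the projection-formula equivalence must be one of $\coind R$-modules rather than merely of $G$-spectra --- is needed equally in both arguments and is treated at the same level of (implicit) detail in the paper; it comes down to the fact that the map $p'$ is assembled from the (op)lax structure maps and unit/counit, so compatibility with the module structures is a routine naturality check in the spirit of Lemma~\ref{lem:modulelemma}.
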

\begin{proof}
	Any $\coind R$-module $M$ is a retract of $\coind R \wedge M$ so it is enough to show that $\coind R$ builds $\coind R \wedge M$. By the right projection formula for $\forget \dashv \coind$, we have $\coind R \wedge M \simeq \coind(R \wedge \forget M)$. Since $R$ builds $R \wedge \forget M$ as $R$ generates $\mod{R}(\Sp_H)$ by assumption, the result follows.
\end{proof}

As $H$ is connected (and we work rationally), the commutative ring $H$-spectrum $DEH_+$ generates its category of modules~\cite[3.1]{Greenlees20}. Therefore we obtain the following corollary.
\begin{cor}\label{cor:coindgenerates}
	The commutative ring $G$-spectrum $F_H(G_+,DEH_+)$ generates its category of modules.
\end{cor}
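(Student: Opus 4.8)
The plan is simply to invoke the preceding proposition with $R = DEH_+$. Since $\coind = F_H(G_+,-)$ by definition, we have $\coind DEH_+ = F_H(G_+,DEH_+)$; moreover this is genuinely a commutative ring $G$-spectrum, because the restriction functor $\forget\colon \Sp_G \to \Sp_H$ is strong symmetric monoidal, so its right adjoint $\coind$ is lax symmetric monoidal and hence carries the commutative monoid $DEH_+$ to a commutative monoid in $\Sp_G$. In particular the category $\mod{F_H(G_+,DEH_+)}(\Sp_G)$ of modules is defined and the statement typechecks. Thus it only remains to verify the hypothesis of the proposition, namely that $DEH_+$ generates $\mod{DEH_+}(\Sp_H)$, i.e.\ that $DEH_+$ builds every object of $h\mod{DEH_+}(\Sp_H)$.

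This hypothesis is exactly the content of \cite[3.1]{Greenlees20}: since $H$ is connected and we are working rationally, $DEH_+$ generates its homotopy category of modules. Feeding this into the proposition yields at once that $\coind DEH_+ = F_H(G_+,DEH_+)$ generates $\mod{F_H(G_+,DEH_+)}(\Sp_G)$, which is the claim. There is no real obstacle here — the corollary is immediate from the proposition together with the cited connectedness input; the only point meriting a moment's care is the monoidal bookkeeping that $\coind$ preserves commutative ring spectra, which as noted follows from its lax symmetric monoidality, and of course that one must check that the hypotheses of \cite[3.1]{Greenlees20} (connectedness of $H$, rationalization) are in force, which they are by the standing assumptions of this section.
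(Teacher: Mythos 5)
Your proof is correct and follows exactly the paper's route: apply the preceding proposition with $R = DEH_+$, using~\cite[3.1]{Greenlees20} (connectedness of $H$, working rationally) for the hypothesis, with the lax symmetric monoidality of $\coind$ supplying the commutative ring structure on $F_H(G_+,DEH_+)$ just as the paper notes before the proposition. Nothing further is needed.
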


\begin{prop}\label{prop:descentequivalence}
	The adjunction
	\begin{center}
		\begin{tikzcd}
		\mod{DEH_+}({\Sp}_H) \arrow[rrrrr, yshift=-0.8mm, "\coind"'] & & & & & \mod{F_H(G_+,DEH_+)}({\Sp}_G) \arrow[lllll, yshift=0.8mm, "DEH_+ \otimes_{\forget F_H(G_+,DEH_+)} \forget(-)"']
		\end{tikzcd}
	\end{center}
	is a Quillen equivalence.
\end{prop}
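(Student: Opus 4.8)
Write $A := F_H(G_+,DEH_+) = \coind DEH_+$. The displayed left adjoint is the composite $L := \ext[\epsilon]\circ\forget$, where $\forget\colon \mod{A}(\Sp_G) \to \mod{\forget A}(\Sp_H)$ is the restriction of the ambient category on modules supplied by Lemma~\ref{lem:modulelemma}, and $\ext[\epsilon] = DEH_+\otimes_{\forget A}(-)$ is extension of scalars along the counit $\epsilon\colon \forget\coind DEH_+ \to DEH_+$ of $\forget\dashv\coind$; this $\epsilon$ is a map of commutative ring spectra because $\forget$ is strong symmetric monoidal and $\coind$ is therefore lax symmetric monoidal. Its right adjoint is $U := \coind\circ\res[\epsilon]$, which agrees with $\coind$ on underlying objects. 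The first task is to see $L\dashv U$ is a Quillen adjunction: $\ext[\epsilon]$ is left Quillen since $\res[\epsilon]$ is automatically right Quillen for right-lifted model structures (Proposition~\ref{prop:extrescoext}), and the module-level $\forget$ is left Quillen because it preserves colimits, preserves weak equivalences (these are created in the underlying category, where $\forget\colon\Sp_G\to\Sp_H$ preserves them), and carries a generating flat cofibration $A\wedge i$ of $\mod{A}(\Sp_G)$ to $\forget A\wedge\forget i$, a cofibration of $\mod{\forget A}(\Sp_H)$ since $\forget\colon\Sp_G\to\Sp_H$ preserves flat cofibrations (established earlier in this section).

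\textbf{Showing it is a Quillen equivalence.} The homotopy category of $\mod{DEH_+}(\Sp_H)$ is compactly generated by $DEH_+$~\cite[3.1]{Greenlees20}, and that of $\mod{A}(\Sp_G)$ is compactly generated by $A$ by Corollary~\ref{cor:coindgenerates}. Both $A$ and $DEH_+$ are cofibrant as modules over themselves, since $S^0$ is flat-cofibrant and $A\wedge-$ (resp.\ $DEH_+\wedge-$) is left Quillen. As $A$ is the free rank-one module over itself, $\forget A$ is the free rank-one module over the ring $\forget A$, so $L(A)=\ext[\epsilon](\forget A)=DEH_+\otimes_{\forget A}\forget A\cong DEH_+$; since $A$ and $\forget A$ are cofibrant this already computes $\L L(A)\simeq DEH_+$. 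Moreover the underived unit of $L\dashv U$ at $A$ is the identity: the unit of $\ext[\epsilon]\dashv\res[\epsilon]$ at $\forget A$ has underlying map $\epsilon$, so the unit of $L\dashv U$ at $A$ is $\coind(\epsilon)\circ\eta^{\forget\dashv\coind}_{A}$, which is the identity by the triangle identity for $\forget\dashv\coind$; hence the derived unit is an equivalence on $A$. Now $\L L$ and $\R U$ are both exact and preserve coproducts ($\L L$ as a left derived functor; $\R U$ because $\res[\epsilon]$ preserves colimits and $\coind$ preserves coproducts in the homotopy category, e.g.\ by the Wirthm\"uller isomorphism, or since $G_+$ is a dualizable $H$-spectrum as $G/H$ is compact), so the class of objects on which the derived unit $\mathrm{id}\Rightarrow\R U\L L$ is an isomorphism is localizing; containing the generator $A$, it is everything, so $\L L$ is fully faithful. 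Its essential image is then a localizing subcategory of the homotopy category of $\mod{DEH_+}(\Sp_H)$ containing the generator $DEH_+\simeq\L L(A)$, hence is everything, so $\L L$ is an equivalence and $L\dashv U$ is a Quillen equivalence. (Equivalently, this last step may be run through the cellularization principle~\cite[2.7]{GreenleesShipley13}: $L$ sends the generator $A$ to the generator $DEH_+$ with derived unit an equivalence, so it induces a Quillen equivalence on cellularizations, which are the ambient categories since the chosen objects generate.)

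\textbf{Main obstacle.} The genuinely delicate part is the point-set bookkeeping underlying the second paragraph: one must unwind the module structures produced by Lemma~\ref{lem:modulelemma} and the projection-formula maps to confirm that $L(A)$ really is the free rank-one $DEH_+$-module and that the unit of $L\dashv U$ at $A$ really is the identity (rather than merely that its source and target are abstractly isomorphic). Everything else — the Quillen adjunction check and the localizing-subcategory argument — is formal once this is pinned down.
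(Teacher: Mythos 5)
Your argument is correct and is essentially the paper's proof in expanded form: the paper obtains the Quillen adjunction from the monoidal lifting of \cite[\S 3]{SchwedeShipley03} (which you verify by hand via the factorization through $\ext[\epsilon]$ and the module-level $\forget$), and deduces the equivalence from the same two generation statements (\cite[3.1]{Greenlees20} and Corollary~\ref{cor:coindgenerates}) together with the derived counit being an equivalence on $DEH_+$, concluding by the Cellularization Principle~\cite[2.7]{GreenleesShipley13}. Your unit-on-$A$ computation and localizing-subcategory argument are just the mirror-image packaging of that same check, as you yourself note.
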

\begin{proof}
	The existence of the Quillen adjunction follows from~\cite[\S 3]{SchwedeShipley03}, also see Section~\ref{sec:liftadjunctionstomodules}. The derived counit is an equivalence on $DEH_+$ and therefore as $DEH_+$ and $F_H(G_+,DEH_+)$ generate their categories of modules (see~\cite[3.1]{Greenlees20} and Corollary~\ref{cor:coindgenerates} respectively), by the Cellularization Principle~\cite[2.7]{GreenleesShipley13} the adjunction is a Quillen equivalence.
\end{proof}

\begin{rem}
	Since the adjunction 	
	\begin{center}
		\begin{tikzcd}
		\mod{DEH_+}({\Sp}_H) \arrow[rrrrr, yshift=-0.8mm, "\coind"'] & & & & & \mod{F_H(G_+,DEH_+)}({\Sp}_G) \arrow[lllll, yshift=0.8mm, "DEH_+ \otimes_{\forget F_H(G_+,DEH_+)} \forget(-)"']
		\end{tikzcd}
	\end{center} 
	is Quillen in both the stable and flat model structures, the previous proposition holds in both the stable and flat model structures.
\end{rem}

\begin{rem}
	Proposition~\ref{prop:descentequivalence} could be viewed as an analogue of the results of Balmer-Dell'Ambrogio-Sanders~\cite[1.1]{BalmerDellAmbrogioSanders15} on \'etale extensions, also see~\cite[5.32]{MathewNaumannNoel17}.
\end{rem}

\subsection{Commutativity: step 1}
Let $\omega\colon DEG_+ \to F_H(G_+,DEH_+)$ be the natural map which is adjoint to the identity map on $DEH_+$ (i.e., the unit map $DEG_+ \to \coind\forget DEG_+$). By taking a cofibrant replacement of $F_H(G_+,DEH_+)$ as a commutative $DEG_+$-algebra, we obtain a commutative diagram
\begin{center}
	\begin{tikzcd}
	DEG_+ \arrow[d, "\omega"'] \arrow[dr, hookrightarrow, "\psi"] & \\
	F_H(G_+,DEH_+) & QF_H(G_+,DEH_+). \arrow[l, twoheadrightarrow, "q", "\sim"']
	\end{tikzcd}
\end{center}

We can now turn to showing that the functors correspond. Firstly let us set the following notation.
\begin{nota}
We set the notation $a = \mathrm{dim}(G/H)$. Note that this is the shift that arises in the Wirthm\"uller isomorphism and in the Gorenstein condition for the map $DBG_+ \to DBH_+$, see Theorem~\ref{thm:relgor}.
\end{nota} 

Consider the diagram
\begin{equation}\label{eq:fixedpoints1}
	\begin{tikzcd}
	\mod{DEG_+}(\Sp_G) \arrow[rrrrr, yshift=-2.2mm, "1"' description] \arrow[ddd, "\forget" description] & & & & & \mod{DEG_+}(\Sp_G) \arrow[lllll, yshift=2.2mm, "1" description] \arrow[ddd, "{\ext[\psi]}" description, xshift=5mm] \arrow[ddd, "{\Sigma^{-a}\coext[\psi]}" description, xshift=-5mm] \\
	& & & & & \\
	& & & & & \\
	\mod{DEH_+}(\Sp_H) \arrow[uuu, xshift=-5mm, "\ind" description] \arrow[uuu, xshift=5mm, "\coind" description] \arrow[rrrrr, yshift=-2.2mm, "{\res[q]}\coind"'] & & & & & \mod{QF_H(G_+,DEH_+)}(\Sp_G). \arrow[lllll, yshift=2.2mm, "DEH_+ \otimes_{\forget F_H(G_+,DEH_+)} \forget{\ext[q]}(-)"'] \arrow[uuu, xshift=-15mm, "{\Sigma^a\res[\psi]}" description] \arrow[uuu, xshift=15mm, "{\res[\psi]}" description]
	\end{tikzcd}
\end{equation}

We note that for a $DEH_+$-module $M$, the coinduction $\coind M$ is both a $DEG_+$-module and a $F_H(G_+,DEH_+)$-module: the $DEG_+$-module structure arises from the projection formula map as in Lemma~\ref{lem:modulelemma} and the $F_H(G_+,DEH_+)$-module structure by using the lax monoidal structure on $\coind$.
\begin{prop}\label{prop:fixedpoints1}
	Let $M \in \mod{DEH_+}$. Then $\coind M \cong \res[\psi]\res[q]\coind M$ as $DEG_+$-modules.
\end{prop}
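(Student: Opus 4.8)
The plan is to unwind the definitions of the three module structures on $\coind M$ and check that they all coincide with the one that $\res[\psi]\res[q]\coind M$ carries. Recall that $\psi\colon DEG_+ \to QF_H(G_+,DEH_+)$ factors the unit map $\omega\colon DEG_+ \to F_H(G_+,DEH_+)$ through the cofibrant replacement $q\colon QF_H(G_+,DEH_+) \xrightarrow{\sim} F_H(G_+,DEH_+)$, so that $\omega = q\circ\psi$. Thus $\res[\psi]\res[q]\coind M$ is the $DEG_+$-module obtained by restricting the $F_H(G_+,DEH_+)$-module structure on $\coind M$ (the one coming from the lax monoidal structure of $\coind$) along $\omega$. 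Since none of the functors $\res[\psi]$, $\res[q]$ change the underlying $G$-spectrum, it suffices to verify that this restricted action map agrees with the $DEG_+$-action on $\coind M$ defined via the projection formula map, as in Lemma~\ref{lem:modulelemma}.

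First I would spell out the two candidate action maps explicitly. The projection-formula $DEG_+$-module structure on $\coind M$ is the composite
\[
DEG_+ \wedge \coind M \xrightarrow{p'} \coind(\forget DEG_+ \wedge M) \xrightarrow{\coind(a)} \coind M,
\]
where $p'$ is the right-projection-formula map for $\forget \dashv \coind$ and $a\colon \forget DEG_+ \wedge M \to DEH_+ \wedge M \to M$ uses the ring map $\forget\omega$ (equivalently, $\forget DEG_+ \to DEH_+$) together with the $DEH_+$-action on $M$. On the other hand, the $F_H(G_+,DEH_+)$-module structure on $\coind M$ obtained from the lax monoidal structure $\Phi$ of $\coind$ is
\[
F_H(G_+,DEH_+) \wedge \coind M \xrightarrow{\Phi} \coind(DEH_+ \wedge M) \xrightarrow{\coind(a')} \coind M,
\]
where $a'$ is the $DEH_+$-action on $M$; restricting along $\omega\colon DEG_+ \to F_H(G_+,DEH_+)$ then precomposes with $\omega \wedge 1$. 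The claim reduces to showing that the two composites $DEG_+ \wedge \coind M \to \coind M$ agree.

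The key step is a compatibility diagram relating the projection formula map $p'$ and the lax monoidal structure map $\Phi$ of $\coind$. Concretely, one checks that the square
\[
\begin{tikzcd}
DEG_+ \wedge \coind M \arrow[r, "p'"] \arrow[d, "\omega \wedge 1"'] & \coind(\forget DEG_+ \wedge M) \arrow[d, "\coind(\forget\omega \wedge 1)"] \\
F_H(G_+,DEH_+) \wedge \coind M \arrow[r, "\Phi"'] & \coind(DEH_+ \wedge M)
\end{tikzcd}
\]
commutes; this follows from the definition of $p'$ in Definition~\ref{defn:projectionformula} (which is itself built from $\Phi$ and the unit/counit of $\forget \dashv \coind$) together with naturality of $\Phi$ and a triangle identity, in the same spirit as the diagram chases in the proof of Proposition~\ref{prop:comparestrong}. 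Once this square commutes, postcomposing both ways around with $\coind(a')$ (and using that $a = a' \circ (\forget\omega \wedge 1)$ by definition of the $\forget DEG_+$-action on $M$) shows the two $DEG_+$-action maps on $\coind M$ coincide, so the identity on underlying spectra is a $DEG_+$-module map, giving the claimed isomorphism $\coind M \cong \res[\psi]\res[q]\coind M$.

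I expect the main obstacle to be bookkeeping rather than anything conceptual: the projection formula map $p'$ for $\forget \dashv \coind$ is defined by a three-step zig-zag involving the unit of $\forget \dashv \coind$, the inverse of the (strong) monoidal structure map of $\forget$, and the counit, so verifying the compatibility square above requires carefully tracking how $\Phi$ (the lax structure on $\coind$) is constructed as the mate of the strong monoidal structure on $\forget$. Once the relationship between $p'$ and $\Phi$ is pinned down, the remaining verification is a routine naturality-and-triangle-identity chase of exactly the type already carried out in Propositions~\ref{prop:comparestrong} and~\ref{prop:changeofrings}.
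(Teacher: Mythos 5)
Your proposal is correct and takes essentially the same route as the paper: reduce to comparing the two action maps on the common underlying object after noting $\res[\psi]\res[q]=\res[\omega]$, and establish the key identity that the projection-formula map $p'$ agrees with the lax monoidal structure map of $\coind$ precomposed with $\omega\wedge 1$ (the unit), verified by unwinding the definition of $p'$ and a naturality/triangle-identity chase. The only cosmetic slip is calling the identification $\forget DEG_+ \cong DEH_+$ ``$\forget\omega$'' (which strictly lands in $\forget\coind DEH_+$); reading that vertical map as the identity, your compatibility square is exactly the paper's middle square.
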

\begin{proof}
	The underlying objects are the same so it is sufficient to check that the module structures agree. Note that $\res[\psi]\res[q] \cong \res[(q\psi)] = \res[\omega]$. We write $R = DEG_+$ to ease the notation. We must check that 
	\begin{center}
		\begin{tikzcd}
		R \wedge \coind M \arrow[dd, "1"'] \arrow[rrdd, "\eta \wedge 1" description] \arrow[rr, "p'"] &  & \coind (\forget R \wedge M) \arrow[rr, "\coind(a)"] \arrow[rrdd, "1" description] &  & \coind M \arrow[rrdd, "1"]                           &  &          \\
		&  &                                                                                   &  &                                                      &  &          \\
		{R \wedge \res[\omega]\coind M} \arrow[rr, "\omega \wedge 1"']                                    &  & \coind\forget R \wedge \coind M \arrow[rr, "l"']                                  &  & \coind (\forget R \wedge M) \arrow[rr, "\coind(a)"'] &  & \coind M
		\end{tikzcd}
	\end{center}
	commutes, where $p'$ is the projection formula map and $l$ is the lax monoidal structure on $\coind$. By definition $\omega$ is the adjoint of the identity map on $\forget R$, or in other words it is the unit $\eta\colon R \to \coind\forget R$ and hence the first triangle commutes. From this one sees that it is enough to just check that the middle square commutes. Spelling out the definitions of $p'$ and $l$, this amounts to checking that
	\begin{center}
		\begin{tikzcd}
		R \wedge \coind M \arrow[r, "\eta"] \arrow[dd, "\eta \wedge 1"'] & \coind\forget (R \wedge \coind M) \arrow[rr, "\coind\Phi^{-1}"] \arrow[dd, "\coind\forget(\eta \wedge 1)" description] & & \coind (\forget R \wedge \forget \coind M) \arrow[ddl, "\coind(\eta \wedge 1)" description] \arrow[dd, "\coind(1 \wedge \epsilon)"]\\
		& & & \\
		\coind\forget R \wedge \coind M \arrow[r, "\eta"'] & \coind\forget(\coind\forget R \wedge \coind M) \arrow[r, "\coind\Phi^{-1}"'] & \coind(\forget\coind\forget R \wedge \forget\coind M) \arrow[r, "\coind(\epsilon \wedge \epsilon)"'] & \coind(\forget R \wedge M) 
		\end{tikzcd}
	\end{center}
	commutes, where going right and down is the map $p'$ and going along the bottom row is the lax monoidal structure map $l$. This diagram commutes using (from left to right) naturality of $\eta$, naturality of $\Phi^{-1}$ and the triangle identities.
\end{proof}

\begin{prop}\label{prop:fixedpoints2}
	For any fibrant $DEG_+$-module $M$, there is a natural weak equivalence $\Sigma^{-a}\coext[\psi]M \xrightarrow{\sim} \res[q]\coind\forget M$.
\end{prop}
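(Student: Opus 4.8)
The plan is to build the required natural transformation from the Wirthm\"uller map for $i\colon H \to G$, reducing the claim that it is a weak equivalence to a projection-formula identity at the level of module categories.

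First I would record the two sides explicitly. Write $A = QF_H(G_+,DEH_+)$ and recall $\omega = q\psi\colon DEG_+ \to F_H(G_+,DEH_+)$ from the diagram preceding the proposition. Then $\coext[\psi]M = \hom_{DEG_+}(A,M)$, while $\res[q]\coind\forget M$ has underlying $G$-spectrum $\coind\forget M = F_H(G_+,\forget M)$, equipped with the $A$-module structure obtained by pulling back along $q$ the $F_H(G_+,DEH_+) = \coind DEH_+$-module structure coming from the lax monoidal structure of $\coind$. Since the flat model structures form a convenient pair (Lemma~\ref{lem:convenient}) the object $A$ is cofibrant as a $DEG_+$-module, so, $M$ being fibrant, $\coext[\psi]M$ already computes $\R\hom_{DEG_+}(F_H(G_+,DEH_+),M)$; likewise $\forget M$ is fibrant and $\coind$, $\forget$ are right Quillen, so $\res[q]\coind\forget M$ already computes the derived composite. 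Hence it is enough to produce a natural map of the stated shape and check that it becomes an isomorphism in the homotopy categories.

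The decisive input is the Wirthm\"uller isomorphism for $i\colon H \to G$: for an $H$-spectrum $N$ there is a natural weak equivalence relating $\coind N$ and $\ind(S^{-L}\wedge N)$, where $L = L(H)$ is the tangent $H$-representation of $G/H$ (see~\cite[V.2.3]{MandellMay02}). Since $H$ is connected and we work rationally, the orientation class of $L$ provides a weak equivalence $S^{-L}\simeq S^{-a}$ of $H$-spectra, so we obtain $\coind N \simeq \Sigma^{-a}\ind N$; taking $N = \forget DEG_+$ and using the projection formula $\ind\forget DEG_+ \cong DEG_+\wedge(G/H)_+$ for $\ind\dashv\forget$, this gives a weak equivalence $F_H(G_+,DEH_+) = \coind\forget DEG_+ \simeq \Sigma^{-a}(DEG_+\wedge(G/H)_+)$ of $DEG_+$-modules, hence (after the cofibrant replacement $q$) an isomorphism $A \cong \Sigma^{-a}(DEG_+\wedge(G/H)_+)$ in the homotopy category of $DEG_+$-modules. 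Applying $\R\hom_{DEG_+}(-,M)$ and desuspending by $a$ yields a natural isomorphism
\[\Sigma^{-a}\coext[\psi]M = \Sigma^{-a}\R\hom_{DEG_+}(A,M) \;\cong\; \R\hom_{DEG_+}(DEG_+\wedge(G/H)_+,M) \;=\; F_G((G/H)_+,M),\]
the last equality because $DEG_+\wedge(G/H)_+$ is free on the flat-cofibrant $G$-spectrum $(G/H)_+$ and $M$ is fibrant; here $F_G(-,-)$ denotes the $G$-equivariant function spectrum.

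It remains to identify $F_G((G/H)_+,M)$ with $\res[q]\coind\forget M$. For this I would use the module-level Frobenius identity: for a ring $G$-spectrum $R$ and an $R$-module $M$ there is a natural isomorphism $F_G((G/H)_+,M) \cong \coind\forget M$, proved by a Yoneda argument from $\ind\dashv\forget$ together with the module projection formula $\ind N \otimes_R X \cong \ind(N\otimes_{\forget R}\forget X)$; with $R = DEG_+$ it is exactly an isomorphism $F_G((G/H)_+,M)\cong \coind\forget M = \res[q]\coind\forget M$, which one checks is $A$-linear by the same type of diagram chase as in Proposition~\ref{prop:fixedpoints1}, using naturality of the projection-formula maps and of the Wirthm\"uller map and that $\omega$ is the unit of $\forget\dashv\coind$. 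Composing these identifications gives the asserted natural weak equivalence. The main obstacle is the handedness and rigidity bookkeeping: Theorem~\ref{thm:compare} forbids zig-zags, so one must pin down a genuine point-set representative of the Wirthm\"uller map and of the orientation trivialisation of $L$, oriented so that the composite runs from $\Sigma^{-a}\coext[\psi]M$ to $\res[q]\coind\forget M$, and then propagate the various module structures through the entire chain; by comparison, showing that the resulting map is a weak equivalence on fibrant $M$ is the soft part, being precisely the derived computation sketched above.
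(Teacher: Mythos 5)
Your route is the same as the paper's: Wirthm\"uller for $i\colon H\to G$, the projection formula identification $\ind\forget DEG_+ \cong G/H_+\wedge DEG_+$, and the adjunction identifications $\mathrm{Hom}_{DEG_+}(G/H_+\wedge DEG_+,M)\cong F(G/H_+,M)\cong \coind\forget M$. The derived computation you carry out is indeed the easy part, and you get the correct answer at that level.

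The gap is exactly the step you defer as ``bookkeeping'': the proposition asserts an honest natural weak equivalence $\Sigma^{-a}\coext[\psi]M \to \res[q]\coind\forget M$ (a single point-set map, no zig-zag, since it feeds into Theorem~\ref{thm:compare}), and your argument only produces an isomorphism in the homotopy category of $DEG_+$-modules together with an acknowledgement that a point-set representative ``must be pinned down''. The whole content of the statement is how to do this, and in particular how to map into the cofibrant replacement $QF_H(G_+,DEH_+)$ rather than $F_H(G_+,DEH_+)$ itself: you pass to the homotopy category precisely at the phrase ``(after the cofibrant replacement $q$)''. The missing idea is a lifting argument. The Wirthm\"uller map is a genuine natural weak equivalence $G_+\wedge_H DEH_+ \to \Sigma^a F_H(G_+,DEH_+)$; since $G_+\wedge_H DEH_+$ is a cofibrant $DEG_+$-module and $\Sigma^a q\colon \Sigma^a QF_H(G_+,DEH_+)\to \Sigma^a F_H(G_+,DEH_+)$ is an acyclic fibration, it lifts to a point-set weak equivalence $G_+\wedge_H DEH_+ \to \Sigma^a QF_H(G_+,DEH_+)$ between cofibrant $DEG_+$-modules. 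Applying $\mathrm{Hom}_{DEG_+}(-,M)$ with $M$ fibrant (Ken Brown's lemma) then yields the single map in the stated direction, and the remaining identifications in your chain are actual isomorphisms, so no zig-zag ever appears. Without this lift, your construction does not prove the proposition as stated, only its derived shadow; with it, your worries about choosing a rigid trivialisation of $S^{L}$ and propagating module structures dissolve, since all the flexibility is absorbed into the one lifted map.
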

\begin{proof}
	The Wirthm\"uller isomorphism gives a natural weak equivalence $G_+ \wedge_H DEH_+ \to \Sigma^a F_H(G_+,DEH_+)$. Since $\Sigma^aQF_H(G_+,DEH_+) \to \Sigma^aF_H(G_+,DEH_+)$ is an acyclic fibration and $G_+ \wedge_H DEH_+$ is cofibrant, by lifting properties we have a natural weak equivalence $G_+ \wedge_H DEH_+ \to \Sigma^aQF_H(G_+,DEH_+)$ of $DEG_+$-modules between cofibrant objects. We then have the string of natural weak equivalences \begin{align*}\Sigma^{-a}\coext[\psi]M &= \Sigma^{-a}\mathrm{Hom}_{DEG_+}(QF_H(G_+,DEH_+),M) \\ &\xrightarrow{\sim} \res[q]\mathrm{Hom}_{DEG_+}((G_+ \wedge_H DEH_+),M)  \\ &\cong \res[q]\mathrm{Hom}_{DEG_+}(G/H_+ \wedge DEG_+, M)\\ &\cong \res[q]F(G/H_+,M) \\ &\cong \res[q]\coind\forget M\end{align*} 
	where the first equivalence follows from Ken Brown's lemma.
\end{proof}

\begin{prop}\label{prop:fixedpointscommute}
The vertical Quillen functors in~(\ref{eq:fixedpoints1}) correspond.
\end{prop}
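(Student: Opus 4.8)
The plan is to recognise diagram~(\ref{eq:fixedpoints1}) as an instance of Theorem~\ref{thm:compare}, in the special form used in the proof of Proposition~\ref{prop:changeofrings} where the two downward-pointing functors on one side agree. On the left the single downward functor is $\forget$, which is both left and right Quillen and so plays the role of $\ext$ and of $\psext$ simultaneously, with $\ind$ and $\coind$ its left and right adjoints pointing upward; on the right the downward functors are $\ext[\psi]$ and $\Sigma^{-a}\coext[\psi]$ (the latter instantiating $\psext$ in the general framework), with upward adjoints $\Sigma^{a}\res[\psi]$ and $\res[\psi]$. The first task is to check the structural hypotheses. The top horizontal is the identity adjunction, hence a Quillen equivalence; the bottom one is a Quillen equivalence by Proposition~\ref{prop:descentequivalence} composed with the Quillen equivalence $\mod{F_H(G_+,DEH_+)}(\Sp_G)\simeq_Q\mod{QF_H(G_+,DEH_+)}(\Sp_G)$ provided by Quillen invariance of modules along the acyclic fibration $q$. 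Every vertical adjoint pair is Quillen: $\ind\dashv\forget$ and $\forget\dashv\coind$ are standard, $\ext[\psi]\dashv\res[\psi]$ is Quillen by Proposition~\ref{prop:extrescoext}(1), and $\res[\psi]\dashv\coext[\psi]$ is Quillen by Proposition~\ref{prop:extrescoext}(2) since $QF_H(G_+,DEH_+)$ is cofibrant as a $DEG_+$-module by Lemma~\ref{lem:convenient} (the flat model structure being convenient).

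It then remains to verify the relation $\R\psext\cong\L\ext$ on each side. On the left it is automatic, since $\forget$ preserves all weak equivalences and so agrees with both its left and right derived functors. On the right it reads $\L\ext[\psi]\cong\Sigma^{-a}\R\coext[\psi]$, which is Proposition~\ref{prop:functorsetup}(2) once we know that $QF_H(G_+,DEH_+)$ is small as a $DEG_+$-module and that $\psi$ is relatively Gorenstein of shift $a$. Smallness follows from Venkov's theorem and Proposition~\ref{prop:small fin gen}, as $\pi_*QF_H(G_+,DEH_+)\cong H^*BH$ is finitely generated over the polynomial ring $\pi_*DEG_+\cong H^*BG$. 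For the relative Gorenstein property I would evaluate the natural weak equivalence of Proposition~\ref{prop:fixedpoints2} at a fibrant replacement of $DEG_+$: using $\forget DEG_+\simeq DEH_+$ and $\coind DEH_+=F_H(G_+,DEH_+)$ this gives $\R\coext[\psi](DEG_+)\simeq\Sigma^{a}QF_H(G_+,DEH_+)$, which is exactly the required condition (alternatively one may transport Theorem~\ref{thm:relgor} along the fixed-point Quillen equivalence using Propositions~\ref{prop:relGorstrong} and~\ref{prop:relGorweak}).

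With the framework in place the two substantive verifications have already been made. Proposition~\ref{prop:fixedpoints1} provides a natural isomorphism $\res[\psi]\res[q]\coind\cong\coind$ of functors $\mod{DEH_+}(\Sp_H)\to\mod{DEG_+}(\Sp_G)$ (matching the two $DEG_+$-module structures), which under the identifications above is precisely condition~(b) of Theorem~\ref{thm:compare}; and Proposition~\ref{prop:fixedpoints2} provides the natural weak equivalence $\Sigma^{-a}\coext[\psi]M\xrightarrow{\sim}\res[q]\coind\forget M$ on fibrant $M$, which is precisely condition~(d). Since (b) discharges one of $\{(a),(b)\}$ and (d) discharges one of $\{(c),(d)\}$, Theorem~\ref{thm:compare} applies, all six derived squares commute, and the vertical Quillen functors in~(\ref{eq:fixedpoints1}) correspond. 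The remaining work is purely formal bookkeeping — matching the labelled functors to the abstract template, identifying the left and right adjoints of the bottom horizontal, and using $\ext[q]\ext[\psi]=\ext[\omega]$ to rewrite composites along the bottom row. I expect the only genuine point of care to be pinning down the shift in the relative Gorenstein computation as exactly $a=\dim(G/H)$, since that is what makes pairing $\ext[\psi]$ with $\Sigma^{-a}\coext[\psi]$ legitimate; but this is a direct consequence of the Wirthm\"uller isomorphism already used to prove Proposition~\ref{prop:fixedpoints2}.
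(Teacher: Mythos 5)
Your proposal is correct and follows essentially the same route as the paper: recognise~(\ref{eq:fixedpoints1}) as an instance of Theorem~\ref{thm:compare} (with the two downward functors on the equivariant side both equal to $\forget$), use Proposition~\ref{prop:descentequivalence} for the horizontal Quillen equivalences, identify $\L\ext[\psi]\cong\Sigma^{-a}\R\coext[\psi]$ via the Wirthm\"uller isomorphism, and feed in Propositions~\ref{prop:fixedpoints1} and~\ref{prop:fixedpoints2} as conditions (b) and (d). The only differences are cosmetic: the paper invokes the Wirthm\"uller isomorphism directly rather than routing through smallness and the relative Gorenstein condition of Proposition~\ref{prop:functorsetup}, and your labelling of which column plays the role of $\C$ versus $\D$ should be swapped so that the left adjoint of the bottom horizontal (the tensor functor, not $\res[q]\coind$) points from $\C_H$ to $\D_H$ --- exactly the bookkeeping you already flagged.
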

\begin{proof}
The horizontal adjunctions are Quillen equivalences by Proposition~\ref{prop:descentequivalence}. The derived functors of $\Sigma^{-a}\coext[\psi]$ and $\ext[\psi]$ are isomorphic by the Wirthm\"uller isomorphism. Therefore we are in the context of Theorem~\ref{thm:compare} and the result follows from applying this together with Propositions~\ref{prop:fixedpoints1} and~\ref{prop:fixedpoints2}.
\end{proof}

In order to apply this to (co)free equivariant spectra, we now consider the localized and cellularized versions 
\begin{equation}\label{eq:fixedpoints1cofree}
	\begin{tikzcd}
	L_{EG_+}\mod{DEG_+}(\Sp_G) \arrow[rrrrr, yshift=-2.2mm, "1"' description] \arrow[ddd, "\forget" description] & & & & & L_{EG_+}\mod{DEG_+}(\Sp_G) \arrow[lllll, yshift=2.2mm, "1" description] \arrow[ddd, "{\ext[\psi]}" description, xshift=5mm] \arrow[ddd, "{\Sigma^{-a}\coext[\psi]}" description, xshift=-5mm] \\
	& & & & & \\
	& & & & & \\
	L_{EH_+}\mod{DEH_+}(\Sp_H) \arrow[uuu, xshift=-5mm, "\ind" description] \arrow[uuu, xshift=5mm, "\coind" description] \arrow[rrrrr, yshift=-2.2mm, "{\res[q]}\coind"'] & & & & & L_{q^*\coind EH_+}\mod{QF_H(G_+,DEH_+)}(\Sp_G) \arrow[lllll, yshift=2.2mm, "DEH_+ \otimes_{\forget F_H(G_+,DEH_+)} \forget{\ext[q]}(-)"'] \arrow[uuu, xshift=-15mm, "{\Sigma^a\res[\psi]}" description] \arrow[uuu, xshift=15mm, "{\res[\psi]}" description]
	\end{tikzcd}
\end{equation}
and
\begin{equation}\label{eq:fixedpoints1free}
	\begin{tikzcd}
	\cell_{G_+}\mod{DEG_+}(\Sp_G) \arrow[rrrrr, yshift=-2.2mm, "1"' description] \arrow[ddd, "\forget" description] & & & & & \cell_{G_+}\mod{DEG_+}(\Sp_G) \arrow[lllll, yshift=2.2mm, "1" description] \arrow[ddd, "{\ext[\psi]}" description, xshift=5mm] \arrow[ddd, "{\Sigma^{-a}\coext[\psi]}" description, xshift=-5mm] \\
	& & & & & \\
	& & & & & \\
	\cell_{H_+}\mod{DEH_+}(\Sp_H) \arrow[uuu, xshift=-5mm, "\ind" description] \arrow[uuu, xshift=5mm, "\coind" description] \arrow[rrrrr, yshift=-2.2mm, "{\res[q]}\coind"'] & & & & & \cell_{q^*\coind H_+}\mod{QF_H(G_+,DEH_+)}(\Sp_G). \arrow[lllll, yshift=2.2mm, "DEH_+ \otimes_{\forget F_H(G_+,DEH_+)} \forget{\ext[q]}(-)"'] \arrow[uuu, xshift=-15mm, "{\Sigma^a\res[\psi]}" description] \arrow[uuu, xshift=15mm, "{\res[\psi]}" description]
	\end{tikzcd}
\end{equation}
We must firstly show that the functors pass to the localizations and cellularizations.
\begin{prop}\label{prop:fixedpoints1quillencofree}
	There is a Quillen adjoint triple of functors
	\begin{center}
		\begin{tikzcd}
		L_{\res[q]\coind EH_+}\mod{{QF_H(G_+,DEH_+)}}(\Sp_G) \arrow[rr, "{\res[\psi]}" description] & &  L_{EG_+}\mod{DEG_+}(\Sp_G). \arrow[ll, yshift=-3.5mm, "{\coext[\psi]}" description] \arrow[ll, yshift=3.5mm, "{\ext[\psi]}" description]
		\end{tikzcd}
	\end{center}
\end{prop}
\begin{proof}
Since the diagram
\begin{center}
	\begin{tikzcd}
	DEG_+ \arrow[d, "\omega"'] \arrow[dr, hookrightarrow, "\psi"] & \\
	F_H(G_+,DEH_+) & QF_H(G_+,DEH_+) \arrow[l, twoheadrightarrow, "q", "\sim"']
	\end{tikzcd}
\end{center}
commutes, we have an isomorphism of functors $\ext[\omega] \cong \ext[q]\ext[\psi]$. Taking mates, we have an equivalence $\ext[\psi] \simeq \res[q]\ext[\omega]$ on cofibrant objects. Note that $\ext[\omega] EG_+ \simeq F_H(G_+,EH_+)$ since $$F_H(G_+,DEH_+) \otimes_{DEG_+} EG_+ \simeq F(G/H_+,S^0) \wedge EG_+ \simeq F_H(G_+,EH_+)$$ by smallness of $G/H_+$. Therefore $\ext[\psi]EG_+ \simeq \res[q]F_H(G_+,EH_+)$ and the result follows from Theorem~\ref{thm:quillenafterloc}(1).
\end{proof}

By the Wirthm\"uller isomorphism, $\coind H_+$ is equivalent to a shift of $G_+$ and so Theorem~\ref{thm:quillenaftercell}(1) gives the following result.
\begin{prop}\label{prop:fixedpoints1quillenfree}
	There is a Quillen adjoint triple of functors
	\begin{center}
		\begin{tikzcd}
		\cell_{\res[q]\coind H_+}\mod{{QF_H(G_+,DEH_+)}}(\Sp_G) \arrow[rr, "{\res[\psi]}" description] & &  \cell_{G_+}\mod{DEG_+}(\Sp_G). \arrow[ll, yshift=-3.5mm, "{\coext[\psi]}" description] \arrow[ll, yshift=3.5mm, "{\ext[\psi]}" description]
		\end{tikzcd}
	\end{center}
\end{prop}

\begin{cor}\label{cor:fixedpoints1commute}
The vertical Quillen functors in~(\ref{eq:fixedpoints1cofree}) and~(\ref{eq:fixedpoints1free}) correspond.
\end{cor}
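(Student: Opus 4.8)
The plan is to repeat the argument of Proposition~\ref{prop:fixedpointscommute} in the localized (resp.\ cellularized) model structures, checking that nothing essential changes. First I would record that every functor appearing in~(\ref{eq:fixedpoints1cofree}) and~(\ref{eq:fixedpoints1free}) is still Quillen: the left-hand columns by Propositions~\ref{prop:changeofringsquillen1} and~\ref{prop:changeofringsquillen2}, and the right-hand columns by Propositions~\ref{prop:fixedpoints1quillencofree} and~\ref{prop:fixedpoints1quillenfree} (the suspensions $\Sigma^{\pm a}$ are Quillen self-equivalences and so are irrelevant here). The top horizontal adjunctions are identities, and the bottom horizontal adjunctions are Quillen equivalences after these localizations/cellularizations: indeed, the objects on the bottom right are chosen to be $\res[q]\coind EH_+$ (resp.\ $\res[q]\coind H_+$), i.e.\ the images of the localizing/cellularizing objects on the bottom left under the Quillen equivalence of Proposition~\ref{prop:descentequivalence} composed with Quillen invariance of modules along $q$, precisely so that the localization/cellularization descends to a Quillen equivalence. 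The identifications of derived functors required by Theorem~\ref{thm:compare} ($\R\psext \cong \L\ext$ on the left and $\R\psexta \cong \L\exta$ on the right, the latter being the Wirthm\"uller isomorphism) persist for the same reasons as in the proof of Proposition~\ref{prop:fixedpointscommute}. Thus we are again in the situation of Theorem~\ref{thm:compare}.

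It then remains to transport the two natural (weak) equivalences produced in the proof of Proposition~\ref{prop:fixedpointscommute}. The natural isomorphism $\coind M \cong \res[\psi]\res[q]\coind M$ of Proposition~\ref{prop:fixedpoints1} is a statement at the level of underlying categories, so it holds verbatim in any model structure on these categories. For the natural weak equivalence $\Sigma^{-a}\coext[\psi]M \xrightarrow{\sim} \res[q]\coind\forget M$ of Proposition~\ref{prop:fixedpoints2}, I would use two elementary observations: first, every weak equivalence of $\mod{DEG_+}(\Sp_G)$ is in particular an $EG_+$-equivalence and a $G_+$-cellular equivalence, so the map is still a weak equivalence after localization/cellularization; second, the class of fibrant objects only shrinks under left Bousfield localization (the fibrations forming a subclass) and is unchanged under cellularization (the fibrations being identical), so a natural weak equivalence defined on all fibrant $DEG_+$-modules restricts to one on the fibrant objects of the localized/cellularized category. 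Symmetrically, the cofibrations are unchanged under left Bousfield localization and the cofibrant objects only shrink under cellularization, so the cofibrancy clauses in the hypotheses of Theorem~\ref{thm:compare} transfer too. Hence those hypotheses are verified in both~(\ref{eq:fixedpoints1cofree}) and~(\ref{eq:fixedpoints1free}), and the corollary follows.

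Since all the genuinely new content was already assembled in Propositions~\ref{prop:fixedpoints1}--\ref{prop:fixedpoints2} together with the Quillen-ness statements of Propositions~\ref{prop:fixedpoints1quillencofree}--\ref{prop:fixedpoints1quillenfree}, I do not expect a serious obstacle. The one point that requires care is the bookkeeping of how the classes of (co)fibrant objects and of weak equivalences behave under left Bousfield localization versus cellularization, so as to be confident that the ``cofibrant'' and ``fibrant'' hypotheses of Theorem~\ref{thm:compare} are being \emph{restricted} rather than \emph{enlarged} when we pass to the localized and cellularized diagrams.
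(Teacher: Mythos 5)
Your proposal is correct and follows essentially the same route as the paper: the paper's proof simply notes that the vertical functors remain Quillen by Propositions~\ref{prop:changeofringsquillen1},~\ref{prop:changeofringsquillen2},~\ref{prop:fixedpoints1quillencofree} and~\ref{prop:fixedpoints1quillenfree}, and then invokes Proposition~\ref{prop:fixedpointscommute}. The extra bookkeeping you supply (weak equivalences being enlarged, cofibrant/fibrant objects only shrinking under localization/cellularization, and the localizing/cellularizing objects corresponding across the bottom Quillen equivalence) is exactly what the paper leaves implicit, and it is carried out correctly.
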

\begin{proof}
The vertical functors in~(\ref{eq:fixedpoints1cofree}) and~(\ref{eq:fixedpoints1free}) are Quillen by Propositions~\ref{prop:changeofringsquillen1},~\ref{prop:changeofringsquillen2},~\ref{prop:fixedpoints1quillencofree} and~\ref{prop:fixedpoints1quillenfree}, so this follows from Proposition~\ref{prop:fixedpointscommute}.
\end{proof}

\subsection{Commutativity: step 2}
We now want to take categorical $G$-fixed points, but in light of Lemma~\ref{lem:flatfixed} and Remark~\ref{rem:notQuillen}, we must take extra care in this step. The key point is that since we are working with (co)free $G$-spectra, we can up to Quillen equivalence restrict to a trivial universe. 

We have a map $\psi\colon DEG_+ \to QF_H(G_+, DEH_+)$ of commutative ring $G$-spectra, and taking categorical $G$-fixed points we obtain a map $\psi^G\colon DBG_+ \to DBH_+$ of commutative ring spectra. 
\begin{rem}
We note that here we abuse notation and write $DBH_+ = (QF_H(G_+, DEH_+))^G$ to simplify notation. The reader may take this as our definition of $DBH_+$. On the other hand, the notation is chosen to be suggestive since $(QF_H(G_+, DEH_+))^G$ is weakly equivalent to the functional dual $F(BH_+, S^0)$ by using the natural isomorphism $(\coind X)^G \cong X^H$ which exists for any $H$-spectrum $X$.
\end{rem}
By taking a cofibrant replacement of $DBH_+$ as a commutative $DBG_+$-algebra, we obtain a commutative diagram
\begin{center}
	\begin{tikzcd}
	DBG_+ \arrow[d, "\psi^G"'] \arrow[dr, hookrightarrow, "\phi"] & \\
	DBH_+ & QDBH_+. \arrow[l, twoheadrightarrow, "\sim"', "c"]
	\end{tikzcd}
\end{center}

The inflation functor is strong monoidal, so as in Section~\ref{sec:liftadjunctionstomodules}, given a commutative ring $G$-spectrum $S$, we obtain an induced adjunction
\[\begin{tikzcd}
\mod{S}(\Sp_G) \arrow[rr, "(-)^G"', yshift=-1mm] & & \mod{S^G}(\Sp) \arrow[ll, yshift=1mm, "\mathrm{inf}^S"']
\end{tikzcd}\]
and moreover, we have the description $\mathrm{inf}^S = S \otimes_{\mathrm{inf}S^G} \mathrm{inf}(-)$. In what follows we will drop the inflation notation for brevity, instead writing $S \otimes_{S^G} -$. We refer the reader to~\cite{GreenleesShipley14c} for further discussion on this adjunction.

\begin{prop}\label{prop:QEfixedpoints}
There are Quillen equivalences
\[\begin{tikzcd}
		L_{EG_+}\mod{DEG_+}^\mathrm{flat} \arrow[rrrr, "(-)^G" description, yshift=-2.1mm] & & & &\cell_{S^0}\mod{DBG_+}^\mathrm{flat} \arrow[llll, yshift=2.1mm, "DEG_+ \otimes_{DBG_+} -" description]
		\end{tikzcd}
\]
and 
\[\begin{tikzcd}
		L_{\res[q]{F_H(G_+,EH_+)}}\mod{QF_H(G_+,DEG_+)}^\mathrm{flat} \arrow[rrrrr, "{\res[c](-)^G}" description, yshift=-2.1mm] & & & & &\cell_{S^0}\mod{QDBH_+}^\mathrm{flat}. \arrow[lllll, yshift=2.1mm, "{QF_H(G_+,DEH_+)} \otimes_{QDBH_+} -" description]
		\end{tikzcd}
\]
\end{prop}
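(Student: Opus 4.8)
The plan is to obtain both Quillen equivalences from the fixed-points--inflation adjunction, following the strategy used by Greenlees--Shipley~\cite{GreenleesShipley14} and Pol and the author~\cite{PolWilliamson} for the corresponding statements in the stable model structure, but working throughout in the flat model structures. The key subtlety, flagged above, is that by Lemma~\ref{lem:flatfixed} and Remark~\ref{rem:notQuillen} the categorical fixed-points functor is only right Quillen in the flat model structure when the equivariant spectra are indexed on a \emph{trivial} universe, so the first move must be to use the (co)free condition to reduce to the trivial universe.

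First I would record that the change-of-universe functor induces a Quillen equivalence between the relevant (co)localized module categories indexed on a complete universe and those indexed on a trivial universe, using that (co)free $G$-spectra, and hence the module categories built from them, do not depend on the choice of universe up to Quillen equivalence after the appropriate localization or cellularization, together with the fact (recalled in Section~\ref{sec:fixedpoints}) that the flat cofibrations are independent of the universe. From now on all equivariant module categories are indexed on a trivial universe. Since inflation is strong monoidal it lifts to module categories as in Section~\ref{sec:liftadjunctionstomodules}, so Lemma~\ref{lem:flatfixed} gives, for a commutative ring $G$-spectrum $S$ indexed on a trivial universe, a Quillen adjunction $S \otimes_{S^G} - \colon \mod{S^G}(\Sp) \rightleftarrows \mod{S}(\Sp_G) \colon (-)^G$ in the flat model structures. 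Taking $S = DEG_+$ (so that $S^G = DBG_+$) gives the underlying adjunction of the first display, and taking $S = QF_H(G_+,DEH_+)$ (so that $S^G = DBH_+$), followed by restriction of scalars along the acyclic fibration $c \colon QDBH_+ \to DBH_+$ — which is part of a Quillen adjunction by Proposition~\ref{prop:extrescoext} since $QDBH_+$ is cofibrant — gives the second.

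It then remains to check that each adjunction descends to a Quillen adjunction between the stated (co)localized categories, using Theorems~\ref{thm:quillenafterloc} and~\ref{thm:quillenaftercell}, and then that it is a Quillen equivalence. For the latter I would invoke the Cellularization Principle~\cite[2.7]{GreenleesShipley13} together with~\cite[3.3.18]{Hirschhorn03}: one checks that the derived counit of the lifted adjunction is an equivalence on the natural generators of the module categories — $DEG_+$ for the first display (see~\cite[3.1]{Greenlees20}) and $QF_H(G_+,DEH_+)$ for the second (see Corollary~\ref{cor:coindgenerates}) — and one identifies the image of the (co)localizing object under the derived left adjoint; for the second display this last step is exactly the identification $\ext[\psi]EG_+ \simeq \res[q]F_H(G_+,EH_+)$ recorded in Proposition~\ref{prop:fixedpoints1quillencofree}, and for the first display a direct computation with $G$-fixed points plays the analogous role.

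The routine part is the diagram chasing identifying the various module and algebra structures; the part requiring genuine care is the reduction to the trivial universe, together with verifying in the last step that the flat-model-structure fixed-points adjunction really does restrict to a Quillen adjunction between the (co)localized categories with derived unit and counit that are equivalences on generators — that is, reconciling the flat model structure, the change of universe, and the localizations and cellularizations simultaneously.
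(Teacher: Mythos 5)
Your reduction to the trivial universe and the lifting of the fixed points--inflation adjunction to module categories matches the first half of the paper's argument: the paper factors each adjunction through $\mod{DEG_+}^{\mathrm{flat,triv}}$, checks the left piece using that the weak equivalences are the non-equivariant equivalences and that flat cofibrations are universe-independent, and uses Lemma~\ref{lem:flatfixed} for the right piece. The gap is in your final step, the proof that the composite is a Quillen \emph{equivalence}. Checking that the derived counit is an equivalence on the generators $DEG_+$ and $QF_H(G_+,DEH_+)$ is essentially vacuous here --- the counit $DEG_+\otimes_{DBG_+}(DEG_+)^G \to DEG_+$ is an isomorphism --- and it is not what the Cellularization Principle requires for the stated (co)localizations: that principle produces a Quillen equivalence of the cellularizations \emph{at the cells you test}, so testing at the ring generators would (at best, and only after knowing that $DEG_+$ generates its module category over the trivial universe) address the unlocalized adjunction, not $L_{EG_+}\mod{DEG_+}\rightleftarrows\cell_{S^0}\mod{DBG_+}$; if that check sufficed, the localization and cellularization in the statement would be superfluous. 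The genuine content is the Eilenberg--Moore/descent statement that the derived unit is an equivalence on the cell, i.e.\ that $S^0 \to (DEG_+\otimes^{\L}_{DBG_+}S^0)^G$ is an equivalence, together with an identification of the resulting cellularization of the equivariant side with the $EG_+$-localization; neither is supplied by your sketch, and your appeal to Proposition~\ref{prop:fixedpoints1quillencofree} concerns the localizing objects on the equivariant side before fixed points are taken, so it does not play this role.

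The paper avoids this computation altogether: after showing the composite adjunction is Quillen, it forms a commutative square whose vertical adjunctions are the identity functors $L_{EG_+}\mod{DEG_+}^{\mathrm{flat}}\rightleftarrows\cell_{G_+}\mod{DEG_+}^{\mathrm{stable}}$ and $\cell_{S^0}\mod{DBG_+}^{\mathrm{flat}}\rightleftarrows\cell_{S^0}\mod{DBG_+}^{\mathrm{stable}}$ (Quillen equivalences because the weak equivalences agree), quotes the stable-model-structure Quillen equivalence of~\cite[8.1]{GreenleesShipley14c} for the bottom horizontal, and concludes by the 2-out-of-3 property of Quillen equivalences. Your route amounts to re-proving that theorem in the flat setting; that could in principle be done, but then the unit-on-$S^0$ computation and the matching of cells with the $EG_+$-localization are exactly the work your outline omits, so as written the argument does not close.
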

\begin{proof}
The proofs of both statements are similar, so we prove only the first. Consider the composite adjunction
\[\begin{tikzcd}
L_{EG_+}\mod{DEG_+}^\mathrm{flat} \arrow[rr, "1" description, yshift=-2.1mm] & & \mod{DEG_+}^\mathrm{flat, triv} \arrow[ll, "1" description, yshift=2.1mm] \arrow[rrrr, "(-)^G" description, yshift=-2.1mm] & & & & \cell_{S^0}\mod{DBG_+}^\mathrm{flat}.\arrow[llll, yshift=2.1mm, "DEG_+ \otimes_{DBG_+} -" description]
\end{tikzcd} \]
The right hand adjunction is Quillen by Lemma~\ref{lem:flatfixed} since the identity functor \[\mod{DBG_+} \to \cell_{S^0}\mod{DBG_+}\] is right Quillen. For the left hand adjunction, note that in both model categories the weak equivalences are the non-equivariant equivalences and the cofibrations are the flat cofibrations since these are independent of the universe. Therefore the composite adjunction is Quillen. 

To see that it is a Quillen equivalence, consider the following commutative diagram of Quillen adjunctions:
\[
\begin{tikzcd}
L_{EG_+}\mod{DEG_+}^\mathrm{flat} \arrow[rr, "1" description, yshift=-2.1mm] \arrow[dd, "1", xshift=2mm] & & \mod{DEG_+}^\mathrm{flat, triv} \arrow[ll, "1" description, yshift=2.1mm] \arrow[rrrr, "(-)^G" description, yshift=-2.1mm] & & & & \cell_{S^0}\mod{DBG_+}^\mathrm{flat}\arrow[llll, yshift=2.1mm, "DEG_+ \otimes_{DBG_+} -" description] \arrow[dd, "1", xshift=2mm]  \\
& & & & & & \\
\cell_{G_+}\mod{DEG_+}^\mathrm{stable} \arrow[uu, "1", xshift=-2mm] \arrow[rrrrrr, "(-)^G" description, yshift=-2.1mm] & & & & & & \cell_{S^0}\mod{DBG_+}^\mathrm{stable}.\arrow[llllll, yshift=2.1mm, "DEG_+ \otimes_{DBG_+} -" description] \arrow[uu, "1", xshift=-2mm]
\end{tikzcd}
 \]
 The left hand vertical is a Quillen equivalence since in both cases the weak equivalences are the non-equivariant equivalences, and the right hand vertical is also a Quillen equivalence. The bottom horizontal adjunction is a Quillen equivalence by~\cite[8.1]{GreenleesShipley14c}, and therefore by the 2-out-of-3 property of Quillen equivalences, one sees that the desired composite adjunction is a Quillen equivalence.
\end{proof}

\begin{rem}\label{rem:freevscofree}
The localization on the left and the cellularization on the right in Proposition~\ref{prop:QEfixedpoints} are vital to ensure that the functors are Quillen, see Remark~\ref{rem:notQuillen}. Therefore, in this step there are not two different approaches: one for the free case and one for the cofree case; instead we must use the Quillen equivalence 
\[\begin{tikzcd}
L_{EG_+}\mod{DEG_+} \arrow[rr, "1" description, yshift=-2mm] & & \cell_{G_+}\mod{DEG_+} \arrow[ll, "1" description, yshift=2mm]
\end{tikzcd}\]
to pass between the two worlds. More precisely, if one wants to work with the free case, one must first use the above Quillen equivalence to pass to the cofree case. Then applying Proposition~\ref{prop:QEfixedpoints} returns one to the free case. In Section~\ref{sec:backtocofree} we will explain how to pass back to the cofree case.
\end{rem}

The next square we consider is
\begin{equation}\label{eq:fixedpoints2}
	\begin{tikzcd}
	\quad L_{EG_+}\mod{DEG_+}(\Sp_G)\quad \arrow[ddd, "{\ext[\psi]}" description, xshift=5mm] \arrow[ddd, "{\Sigma^{-a}\coext[\psi]}" description, xshift=-5mm] \arrow[rrrrr, yshift=-2.2mm, "(-)^G"' description] & & & & &\quad\cell_{S^0}\mod{DBG_+}(\Sp)\quad \arrow[ddd, "{\ext[\phi]}" description, xshift=5mm] \arrow[ddd, "{\Sigma^{-a}\coext[\phi]}" description, xshift=-5mm] \arrow[lllll, yshift=2.2mm, "DEG_+ \otimes_{DBG_+} -" description] \\
	& & & & & \\
	& & & & & \\
	L_{\res[q]{F_H(G_+,EH_+)}}\mod{QF_H(G_+,DEH_+)}(\Sp_G) \arrow[uuu, xshift=-15mm, "{\Sigma^a\res[\psi]}" description] \arrow[uuu, xshift=15mm, "{\res[\psi]}" description] \arrow[rrrrr, yshift=-2.2mm, "{\res[c]}((-)^G)"' description] & & & & & \quad\cell_{S^0}\mod{QDBH_+}(\Sp).\quad \arrow[uuu, xshift=-15mm, "{\Sigma^a\res[\phi]}" description] \arrow[uuu, xshift=15mm, "{\res[\phi]}" description] \arrow[lllll, yshift=2.2mm, "{QF_H(G_+,DEH_+)} \otimes_{QDBH_+} -" description]
	\end{tikzcd}
\end{equation}
This square is an instance of the framework of Proposition~\ref{prop:compareweak}, but before we can apply this to deduce the correspondence of functors we need to verify that the vertical functors are Quillen.

\begin{prop}\label{prop:changeofrings2quillen}
	There is a Quillen adjoint triple of functors
	\begin{center}
		\begin{tikzcd}
		\cell_{S^0}\mod{QDBH_+}(\Sp) \arrow[rr, "{\res[\phi]}" description] & &  \cell_{S^0}\mod{DBG_+}(\Sp). \arrow[ll, yshift=-3.5mm, "{\coext[\phi]}" description] \arrow[ll, yshift=3.5mm, "{\ext[\phi]}" description]
		\end{tikzcd}
	\end{center}
\end{prop}
\begin{proof}
This follows from Theorem~\ref{thm:quillenaftercell}(1) together with Example~\ref{eg:smallbuild}.
\end{proof}

\begin{prop}\label{prop:fixedpointscommute2}
The vertical Quillen functors in~(\ref{eq:fixedpoints2}) correspond.
\end{prop}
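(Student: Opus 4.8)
The plan is to recognize the square~(\ref{eq:fixedpoints2}) as an instance of the framework of Proposition~\ref{prop:compareweak} and then feed the outcome into Theorem~\ref{thm:compare}. Concretely, I would apply Proposition~\ref{prop:compareweak} with the weak monoidal Quillen pair taken to be the fixed points--inflation adjunction $\inf\colon \Sp \rightleftarrows \Sp_G \colon (-)^G$ on a trivial universe (Quillen by Lemma~\ref{lem:flatfixed}, and weak monoidal since $\inf$ is strong monoidal), with $\theta$ the map $\psi\colon DEG_+ \to QF_H(G_+,DEH_+)$ of commutative ring $G$-spectra, and the convenient pair being the (flat, positive flat) model structures. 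In the notation of Proposition~\ref{prop:compareweak}, the map there denoted $U\theta$ is then $\psi^G\colon DBG_+ \to DBH_+$, the cofibrant replacement denoted $q$ is $c\colon QDBH_+ \to DBH_+$, and the unit map denoted $\psi$ is $\phi\colon DBG_+ \to QDBH_+$. Under this dictionary the two vertical adjoint triples of~(\ref{eq:fixedpoints2}) are exactly $\ext[\psi]\dashv\res[\psi]\dashv\coext[\psi]$ and $\ext[\phi]\dashv\res[\phi]\dashv\coext[\phi]$ (up to the invertible shift $\Sigma^{\pm a}$ coming from the relative Gorenstein condition), and the horizontal adjunctions are the fixed-points Quillen equivalences of Proposition~\ref{prop:QEfixedpoints}; so~(\ref{eq:fixedpoints2}) has the shape required by Theorem~\ref{thm:compare}.

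I would then check the hypotheses of Theorem~\ref{thm:compare}. The horizontal adjunctions are Quillen equivalences by Proposition~\ref{prop:QEfixedpoints}, and the vertical adjoint triples remain Quillen after the localizations and cellularizations appearing in~(\ref{eq:fixedpoints2}) by Propositions~\ref{prop:fixedpoints1quillencofree} (left column) and~\ref{prop:changeofrings2quillen} (right column). The two natural isomorphisms of derived functors needed by Theorem~\ref{thm:compare} also hold: $\L\ext[\psi] \cong \R\Sigma^{-a}\coext[\psi]$ is the Wirthm\"uller isomorphism, exactly as in the proof of Proposition~\ref{prop:fixedpointscommute}, while $\L\ext[\phi] \cong \R\Sigma^{-a}\coext[\phi]$ holds because $\phi$ is a cofibrant replacement of $\psi^G$, which is, up to weak equivalence, the cochain map $DBG_+ \to DBH_+$ and hence is relatively Gorenstein of shift $a = \dim(G/H)$ by Theorem~\ref{thm:relgor}; since relative Gorensteinness is a derived condition it passes to $\phi$.

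With the hypotheses verified I would read off the two statements needed by Theorem~\ref{thm:compare} directly from Proposition~\ref{prop:compareweak}. Part~(1) of Proposition~\ref{prop:compareweak} gives a natural isomorphism $\res[\phi]\res[c]\,(-)^G \cong (-)^G\res[\psi]$, which is statement~(b) of Theorem~\ref{thm:compare}. Part~(2) gives a natural transformation $(DEG_+ \otimes_{DBG_+} -)\circ \res[\phi] \to \res[\psi]\circ(QF_H(G_+,DEH_+)\otimes_{QDBH_+}-)$ that is a weak equivalence on cofibrant objects; twisting by the shift $\Sigma^a$, which commutes up to natural isomorphism with the left Quillen functors in sight, turns this into the natural weak equivalence between $F_G\twext$ and $\twexta F_H$ demanded by statement~(c) of Theorem~\ref{thm:compare}, where $\twext$ and $\twexta$ are the up-pointing functors $\Sigma^a\res[\phi]$ and $\Sigma^a\res[\psi]$ of~(\ref{eq:fixedpoints2}). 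As statements~(b) and~(c) both hold, Theorem~\ref{thm:compare} yields that all six derived squares commute, that is, the vertical Quillen functors in~(\ref{eq:fixedpoints2}) correspond.

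The main point requiring care is the universe. Categorical $G$-fixed points is not right Quillen for the flat model structure on a complete universe (Remark~\ref{rem:notQuillen}), so the identification of~(\ref{eq:fixedpoints2}) with the abstract set-up of Proposition~\ref{prop:compareweak} can only be made after replacing the complete-universe module categories by their trivial-universe counterparts up to Quillen equivalence; this is precisely the content of Proposition~\ref{prop:QEfixedpoints}, and it is what forces a localization to appear on the source side and a cellularization on the target side of~(\ref{eq:fixedpoints2}). The remaining bookkeeping --- checking that the point-set natural (weak) equivalences produced by Proposition~\ref{prop:compareweak} persist after passing to the localized/cellularized model structures (they do, since these only enlarge the class of weak equivalences while keeping either the cofibrations or the fibrations fixed), and that each functor is slotted into the correct column of the general framework --- is routine.
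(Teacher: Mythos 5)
Your proposal is correct and follows essentially the same route as the paper: the horizontals are Quillen equivalences by Proposition~\ref{prop:QEfixedpoints}, the verticals remain Quillen by Propositions~\ref{prop:fixedpoints1quillencofree} and~\ref{prop:changeofrings2quillen}, the identifications $\L\ext[\psi]\cong\Sigma^{-a}\R\coext[\psi]$ (Wirthm\"uller) and $\L\ext[\phi]\cong\Sigma^{-a}\R\coext[\phi]$ (relative Gorenstein, Theorem~\ref{thm:relgor}) supply the standing hypotheses, and the conclusion is obtained from Proposition~\ref{prop:compareweak} fed into Theorem~\ref{thm:compare}. You simply make explicit the dictionary (which functor plays which role in Proposition~\ref{prop:compareweak}) and the persistence of the point-set equivalences under the localization/cellularization, details the paper leaves implicit.
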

\begin{proof}
The vertical functors in~(\ref{eq:fixedpoints2}) are Quillen by Propositions~\ref{prop:fixedpoints1quillencofree} and~\ref{prop:changeofrings2quillen} and the horizontals are Quillen equivalences by Proposition~\ref{prop:QEfixedpoints}. The derived functors of $\Sigma^{-a}\coext[\phi]$ and $\res[\phi]$ are isomorphic since $\phi$ is relatively Gorenstein by Theorem~\ref{thm:relgor}. The result then follows from Proposition~\ref{prop:compareweak} and Theorem~\ref{thm:compare}.
\end{proof}

\subsection{Returning to the cofree case}\label{sec:backtocofree}
In Remark~\ref{rem:freevscofree} we explained that we have a single approach rather than two separate approaches for treating commutativity in taking categorical fixed points. The previous section explained how this works when landing in the free case. If one wishes to then pass back to the cofree case, one must use the Quillen equivalence
\[\begin{tikzcd}
L_{BG_+}\mod{DBG_+} \arrow[rr, "1" description, yshift=-2mm] & & \cell_{S^0}\mod{DBG_+} \arrow[ll, "1" description, yshift=2mm]
\end{tikzcd} \]
as we now describe. This amounts to considering the square
\begin{equation}\label{eq:fixedpoints3}
	\begin{tikzcd}
	\cell_{S^0}\mod{DBG_+}(\Sp) \arrow[ddd, "{\ext[\phi]}" description, xshift=5mm] \arrow[ddd, "{\Sigma^{-a}\coext[\phi]}" description, xshift=-5mm] \arrow[rrrr, yshift=2.2mm, "1"' description] & & & & L_{BG_+}\mod{DBG_+}(\Sp) \arrow[ddd, "{\ext[\phi]}" description, xshift=5mm] \arrow[ddd, "{\Sigma^{-a}\coext[\phi]}" description, xshift=-5mm] \arrow[llll, yshift=-2.2mm, "1" description] \\
	& & & & \\
	& & & & \\
	\cell_{S^0}\mod{QDBH_+}(\Sp) \arrow[uuu, xshift=-15mm, "{\Sigma^a\res[\phi]}" description] \arrow[uuu, xshift=15mm, "{\res[\phi]}" description] \arrow[rrrr, yshift=2.2mm, "1"' description] & & & & L_{BH_+}\mod{QDBH_+}(\Sp). \arrow[uuu, xshift=-15mm, "{\Sigma^a\res[\phi]}" description] \arrow[uuu, xshift=15mm, "{\res[\phi]}" description] \arrow[llll, yshift=-2.2mm, "1" description]
	\end{tikzcd}
\end{equation}

Firstly we need to justify that the extension, restriction and coextension of scalars along $\phi\colon DBG_+ \to QDBH_+$ are Quillen functors between the localizations. In order to prove this we require a couple of preparatory results.
We write $\Gamma_k$ for the $k$-cellularization functor, i.e., the right adjoint to the inclusion of the localizing subcategory generated by $k$.
\begin{lem}\label{lem:cellularization and extension of scalars}
	Let $\theta\colon S \to R$ be a map of ring spectra such that $R$ is small over $S$, and let $k$ be an $R$-algebra. Then for $M$ an $S$-module, we have $R \otimes_S^\L \Gamma_kM \simeq \Gamma_k(R \otimes_S^\L M)$.
\end{lem}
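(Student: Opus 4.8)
The plan is to exhibit the natural map $R\otimes_S^{\L}\Gamma_kM\to R\otimes_S^{\L}M$, obtained by applying $R\otimes_S^{\L}-=\L\ext$ to the cellularisation map $\Gamma_kM\to M$ in $\Dsf(S)$, as the $k$-cellularisation of $R\otimes_S^{\L}M$ in $\Dsf(R)$. By the universal property of the cellularisation functor $\Gamma_k$ on $\Dsf(R)$ this requires two things: that $R\otimes_S^{\L}\Gamma_kM$ lies in the localizing subcategory $\mathrm{Loc}_R(k)$ of $\Dsf(R)$ generated by $k$, and that the displayed map is a $k$-cellular equivalence. Both points rest on the hypothesis that $R$ is small over $S$, through Example~\ref{eg:smallbuild}.

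For the first point, $\Gamma_kM$ lies in the localizing subcategory $\mathrm{Loc}_S(k)$ of $\Dsf(S)$ generated by $k$ (regarded as an $S$-module via restriction along $\theta$) by construction. Since $\ext$ is left Quillen (Proposition~\ref{prop:extrescoext}(1)), the functor $R\otimes_S^{\L}-=\L\ext\colon\Dsf(S)\to\Dsf(R)$ is exact and preserves coproducts, so it carries $\mathrm{Loc}_S(k)$ into $\mathrm{Loc}_R(\L\ext\res k)$. As $R$ is small over $S$, Example~\ref{eg:smallbuild} shows that $k$ builds $\L\ext\res k=R\otimes_S^{\L}\res k$, whence $\mathrm{Loc}_R(\L\ext\res k)\subseteq\mathrm{Loc}_R(k)$; therefore $R\otimes_S^{\L}\Gamma_kM\in\mathrm{Loc}_R(k)$.

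For the second point I would lift the statement to the level of model categories rather than verify the cellular-equivalence condition by hand. Again since $R$ is small over $S$, Example~\ref{eg:smallbuild} gives that $k$ builds $\L\ext\res k$, so Theorem~\ref{thm:quillenaftercell}(1) (with $\C=\mod{S}$ and $\D=\mod{R}$, the relevant cellularisations existing by assumption) shows that $\ext\dashv\res$ is a Quillen adjunction between $\cell_{\res k}\mod{S}$ and $\cell_{k}\mod{R}$. Consider now the square of model categories and left Quillen functors whose horizontal arrows are $\ext$ and whose vertical arrows are the identity functors $\mod{S}\to\cell_{\res k}\mod{S}$ and $\mod{R}\to\cell_{k}\mod{R}$; each identity functor here is left Quillen because every weak equivalence is in particular a cellular equivalence. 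This square commutes on the nose, so the induced square of total left derived functors commutes. Cofibrant replacement in $\cell_{\res k}\mod{S}$ realises the cellularisation $\Gamma_k$ on $\Dsf(S)$, and the total left derived functor of $\mathrm{id}\colon\mod{R}\to\cell_{k}\mod{R}$ is, under the usual identification of $\mathrm{Ho}(\cell_{k}\mod{R})$ with $\mathrm{Loc}_R(k)$, the cellularisation $\Gamma_k$ on $\Dsf(R)$. Feeding these identifications into the commuting square of derived functors, and using the first point to see that $R\otimes_S^{\L}\Gamma_kM$ already lies in $\mathrm{Loc}_R(k)$ and is hence its own $k$-cellularisation, one reads off the natural equivalence $R\otimes_S^{\L}\Gamma_kM\simeq\Gamma_k(R\otimes_S^{\L}M)$.

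The step I expect to be the main obstacle is the one underpinning the derived-functor square, namely ensuring that extension of scalars remains left Quillen after passing to the cellularised model structures; this is precisely where smallness of $R$ over $S$ is used, via Example~\ref{eg:smallbuild}, and it is the same input that supplies the containment in $\mathrm{Loc}_R(k)$ needed in the first point. The remaining ingredients, namely composing total left derived functors of left Quillen functors and identifying cofibrant replacement in a cellularised model structure with the cellularisation functor, are standard.
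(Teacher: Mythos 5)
Your first point (cellularity of $R\otimes_S^\L\Gamma_kM$, via Example~\ref{eg:smallbuild}) is exactly the paper's argument, but your second point has a genuine gap. Cellularization is a \emph{right} Bousfield localization: the fibrations are unchanged and the cofibrations shrink, so the identity functors $\mod{S}\to\cell_{\res k}\mod{S}$ and $\mod{R}\to\cell_{k}\mod{R}$ are \emph{right} Quillen, not left Quillen (the paper itself uses this, e.g.\ in the proof of Proposition~\ref{prop:QEfixedpoints}); preserving weak equivalences does not make a functor left Quillen, since it need not preserve the (smaller) cellular cofibrations. More seriously, even after correcting the handedness the square you describe carries no content: taking the left Quillen identities $\cell_{\res k}\mod{S}\to\mod{S}$ and $\cell_{k}\mod{R}\to\mod{R}$ as verticals, the commuting square of total left derived functors evaluated at $M$ only returns $\L\ext\,\Gamma_kM\simeq\L\ext\,\Gamma_kM$, because cofibrant replacement in the cellularized structure (which is what realizes $\Gamma$) appears on both sides; the object $\Gamma_k(\L\ext M)$ never enters. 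To bring it in you must use the right Quillen identity $\mod{R}\to\cell_{k}\mod{R}$, i.e.\ a square of mixed handedness, and commutation of such a square is exactly what is not formal — it amounts to showing that $\L\ext\,\Gamma_kM\to\L\ext M$ is a $k$-cellular equivalence. Nor does this follow from the cellularized Quillen adjunction of Theorem~\ref{thm:quillenaftercell}(1) via Ken Brown's lemma, because $M$ is in general not cofibrant (not cellular) in $\cell_{\res k}\mod{S}$.

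The missing ingredient is a second use of smallness, which is where the paper's proof does its real work: since $R$ is small over $S$, one has $R\otimes_S^\L-\simeq\R\mathrm{Hom}_S(\dual R,-)$, and hence natural equivalences $\R\mathrm{Hom}_R(k,R\otimes_S^\L-)\simeq\R\mathrm{Hom}_R\bigl(k,\R\mathrm{Hom}_S(\dual R,-)\bigr)\simeq\R\mathrm{Hom}_S\bigl(\dual R,\R\mathrm{Hom}_S(k,-)\bigr)$. Applying this to the $k$-cellular equivalence $\Gamma_kM\to M$ of $S$-modules shows directly that $R\otimes_S^\L\Gamma_kM\to R\otimes_S^\L M$ is a $k$-cellular equivalence of $R$-modules, which together with your first point identifies $R\otimes_S^\L\Gamma_kM$ with $\Gamma_k(R\otimes_S^\L M)$ by the universal property of cellularization. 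Note also that the lemma lives entirely at the derived level ($\Gamma_k$ is the right adjoint to the inclusion of the localizing subcategory generated by $k$), so the existence of cellularized model structures, which your argument has to assume, is not needed at all.
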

\begin{proof}
	Throughout the proof all functors are implicitly derived. We must check that $R \otimes_S \Gamma_kM$ satisfies the universal properties of the cellularization. Firstly, there is a natural map $R \otimes_S \Gamma_kM \to R \otimes_S M$ as there is a natural map $\Gamma_kM \to M$. Since $S$ finitely builds $R$, $k$ finitely builds $R \otimes_S k$, so $R \otimes_S \Gamma_kM$ is $k$-cellular. It remains to check that $R \otimes_S \Gamma_kM \to R \otimes_S M$ is a $k$-cellular equivalence. 
	
	Note that there are natural equivalences $$\mathrm{Hom}_R(k,R \otimes_S -) \simeq \mathrm{Hom}_R(k,\mathrm{Hom}_S(DR,-)) \simeq \mathrm{Hom}_S(DR,\mathrm{Hom}_S(k,-))$$ since $R$ is a small $S$-module. It follows that $R \otimes_S \Gamma_kM \to R \otimes_S M$ is a $k$-cellular equivalence as $\Gamma_kM \to M$ is a $k$-cellular equivalence.
\end{proof}

\begin{cor}\label{cor:extension of BG}
	There is an equivalence \[\L\ext[\phi](BG_+) = QDBH_+ \otimes^\L_{DBG_+} \Sigma^{\mathrm{dim}(G)}BG_+ \simeq \Sigma^{\mathrm{dim}(H)}BH_+.\]
\end{cor}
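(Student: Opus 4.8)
The plan is to deduce the corollary from Lemma~\ref{lem:cellularization and extension of scalars} together with Gorenstein duality for the classifying space of a connected compact Lie group. The key input I would record first is that, since $G$ is connected compact Lie and hence $H^*BG$ is polynomial, $DBG_+$ satisfies Gorenstein duality: writing $S^0$ for the $DBG_+$-module obtained from the augmentation $DBG_+ = F(BG_+,S^0) \to F(S^0,S^0) = S^0$ induced by the basepoint, and $\Gamma_{S^0}$ for the $S^0$-cellularization functor on $\mod{DBG_+}$ (as in the discussion before Lemma~\ref{lem:cellularization and extension of scalars}), there is an equivalence of $DBG_+$-modules $\Gamma_{S^0}(DBG_+) \simeq \Sigma^{\dim G}BG_+$, where $BG_+$ carries its cap-product module structure over $DBG_+$; see e.g.~\cite{Greenlees20}. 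The same statement applied to $H$ gives $\Gamma_{S^0}(DBH_+) \simeq \Sigma^{\dim H}BH_+$.

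I would then check that Lemma~\ref{lem:cellularization and extension of scalars} applies to $\phi\colon DBG_+ \to QDBH_+$ with $k = S^0$. The ring $QDBH_+$ is small over $DBG_+$ because $\pi_*(QDBH_+) = H^*BH$ is finitely generated over the polynomial ring $\pi_*(DBG_+) = H^*BG$ by Venkov's theorem, so Proposition~\ref{prop:small fin gen} applies; this is exactly the smallness observation already used in the paper for ring maps with homotopy $H^*BG \to H^*BH$. Moreover $S^0$ is a $QDBH_+$-algebra via $QDBH_+ \xrightarrow{q} DBH_+ \to S^0$, and restricting this structure along $\phi$ recovers the augmentation of $DBG_+$ since $\ast \to BH \to BG$ is the basepoint of $BG$.

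The computation is then the chain
\begin{align*}
QDBH_+ \otimes^\L_{DBG_+} \Sigma^{\dim G}BG_+
&\simeq QDBH_+ \otimes^\L_{DBG_+} \Gamma_{S^0}(DBG_+) \\
&\simeq \Gamma_{S^0}\bigl(QDBH_+ \otimes^\L_{DBG_+} DBG_+\bigr) \\
&\simeq \Gamma_{S^0}(QDBH_+) \;\simeq\; \Sigma^{\dim H}BH_+,
\end{align*}
where the first equivalence is Gorenstein duality for $BG$, the second is Lemma~\ref{lem:cellularization and extension of scalars} with $M = DBG_+$, and the last uses $QDBH_+ \simeq DBH_+$ together with Gorenstein duality for $BH$. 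The point requiring the most care is the first input: identifying Gorenstein duality as an equivalence of $DBG_+$-modules with precisely the shift $\dim G$ and precisely the cap-product module structure on $BG_+$. An alternative route, avoiding a direct appeal to Gorenstein duality, would be to use Theorem~\ref{thm:relgor} (so that $\theta$ is relatively Gorenstein of shift $d = \dim G - \dim H$) together with Proposition~\ref{prop:functorsetup}(2) to rewrite $\L\ext[\phi]$ as $\Sigma^{-d}\R\coext[\phi]$, reducing the claim to the identity $\R\mathrm{Hom}_{DBG_+}(DBH_+, BG_+) \simeq BH_+$; the latter follows from the equivalence of $DBG_+$-modules $BG_+ \simeq F(DBG_+,S^0)$ (valid rationally, as $BG$ has finite type rational homology) and the coinduction adjunction $\Sp \rightleftarrows \mod{DBG_+}$. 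However, the cellularization argument is shorter, so I would present that as the proof.
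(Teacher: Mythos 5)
Your proposal is correct and follows essentially the same route as the paper's own proof: Gorenstein duality identifies $\Sigma^{\mathrm{dim}(G)}BG_+$ with $\Gamma_{S^0}DBG_+$, Lemma~\ref{lem:cellularization and extension of scalars} moves the cellularization past $QDBH_+ \otimes^\L_{DBG_+}-$, and Gorenstein duality for $H$ identifies $\Gamma_{S^0}QDBH_+ \simeq \Sigma^{\mathrm{dim}(H)}BH_+$. The paper's proof is a one-line citation of exactly these two ingredients; your write-up merely makes explicit the smallness check (Venkov plus Proposition~\ref{prop:small fin gen}) and the $S^0$-algebra structure, which the paper leaves implicit.
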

\begin{proof}
	This follows from Lemma~\ref{lem:cellularization and extension of scalars} using that $QDBH_+$ is a small $DBG_+$-module by Venkov's theorem (see after Proposition~\ref{prop:small fin gen}) and the fact that $\Gamma_{S^0}DBG_+ \simeq \Sigma^{\mathrm{dim}(G)}BG_+$ by Gorenstein duality~\cite{DwyerGreenleesIyengar06}. 
\end{proof}

\begin{prop}\label{prop:changeofrings2quillenloc}
	There is a Quillen adjoint triple of functors
	\begin{center}
		\begin{tikzcd}
		L_{BH_+}\mod{QDBH_+}(\Sp) \arrow[rr, "{\res[\phi]}" description] & &  L_{BG_+}\mod{DBG_+}(\Sp). \arrow[ll, yshift=-3.5mm, "{\coext[\phi]}" description] \arrow[ll, yshift=3.5mm, "{\ext[\phi]}" description]
		\end{tikzcd}
	\end{center}
\end{prop}
\begin{proof}
	We have $\langle BH_+ \rangle = \langle \L\ext[\phi](BG_+) \rangle$ by Corollary~\ref{cor:extension of BG} so the claim follows from Theorem~\ref{thm:quillenafterloc}(1).
\end{proof}

In order to see that we can pass back to the cofree case from the free case, it remains to check that the horizontals in~(\ref{eq:fixedpoints3}) are Quillen equivalences.
\begin{prop}\label{prop:fixedpoints3}
The horizontal adjunctions in~(\ref{eq:fixedpoints3}) are Quillen equivalences. Therefore the vertical functors in~(\ref{eq:fixedpoints3}) correspond.
\end{prop}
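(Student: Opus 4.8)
The plan is to show that the two horizontal identity-functor adjunctions in~(\ref{eq:fixedpoints3}) are Quillen equivalences, and then to deduce the correspondence of vertical functors from the general machinery. The key point is that the cellularization $\cell_{S^0}\mod{DBG_+}$ and the localization $L_{BG_+}\mod{DBG_+}$ are both models for the ``free'' part of $\mod{DBG_+}$: indeed $\Gamma_{S^0}DBG_+ \simeq \Sigma^{\mathrm{dim}(G)}BG_+$ by Gorenstein duality (as used in Corollary~\ref{cor:extension of BG}), so the small object $S^0$ and the object $BG_+$ generate the same localizing subcategory of $h\mod{DBG_+}$. More precisely, $S^0$ finitely builds $DBG_+$ (since $DBG_+$ is small over $S^0$, by Venkov's theorem as recorded after Proposition~\ref{prop:small fin gen}), hence $\langle BG_+ \rangle$ coincides with the Bousfield class whose acyclics are exactly the $S^0$-cellular-trivial modules. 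First I would invoke the standard comparison (as in~\cite[Appendix]{GreenleesShipley14} or the discussion there) that for a stable model category $\C$ with a small object $k$ whose Bousfield class is $\langle E\rangle$, the identity functor $L_E\C \to \cell_k\C$ is a Quillen equivalence. This gives that both horizontals in~(\ref{eq:fixedpoints3}) are Quillen equivalences.

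Next, I would check the hypotheses of Theorem~\ref{thm:compare}. The vertical functors $\ext[\phi], \res[\phi], \coext[\phi]$ are Quillen on the cellularized side by Proposition~\ref{prop:changeofrings2quillen}, and on the localized side by Proposition~\ref{prop:changeofrings2quillenloc}; moreover $\R\coext[\phi] \cong \Sigma^a\L\ext[\phi]$ since $\phi\colon DBG_+ \to QDBH_+$ is relatively Gorenstein of shift $a$ by Theorem~\ref{thm:relgor} (transported along the cofibrant replacement and along the monoidal Quillen equivalences via Propositions~\ref{prop:relGorstrong} and~\ref{prop:relGorweak}, as in the previous sections). Since the horizontal functors here are literally identity functors, the squares commute strictly: $\ext[\phi] \circ \mathrm{id} = \mathrm{id} \circ \ext[\phi]$ and likewise for $\res[\phi]$ and $\twext[\phi]$. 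Thus conditions (a) and (c) of Theorem~\ref{thm:compare} hold trivially (the required natural transformation is the identity, which is certainly a weak equivalence on cofibrant objects), and Theorem~\ref{thm:compare} then yields that all six derived squares commute, i.e., the vertical Quillen functors correspond.

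The one genuine obstacle is the first step: justifying that $L_{BG_+}\mod{DBG_+} \to \cell_{S^0}\mod{DBG_+}$ is a Quillen equivalence. This requires knowing both model structures exist (cellularization needs $\mod{DBG_+}$ to be right proper and cellular; localization needs the existence result of~\cite[6.1]{BalchinGreenlees20}), and then identifying that ``$BG_+$-local'' equals ``$S^0$-cellular-trivial-free'' — equivalently that $\langle BG_+\rangle$ has acyclics precisely the objects $M$ with $[S^0,M]^{DBG_+}_* = 0$. This in turn uses the Gorenstein duality identification of Corollary~\ref{cor:extension of BG} together with the fact that $S^0$ builds $DBG_+$-modules of the form $S^0 \otimes^\L_{S^0} (-)$ — i.e., that the free part is compactly generated by $S^0$ on the cellular side and by $BG_+$ on the local side, and these agree after the duality shift. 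Once this identification is in hand, everything else is formal. I would also remark (for completeness, matching Remark~\ref{rem:freevscofree}) that combining this Proposition with Proposition~\ref{prop:QEfixedpoints} is exactly what lets one return from the free model to the cofree model after taking categorical fixed points, so no separate treatment of the cofree case is needed at this stage.
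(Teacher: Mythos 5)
Your second step is fine: once the horizontals are known to be Quillen equivalences, the verticals are Quillen by Propositions~\ref{prop:changeofrings2quillen} and~\ref{prop:changeofrings2quillenloc}, the relative Gorenstein input comes from Theorem~\ref{thm:relgor}, and since the horizontals are identities and the two columns carry literally the same functors, the hypotheses of Theorem~\ref{thm:compare} hold on the nose. The problem is in your justification of the first step, and it is a genuine error rather than a loose end: the claim that ``$S^0$ finitely builds $DBG_+$ (since $DBG_+$ is small over $S^0$)'' is false, and backwards. $DBG_+$ is not a small module over the sphere (its homotopy $H^*BG$ is infinite dimensional over $\mathbb{Q}$), and $S^0$ does not build $DBG_+$ in $\mod{DBG_+}$: the localizing subcategory generated by $S^0$ consists of modules with $I$-power torsion homotopy, and $DBG_+$ is not of this kind. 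If your claim were true, every $DBG_+$-module would be $S^0$-cellular and the cellularization would do nothing. The statement you actually need is the opposite one: $S^0$ is a \emph{small} $DBG_+$-module (Proposition~\ref{prop:small fin gen}, since $\mathbb{Q}$ is finitely generated over $H^*BG$). Smallness of the cell makes $\Gamma_{S^0}$ smashing, so $[S^0,M]_*=0$ if and only if $\Gamma_{S^0}DBG_+\otimes^{\L}_{DBG_+}M\simeq 0$, and Gorenstein duality, $\Gamma_{S^0}DBG_+\simeq \Sigma^{\mathrm{dim}(G)}BG_+$ as in Corollary~\ref{cor:extension of BG}, then identifies the $S^0$-cellular equivalences with the $BG_+$-equivalences; with the same weak equivalences on both sides the identity adjunction is a Quillen equivalence. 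Relatedly, your ``standard comparison'' for a small object $k$ of Bousfield class $\langle E\rangle$ conflates the tensor Bousfield class of $k$ with the class of $[k,-]_*$-acyclics; for small $k$ the latter is controlled by $\dual k$ (equivalently by $\Gamma_k$ of the unit), and it is only because $\R\mathrm{Hom}_{DBG_+}(S^0,DBG_+)\simeq \Sigma^{-\mathrm{dim}(G)}S^0$ that the distinction is harmless here --- this must be argued, not assumed.

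For comparison, the paper proves the first step by a different, softer route: it places the identity adjunction between $\cell_{S^0}\mod{DBG_+}$ and $L_{BG_+}\mod{DBG_+}$ in a commuting square whose other three sides are the identity adjunction between $L_{EG_+}\mod{DEG_+}$ and $\cell_{G_+}\mod{DEG_+}$ (a Quillen equivalence because both model structures have the non-equivariant equivalences as their weak equivalences) and the two fixed-points Quillen equivalences (resting on $DEG_+$ generating its module category, as in Proposition~\ref{prop:QEfixedpoints}), and concludes by the two-out-of-three property of Quillen equivalences. Your direct argument on the $DBG_+$-side is a legitimate alternative once the smallness claim is corrected as above, but as written the key step rests on a false statement.
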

\begin{proof}
We prove the claim for the top horizontal adjunction; the bottom horizontal adjunction follows similarly. Consider the diagram
\[
\begin{tikzcd}
L_{EG_+}\mod{DEG_+} \arrow[dd, "1", xshift=2mm] \arrow[rrrr, "(-)^G" description, yshift=-2.1mm] & & & & L_{BG_+}\mod{DBG_+} \arrow[llll, yshift=2.1mm, "DEG_+ \otimes_{DBG_+} -" description] \arrow[dd, "1", xshift=2mm]  \\
& & & & \\
\cell_{G_+}\mod{DEG_+} \arrow[uu, "1", xshift=-2mm] \arrow[rrrr, "(-)^G" description, yshift=-2.1mm] & & & & \cell_{S^0}\mod{DBG_+}. \arrow[llll, yshift=2.1mm, "DEG_+ \otimes_{DBG_+} -" description] \arrow[uu, "1", xshift=-2mm]
\end{tikzcd}
 \]
 The left hand vertical is a Quillen equivalence since in both model categories the weak equivalences are the non-equivariant equivalences. The horizontal
 adjunctions are Quillen equivalences since $DEG_+$ generates its category of modules by~\cite[3.1]{Greenlees20}; see~\cite[\S 8 Step 2]{PolWilliamson} and~\cite[8.1]{GreenleesShipley14c} for more details. Therefore the right hand vertical is a Quillen equivalence as required.
\end{proof}

\section{Change of model}\label{sec:model}
In this section we pass from orthogonal spectra to symmetric spectra since the results of~\cite{Shipley07} use symmetric spectra as their model, whereas we have been using orthogonal spectra so far in order to discuss equivariant spectra. 

We will write $\Sp^\mathscr{O}$ and $\Sp^\Sigma$ for the categories of orthogonal spectra and symmetric spectra respectively. The forgetful functor $\mathbb{U}\colon \Sp^\mathscr{O} \to \Sp^\Sigma$ has a left adjoint $\mathbb{P}$ and this forms a strong monoidal Quillen equivalence, see~\cite{MandellMaySchwedeShipley01} for details. (The argument given there is for the stable model structure, but a similar argument directly applies to the flat model structure also.) Therefore, given any commutative orthogonal ring spectrum $S$, we obtain a Quillen equivalence
\[\begin{tikzcd}
\mod{S}(\Sp^{\mathscr{O}}) \arrow[rr, "\mathbb{U}" description, yshift=-2mm] & & \mod{\mathbb{U}S}(\Sp^\Sigma) \arrow[ll, "\mathbb{P}^S" description, yshift=2mm]
\end{tikzcd}\]
as in Section~\ref{sec:liftadjunctionstomodules}. We can now specialize to our case of interest.

We have a map $\phi\colon DBG_+ \to QDBH_+$ of commutative orthogonal ring spectra as discussed in the previous section. Applying $\mathbb{U}$ gives a map $\mathbb{U}\phi\colon \mathbb{U}DBG_+ \to \mathbb{U}QDBH_+$ of commutative symmetric ring spectra. Cofibrantly replacing $\mathbb{U}QDBH_+$ as a commutative $\mathbb{U}DBG_+$-algebra gives a commutative diagram 
\begin{center}
	\begin{tikzcd}
	\mathbb{U}DBG_+ \arrow[d, "\mathbb{U}\phi"'] \arrow[dr, hookrightarrow, "\psi"] & \\
	\mathbb{U}QDBH_+ & Q\mathbb{U}QDBH_+. \arrow[l, twoheadrightarrow, "\sim"', "q"]
	\end{tikzcd}
\end{center}
This puts us in the context of Proposition~\ref{prop:compareweak}, and the next square we consider is 
\begin{equation}\label{eq:changeofmodel}
	\begin{tikzcd}
	\mod{DBG_+}(\Sp^\mathscr{O}) \arrow[rrr, yshift=-2mm,"{\mathbb{U}}" description] \arrow[dd, "{\ext[\phi]}" description, xshift=4.2mm] \arrow[dd, "{\Sigma^{-a}\coext[\phi]}" description, xshift=-4.2mm] & & & \mod{\mathbb{U}DBG_+}(\Sp^\Sigma)  \arrow[lll, yshift=2mm,"\mathbb{P}^{DBG_+}" description] \arrow[dd, "{\ext[\psi]}" description, xshift=4.2mm] \arrow[dd, "{\Sigma^{-a}\coext[\psi]}" description, xshift=-4.2mm]
	\\
	& & \\
	\mod{QDBH_+}(\Sp^\mathscr{O}) \arrow[rr, yshift=-2mm,"\mathbb{U}" description] \arrow[uu, xshift=-12.6mm, "{\Sigma^a\res[\phi]}" description] \arrow[uu, xshift=12.6mm, "{\res[\phi]}" description] & & \mod{\mathbb{U}QDBH_+}(\Sp^\Sigma) \arrow[ll, yshift=2mm,"\mathbb{P}^{QDBH_+}" description] \arrow[r, yshift=-2mm,"{\res[q]}" description] & \mod{Q\mathbb{U}QDBH_+}(\Sp^\Sigma). \arrow[uu, xshift=-13mm, "{\Sigma^a\res[\psi]}" description] \arrow[uu, xshift=12.6mm, "{\res[\psi]}" description]  \arrow[l, yshift=2mm,"{\ext[q]}" description]
	\end{tikzcd}
\end{equation}

\begin{prop}\label{prop:changeofmodelcorrespond}
The vertical Quillen functors in~(\ref{eq:changeofmodel}) correspond.
\end{prop}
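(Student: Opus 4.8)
The plan is to recognise the square~(\ref{eq:changeofmodel}) as an instance of the setup of Proposition~\ref{prop:compareweak}, with $\C = \Sp^{\mathscr{O}}$, $\D = \Sp^\Sigma$, $U = \mathbb{U}$, $F = \mathbb{P}$ and $\theta = \phi\colon DBG_+ \to QDBH_+$ (so that the map called $\phi$ in that proposition is here $\mathbb{U}\phi$, the replacement $q\colon QUR \to UR$ is $Q\mathbb{U}QDBH_+ \to \mathbb{U}QDBH_+$, and $\psi$ is as constructed before the diagram), and then to invoke Theorem~\ref{thm:compare}. After lifting $\mathbb{P} \dashv \mathbb{U}$ to modules, the diagram~(\ref{eq:changeofmodel}) is exactly of the shape appearing in Proposition~\ref{prop:compareweak}. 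The first task is to confirm that all of the adjoint pairs are Quillen: $\res[\phi]$ and $\res[\psi]$ are right Quillen since the module model structures are right lifted, and $\ext[\phi], \ext[\psi]$ are left Quillen by Proposition~\ref{prop:extrescoext}(1). For the coextension functors one uses that $QDBH_+$ is cofibrant as a $DBG_+$-module, being a cofibrant replacement of a commutative algebra in the convenient pair $(\text{flat}, \text{positive flat})$ on orthogonal spectra (Lemma~\ref{lem:convenient}), and likewise that $Q\mathbb{U}QDBH_+$ is cofibrant as a $\mathbb{U}DBG_+$-module, so $\coext[\phi], \coext[\psi]$ are right Quillen by Proposition~\ref{prop:extrescoext}(2); suspension by $\pm a$ preserves all of this. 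Finally, the top horizontal $\mathbb{P}^{DBG_+} \dashv \mathbb{U}$ is a Quillen equivalence (as recalled before the diagram), and the bottom horizontal is the composite of the Quillen equivalence $\mathbb{P}^{QDBH_+} \dashv \mathbb{U}$ with $\ext[q] \dashv \res[q]$, which is a Quillen equivalence by Quillen invariance of modules since $q$ is a weak equivalence of commutative ring spectra; hence the bottom horizontal is a Quillen equivalence.

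Next I would verify the standing hypothesis $\L\ext \cong \R\psext$ of Theorem~\ref{thm:compare} on both sides, that is, that $\L\ext[\phi] \cong \Sigma^{-a}\R\coext[\phi]$ and $\L\ext[\psi] \cong \Sigma^{-a}\R\coext[\psi]$. Since $\pi_*DBG_+ = H^*BG$ and $\pi_*DBH_+ = H^*BH$, Venkov's theorem together with the finite global dimension of the polynomial ring $H^*BG$ shows via Proposition~\ref{prop:small fin gen} that $QDBH_+$ is small over $DBG_+$ and $Q\mathbb{U}QDBH_+$ is small over $\mathbb{U}DBG_+$. The map $\phi$ is relatively Gorenstein of shift $a = \mathrm{dim}(G/H)$ by Theorem~\ref{thm:relgor}. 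For $\psi$, I would apply Proposition~\ref{prop:relGorweak} to the strong monoidal Quillen equivalence $\mathbb{P}\colon \Sp^\Sigma \rightleftarrows \Sp^{\mathscr{O}} : \mathbb{U}$: here $\mathbb{U}$ preserves colimits (colimits of spectra being formed levelwise) and all weak equivalences, the homotopy category of $\Sp^\Sigma$ is compactly generated by the sphere, and $(\text{flat}, \text{positive flat})$ on symmetric spectra is convenient, so the proposition shows that $\psi\colon \mathbb{U}DBG_+ \to Q\mathbb{U}QDBH_+$ is again relatively Gorenstein of shift $a$. Proposition~\ref{prop:functorsetup}(2) then supplies both required natural isomorphisms of derived functors.

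With these preliminaries settled, Proposition~\ref{prop:compareweak} applies: part~(1) produces a natural isomorphism $\res[\psi]\res[q]\mathbb{U} \cong \mathbb{U}\res[\phi]$, which is condition~(b) of Theorem~\ref{thm:compare}, and part~(2) produces a natural map $\mathbb{P}^{DBG_+}\res[\psi]N \to \res[\phi]\mathbb{P}^{QDBH_+}\ext[q]N$ which is a weak equivalence for cofibrant $N$, and which, after composing with the Quillen self-equivalence $\Sigma^a$ (which commutes with all the functors in play), is condition~(c) of Theorem~\ref{thm:compare}; the hypotheses of part~(2) hold because $QDBH_+$ is cofibrant over $DBG_+$ and the module-level adjunctions $\mathbb{P}^{DBG_+} \dashv \mathbb{U}$ and $\mathbb{P}^{QDBH_+} \dashv \mathbb{U}$ are Quillen equivalences. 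Since conditions (b) and (c) are then both satisfied, Theorem~\ref{thm:compare} gives that all six derived squares commute, i.e.\ the vertical Quillen functors correspond. I expect the only point needing genuine care to be the relatively Gorenstein claim for $\psi$ — in particular checking that the orthogonal-to-symmetric comparison $\mathbb{P} \dashv \mathbb{U}$ meets the hypotheses of Proposition~\ref{prop:relGorweak}, most notably that $\mathbb{U}$ preserves colimits — since everything else is assembled directly from results already established.
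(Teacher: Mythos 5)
Your proposal is correct and follows essentially the same route as the paper: recognise the square as an instance of Proposition~\ref{prop:compareweak}, establish that both vertical ring maps are relatively Gorenstein of shift $a$ by combining Theorem~\ref{thm:relgor} with Proposition~\ref{prop:relGorweak}, and conclude via Theorem~\ref{thm:compare}. The paper's proof is just a terser version of this; your additional verifications (Quillen-ness of the verticals, Quillen invariance for the bottom horizontal, smallness via Venkov and Proposition~\ref{prop:small fin gen} to apply Proposition~\ref{prop:functorsetup}(2)) are exactly the hypotheses the paper leaves implicit.
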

\begin{proof}
The square satisfies the hypotheses of Proposition~\ref{prop:compareweak}. By combining Theorem~\ref{thm:relgor} with Proposition~\ref{prop:relGorweak} one sees that each of the vertical ring maps are relatively Gorenstein of shift $a$. Therefore it follows from Theorem~\ref{thm:compare} that the vertical Quillen functors correspond.
\end{proof}

In order for this to be applicable to the case of (co)free equivariant spectra, we need to verify that the previous proposition is also true after the relevant localization and cellularization. To this end we consider the following diagrams. We note that in first diagram below we write $S^0$ for both the orthogonal and symmetric sphere spectrum; since $\mathbb{U}(S^0) \simeq S^0$ this is justified. We make a similar abuse of notation in the second diagram also.
\begin{equation}\label{eq:changeofmodelcell}
	\begin{tikzcd}
	\cell_{S^0}\mod{DBG_+}(\Sp^\mathscr{O}) \arrow[rrr, yshift=-2mm,"{\mathbb{U}}" description] \arrow[dd, "{\ext[\phi]}" description, xshift=4.2mm] \arrow[dd, "{\Sigma^{-a}\coext[\phi]}" description, xshift=-4.2mm] & & & \cell_{S^0}\mod{\mathbb{U}DBG_+}(\Sp^\Sigma)  \arrow[lll, yshift=2mm,"\mathbb{P}^{DBG_+}" description] \arrow[dd, "{\ext[\psi]}" description, xshift=4.2mm] \arrow[dd, "{\Sigma^{-a}\coext[\psi]}" description, xshift=-4.2mm]
	\\
	& & \\
	\cell_{S^0}\mod{QDBH_+}(\Sp^\mathscr{O}) \arrow[rr, yshift=-2mm,"\mathbb{U}" description] \arrow[uu, xshift=-12.6mm, "{\Sigma^a\res[\phi]}" description] \arrow[uu, xshift=12.6mm, "{\res[\phi]}" description] & & \cell_{S^0}\mod{\mathbb{U}QDBH_+}(\Sp^\Sigma) \arrow[ll, yshift=2mm,"\mathbb{P}^{QDBH_+}" description] \arrow[r, yshift=-2mm,"{\res[q]}" description] & \cell_{S^0}\mod{Q\mathbb{U}QDBH_+}(\Sp^\Sigma) \arrow[uu, xshift=-13mm, "{\Sigma^a\res[\psi]}" description] \arrow[uu, xshift=12.6mm, "{\res[\psi]}" description]  \arrow[l, yshift=2mm,"{\ext[q]}" description]
	\end{tikzcd}
\end{equation}

\begin{equation}\label{eq:changeofmodelloc}
	\begin{tikzcd}
	L_{BG_+}\mod{DBG_+}(\Sp^\mathscr{O}) \arrow[rrr, yshift=-2mm,"{\mathbb{U}}" description] \arrow[dd, "{\ext[\phi]}" description, xshift=4.2mm] \arrow[dd, "{\Sigma^{-a}\coext[\phi]}" description, xshift=-4.2mm] & & & L_{BG_+}\mod{\mathbb{U}DBG_+}(\Sp^\Sigma)  \arrow[lll, yshift=2mm,"\mathbb{P}^{DBG_+}" description] \arrow[dd, "{\ext[\psi]}" description, xshift=4.2mm] \arrow[dd, "{\Sigma^{-a}\coext[\psi]}" description, xshift=-4.2mm]
	\\
	& & \\
	L_{BH_+}\mod{QDBH_+}(\Sp^\mathscr{O}) \arrow[rr, yshift=-2mm,"\mathbb{U}" description] \arrow[uu, xshift=-12.6mm, "{\Sigma^a\res[\phi]}" description] \arrow[uu, xshift=12.6mm, "{\res[\phi]}" description] & & L_{BH_+}\mod{\mathbb{U}QDBH_+}(\Sp^\Sigma) \arrow[ll, yshift=2mm,"\mathbb{P}^{QDBH_+}" description] \arrow[r, yshift=-2mm,"{\res[q]}" description] & L_{BH_+}\mod{Q\mathbb{U}QDBH_+}(\Sp^\Sigma) \arrow[uu, xshift=-13mm, "{\Sigma^a\res[\psi]}" description] \arrow[uu, xshift=12.6mm, "{\res[\psi]}" description]  \arrow[l, yshift=2mm,"{\ext[q]}" description]
	\end{tikzcd}
\end{equation}

\begin{cor}\label{cor:changeofmodel}
The vertical Quillen functors in~(\ref{eq:changeofmodelcell}) and~(\ref{eq:changeofmodelloc}) correspond.
\end{cor}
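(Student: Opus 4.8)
The plan is to bootstrap from Proposition~\ref{prop:changeofmodelcorrespond}. The structural point is that a left Bousfield localization leaves the underlying category, the cofibrations and the cofibrant objects unchanged, while a right cellularization leaves the underlying category, the fibrations and the fibrant objects unchanged, and in both cases the class of weak equivalences only grows. Hence the natural isomorphism and the natural weak equivalence on cofibrant objects produced in the proof of Proposition~\ref{prop:changeofmodelcorrespond} (via Proposition~\ref{prop:compareweak}) remain valid, unchanged, in the cellularized structures of~(\ref{eq:changeofmodelcell}) and the localized structures of~(\ref{eq:changeofmodelloc}). Similarly, the derived-functor identifications $\L\ext\cong\Sigma^{-a}\R\coext$ for each vertical ring map, which come from relative Gorenstein-ness (Theorem~\ref{thm:relgor} and Proposition~\ref{prop:relGorweak}), descend to the localized and cellularized homotopy categories because $\ext$ and $\coext$ are compatible with the relevant localizations and cellularizations by Theorems~\ref{thm:quillenafterloc} and~\ref{thm:quillenaftercell} and Lemma~\ref{lem:cellularization and extension of scalars}. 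Thus, once we know that all the functors in~(\ref{eq:changeofmodelcell}) and~(\ref{eq:changeofmodelloc}) are Quillen and that the horizontal adjunctions remain Quillen equivalences, Theorem~\ref{thm:compare} applies verbatim and yields the two correspondences.

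First I would verify the vertical functors are Quillen. The left-hand columns are already Propositions~\ref{prop:changeofrings2quillen} and~\ref{prop:changeofrings2quillenloc}. For the right-hand columns I would rerun those arguments for the map $\psi\colon\mathbb{U}DBG_+\to Q\mathbb{U}QDBH_+$ of symmetric ring spectra: in the cellularized case Theorem~\ref{thm:quillenaftercell}(1) with Example~\ref{eg:smallbuild} applies since $\mathbb{U}QDBH_+$ is small over $\mathbb{U}DBG_+$ ($\mathbb{U}$ preserves weak equivalences, hence smallness, so this follows from Venkov's theorem and Proposition~\ref{prop:small fin gen}); in the localized case one computes $\langle\L\ext[\psi](BG_+)\rangle=\langle BH_+\rangle$ by the symmetric-spectrum analogue of Corollary~\ref{cor:extension of BG} (Lemma~\ref{lem:cellularization and extension of scalars} together with Gorenstein duality, which holds since the homotopy ring of $\mathbb{U}DBG_+$ agrees with that of $DBG_+$), so Theorem~\ref{thm:quillenafterloc}(1) applies. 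The auxiliary change-of-rings step $\ext[q]\dashv\res[q]$ along the weak equivalence $q$ is Quillen between the localizations and cellularizations because it preserves all weak equivalences and the relevant cells, respectively localizing objects, correspond.

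Next I would check that the horizontal adjunctions remain Quillen equivalences. Since $\mathbb{U}$ preserves all weak equivalences and carries $S^0$ to $S^0$ and $BG_+$, $BH_+$ to $BG_+$, $BH_+$ up to weak equivalence, the Quillen equivalences $\mathbb{P}^{(-)}\dashv\mathbb{U}$ descend: to the cellularizations by the Cellularization Principle~\cite[2.7]{GreenleesShipley13}, and to the localizations by Lemma~\ref{lem:bousfieldclasses} together with the universal property of Bousfield localization~\cite[3.3.18]{Hirschhorn03}; the same applies to $\ext[q]\dashv\res[q]$, which is a Quillen equivalence by Quillen invariance of modules. With all this in hand, Theorem~\ref{thm:compare} gives that the vertical Quillen functors in~(\ref{eq:changeofmodelcell}) and~(\ref{eq:changeofmodelloc}) correspond. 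The only genuinely new input beyond Proposition~\ref{prop:changeofmodelcorrespond} is this descent of the change-of-model and Quillen-invariance equivalences to the cellularizations and localizations, together with the ancillary smallness and Gorenstein-duality facts for the symmetric-spectrum rings; I expect no essential difficulty, since $\mathbb{U}$ is weak-equivalence-preserving and leaves homotopy rings unchanged, so everything transports directly from the orthogonal-spectrum computations, and the Cellularization Principle and Lemma~\ref{lem:bousfieldclasses} are tailored to handle the rest. The main point to keep straight is therefore purely the bookkeeping of Bousfield classes and cellular-build relations across the change of model.
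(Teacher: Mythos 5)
Your proposal is correct and takes essentially the same route as the paper: reduce to checking that the vertical functors are still Quillen after the cellularizations and localizations, and then quote Proposition~\ref{prop:changeofmodelcorrespond}, the persistence of the point-set comparisons being exactly the implicit justification. The only (cosmetic) difference is in the localized right-hand column, where the paper transports the Bousfield class identity of Corollary~\ref{cor:extension of BG} from orthogonal to symmetric spectra via Proposition~\ref{prop:changeofrings2quillenloc}, Theorem~\ref{thm:quillenafterloc}(1) and Lemma~\ref{lem:bousfieldclasses} rather than recomputing the Gorenstein duality statement on the symmetric spectrum side.
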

\begin{proof}
It is enough to show that the vertical functors in~(\ref{eq:changeofmodelcell}) and~(\ref{eq:changeofmodelloc}) are Quillen, and then the result is a consequence of Proposition~\ref{prop:changeofmodelcorrespond}. The case of~(\ref{eq:changeofmodelcell}) follows from Theorem~\ref{thm:quillenaftercell}(1) together with Example~\ref{eg:smallbuild}. For the case of~(\ref{eq:changeofmodelloc}), we note that the left hand vertical functors are Quillen by Proposition~\ref{prop:changeofrings2quillenloc}, so it follows from Theorem~\ref{thm:quillenafterloc}(1) and Lemma~\ref{lem:bousfieldclasses} that the right hand vertical functors are also Quillen. 
\end{proof}

\section{Shipley's algebraicization theorem}\label{sec:shipley}
In this section we use the results of~\cite{Shipley07, FlatnessandShipley'sTheorem} to pass between modules over the commutative symmetric ring spectra $\mathbb{U}DBG_+$ and $Q\mathbb{U}QDBH_+$ to modules over commutative DGAs, whilst keeping track of the change of rings adjunctions. This consists of three steps which we will deal with separately. The first step involves passage from symmetric spectra in simplicial sets to symmetric spectra in simplicial $\mathbb{Q}$-modules, the second step is a stabilized version of the Dold-Kan equivalence and the final step is the passage to algebra. Shipley~\cite{Shipley07} proved that these Quillen equivalences hold in the stable model structure. For our purposes, the flat model structure is crucial, and the Quillen equivalences are proven to hold in the flat model structures in~\cite{FlatnessandShipley'sTheorem}. 

\subsection{Recap of the equivalences}
We provide a brief recap of the Quillen equivalences used in Shipley's algebraicization theorem and use this as an opportunity to set notation. For more details see~\cite{Shipley07} and~\cite{FlatnessandShipley'sTheorem}. 

Let $\C$ be a bicomplete, closed symmetric monoidal category. Let $\Sigma$ be the category whose objects are the finite sets $\underline{n} = \{1,\ldots ,n\}$ for $n \geq 0$ where $\underline{0} = \emptyset$, and whose morphisms are the bijections of $\underline{n}$. The category of \emph{symmetric sequences} in $\C$ is the functor category $\C^\Sigma$. For an object $K \in \C$, the category of \emph{symmetric spectra} $\text{Sp}^\Sigma(\C,K)$ is the category of modules over $\text{Sym}(K)$ in $\C^\Sigma$, where $\text{Sym}(K) = (\mathds{1}, K, K^{\otimes 2}, \cdots)$ is the free commutative monoid on $K$.

We write $\widetilde{\mathbb{Q}}\colon \sset_*^\Sigma \to \sq^\Sigma$ for the functor which takes the free simplicial $\mathbb{Q}$-module levelwise on the non-basepoint simplices. Recall that we define $\Sp^\Sigma(\sq)$ to be the category of modules over $\mathrm{Sym}(\widetilde{\mathbb{Q}}S^1)$ in $\sq^\Sigma$. The object $\mathrm{Sym}(\widetilde{\mathbb{Q}}S^1)$ is equivalent to $H\mathbb{Q}$. There is a ring map $\alpha\colon H\mathbb{Q} \to \widetilde{\mathbb{Q}}H\mathbb{Q}$, and the composite $\res[\alpha]\widetilde{\mathbb{Q}}$ gives a zig-zag of strong monoidal Quillen equivalences between $\mod{H\mathbb{Q}}$ and $\Sp^\Sigma(\sq)$.

We write $\Sp^\Sigma(\chq)$ for the category of modules over $\mathrm{Sym}(\mathbb{Q}[1])$ in $(\chq)^\Sigma$ where $\mathbb{Q}[1]$ is the chain complex which contains a single copy of $\mathbb{Q}$ in degree 1. Applying the normalized chains functor $N\colon \sq \to \mathrm{Ch}_\mathbb{Q}^+$ levelwise gives a functor $\Sp^\Sigma(\sq) \to \mod{\N}((\chq)^\Sigma)$ where $\N = N\mathrm{Sym}(\widetilde{\mathbb{Q}}S^1)$. There is a ring map $\phi\colon \mathrm{Sym}(\mathbb{Q}[1]) \to \N$, and the composite $$\res[\phi]N\colon \Sp^\Sigma(\sq) \to \Sp^\Sigma(\chq)$$ is a right Quillen equivalence. This forms a weak monoidal Quillen equivalence. 

Finally, there is a functor $R\colon \mathrm{Ch}_\mathbb{Q} \to \Sp^\Sigma(\chq)$ which is defined by $RY_n = C_0(Y \otimes \mathbb{Q}[n])$ where $C_0$ denotes the connective cover, and this functor is a right Quillen equivalence. Together with its left adjoint $D$, this forms a strong monoidal Quillen equivalence.

Since all of these functors are appropriately monoidal, they lift to give Quillen equivalences between the categories of modules over monoids as described in Section~\ref{sec:liftadjunctionstomodules}.

\subsection{To simplicial \texorpdfstring{$\mathbb{Q}$}{Q}-modules}
Although we are interested in the particular case which corresponds to the case of (co)free $G$-spectra, it is convenient to work in a more general setting for clarity and to ease notation. 

We will work in the general setting of a map of commutative (symmetric) ring spectra $\phi\colon S \to R$ and suppose that $R$ is a cofibrant $S$-module. We will moreover assume that $\phi\colon S \to R$ is relatively Gorenstein of shift $a$. Let $K$ be an $S$-module and $E$ be an $R$-module; these will play the role of the objects which we cellularize and localize at respectively.

We write $\kappa = \widetilde{\mathbb{Q}}\phi\colon \widetilde{\mathbb{Q}}S \to \widetilde{\mathbb{Q}}R$. Since $\widetilde{\mathbb{Q}}$ is left Quillen, $\widetilde{\mathbb{Q}}R$ is cofibrant as a $\widetilde{\mathbb{Q}}S$-module. Cofibrantly replacing $\res[\alpha]\widetilde{\mathbb{Q}}R$ as a commutative $\res[\alpha]\widetilde{\mathbb{Q}}S$-algebra gives a commutative diagram
\begin{center}
	\begin{tikzcd}
	\res[\alpha]\widetilde{\mathbb{Q}}S \arrow[d, "{\res[\alpha]}\kappa"'] \arrow[dr, hookrightarrow, "\delta"]& \\
	\res[\alpha]\widetilde{\mathbb{Q}}R & Q\res[\alpha]\widetilde{\mathbb{Q}}R. \arrow[l, twoheadrightarrow, "q"] \arrow[l, "\sim"']
	\end{tikzcd}
\end{center}

The passage to simplicial $\mathbb{Q}$-modules can be seen as the two squares
\begin{equation}\label{eq:shipley1}
	\begin{tikzcd}
	\qquad\mod{S}\qquad \arrow[rrr, yshift=2mm, "\widetilde{\mathbb{Q}}" description] \arrow[dd, "{\ext[\psi]}" description, xshift=4.2mm] \arrow[dd, "{\Sigma^{-a}\coext[\psi]}" description, xshift=-4.2mm] & & &  \qquad\mod{\widetilde{\mathbb{Q}}S}\qquad \arrow[rrr, "{\res[\alpha]}" description, yshift=-2mm] \arrow[lll, "U" description, yshift=-2mm] \arrow[dd, "{\ext[\kappa]}" description, xshift=4.2mm] \arrow[dd, "{\Sigma^{-a}\coext[\kappa]}" description, xshift=-4.2mm] & & & \qquad\mod{\res[\alpha]\widetilde{\mathbb{Q}}S}\qquad \arrow[lll, "{\ext[\alpha]^{\widetilde{\mathbb{Q}}S}}" description, yshift=2mm] \arrow[dd, "{\ext[\delta]}" description, xshift=4.2mm] \arrow[dd, "{\Sigma^{-a}\coext[\delta]}" description, xshift=-4.2mm] \\
	& & & & & & \\
	\qquad\mod{R}\qquad \arrow[rrr, yshift=2mm, "\widetilde{\mathbb{Q}}" description] \arrow[uu, xshift=-12.8mm, "{\Sigma^a\res[\psi]}" description] \arrow[uu, xshift=12.6mm, "{\res[\psi]}" description] & & &  \qquad\mod{\widetilde{\mathbb{Q}}R}\qquad \arrow[rrr, "{\res[q]\res[\alpha]}" description, yshift=-2.2mm] \arrow[lll, "U" description, yshift=-2mm] \arrow[uu, xshift=-12.6mm, "{\Sigma^a\res[\kappa]}" description] \arrow[uu, xshift=12.6mm, "{\res[\kappa]}" description] & & &  \qquad\mod{Q\res[\alpha]\widetilde{\mathbb{Q}}R}.\qquad \arrow[lll, "{\ext[\alpha]^{\widetilde{\mathbb{Q}}R}\ext[q]}" description, yshift=2.2mm] \arrow[uu, xshift=-12.6mm, "{\Sigma^a\res[\delta]}" description] \arrow[uu, xshift=12.6mm, "{\res[\delta]}" description]
	\end{tikzcd}
\end{equation}

\begin{prop}\label{prop:shipley1correspond}
The vertical Quillen functors in~(\ref{eq:shipley1}) correspond.
\end{prop}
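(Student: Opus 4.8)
The plan is to recognize (\ref{eq:shipley1}) as two horizontally composed squares, each of which is an instance of one of the comparison propositions proved in Section~\ref{sec:compare}, and then to invoke Theorem~\ref{thm:compare} once the hypotheses are checked. Concretely, the left-hand square has horizontals given by the strong monoidal Quillen pair $\widetilde{\mathbb{Q}} \dashv U$, so it falls under Proposition~\ref{prop:comparestrong}; the right-hand square has horizontals given by the zig-zag $\res[\alpha]\widetilde{\mathbb{Q}}$ realizing the equivalence $\mod{\widetilde{\mathbb{Q}}S} \simeq \mod{\res[\alpha]\widetilde{\mathbb{Q}}S}$, which is a strong monoidal Quillen equivalence, so after again applying Proposition~\ref{prop:comparestrong} (together with the cofibrant replacement bookkeeping $q$ that appears in the diagram) we get the required natural isomorphisms for that square too. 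The first step is therefore to spell out precisely which propositions apply to which square, noting that in the left square the ``twisted'' functors $\twext$ etc.\ are simply $\Sigma^{\mp a}\coext$ (since, by relative Gorensteinness of $\phi\colon S \to R$ and Proposition~\ref{prop:functorsetup}, $\Sigma^{-a}\coext \cong \R\ext$ at the derived level) and similarly on the other side.

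Next I would verify the hypothesis of Theorem~\ref{thm:compare} that $\L\ext \cong \R\psext$ at each vertex, i.e.\ that each of the three vertical ring maps $\psi\colon S \to R$, $\kappa\colon \widetilde{\mathbb{Q}}S \to \widetilde{\mathbb{Q}}R$, and $\delta\colon \res[\alpha]\widetilde{\mathbb{Q}}S \to Q\res[\alpha]\widetilde{\mathbb{Q}}R$ is relatively Gorenstein of shift $a$. The map $\psi$ is relatively Gorenstein by hypothesis. For $\kappa$ and $\delta$ I would apply Proposition~\ref{prop:relGorstrong}: since $\widetilde{\mathbb{Q}}$ (more precisely the composite $\res[\alpha]\widetilde{\mathbb{Q}}$) is a strong monoidal Quillen equivalence between stable monoidal model categories with cofibrant units satisfying Quillen invariance, and it preserves all weak equivalences, relative Gorensteinness of shift $a$ is transported along it; the cofibrant replacement $q$ changes nothing at the derived level. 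This gives the natural isomorphisms $\L\ext \cong \R\psext$ needed to be in the setting of Theorem~\ref{thm:compare} at each stage. I would also record that all vertical functors in sight are Quillen: extension and restriction of scalars are always Quillen (Proposition~\ref{prop:extrescoext}(1)), and restriction--coextension is Quillen because $R$, $\widetilde{\mathbb{Q}}R$, and $Q\res[\alpha]\widetilde{\mathbb{Q}}R$ are cofibrant as modules over $S$, $\widetilde{\mathbb{Q}}S$, $\res[\alpha]\widetilde{\mathbb{Q}}S$ respectively (using that $\widetilde{\mathbb{Q}}$ is left Quillen and Lemma~\ref{lem:convenient}).

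Then the core of the argument: for each square, check conditions (a) and (c) of Theorem~\ref{thm:compare}. For the left square, Proposition~\ref{prop:comparestrong} supplies natural isomorphisms $\res[\kappa]U \cong U\res[\psi]$ and $\widetilde{\mathbb{Q}}\res[\psi] \cong \res[\kappa]\widetilde{\mathbb{Q}}$ (and correspondingly for the twisted/coextension functors, which are built from internal homs against cofibrant objects, using Lemma~\ref{lem:internalhom} to push $\widetilde{\mathbb{Q}}$ past the $\mathrm{Hom}$); this gives both (a) in the form of the $\ext$-square and (c) in the form of the twisted-$\ext$-square. An identical argument, with $U$ replaced by $\ext[\alpha]^{\widetilde{\mathbb{Q}}S}$ and the necessary $\res[q]$ inserted, handles the right square via Proposition~\ref{prop:comparestrong} again. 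Having checked (a) and (c) for both squares, Theorem~\ref{thm:compare} applies to each, so in each square all six derived squares commute; composing the two squares horizontally then yields the correspondence of the composite vertical Quillen functors across (\ref{eq:shipley1}).

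The main obstacle I anticipate is bookkeeping rather than conceptual: correctly matching the ``twisted'' functor names $\psext$, $\twext$, $\psexta$, $\twexta$ of the abstract framework in Section~\ref{sec:compare} with the concrete $\Sigma^{\pm a}\coext$ and $\Sigma^{\pm a}\res$ appearing in the diagram, keeping the cofibrant-replacement maps $\delta$ and $q$ (and the shift $a$, which is inert under the monoidal functors since they are additive/exact) consistent across the two squares, and making sure the hypotheses of Proposition~\ref{prop:comparestrong} and Proposition~\ref{prop:relGorstrong} — in particular cofibrancy of units, Quillen invariance, preservation of all weak equivalences by $\widetilde{\mathbb{Q}}$, and existence of the relevant right-lifted monoidal model structures — are all genuinely in force for symmetric spectra in simplicial sets and in simplicial $\mathbb{Q}$-modules. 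None of these should be hard, but the proof needs to cite the right facts from Section~\ref{sec:background} and~\cite{FlatnessandShipley'sTheorem} for each.
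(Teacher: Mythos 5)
Your treatment of the left-hand square agrees with the paper: it is the strong monoidal situation, Proposition~\ref{prop:comparestrong} supplies conditions of type (b) and (c) of Theorem~\ref{thm:compare}, and $\kappa=\widetilde{\mathbb{Q}}\phi$ inherits the relative Gorenstein property via Proposition~\ref{prop:relGorstrong}. The gap is in the right-hand square. There the horizontal functor into the new module category is the \emph{right} adjoint $\res[\alpha]$, which is only lax monoidal, and the vertical ring map on the right is $\delta\colon \res[\alpha]\widetilde{\mathbb{Q}}S \to Q\res[\alpha]\widetilde{\mathbb{Q}}R$, a cofibrant replacement of $\res[\alpha]\kappa$; it is not of the form $F\theta$ for a strong monoidal left adjoint $F$, so Proposition~\ref{prop:comparestrong} does not apply, and its conclusions (strict point-set isomorphisms commuting restrictions past $F$ and $U$) are not even the right shape of statement here. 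The cofibrant replacement is not mere bookkeeping: it forces the bottom horizontal to be $\res[q]\res[\alpha]$, whose left adjoint is the composite $\ext[\alpha]^{\widetilde{\mathbb{Q}}R}\ext[q]$, and the comparison actually needed is a natural map $\ext[\alpha]^{\widetilde{\mathbb{Q}}S}\res[\delta]N \to \res[\kappa]\ext[\alpha]^{\widetilde{\mathbb{Q}}R}\ext[q]N$ which is only a weak equivalence on cofibrant $N$. Producing this map and proving it is an equivalence is exactly the content of Proposition~\ref{prop:compareweak}(2), whose proof uses that the lifted module adjunctions are Quillen equivalences together with a derived unit/counit argument; nothing in your proposal reproduces or replaces this. (Since a strong monoidal Quillen pair is in particular weak monoidal, Proposition~\ref{prop:compareweak} does apply to $(\ext[\alpha],\res[\alpha])$ --- this is the paper's route, and it is the tool you should cite for the second square.)

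A smaller, related point: the relative Gorenstein property of $\delta$ cannot be obtained by applying Proposition~\ref{prop:relGorstrong} to the ``composite $\res[\alpha]\widetilde{\mathbb{Q}}$'', since that composite is a zig-zag rather than a single strong monoidal left Quillen functor. Transport along the restriction step, where the cofibrant replacement enters, is precisely what Proposition~\ref{prop:relGorweak} provides; your remark that ``$q$ changes nothing at the derived level'' is the conclusion of that proposition, not a substitute for invoking it.
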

\begin{proof}
The first square satisfies the hypotheses of Proposition~\ref{prop:comparestrong} and the second square satisfies the hypotheses of Proposition~\ref{prop:compareweak}. Since $\phi\colon S \to R$ is relatively Gorenstein of shift $a$ by assumption, applying Proposition~\ref{prop:relGorstrong} and~\ref{prop:relGorweak} one sees that $\kappa$ and $\delta$ are also relatively Gorenstein of shift $a$. Therefore it follows from Theorem~\ref{thm:compare} that the vertical Quillen functors correspond.
\end{proof}

In order to apply the results to the case of free and (co)free $G$-spectra we need to check that the previous proposition remains true after the relevant localizations and cellularizations. Therefore we consider the diagrams
\begin{equation}\label{eq:shipley1cell}
	\begin{tikzcd}
	\cell_{\res[\psi]K}\mod{S} \arrow[rrr, yshift=2mm, "\widetilde{\mathbb{Q}}" description] \arrow[dd, "{\ext[\psi]}" description, xshift=4.2mm] \arrow[dd, "{\Sigma^{-a}\coext[\psi]}" description, xshift=-4.2mm] & & &  \cell_{\widetilde{\mathbb{Q}}\res[\psi]K}\mod{\widetilde{\mathbb{Q}}S} \arrow[rrr, "{\res[\alpha]}" description, yshift=-2mm] \arrow[lll, "U" description, yshift=-2mm] \arrow[dd, "{\ext[\kappa]}" description, xshift=4.2mm] \arrow[dd, "{\Sigma^{-a}\coext[\kappa]}" description, xshift=-4.2mm] & & & \cell_{\res[\alpha]\widetilde{\mathbb{Q}}\res[\psi]K}\mod{\res[\alpha]\widetilde{\mathbb{Q}}S} \arrow[lll, "{\ext[\alpha]^{\widetilde{\mathbb{Q}}S}}" description, yshift=2mm] \arrow[dd, "{\ext[\delta]}" description, xshift=4.2mm] \arrow[dd, "{\Sigma^{-a}\coext[\delta]}" description, xshift=-4.2mm] \\
	& & & & & & \\
	\cell_{K}\mod{R} \arrow[rrr, yshift=2mm, "\widetilde{\mathbb{Q}}" description] \arrow[uu, xshift=-12.8mm, "{\Sigma^a\res[\psi]}" description] \arrow[uu, xshift=12.6mm, "{\res[\psi]}" description] & & &  \cell_{\widetilde{\mathbb{Q}}K}\mod{\widetilde{\mathbb{Q}}R} \arrow[rrr, "{\res[q]\res[\alpha]}" description, yshift=-2.2mm] \arrow[lll, "U" description, yshift=-2mm] \arrow[uu, xshift=-12.6mm, "{\Sigma^a\res[\kappa]}" description] \arrow[uu, xshift=12.6mm, "{\res[\kappa]}" description] & & & \cell_{\res[q]\res[\alpha]\widetilde{\mathbb{Q}}K}\mod{Q\res[\alpha]\widetilde{\mathbb{Q}}R} \arrow[lll, "{\ext[\alpha]^{\widetilde{\mathbb{Q}}R}\ext[q]}" description, yshift=2.2mm] \arrow[uu, xshift=-12.6mm, "{\Sigma^a\res[\delta]}" description] \arrow[uu, xshift=12.6mm, "{\res[\delta]}" description]
	\end{tikzcd}
\end{equation}
and
\begin{equation}\label{eq:shipley1L}
	\begin{tikzcd}
	\quad L_{E}\mod{S}\quad \arrow[rrr, yshift=2mm, "\widetilde{\mathbb{Q}}" description] \arrow[dd, "{\ext[\psi]}" description, xshift=4.2mm] \arrow[dd, "{\Sigma^{-a}\coext[\psi]}" description, xshift=-4.2mm] & & & \quad L_{\widetilde{\mathbb{Q}}E}\mod{\widetilde{\mathbb{Q}}S}\quad\arrow[rrr, "{\res[\alpha]}" description, yshift=-2mm] \arrow[lll, "U" description, yshift=-2mm] \arrow[dd, "{\ext[\kappa]}" description, xshift=4.2mm] \arrow[dd, "{\Sigma^{-a}\coext[\kappa]}" description, xshift=-4.2mm] & & & L_{\res[q]\res[\alpha]\widetilde{\mathbb{Q}}E}\mod{\res[\alpha]\widetilde{\mathbb{Q}}S} \arrow[lll, "{\ext[\alpha]^{\widetilde{\mathbb{Q}}S}}" description, yshift=2mm] \arrow[dd, "{\ext[\delta]}" description, xshift=4.2mm] \arrow[dd, "{\Sigma^{-a}\coext[\delta]}" description, xshift=-4.2mm] \\
	& & & & & & \\
	\quad L_{\ext[\psi]E}\mod{R}\quad \arrow[rrr, yshift=2mm, "\widetilde{\mathbb{Q}}" description] \arrow[uu, xshift=-12.8mm, "{\Sigma^a\res[\psi]}" description] \arrow[uu, xshift=12.6mm, "{\res[\psi]}" description] & & & \quad L_{\widetilde{\mathbb{Q}}\ext[\psi]E}\mod{\widetilde{\mathbb{Q}}R} \quad \arrow[rrr, "{\res[q]\res[\alpha]}" description, yshift=-2.2mm] \arrow[lll, "U" description, yshift=-2mm] \arrow[uu, xshift=-12.6mm, "{\Sigma^a\res[\kappa]}" description] \arrow[uu, xshift=12.6mm, "{\res[\kappa]}" description] & & &  L_{\res[q]\res[\alpha]\widetilde{\mathbb{Q}}\ext[\psi]E}\mod{Q\res[\alpha]\widetilde{\mathbb{Q}}R}. \arrow[lll, "{\ext[\alpha]^{\widetilde{\mathbb{Q}}R}\ext[q]}" description, yshift=2.2mm] \arrow[uu, xshift=-12.6mm, "{\Sigma^a\res[\delta]}" description] \arrow[uu, xshift=12.6mm, "{\res[\delta]}" description]
	\end{tikzcd}
\end{equation}

\begin{cor}\label{cor:shipley1}
The vertical Quillen functors in~(\ref{eq:shipley1cell}) and~(\ref{eq:shipley1L}) correspond.
\end{cor}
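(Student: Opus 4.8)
The plan is to imitate the proofs of Corollaries~\ref{cor:changeofmodel} and~\ref{cor:fixedpoints1commute}. First I would reduce the statement to a purely model-categorical check: it suffices to verify that every vertical functor appearing in~(\ref{eq:shipley1cell}) and~(\ref{eq:shipley1L}) is a Quillen functor between the indicated cellularized, respectively localized, model structures. Granting this, the correspondence is inherited directly from Proposition~\ref{prop:shipley1correspond}: every weak equivalence of the underlying (uncellularized, unlocalized) model categories is in particular a cellular equivalence and an $E$-equivalence, so the natural weak equivalences on cofibrant objects produced in the proof of Proposition~\ref{prop:shipley1correspond} remain natural weak equivalences after cellularization or localization; and the derived identifications $\L\ext[\phi] \cong \R\psext[\phi]$ and $\L\ext[\delta] \cong \R\psext[\delta]$ needed to invoke Theorem~\ref{thm:compare} persist, since restriction of scalars stays both left and right Quillen and the relative Gorenstein identification is a derived statement.

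For the cellularized diagram~(\ref{eq:shipley1cell}) I would apply Theorem~\ref{thm:quillenaftercell}(1) column by column. Each column is an extension--restriction--coextension of scalars adjoint triple along one of the maps $\phi$, $\kappa = \widetilde{\mathbb{Q}}\phi$ or $\delta$; these are Quillen adjoint triples by Proposition~\ref{prop:extrescoext} together with the standing cofibrancy hypotheses. Since $R$ is small over $S$ --- and $\widetilde{\mathbb{Q}}$ and the remaining Shipley functors preserve smallness --- Example~\ref{eg:smallbuild} shows that each target ring is small over its source, hence that $K$ builds $\L\ext\res K$; moreover the cellularizing object in the top row of each column is precisely the restriction of scalars of the one in the bottom row, so the hypothesis of Theorem~\ref{thm:quillenaftercell}(1) is met verbatim. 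This makes all three columns of~(\ref{eq:shipley1cell}) into Quillen adjoint triples between the displayed cellularizations.

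For the localized diagram~(\ref{eq:shipley1L}) I would use Theorem~\ref{thm:quillenafterloc}(1) together with Lemma~\ref{lem:bousfieldclasses}. In the left-hand column the localizing object in the bottom row is the extension of scalars of the one in the top row, so the Bousfield-class hypothesis of Theorem~\ref{thm:quillenafterloc}(1) holds essentially by construction; since extension--restriction of scalars is a weak monoidal Quillen pair and satisfies the right projection formula, Theorem~\ref{thm:quillenafterloc}(1) then makes this column Quillen between the localizations. For the middle and right-hand columns, $\widetilde{\mathbb{Q}}$, $\res[\alpha](-)$, the normalization functor and the remaining Shipley functors are strong or weak monoidal Quillen equivalences, so Lemma~\ref{lem:bousfieldclasses} transports the equality of Bousfield classes along the horizontal equivalences, and Theorem~\ref{thm:quillenafterloc}(1) shows these columns are Quillen between the localizations as well. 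Assembling these facts with Proposition~\ref{prop:shipley1correspond} then gives the result. I expect the only genuine labour here to be careful bookkeeping: matching the cellularizing and localizing objects in the top rows against the restriction/extension of scalars of those in the bottom rows, and tracking the Bousfield classes through the correct composite of horizontal functors --- in particular through $\res[q]\res[\alpha]$ and $\ext[\alpha]^{\widetilde{\mathbb{Q}}R}\ext[q]$, where the auxiliary cofibrant-replacement map $q$ must be accounted for by a further application of Lemma~\ref{lem:bousfieldclasses}.
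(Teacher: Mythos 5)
Your proposal is correct and follows essentially the same route as the paper: reduce to checking that the vertical functors remain Quillen after the cellularizations and localizations, handle the cellularized diagram via Theorem~\ref{thm:quillenaftercell}(1) with Example~\ref{eg:smallbuild} and the localized one via Theorem~\ref{thm:quillenafterloc}(1) with Lemma~\ref{lem:bousfieldclasses}, and then quote Proposition~\ref{prop:shipley1correspond}. The one piece of bookkeeping you defer --- that each cell in the top row is the restriction of scalars of the corresponding cell in the bottom row --- is exactly the point the paper settles by observing that lax monoidal functors commute with restriction of scalars as in Proposition~\ref{prop:comparestrong} (the leftmost column being part of the setup).
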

\begin{proof}
This is a consequence of Proposition~\ref{prop:shipley1correspond} once we show that the vertical functors in~(\ref{eq:shipley1cell}) and~(\ref{eq:shipley1L}) are Quillen functors. The case of~(\ref{eq:shipley1L}) follows immediately from Theorem~\ref{thm:quillenafterloc}(1) and Lemma~\ref{lem:bousfieldclasses}. For the case of the cellularizations we note that by Theorem~\ref{thm:quillenaftercell}(1) and Example~\ref{eg:smallbuild} it is enough to show that each cell on the top row is obtained from the corresponding cell on the bottom row by restriction of scalars. For the left most functors this was part of the setup, and for the other cases, this follows since lax monoidal functors commute with restriction of scalars as in Proposition~\ref{prop:comparestrong}. 
\end{proof}

\subsection{The Dold-Kan type equivalence}
Since $\res[\phi]N$ is lax symmetric monoidal, we obtain a map of commutative monoids $\res[\phi]N\delta$, and by cofibrant replacement in commutative algebras, we obtain a commutative diagram
\begin{center}
	\begin{tikzcd}
	\res[\phi]N\res[\alpha]\widetilde{\mathbb{Q}}S \arrow[d, "{\res[\phi]N\delta}"'] \arrow[dr, hookrightarrow, "\gamma"]& \\
	\res[\phi]NQ\res[\alpha]\widetilde{\mathbb{Q}}R & Q\res[\phi]NQ\res[\alpha]\widetilde{\mathbb{Q}}R. \arrow[l, twoheadrightarrow, "q"] \arrow[l, "\sim"']
	\end{tikzcd}
\end{center}

From these we get the square which gives the passage from symmetric spectra in simplicial $\mathbb{Q}$-modules to symmetric spectra in non-negatively graded chain complexes of $\mathbb{Q}$-modules, as shown in the following diagram. Note that since $(L,\res[\phi]N)$ is only a weak monoidal Quillen pair as an adjunction between $\symsp(\sq)$ and $\symsp(\chq)$, the left adjoint to $\res[\phi]N$ at the level of modules is not $L$. 

\begin{equation}\label{eq:shipley2}
	\begin{tikzcd}
	\qquad\mod{{\res[\alpha]}\widetilde{\mathbb{Q}}S}\qquad \arrow[rrr, yshift=-2mm,"{\res[\phi]N}" description] \arrow[dd, "{\ext[\delta]}" description, xshift=4.2mm] \arrow[dd, "{\Sigma^{-a}\coext[\delta]}" description, xshift=-4.2mm] & & & \quad\mod{{\res[\phi]}N{\res[\alpha]}\widetilde{\mathbb{Q}}S}\quad  \arrow[lll, yshift=2mm,"L^{{\res[\alpha]}\widetilde{\mathbb{Q}}S}" description] \arrow[dd, "{\ext[\gamma]}" description, xshift=4.2mm] \arrow[dd, "{\Sigma^{-a}\coext[\gamma]}" description, xshift=-4.2mm]
	\\
	& & \\
	\qquad\mod{Q{\res[\alpha]}\widetilde{\mathbb{Q}}R}\qquad \arrow[rr, yshift=-2mm,"{\res[\phi]N}" description] \arrow[uu, xshift=-12.6mm, "{\Sigma^a\res[\delta]}" description] \arrow[uu, xshift=12.6mm, "{\res[\delta]}" description] & & \mod{{\res[\phi]}NQ{\res[\alpha]}\widetilde{\mathbb{Q}}R} \arrow[ll, yshift=2mm,"L^{Q{\res[\alpha]}\widetilde{\mathbb{Q}}R}" description] \arrow[r, yshift=-2mm,"{\res[q]}" description] & \quad\mod{Q{\res[\phi]}NQ{\res[\alpha]}\widetilde{\mathbb{Q}}R}\quad \arrow[uu, xshift=-13mm, "{\Sigma^a\res[\gamma]}" description] \arrow[uu, xshift=12.6mm, "{\res[\gamma]}" description]  \arrow[l, yshift=2mm,"{\ext[q]}" description]
	\end{tikzcd}
\end{equation}

\begin{prop}\label{prop:shipley2}
The vertical Quillen functors in~(\ref{eq:shipley2}) correspond.
\end{prop}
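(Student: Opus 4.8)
The plan is to recognise diagram~(\ref{eq:shipley2}) as an instance of the framework of Proposition~\ref{prop:compareweak} and then to apply Theorem~\ref{thm:compare}, exactly as in the proof of Proposition~\ref{prop:shipley1correspond}. First I would match up the data: the relevant weak monoidal Quillen pair is the levelwise Dold--Kan adjunction $L \dashv \res[\phi]N$ between $\symsp(\sq)$ and $\symsp(\chq)$, together with its lifts to categories of modules over commutative monoids as in Section~\ref{sec:liftadjunctionstomodules}; the convenient pair on $\symsp(\chq)$ is given by the flat and positive flat model structures; the map $\theta$ of commutative monoids is $\delta\colon \res[\alpha]\widetilde{\mathbb{Q}}S \to Q\res[\alpha]\widetilde{\mathbb{Q}}R$; and $\gamma$ plays the role of the unit map into the cofibrant replacement $Q\res[\phi]NQ\res[\alpha]\widetilde{\mathbb{Q}}R$. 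With these identifications the square in~(\ref{eq:shipley2}) is precisely the one in Proposition~\ref{prop:compareweak}(2). The horizontal adjunctions are Quillen equivalences, since the Dold--Kan equivalence lifts to modules (Section~\ref{sec:shipley}) and $\ext[q] \dashv \res[q]$ is a Quillen equivalence by Quillen invariance, while all the vertical adjoint pairs are Quillen by Proposition~\ref{prop:extrescoext} because $Q\res[\alpha]\widetilde{\mathbb{Q}}R$ and $Q\res[\phi]NQ\res[\alpha]\widetilde{\mathbb{Q}}R$ are cofibrant as modules over $\res[\alpha]\widetilde{\mathbb{Q}}S$ and $\res[\phi]N\res[\alpha]\widetilde{\mathbb{Q}}S$ respectively (Lemma~\ref{lem:convenient}).

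It then remains to supply the hypotheses of Theorem~\ref{thm:compare}, namely the natural isomorphisms $\L\ext[\delta] \cong \Sigma^{-a}\R\coext[\delta]$ and $\L\ext[\gamma] \cong \Sigma^{-a}\R\coext[\gamma]$, which by Proposition~\ref{prop:functorsetup} are equivalent to $\delta$ and $\gamma$ being relatively Gorenstein of shift $a$. The map $\delta$ is relatively Gorenstein of shift $a$: it is the right-hand vertical ring map occurring in the proof of Proposition~\ref{prop:shipley1correspond}, where this was already established. For $\gamma$ I would argue as in the proof of Proposition~\ref{prop:relGorweak}: since $\res[\phi]N$ preserves all weak equivalences (it is a levelwise Dold--Kan equivalence followed by a restriction of scalars), its lifts to module categories are weak monoidal Quillen equivalences, which descend to strong monoidal equivalences of the derived categories; the relative Gorenstein condition being a derived condition, it suffices to compute the relevant derived internal hom, and there Lemma~\ref{lem:internalhom} applies because the derived left adjoint is strong monoidal, transporting the relative Gorenstein property from $\delta$ to $\gamma$. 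Granting both conditions, Proposition~\ref{prop:compareweak} provides the natural weak equivalences needed as input to Theorem~\ref{thm:compare}, and the latter then yields the correspondence of the vertical Quillen functors.

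The main obstacle is this relative Gorenstein transfer for $\gamma$: the Dold--Kan step is only a \emph{weak} monoidal Quillen equivalence, so neither Proposition~\ref{prop:relGorstrong} nor Proposition~\ref{prop:relGorweak} applies verbatim (both require a strong monoidal ambient pair), and one must instead rerun the relevant part of the argument, taking care that the left adjoint between module categories becomes strong monoidal only after passage to homotopy categories. The remainder is routine and parallels Proposition~\ref{prop:shipley1correspond} and the strategy described in Section~\ref{sec:strategy}.
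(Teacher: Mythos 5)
Your proposal is correct and is essentially the paper's argument: the paper's proof consists precisely of noting that $\gamma$ is relatively Gorenstein of shift $a$ by Proposition~\ref{prop:relGorweak} (since $\delta$ is, as in the proof of Proposition~\ref{prop:shipley1correspond}) and then applying Theorem~\ref{thm:compare} together with Proposition~\ref{prop:compareweak}. The only divergence is your handling of $\gamma$: where the paper cites Proposition~\ref{prop:relGorweak} outright, you rerun its proof on the grounds that its stated hypothesis asks for a strong monoidal ambient Quillen equivalence while the Dold--Kan step $(L,\res[\phi]N)$ is only weak monoidal. That caution is justified by the literal statement, but the proof of Proposition~\ref{prop:relGorweak} never uses strongness of the left adjoint --- it only needs $U$ lax monoidal, colimit- and weak-equivalence-preserving, together with the Schwede--Shipley criterion for weak monoidal Quillen pairs at the module level, all of which hold for $\res[\phi]N$ --- so the argument you inline is exactly that proof and the paper's citation is sound in substance. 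One minor imprecision: Proposition~\ref{prop:functorsetup} derives $\L\ext[\gamma] \cong \Sigma^{-a}\R\coext[\gamma]$ from relative Gorensteinness \emph{together with} smallness of the target over the source (which holds in the case at hand, ultimately by Venkov's theorem), rather than being equivalent to the Gorenstein condition alone.
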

\begin{proof}
Firstly note that $\gamma$ is relatively Gorenstein of shift $a$ by Proposition~\ref{prop:relGorweak} since $\delta$ is as explained in the proof of Proposition~\ref{prop:shipley1correspond}. Then apply Theorem~\ref{thm:compare} together with Proposition~\ref{prop:compareweak}.
\end{proof}

We now consider the corresponding localized and cellularized versions of~(\ref{eq:shipley2}). In order to not overcomplicate the notation we will not write down these localized and cellularized versions explicitly. They are obtained from~(\ref{eq:shipley2}) by cellularizing/localizing at the images of the cells/localizing objects just as~(\ref{eq:shipley1cell}) and~(\ref{eq:shipley1L}) were obtained from~(\ref{eq:shipley1}).

\begin{cor}\label{cor:shipley2}
The vertical Quillen functors in~(\ref{eq:shipley2}) correspond after the appropriate localizations and cellularizations.
\end{cor}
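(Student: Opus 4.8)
The plan is to argue exactly as in the proof of Corollary~\ref{cor:shipley1}. The correspondence at the point-set level is already supplied by Proposition~\ref{prop:shipley2}: the square~(\ref{eq:shipley2}) lies in the setup of Proposition~\ref{prop:compareweak}, the map $\gamma$ is relatively Gorenstein of shift $a$ by Proposition~\ref{prop:relGorweak} (since $\delta$ is, as in the proof of Proposition~\ref{prop:shipley1correspond}), and so all six derived squares commute by Theorem~\ref{thm:compare}. Consequently it remains only to check that every vertical functor is still Quillen after passing to the localized and cellularized versions of~(\ref{eq:shipley2}). These are formed by cellularizing each category at the image of the cell under the horizontal functors, respectively localizing at the image of the localizing object, exactly as~(\ref{eq:shipley1cell}) and~(\ref{eq:shipley1L}) were formed from~(\ref{eq:shipley1}); once the verticals are Quillen the conclusion follows from Proposition~\ref{prop:shipley2}.

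For the localized diagram I would apply Theorem~\ref{thm:quillenafterloc}(1) to the two vertical adjoint triples (change of scalars along $\delta$ on the left and along $\gamma$ on the right, together with the shift): the required right projection formula holds for change of rings, the Bousfield-class identity $\langle E'\rangle = \langle \L\ext(E)\rangle$ holds by construction of the localized diagram, and Lemma~\ref{lem:bousfieldclasses} ensures this identity is transported along the weak monoidal Quillen equivalence $L^{(-)} \dashv \res[\phi]N$. For the cellularized diagram, by Theorem~\ref{thm:quillenaftercell}(1) together with Example~\ref{eg:smallbuild} it suffices to check that each cell in the top row is obtained from the corresponding cell in the bottom row by restriction of scalars; here I use that the targets of $\delta$ and $\gamma$ are small over their sources, which is inherited from the running assumption that $R$ is small over $S$ since smallness is preserved by the Quillen equivalences of the Dold--Kan step. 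For the leftmost pair of categories the identification of the cells is part of the construction, and for the remaining ones it follows since $\res[\phi]N$ is lax symmetric monoidal and lax monoidal functors commute with restriction of scalars by the argument of Proposition~\ref{prop:comparestrong}.

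The one point in the cellular argument requiring a little care is the weak monoidal step, where the module-level left adjoint $L^{(-)}$ is not the underlying left adjoint $L$; but this is precisely handled by the same commutation of lax monoidal functors with restriction of scalars, so no separate argument is needed. I expect the main obstacle throughout to be bookkeeping rather than any genuine difficulty: one must keep track of which object each of the six categories in~(\ref{eq:shipley2}) is cellularized or localized at as it passes through $\delta$, $\gamma$ and the weak monoidal Quillen equivalence, and confirm that the module-level adjoints send each cell, respectively localizing object, to exactly the one the cited theorems require. Once these identifications are recorded, the cited results apply mechanically.
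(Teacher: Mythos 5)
Your proposal is correct and follows essentially the same route as the paper: reduce to checking that the vertical functors remain Quillen after the localizations and cellularizations (formed exactly as~(\ref{eq:shipley1cell}) and~(\ref{eq:shipley1L}) were from~(\ref{eq:shipley1})), handle the localized case via Theorem~\ref{thm:quillenafterloc}(1) and Lemma~\ref{lem:bousfieldclasses}, the cellularized case via Theorem~\ref{thm:quillenaftercell}(1), Example~\ref{eg:smallbuild} and the fact that lax monoidal functors commute with restriction of scalars as in Proposition~\ref{prop:comparestrong}, and then invoke Proposition~\ref{prop:shipley2}. This matches the paper's proof, which argues exactly as in Corollary~\ref{cor:shipley1}.
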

\begin{proof}
It is enough to check that the vertical functors are Quillen after the appropriate localizations and cellularizations. This follows from Theorems~\ref{thm:quillenafterloc} and~\ref{thm:quillenaftercell} together with Lemma~\ref{lem:bousfieldclasses} as in Corollary~\ref{cor:shipley1}.
\end{proof}

\subsection{To algebra}
The final stage in the process is the passage from symmetric spectra in non-negatively graded chain complexes to chain complexes. This can be realized by a single square
\begin{equation}\label{eq:shipley3}
	\begin{tikzcd}
	\quad\mod{{\res[\phi]}N{\res[\alpha]}\widetilde{\mathbb{Q}}S}\quad \arrow[rr, yshift=2mm,"D" description]\arrow[dd, "{\ext[\gamma]}" description, xshift=4.2mm] \arrow[dd, "{\Sigma^{-a}\coext[\gamma]}" description, xshift=-4.2mm] & & \quad\mod{D{\res[\phi]}N{\res[\alpha]}\widetilde{\mathbb{Q}}S}\quad \arrow[ll, yshift=-2mm,"R" description] \arrow[dd, "{\ext[\lambda]}" description, xshift=4.2mm] \arrow[dd, "{\Sigma^{-a}\coext[\lambda]}" description, xshift=-4.2mm] \\
	& & \\
	\quad\mod{Q{\res[\phi]}NQ{\res[\alpha]}\widetilde{\mathbb{Q}}R}\quad \arrow[uu, xshift=-13mm, "{\Sigma^a\res[\gamma]}" description] \arrow[uu, xshift=12.6mm, "{\res[\gamma]}" description]  \arrow[rr, yshift=2mm,"D" description] & & \quad\mod{DQ{\res[\phi]}NQ{\res[\alpha]}\widetilde{\mathbb{Q}}R}\quad \arrow[ll, yshift=-2mm,"R" description] \arrow[uu, xshift=-13mm, "{\Sigma^a\res[\lambda]}" description] \arrow[uu, xshift=12.6mm, "{\res[\lambda]}" description] 
	\end{tikzcd}
\end{equation}
in which $\lambda = D\gamma$. Since $D$ is left Quillen, $DQ{\res[\phi]}NQ{\res[\alpha]}\widetilde{\mathbb{Q}}R$ is a cofibrant $D{\res[\phi]}N{\res[\alpha]}\widetilde{\mathbb{Q}}S$-module. 

\begin{prop}\label{prop:shipley3}
The vertical Quillen functors in~(\ref{eq:shipley3}) correspond.
\end{prop}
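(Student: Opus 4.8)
The plan is to recognise~(\ref{eq:shipley3}) as an instance of the general framework of Theorem~\ref{thm:compare} and to verify its hypotheses using Proposition~\ref{prop:comparestrong}, in the same spirit as the proof of Proposition~\ref{prop:shipley2} but with the simplification that the relevant horizontal adjunction is now strong, rather than merely weakly, monoidal. Concretely, the horizontals are Shipley's strong monoidal Quillen equivalence $(D,R)$, lifted to categories of modules over commutative monoids as in Section~\ref{sec:liftadjunctionstomodules}; since $(D,R)$ is strong monoidal the left adjoint at the level of modules is again $D$, and the lifted adjunctions are Quillen equivalences because $D$ preserves all weak equivalences, cf.~\cite{Shipley07, FlatnessandShipley'sTheorem}. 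The verticals are the extension--restriction--coextension of scalars adjunctions along $\gamma$ on the left and along $\lambda=D\gamma$ on the right. That all the vertical pairs are Quillen follows from Proposition~\ref{prop:extrescoext}, using that $DQ{\res[\phi]}NQ{\res[\alpha]}\freeq R$ is cofibrant as a $D{\res[\phi]}N{\res[\alpha]}\freeq S$-module (noted just above) and the analogous statement for $\gamma$.

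First I would record the derived-functor identifications $\L\ext[\gamma]\cong\Sigma^{-a}\R\coext[\gamma]$ and $\L\ext[\lambda]\cong\Sigma^{-a}\R\coext[\lambda]$ needed to place us in the situation of Theorem~\ref{thm:compare}. By Proposition~\ref{prop:functorsetup} it is enough to check that $\gamma$ and $\lambda$ are each relatively Gorenstein of shift $a$, smallness of the relevant modules being inherited from the original map $\phi\colon S\to R$ since all the intervening functors are Quillen equivalences. The map $\gamma$ was shown to be relatively Gorenstein of shift $a$ in the proof of Proposition~\ref{prop:shipley2}. For $\lambda=D\gamma$ I would invoke Proposition~\ref{prop:relGorstrong}: $(D,R)$ is a strong monoidal Quillen equivalence between stable monoidal model categories with rigidly-compactly generated homotopy categories and cofibrant units, both satisfying Quillen invariance, the requisite right lifted (flat) model structures on modules exist, and $D$ preserves all weak equivalences, so $\lambda$ inherits the relative Gorenstein condition of shift $a$.

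It then remains to supply two of the four data (a)--(d) of Theorem~\ref{thm:compare}. Applying Proposition~\ref{prop:comparestrong} to the strong monoidal adjunction $(D,R)$ and the map $\theta=\gamma$ (so that $F\theta=\lambda$) produces natural isomorphisms $\res[\gamma]R\cong R\res[\lambda]$ and $D\res[\gamma]\cong\res[\lambda]D$; since suspension commutes with all of these functors up to natural isomorphism, the first yields $\Sigma^a\res[\gamma]R\cong R\Sigma^a\res[\lambda]$ and the second yields $D\Sigma^a\res[\gamma]\cong\Sigma^a\res[\lambda]D$. These are exactly statements (b) and (c) of Theorem~\ref{thm:compare}, holding on the nose and hence in particular on fibrant, respectively cofibrant, objects. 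Theorem~\ref{thm:compare} then gives that all six derived squares associated to~(\ref{eq:shipley3}) commute, that is, the vertical Quillen functors correspond.

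The only step that is not purely formal is the verification that Shipley's pair $D\dashv R$ meets the hypotheses of Proposition~\ref{prop:relGorstrong} --- in particular that $D$ preserves all weak equivalences and that the needed right lifted flat model structures on module categories exist --- but this is precisely what is recorded in~\cite{Shipley07, FlatnessandShipley'sTheorem}; everything else is a direct transcription of the argument for Proposition~\ref{prop:shipley2}.
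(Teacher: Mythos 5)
Your proposal is correct and follows essentially the same route as the paper: deduce that $\lambda = D\gamma$ is relatively Gorenstein of shift $a$ from the Gorenstein property of $\gamma$ (established in the proof of Proposition~\ref{prop:shipley2}) via Proposition~\ref{prop:relGorstrong}, and then apply Theorem~\ref{thm:compare} together with Proposition~\ref{prop:comparestrong} for the strong monoidal pair $(D,R)$. The extra details you supply (the module-level lift of $(D,R)$, the Quillen conditions from Proposition~\ref{prop:extrescoext}, and which two of (a)--(d) you verify) are exactly the implicit content of the paper's brief argument.
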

\begin{proof}
Since $\gamma$ is relatively Gorenstein of shift $a$ as explained in the proof of Proposition~\ref{prop:shipley2}, the ring map $\lambda$ is also relatively Gorenstein of shift $a$ by Proposition~\ref{prop:relGorstrong}. The result then follows from Theorem~\ref{thm:compare} together with Proposition~\ref{prop:comparestrong}.
\end{proof}

\begin{cor}\label{cor:shipley3}
The vertical Quillen functors in~(\ref{eq:shipley3}) correspond after the appropriate localizations and cellularizations.
\end{cor}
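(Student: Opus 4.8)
The plan is to argue exactly as in Corollaries~\ref{cor:shipley1} and~\ref{cor:shipley2}. By Proposition~\ref{prop:shipley3} the vertical Quillen functors in~(\ref{eq:shipley3}) already correspond on the unlocalized, uncellularized square, and the natural weak equivalences produced there survive passage to any localization or cellularization (the localization and cellularization functors being right, respectively left, Quillen); so it will suffice to check that the vertical functors remain Quillen functors after the relevant localizations and cellularizations, and then appeal to Theorem~\ref{thm:compare}.

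For the cellularized version I would apply Theorem~\ref{thm:quillenaftercell}(1): the restriction--coextension adjunction along $\lambda$ is automatically Quillen between cellularizations since the cellularizing objects correspond, and for the extension--restriction adjunction it is enough to know that the cellularizing object on the top row is obtained from the one on the bottom row by restriction of scalars, together with the building relation supplied by Example~\ref{eg:smallbuild}. The first point holds because $D$ is strong (hence lax) monoidal, so $D\,\res[\gamma] \cong \res[\lambda]\,D$ by Proposition~\ref{prop:comparestrong}; the second holds because $DQ\res[\phi]NQ\res[\alpha]\widetilde{\mathbb{Q}}R$ is small as a $D\res[\phi]N\res[\alpha]\widetilde{\mathbb{Q}}S$-module. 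In the cases of interest this smallness traces back, via Proposition~\ref{prop:small fin gen}, to $H^*BH$ being a finitely generated $H^*BG$-module, and it is preserved all the way along the zig-zag of monoidal Quillen equivalences. For the localized version I would instead use Theorem~\ref{thm:quillenafterloc}(1), transporting the required equality of Bousfield classes along the strong monoidal Quillen equivalence $D \dashv R$ by means of Lemma~\ref{lem:bousfieldclasses}; the right projection formula hypothesis of Theorem~\ref{thm:quillenafterloc}(1) is automatic since $\ext[\lambda] \dashv \res[\lambda]$ is a strong (hence weak) monoidal Quillen pair for which the right projection formula holds on the nose.

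The genuine mathematical content has all been extracted in Proposition~\ref{prop:shipley3} and in the machinery of Sections~\ref{sec:functors} and~\ref{sec:compare}, so the main obstacle here is purely bookkeeping: one must confirm that the localizing and cellularizing objects appearing in the localized and cellularized versions of~(\ref{eq:shipley3}) are precisely the images of the original $K$ and $E$ under the composite of all the preceding Quillen equivalences --- the same objects tracked through Corollaries~\ref{cor:shipley1} and~\ref{cor:shipley2} --- so that the individual correspondences compose and, at the final stage, match the identifications of the cellularization with torsion modules and of the localization with $L$-complete modules. This is routine but notation-heavy, which is why the explicit localized and cellularized diagrams are suppressed.
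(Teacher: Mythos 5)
Your proposal is correct and follows essentially the same route as the paper: the paper's proof of Corollary~\ref{cor:shipley3} simply says it is analogous to Corollary~\ref{cor:shipley1}, i.e.\ one reduces to checking that the vertical functors remain Quillen after localization/cellularization and then invokes Theorem~\ref{thm:quillenafterloc}(1) with Lemma~\ref{lem:bousfieldclasses} for the localized square, and Theorem~\ref{thm:quillenaftercell}(1) with Example~\ref{eg:smallbuild} plus the fact that the strong monoidal $D$ commutes with restriction of scalars (Proposition~\ref{prop:comparestrong}) for the cellularized square, exactly as you describe. The only blemish is your parenthetical about the localization/cellularization identity functors being right/left Quillen, which is stated ambiguously, but it plays no essential role since the real point is just that weak equivalences only enlarge and (co)fibrant objects are unchanged or shrink under these (co)localizations.
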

\begin{proof}
This is analogous to the proof of Corollary~\ref{cor:shipley1}.
\end{proof}

Taking $S=\mathbb{U}DBG_+$, $R=Q\mathbb{U}QDBH_+$, $K = S^0$ and $E = BH_+$ shows that the functors correspond through Shipley's algebraicization theorem in our setting. We note that despite starting in the flat model structure on symmetric spectra, we are now in the usual projective model structure on chain complexes. 

\section{The formality square}
Recall from Section~\ref{sec:model} that we have a map of commutative symmetric ring spectra $\mathbb{U}DBG_+ \to Q\mathbb{U}QDBH_+$. To ease notation, we write $S_a$ and $R_a$ for the commutative DGAs which are the images of $\mathbb{U}DBG_+$ and $Q\mathbb{U}QDBH_+$ respectively under the Quillen equivalences in Shipley's algebraicization theorem described in the previous section. They are commutative models for the cochains on $BG$ and $BH$ respectively but they are not good models for us since it is not clear how build a commutative square 
\begin{center}
	\begin{tikzcd}
	S_a \arrow[d] & H^*BG \arrow[l] \arrow[d] \\
	R_a & H^*BH. \arrow[l] 
	\end{tikzcd}
\end{center}
We now describe how to alter these models so that a commutative square can be built with the desired properties. 

Firstly factor the map $\lambda\colon S_a \to R_a$ as an acyclic cofibration of commutative DGAs followed by a fibration $$S_a \xrightarrow{j_G} S_a' \xrightarrow{\nu} R_a.$$ 

Since $\nu\colon S_a' \to R_a$ is a fibration (i.e., a surjection) we can build a commutative square 
\begin{equation}\label{eq:surjection}
	\begin{tikzcd}
	S_a \arrow[r, hookrightarrow, "j_G"] \arrow[d, "\lambda"'] & S_a' \arrow[dl, twoheadrightarrow, "\nu"] & H^*BG \arrow[l, "w"'] \arrow[l, "\sim"] \arrow[d, "\theta"] \\
	R_a & & H^*BH \arrow[ll, "w'"] \arrow[ll, "\sim"'] 
	\end{tikzcd}
\end{equation}
as we now describe. This method was also used in~\cite[Proof of 9.1]{GreenleesShipley11}.

We first define $w'\colon H^*BH \to R_a$ by choosing cocycle representatives $\tilde y_i$ of the polynomial generators $y_i$ of $H^*BH$. Write $H^*BG = \mathbb{Q}[x_1,...,x_r]$. Choosing cocycle representatives $\tilde x_i'$ for the polynomial generators will not yield a commutative square in general. However, the cohomology classes $\nu(\tilde x_i')$ and $w'\theta (x_i)$ are cohomologous since the map $DBG_+ \to DBH_+$ which gives rise to $\lambda\colon S_a \to R_a$ via Shipley's algebraicization theorem represents the map $\theta$ in homotopy. Therefore the differences $\nu(\tilde x_i') - w'\theta (x_i)$ are coboundaries $d(b_i)$. As $\nu\colon S_a'\to R_a$ is a surjection, we can lift the coboundary $d(b_i)$ to give a coboundary $a_i$ in $S_a'$ such that $\nu a_i = d(b_i)$. Define $w(x_i) = \tilde x_i' - a_i$. Then $$\nu w(x_i) = \nu \tilde{x}_i' - \nu a_i = \nu\tilde{x}_i' - d(b_i) = w'\theta (x_i)$$ so the square commutes. Note also that $w$ and $w'$ are quasiisomorphisms by construction.

Next factor the map $\theta\colon H^*BG \to H^*BH$ into a cofibration followed by an acyclic fibration $$H^*BG \xrightarrow{\phi} QH^*BH \xrightarrow{q} H^*BH$$ of commutative DGAs. Taking the pushout of the span $S_a' \xleftarrow{w} H^*BG \xrightarrow{\phi} QH^*BH$ gives the commutative diagram
\begin{equation}\label{eq:pushout}
	\begin{tikzcd}
	& S_a' \arrow[ddl, bend right, "\nu"'] \arrow[d, hookrightarrow, "\delta"'] & H^*BG \arrow[l, "w"'] \arrow[l, "\sim"] \arrow[d, hookrightarrow, "\phi"] \\
	& R_a' \arrow[dl, "\alpha" description] & QH^*BH \arrow[dll, bend left, "w'q"] \arrow[l, "z"] \arrow[l, "\sim"'] \\
	R_a & & \\
	\end{tikzcd}
\end{equation}
where $z$ is a weak equivalence by left properness. 

\begin{lem}\label{lem:formalitydiagram}
Using the notation above, the diagram
$$
\begin{tikzcd}
S_a \arrow[d, hookrightarrow, "\lambda"'] & & S_a \arrow[ll, "1"', "\sim"] \arrow[d, hookrightarrow, "\delta j_G"'] \arrow[rr, "j_G", "\sim"', hookrightarrow] & & S_a' \arrow[d, hookrightarrow, "\delta"] & & H^*BG \arrow[ll, "w"', "\sim"] \arrow[d, hookrightarrow, "\phi"] \\
R_a & & R_a' \arrow[ll, "\alpha", "\sim"'] \arrow[rr, "1"', "\sim"] & & R_a'  & & QH^*BH \arrow[ll, "z", "\sim"']
\end{tikzcd}
$$
is commutative.
\end{lem}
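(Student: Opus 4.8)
```latex
\begin{proof}[Proof sketch]
The plan is to verify commutativity of the diagram square by square, reducing everything to the commutativity of the pushout diagram~(\ref{eq:pushout}) and the square~(\ref{eq:surjection}), both of which have already been established. First I would observe that the leftmost square, namely
\[
\begin{tikzcd}
S_a \arrow[d, "\lambda"'] & S_a \arrow[l, "1"'] \arrow[d, "\delta j_G"'] \\
R_a & R_a' \arrow[l, "\alpha"]
\end{tikzcd}
\]
commutes precisely because $\alpha \delta = \nu$ by construction of the pushout~(\ref{eq:pushout}) (the map $\alpha$ is defined so that $\alpha\delta = \nu$), and $\nu j_G = \lambda$ by the factorization $S_a \xrightarrow{j_G} S_a' \xrightarrow{\nu} R_a$ of $\lambda$. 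Hence $\alpha \delta j_G = \nu j_G = \lambda = \lambda \circ 1$, as required.

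Next I would check the middle square
\[
\begin{tikzcd}
S_a \arrow[d, "\delta j_G"'] \arrow[r, "j_G"] & S_a' \arrow[d, "\delta"] \\
R_a' \arrow[r, "1"'] & R_a'
\end{tikzcd}
\]
which commutes trivially: $\delta \circ j_G = 1 \circ (\delta j_G)$ by definition of the left vertical map as the composite $\delta j_G$. Finally, for the rightmost square
\[
\begin{tikzcd}
S_a' \arrow[d, "\delta"'] & H^*BG \arrow[l, "w"'] \arrow[d, "\phi"] \\
R_a' & QH^*BH \arrow[l, "z"']
\end{tikzcd}
\]
commutativity, i.e.\ $z \phi = \delta w$, is exactly the statement that the square at the top right of~(\ref{eq:pushout}) commutes, which holds since~(\ref{eq:pushout}) was constructed as a pushout of the span $S_a' \xleftarrow{w} H^*BG \xrightarrow{\phi} QH^*BH$ and $\delta$, $z$ are the structure maps into the pushout $R_a'$. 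That the labelled maps $1$, $j_G$, $w$, $\alpha$, $z$ are weak equivalences was recorded when each was introduced: $w$ and $w'$ are quasiisomorphisms by construction in~(\ref{eq:surjection}), $j_G$ is an acyclic cofibration from the chosen factorization of $\lambda$, $z$ is a weak equivalence by left properness as noted after~(\ref{eq:pushout}), and $\alpha$ is a weak equivalence since $\alpha \delta = \nu$ and $\delta$ (a cobase change of the acyclic cofibration $\phi$... wait, $\phi$ is only a cofibration) — more carefully, $\alpha$ is a weak equivalence by the two-out-of-three property once one knows $\nu$ and $\delta$: since $j_G$ and $\lambda$... in any case this was already asserted in the surrounding text.

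I do not expect any serious obstacle here: the lemma is a bookkeeping assembly of commutativity relations each of which is built into the definitions of the maps $j_G$, $\delta$, $\alpha$, $z$ via the factorization of $\lambda$ and the pushout~(\ref{eq:pushout}). The only mild subtlety is keeping the identifications of the composite vertical maps straight — in particular remembering that the middle vertical arrow is \emph{defined} to be $\delta j_G$ and that $\alpha$ is \emph{defined} so that $\alpha\delta=\nu$ — after which the three squares commute on the nose.
\end{proof}
```
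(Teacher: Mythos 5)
Your square-by-square argument is exactly the paper's own proof: the left square follows from $\alpha\delta j_G=\nu j_G=\lambda$ using~(\ref{eq:pushout}) and~(\ref{eq:surjection}), the middle square commutes by definition of the vertical composite, and the right square is precisely the commutativity of the pushout square in~(\ref{eq:pushout}). The only wobble is your aside about why $\alpha$ is a weak equivalence --- that is not part of the commutativity claim being proved, and if you do want it, the clean argument is two-out-of-three applied to $\alpha z = w'q$ (with $z$ a weak equivalence by left properness and $w'q$ the composite of the quasiisomorphism $w'$ with the acyclic fibration $q$), not an argument via $\nu$, which is not a weak equivalence.
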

\begin{proof}
The left hand square is commutative since $\alpha\delta j_G = \nu j_G = \lambda$ using~(\ref{eq:pushout}) and~(\ref{eq:surjection}) respectively. The middle square commutes by definition, and the right hand square commutes by~(\ref{eq:pushout}).
\end{proof}

The formality step now consists of the following three squares induced by the diagram in Lemma~\ref{lem:formalitydiagram}. We write $\beta = \delta j_G$ to ease notation.
\begin{equation}\label{eq:formality}
\resizebox{\hsize}{!}{\begin{tikzcd}[ampersand replacement=\&]
	\qquad\mod{S_a}\qquad \arrow[rr, yshift=-2mm, "1" description] \arrow[dd, xshift=-4.2mm, "{\Sigma^{-a}\coext[\lambda]}" description] \arrow[dd, xshift=4.2mm, "{\ext[\lambda]}" description] \& \& \qquad\mod{S_a}\qquad \arrow[ll, yshift=2mm, "1" description] \arrow[rr, yshift=2mm, "{\ext[j]^G}" description] \arrow[dd, xshift=-4.2mm, "{\Sigma^{-a}\coext[\beta]}" description] \arrow[dd, xshift=4.2mm, "{\ext[\beta]}" description] \& \& \qquad\mod{S_a'}\qquad \arrow[rr, yshift=-2mm, "{\res[w]}" description]\arrow[ll, yshift=-2mm, "{\res[j]_G}" description]\arrow[dd, xshift=-4.2mm, "{\Sigma^{-a}\coext[\delta]}" description] \arrow[dd, xshift=4.2mm, "{\ext[\delta]}" description]\& \&\quad\mod{H^*BG}\quad \arrow[ll, yshift=2mm, "{\ext[w]}" description] \arrow[dd, xshift=-4.2mm, "{\Sigma^{-a}\coext[\phi]}" description] \arrow[dd, xshift=4.2mm, "{\ext[\phi]}" description] \\
	\& \& \& \& \& \& \\
	\qquad\mod{R_a}\qquad \arrow[rr, yshift=-2mm, "{\res[\alpha]}" description] \arrow[uu, xshift=-13mm, "{\Sigma^a\res[\lambda]}" description] \arrow[uu, xshift=13mm, "{\res[\lambda]}" description]\& \& \qquad\mod{R_a'}\qquad \arrow[ll, yshift=2mm, "{\ext[\alpha]}" description] \arrow[rr, yshift=2mm, "1" description] \arrow[uu, xshift=-13mm, "{\Sigma^a\res[\beta]}" description] \arrow[uu, xshift=13mm, "{\res[\beta]}" description]\& \& \qquad\mod{R_a'}\qquad \arrow[rr, yshift=-2mm, "{\res[z]}" description]\arrow[ll, yshift=-2mm, "1" description] \arrow[uu, xshift=-13mm, "{\Sigma^a\res[\delta]}" description] \arrow[uu, xshift=13mm, "{\res[\delta]}" description]\& \&\quad\mod{QH^*BH}\quad \arrow[ll, yshift=2mm, "{\ext[z]}" description] \arrow[uu, xshift=-13mm, "{\Sigma^a\res[\phi]}" description] \arrow[uu, xshift=13mm, "{\res[\phi]}" description]
	\end{tikzcd}}
\end{equation}

\begin{prop}\label{prop:formality}
The vertical Quillen functors in~(\ref{eq:formality}) correspond.
\end{prop}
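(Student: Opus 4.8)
The plan is to read the diagram~(\ref{eq:formality}) as the horizontal composite of three squares, each obtained from one of the three commutative squares of commutative DGAs in Lemma~\ref{lem:formalitydiagram} by applying change of scalars, and to verify the hypotheses of Theorem~\ref{thm:compare} for each of the three in turn. In all three of those DGA squares the two horizontal maps are weak equivalences (the identity and $\alpha$ in the first, $j_G$ and the identity in the second, $w$ and $z$ in the third), so the horizontal adjunctions of~(\ref{eq:formality}) are change of rings along quasi-isomorphisms. The first bit of care is to match the data of each square with the framework of Section~\ref{sec:compare}: the vertical functors $\ext[\mu],\res[\mu],\Sigma^{-a}\coext[\mu]$ play the roles of $\ext,\res,\psext$ (so here $\psext=\Sigma^{-a}\coext$ and the role of $\twext$ is played by $\Sigma^a\res$), while the horizontals are the Quillen equivalences $F_G\dashv U_G$ and $F_H\dashv U_H$ — with the proviso that in the third (formality) square the quasi-isomorphism $w\colon H^*BG\to S_a'$ runs from the algebraic side towards the topological side, so there the left Quillen horizontal is an extension of scalars, not a restriction, and the square sits ``mirror-reversed'' relative to the framework picture.

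Next I would discharge the standing hypotheses of Theorem~\ref{thm:compare}. That all adjoint pairs are Quillen is routine: the extension--restriction adjunctions always are (Proposition~\ref{prop:extrescoext}(1)), and for the restriction--coextension adjunctions one must check that in each of $\lambda,\beta,\delta,\phi$ the target is cofibrant as a module over the source, and then invoke Proposition~\ref{prop:extrescoext}(2). This is where the convenient pair of projective model structures on commutative DGAs enters: by Lemma~\ref{lem:convenient}, cofibrant replacements of, and cofibrations between, commutative DGAs are cofibrations of underlying modules, so $QH^*BH$ is cofibrant over $H^*BG$, the pushout $\delta$ makes $R_a'$ cofibrant over $S_a'$, the composite $\beta=\delta j_G$ makes $R_a'$ cofibrant over $S_a$, and $R_a$ is cofibrant over $S_a$ as recorded in Section~\ref{sec:shipley}. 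The horizontal adjunctions are Quillen equivalences by Quillen invariance of modules for commutative DGAs.

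The isomorphisms $\L\ext\cong\R\psext$ I would obtain from Proposition~\ref{prop:functorsetup}(2): $\lambda$ is relatively Gorenstein of shift $a$ as established in the proof of Proposition~\ref{prop:shipley3}, and since Lemma~\ref{lem:formalitydiagram} exhibits $\beta,\delta,\phi$ as base changes of $\lambda$ along quasi-isomorphisms of commutative DGAs, this property transports to them (the module-level Quillen equivalences induced by those quasi-isomorphisms carry $\R\coext$ to $\R\coext$), while smallness of each target over its source is likewise inherited from the fact that $H^*BH$ is a finitely generated $H^*BG$-module (Venkov) over a ring of finite global dimension. Finally, the two commutation conditions of Theorem~\ref{thm:compare} I would verify by applying Proposition~\ref{prop:ringsquare} to each of the three DGA squares: part~(1) gives the natural isomorphism of composites of restriction functors, which is condition~(b); part~(2) gives a natural map that is a weak equivalence on cofibrant objects which, after applying the invertible shift $\Sigma^a$, is condition~(c). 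For part~(2) I would note that cofibrants are flat in DG-modules over a DGA, and that the required weak equivalence $S\otimes_{S'}R'\xrightarrow{\sim}R$ is furnished by Remark~\ref{rem:pushout}: the third square is a pushout of commutative algebras by~(\ref{eq:pushout}), and in the first two squares one leg of the relevant span is an acyclic cofibration of commutative algebras with the opposite horizontal map a weak equivalence. Theorem~\ref{thm:compare} then applies to each square, so the vertical Quillen functors in~(\ref{eq:formality}) correspond.

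I expect the main obstacle to be bookkeeping rather than any genuinely new idea: correctly identifying each of the three squares — one of them orientation-reversed because the formality map runs the ``wrong'' way — with the variables of Theorem~\ref{thm:compare} and of Proposition~\ref{prop:ringsquare}, and checking in each case that the hypothesis $S\otimes_{S'}R'\xrightarrow{\sim}R$ of Proposition~\ref{prop:ringsquare}(2) is exactly the one provided by Remark~\ref{rem:pushout}. The one genuinely substantive point to be careful about is the transport of the relative Gorenstein property of shift $a$ from $\lambda$ to $\beta,\delta,\phi$, since Propositions~\ref{prop:relGorstrong} and~\ref{prop:relGorweak} are phrased for monoidal Quillen equivalences rather than for change of rings along quasi-isomorphisms of DGAs; but this follows from the fact that such a change of rings is a Quillen equivalence of module categories under which the derived coextension functor, and hence the condition $\R\coext(\mathrm{unit})\simeq\Sigma^a(-)$, is preserved.
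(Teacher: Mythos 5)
Your proposal is correct and follows essentially the same route as the paper: the paper's proof likewise splits~(\ref{eq:formality}) into the three squares coming from Lemma~\ref{lem:formalitydiagram}, checks the hypotheses of Proposition~\ref{prop:ringsquare} via Remark~\ref{rem:pushout}, and concludes by Theorem~\ref{thm:compare}. Your additional bookkeeping (cofibrancy via the convenient structure, transport of smallness and the relative Gorenstein shift to $\beta,\delta,\phi$ along the quasi-isomorphisms) fills in hypotheses the paper leaves implicit, and does so correctly.
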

\begin{proof}
All three squares satisfy the hypotheses of Proposition~\ref{prop:ringsquare} using Remark~\ref{rem:pushout}, so the result follows from Theorem~\ref{thm:compare}.
\end{proof}

\begin{rem}
	The splitting of this formality step into three pieces is required to ensure that the vertical functors are Quillen. In order to guarantee this, the vertical maps of commutative DGAs need to be cofibrations, see Proposition~\ref{prop:extrescoext}.
\end{rem}

We now need to check that the correspondence still holds after localization/cellularization at the appropriate objects. Firstly we need to identify the images of the objects to localize/cellularize at. We write $\Theta$ for the derived functor $\mod{DBG_+} \to \mod{S_a}$ which realizes the Quillen equivalence of Section~\ref{sec:shipley}. We recall that $\Theta$ has the property that $H_*(\Theta M) = \pi_*M$.
\begin{lem}
\leavevmode
\begin{enumerate}
\item We have $\Theta(S^0) \simeq \mathbb{Q}$.
\item We have $\Theta(BG_+) \simeq H_*BG$, and moreover, there is an equality of Bousfield classes $\langle H_*BG \rangle = \langle \mathbb{Q} \rangle$.
\end{enumerate}
\end{lem}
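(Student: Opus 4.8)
The plan is to exploit the defining property of $\Theta$---it is the derived functor of the (zig-zag of monoidal) Quillen equivalences $\mod{DBG_+}\xrightarrow{\sim}\mod{S_a}$ of Section~\ref{sec:shipley}, so it is a strong monoidal equivalence of homotopy categories and satisfies $H_*(\Theta M)\cong\pi_* M$ as modules over $H^*BG\cong H_*(S_a)\cong\pi_*(DBG_+)$---together with the Gorenstein duality for $C^*(BG)$ already invoked in Corollary~\ref{cor:extension of BG} and classical local duality for the polynomial ring $H^*BG$.

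For (1) I would apply $H_*(\Theta(-))\cong\pi_*(-)$ to $S^0$: rationally $\pi_* S^0\cong\mathbb{Q}$, concentrated in degree $0$, so $\Theta(S^0)$ is an $S_a$-module with homology $\mathbb{Q}$ concentrated in degree zero. Since $S_a$ is a commutative model for cochains on $BG$ it is coconnective with $H_0(S_a)\cong\mathbb{Q}$; for degree reasons the $H^*BG$-action on this homology factors through the augmentation, and an $S_a$-module with homology concentrated in degree zero is equivalent (via its Postnikov truncation) to that homology. Hence $\Theta(S^0)\simeq\mathbb{Q}$. (Alternatively one observes that $\Theta$ is lax monoidal and intertwines restriction of scalars, hence carries the augmentation $DBG_+$-module $S^0$ to the augmentation $S_a$-module.)

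For (2) the same property gives $H_*(\Theta(BG_+))\cong\pi_*(BG_+)\cong H_*(BG;\mathbb{Q})=H_*BG$ with its cap-product $H^*BG$-module structure. I would not deduce $\Theta(BG_+)\simeq H_*BG$ from this alone---that would require intrinsic formality of $H_*BG$ as an $H^*BG$-module---but instead transport the Gorenstein duality identification $\Gamma_{S^0}(DBG_+)\simeq\Sigma^{\mathrm{dim}(G)}BG_+$ of $DBG_+$-modules (Dwyer--Greenlees--Iyengar, as in the proof of Corollary~\ref{cor:extension of BG}). As $\Theta$ preserves the unit and commutes with cellularization, part (1) gives
\[
\Theta(BG_+)\simeq\Sigma^{-\mathrm{dim}(G)}\Gamma_{\Theta(S^0)}(\Theta(DBG_+))\simeq\Sigma^{-\mathrm{dim}(G)}\Gamma_{\mathbb{Q}}(S_a).
\]
The formality quasi-isomorphism identifies $S_a$ with $H^*BG$, so $\Gamma_{\mathbb{Q}}(S_a)$ is derived $I$-power torsion $\Gamma_I(H^*BG)$; by local duality for $H^*BG=\mathbb{Q}[x_1,\dots,x_r]$ this is concentrated in a single cohomological degree, where it is an internal degree shift of $\mathrm{Hom}_\mathbb{Q}(H^*BG,\mathbb{Q})=H_*BG$. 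In particular it is formal, and comparing homological degrees with $H_*(\Theta(BG_+))\cong H_*BG$ forces the shift to be trivial, yielding $\Theta(BG_+)\simeq H_*BG$.

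For the Bousfield classes, the display exhibits $\Theta(BG_+)$ as a $\Theta(S^0)$-cellular object, so $\langle H_*BG\rangle\leq\langle\mathbb{Q}\rangle$; equivalently $H_*BG$ is $I$-power torsion, hence built from $\mathbb{Q}$. Conversely, derived $I$-power torsion over the Noetherian ring $H^*BG$ is smashing, so $\mathbb{Q}\simeq\Gamma_I(\mathbb{Q})\simeq\Gamma_I(H^*BG)\otimes^{\L}_{H^*BG}\mathbb{Q}$ lies in $\mathrm{Loc}(\Gamma_I(H^*BG))=\mathrm{Loc}(H_*BG)$, whence $\langle\mathbb{Q}\rangle\leq\langle H_*BG\rangle$; thus $\langle H_*BG\rangle=\langle\mathbb{Q}\rangle$, both generating the localizing subcategory of $I$-power torsion modules. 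The main obstacle is precisely the upgrade in (2) from a homology isomorphism to an honest equivalence: routing it through cellularization (an intrinsic operation transported by $\Theta$) and local duality sidesteps having to establish module formality directly. The remaining points---that $\Theta$ preserves units, commutes with cellularization, and intertwines restriction of scalars across the full Shipley zig-zag, and that the suspensions match up---are routine bookkeeping given the monoidal structure of the equivalences of Section~\ref{sec:shipley}.
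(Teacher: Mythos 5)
Your proposal is essentially correct, but it takes a more self-contained route than the paper, which at this point simply quotes earlier work: part (1) is the intrinsic formality of $\mathbb{Q}$ concentrated in degree zero (citing \cite[8.10]{GreenleesShipley11}), the identification $\Theta(BG_+)\simeq H_*BG$ is quoted from \cite[8.2]{PolWilliamson}, and the Bousfield class equality follows because the collection of $Z$ with $Z\otimes^{\mathbb{L}}_{S_a}M\simeq 0$ is localizing and $\mathbb{Q}$ and $H_*BG$ build each other by \cite[8.3]{PolWilliamson}. Your argument for (2) instead re-derives the identification by transporting the Gorenstein duality equivalence $\Gamma_{S^0}DBG_+\simeq\Sigma^{\dim G}BG_+$ through $\Theta$ (using that a coproduct-preserving triangulated equivalence commutes with cellularization and carries the rank-one free module to the rank-one free module) and then invoking local duality for the polynomial ring; your Bousfield-class argument supplies proofs of the two building statements (torsion in one direction, smashing of $\Gamma_I$ in the other) rather than citing them, but is otherwise the same mutual-building argument. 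This buys independence from \cite{PolWilliamson} at the cost of more bookkeeping. Two justifications are thinner than you suggest, though both are repairable by standard arguments: for (1), ``Postnikov truncation'' does not literally work, since over the coconnective DGA $S_a$ the naive truncations of a DG-module are not DG-submodules; the correct statement is the intrinsic formality of the augmentation module, proved e.g.\ by mapping in the Koszul complex on the polynomial generators, which is exactly what the cited result provides. Similarly, your ``in particular it is formal'' for $\Gamma_I(H^*BG)$ needs the truncation to be taken in the Koszul (complex) direction of the stable Koszul complex---whose levels are honest graded modules---rather than in the total grading, where the homology is not concentrated in a single degree; with that reading the formality and the degree comparison pinning down the shift go through, and the conclusion $\Theta(BG_+)\simeq H_*BG$ is correct.
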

\begin{proof}
Since $\pi_*S^0 = \mathbb{Q}$ which is intrinsically formal as it is concentrated in degree zero, the first claim follows; see~\cite[8.10]{GreenleesShipley11} for more details. We have $\Theta(BG_+) \simeq H_*BG$ by~\cite[8.2]{PolWilliamson}. Fix an $S_a$-module $M$, and note that the collection of $S_a$-modules $Z$ for which $Z \otimes _{S_a}^{\mathbb{L}} M \simeq 0$ is localizing. Since $\mathbb{Q}$ and $H_*BG$ build each other by~\cite[8.3]{PolWilliamson} it follows that $\langle H_*BG \rangle = \langle \mathbb{Q} \rangle$.
\end{proof}

The previous result tells us that it is sufficient to localize/cellularize at $\mathbb{Q}$.
\begin{cor}\label{cor:formality}
The vertical functors in~(\ref{eq:formality}) correspond after the appropriate localization and cellularization.
\end{cor}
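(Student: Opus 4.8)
The plan is to imitate the structure of the proofs of Corollaries~\ref{cor:shipley1},~\ref{cor:shipley2} and~\ref{cor:shipley3}: since Proposition~\ref{prop:formality} already establishes the correspondence of vertical Quillen functors in~(\ref{eq:formality}), it suffices by the strategy outlined in Section~\ref{sec:strategy} to check that each vertical functor remains a Quillen functor after the appropriate localization or cellularization. The relevant cellularization/localization in the cellular case is at $\mathbb{Q}$ in each column (using the previous lemma identifying the images of $S^0$ and $BG_+$, together with the equality of Bousfield classes $\langle H_*BG\rangle=\langle\mathbb{Q}\rangle$), and in the localized case one localizes at the images of $BH_+$ along the columns; the point is that these images are compatible along the horizontal ring maps.

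First I would treat the cellular case. By Theorem~\ref{thm:quillenaftercell}(1) together with Example~\ref{eg:smallbuild}, to see that the adjoint triple $(\ext,\res,\coext)$ along a ring map which makes the target small over the source descends to the cellularizations, it is enough to observe that the cellularizing object on the top row is obtained from the one on the bottom row by restriction of scalars. In~(\ref{eq:formality}) every horizontal is either an identity, an extension of scalars $\ext[j]^G$ or $\ext[\alpha]$ or $\ext[w]$, $\ext[z]$, or a restriction of scalars; in each column the cellularizing object is $\mathbb{Q}$, and $\res$ of $\mathbb{Q}$ is $\mathbb{Q}$ (lax monoidal functors commute with restriction of scalars as in Proposition~\ref{prop:comparestrong}), so the compatibility holds. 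The smallness hypotheses are satisfied since $R_a$ is small over $S_a$ — this follows from the fact that $H^*BH$ is finitely generated over $H^*BG$ by Venkov's theorem together with Proposition~\ref{prop:small fin gen}, and smallness is preserved by the Quillen equivalences of Section~\ref{sec:shipley}, as is the cofibrancy of $R_a$ as an $S_a$-module and the relatively Gorenstein condition.

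Next I would treat the localized case. Here one uses Theorem~\ref{thm:quillenafterloc}(1) to check that the extension--restriction--coextension adjunctions descend to the localizations, and Lemma~\ref{lem:bousfieldclasses} to verify that the Bousfield class hypothesis $\langle E'\rangle=\langle\L\ext E\rangle$ is preserved along each of the horizontal monoidal Quillen equivalences, exactly as in the proof of Corollary~\ref{cor:shipley1}. One should track the image of the localizing object $BH_+$ (equivalently $\mathbb{Q}$ after identifying Bousfield classes via the previous lemma) down the columns, using Corollary~\ref{cor:extension of BG} (or its analogue) to compute $\L\ext[\phi]$ of the appropriate object. Since $\mathbb{P}$, $\widetilde{\mathbb{Q}}$, $N$, $D$ and their partners are weak monoidal Quillen equivalences between stable model categories, Lemma~\ref{lem:bousfieldclasses} applies in each step.

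Combining these, each vertical functor in the localized and cellularized versions of~(\ref{eq:formality}) is Quillen, so Proposition~\ref{prop:formality} together with Theorem~\ref{thm:compare} gives the desired correspondence. The main obstacle I anticipate is purely bookkeeping: correctly identifying the localizing/cellularizing object in each of the four columns of the (suppressed) localized/cellularized diagram and confirming that it is the restriction of scalars (in the cellular case) or has the expected Bousfield class (in the localized case) of the object in the neighbouring column — but this is exactly the pattern already carried out in Corollaries~\ref{cor:shipley1},~\ref{cor:shipley2} and~\ref{cor:shipley3}, so there should be no genuinely new difficulty.

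\begin{proof}
As in the proof of Proposition~\ref{prop:formality}, the correspondence at the point-set level has already been established; by the discussion of Section~\ref{sec:strategy} it therefore suffices to check that the vertical functors in the localized and cellularized versions of~(\ref{eq:formality}) remain Quillen functors. For the cellular case, by the previous lemma the relevant cellularizing object in each column is (up to the equality of Bousfield classes $\langle H_*BG\rangle = \langle\mathbb{Q}\rangle$) given by $\mathbb{Q}$, and since lax monoidal functors commute with restriction of scalars as in Proposition~\ref{prop:comparestrong}, each cell on the top row is obtained from the corresponding cell on the bottom row by restriction of scalars. As $R_a$ is small over $S_a$ (by Proposition~\ref{prop:small fin gen} and Venkov's theorem, transported along the Quillen equivalences of Section~\ref{sec:shipley}), Theorem~\ref{thm:quillenaftercell}(1) together with Example~\ref{eg:smallbuild} shows that the vertical functors are Quillen after cellularization. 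For the localized case, the vertical functors are Quillen by Theorem~\ref{thm:quillenafterloc}(1), where the required equalities of Bousfield classes along the horizontal monoidal Quillen equivalences follow from Lemma~\ref{lem:bousfieldclasses}, exactly as in the proof of Corollary~\ref{cor:shipley1}. The result now follows from Proposition~\ref{prop:formality} and Theorem~\ref{thm:compare}.
\end{proof}
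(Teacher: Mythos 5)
Your proposal is correct and follows essentially the same route as the paper: the paper's proof simply notes that the vertical functors remain Quillen after the localization/cellularization by Theorems~\ref{thm:quillenafterloc} and~\ref{thm:quillenaftercell} and then invokes Proposition~\ref{prop:formality}. Your additional bookkeeping (smallness via Proposition~\ref{prop:small fin gen} and Example~\ref{eg:smallbuild}, compatibility of the cells under restriction of scalars, and the Bousfield class checks) just spells out details the paper leaves implicit, as in Corollaries~\ref{cor:shipley1}--\ref{cor:shipley3}.
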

\begin{proof}
The vertical functors are Quillen after the localization/cellularization by Theorems~\ref{thm:quillenafterloc} and~\ref{thm:quillenaftercell}, so the result follows from Proposition~\ref{prop:formality}.
\end{proof}

Combining the results of the previous sections gives a proof of Theorem~\ref{thm:main} but we will make this explicit in the next section
once we have more language to explain it. 

\section{Torsion and completion}
There are two options for the final step: either to pass to torsion $H^*BG$-modules, or $L$-complete $H^*BG$-modules. We show that unless the ranks of $G$ and $H$ are equal, there are obstructions to forming the correct setup in this step. 

\subsection{Koszul complexes and local (co)homology}
In this section we recall some key definitions which we require for the rest of the section. For more detail see~\cite[\S 6]{DwyerGreenlees02}. 

For $I=(x_1,\cdots,x_n)$ a finitely generated homogeneous ideal in a graded commutative ring $S$, the \emph{stable Koszul complex} denoted $K_\infty(I)$ is defined by $$K_\infty(I) = K_\infty(x_1) \otimes_S \cdots \otimes_S K_\infty(x_n)$$ where $K_\infty(x_i)$ is the complex $S \to S[1/x_i]$ in degrees 0 and -1. Note that up to quasiisomorphism, the stable Koszul complex $K_\infty(I)$ does not depend on the chosen set of generators for $I$, and in fact only depends on $I$ up to radical~\cite[1.2]{GreenleesMay95b}.

The \emph{local homology} of an $S$-module $N$ is defined by $$H_n^IN = H_n\mathrm{Hom}_S(PK_\infty(I),N)$$ where $PK_\infty(I)$ is a projective replacement of $K_\infty(I)$, see~\cite[\S 1]{GreenleesMay95b} for instance. We write $\Lambda_I = \mathbb{R}\mathrm{Hom}_S(K_\infty(I),-)$ for the derived completion functor and say that $N$ is \emph{derived complete} if the natural map $N \to \Lambda_IN$ is a quasiisomorphism. 

The \emph{local cohomology} of an $S$-module $N$ is defined by $$H^n_IN = H_{-n}(K_\infty(I) \otimes_S N).$$ We write $\Gamma_I = K_\infty(I) \otimes_R^\mathbb{L} -$ for the derived torsion functor, and say that $N$ is \emph{derived torsion} if the natural map $\Gamma_IN \to N$ is a quasiisomorphism.

We note that when $S$ is Noetherian the 0th local homology is the $L$-completion functor $$L_0^I = L_0((-)_I^\wedge))$$ which is the 0th left derived functor of $I$-adic completion~\cite[2.5]{GreenleesMay92}. We say that an $S$-module $M$ is $L_0^I$-complete if the natural map $M \to L_0^IM$ is an isomorphism, and write $\mod{S}^\wedge$ for the full subcategory of $L_0^I$-complete $S$-modules. 

Similarly, for $S$ Noetherian, the 0th local cohomology is the $I$-power torsion functor $$T_IM = \{m \in M\mid I^nm = 0 \text{ for some $n$}\}$$ by a result of Grothendieck~\cite{Hartshorne67}. We say that an $S$-module $M$ is ($I$-power) torsion if the natural map $T_IM \to M$ is an isomorphism, and write $\mod{S}^\mathrm{torsion}$ for the full subcategory of torsion $S$-modules.

We now turn to the interaction of local homology with change of rings. Given a map of rings $\theta\colon S \to R$ and an ideal $I = (x_1,\ldots ,x_n)$ of $S$, we write $IR$ for the ideal $(\theta(x_1), \ldots, \theta(x_n))$ of $R$.
\begin{lem}\label{lem:local homology base change}
	Let $\theta\colon S \to R$ be a map of rings and $I$ be an ideal in $S$. There is an isomorphism $H^I_*(\res M) \cong H^{IR}_*(M)$ for any $R$-module $M$. Furthermore, if $R$ is a projective $S$-module, there is an isomorphism $\mathrm{Hom}_S(R,H^I_*(N)) \cong H^{IR}_*(\mathrm{Hom}_S(R,N))$ for any $S$-module $N$.
\end{lem}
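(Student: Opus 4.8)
The statement asserts two base-change formulas for local homology along a ring map $\theta\colon S \to R$. Both reduce to identifying the stable Koszul complex $K_\infty(IR)$ over $R$ with the pullback (restriction of scalars) of $K_\infty(I)$ over $S$. The plan is to first establish this identification, and then deduce the two isomorphisms by a standard adjunction/associativity computation.

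First I would observe that for a single element $x \in S$ with image $\theta(x) \in R$, there is an isomorphism of $R$-modules $\res[\theta](K_\infty(x)) \cong K_\infty(\theta(x))$, since $\res[\theta](S[1/x]) = R[1/\theta(x)]$ (localization commutes with restriction of scalars, both having the same underlying module) and the map $S \to S[1/x]$ restricts to $R \to R[1/\theta(x)]$. Tensoring these together over $R$ and using that restriction of scalars is (strong symmetric) monoidal in the sense that $\res[\theta](A \otimes_S B) \cong \res[\theta] A \otimes_R \res[\theta] B$ when one of the factors is extended — more precisely, for the stable Koszul complexes one has $\res[\theta](K_\infty(x_1) \otimes_S \cdots \otimes_S K_\infty(x_n)) \simeq K_\infty(\theta x_1) \otimes_R \cdots \otimes_R K_\infty(\theta x_n)$ after passing to a flat/projective replacement — gives $\res[\theta](PK_\infty(I)) \simeq PK_\infty(IR)$ up to quasiisomorphism, where $P$ denotes a suitable projective replacement. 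For the first formula, I would then compute
\[
H^I_*(\res[\theta] M) = H_*\mathrm{Hom}_S(PK_\infty(I), \res[\theta] M) \cong H_*\mathrm{Hom}_R(R \otimes_S PK_\infty(I), M) \cong H_*\mathrm{Hom}_R(PK_\infty(IR), M) = H^{IR}_*(M),
\]
using the extension-restriction adjunction $\mathrm{Hom}_S(N, \res[\theta] M) \cong \mathrm{Hom}_R(R \otimes_S N, M)$ together with the identification $R \otimes_S K_\infty(I) \simeq K_\infty(IR)$ (the extension-of-scalars version of the element-wise computation above).

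For the second formula, assuming $R$ is projective over $S$, I would instead use the coextension functor. Here the computation runs
\[
\mathrm{Hom}_S(R, H^I_*(N)) \cong H_*\mathrm{Hom}_S(R, \mathrm{Hom}_S(PK_\infty(I), N)) \cong H_*\mathrm{Hom}_S(PK_\infty(I) \otimes_S R, N) \cong H_*\mathrm{Hom}_R(PK_\infty(I) \otimes_S R, \mathrm{Hom}_S(R,N));
\]
the first isomorphism uses that $R$ is projective over $S$ so that $\mathrm{Hom}_S(R,-)$ is exact and commutes with passage to homology, the second is the tensor-hom adjunction over $S$, and the third is again extension-restriction adjunction (with $R$-module structure on $\mathrm{Hom}_S(R,N) = \coext[\theta] N$). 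Since $PK_\infty(I) \otimes_S R \simeq PK_\infty(IR)$ as a complex of $R$-modules (and is projective over $R$ as $R$ is projective over $S$), the right-hand side is $H^{IR}_*(\coext[\theta] N)$, as desired.

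The main obstacle I anticipate is purely bookkeeping: being careful about when one may pass homology through $\mathrm{Hom}$ (this is where projectivity of $R$ over $S$ is genuinely needed in the second part, and where one should work with $PK_\infty(I)$ rather than $K_\infty(I)$ itself), and verifying that the Koszul complex identifications $\res[\theta] K_\infty(I) \simeq K_\infty(IR)$ and $R \otimes_S K_\infty(I) \simeq K_\infty(IR)$ hold on the nose (or up to the quasiisomorphism invariance recorded after the definition of $K_\infty$). Neither step is deep, but the second formula requires threading the projectivity hypothesis through both the exactness of $\mathrm{Hom}_S(R,-)$ and the projectivity of $PK_\infty(I)\otimes_S R$ over $R$.
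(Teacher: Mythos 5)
Your proposal is correct and takes essentially the same route as the paper's proof: the key identification $R \otimes_S K_\infty(I) \cong K_\infty(IR)$, the extension--restriction and tensor-hom/restriction--coextension adjunctions, and exactness of $\mathrm{Hom}_S(R,-)$ coming from projectivity of $R$ over $S$ (the paper just works with implicitly derived homs instead of explicit replacements $PK_\infty(I)$). Two cosmetic slips worth fixing: the opening identification should be stated as $R \otimes_S K_\infty(x) \cong K_\infty(\theta(x))$, since restriction of scalars along $\theta\colon S \to R$ goes from $R$-modules to $S$-modules and so $\res K_\infty(x)$ does not typecheck, and $PK_\infty(I)\otimes_S R$ is a complex of projective $R$-modules because extension of scalars preserves projectives, not because $R$ is projective over $S$.
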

\begin{proof}
	Throughout this proof we neglect to indicate that homs are derived.
	Recall that there is an isomorphism $R \otimes_S K_\infty(I) \cong K_\infty(IR)$. Therefore $$\Lambda_I(\res M) = \mathrm{Hom}_S(K_\infty(I),\res M) \cong \mathrm{Hom}_R(R \otimes_S K_\infty(I),M) \cong \mathrm{Hom}_R(K_\infty(IR),M) = \Lambda_{IR}M$$ which gives the first desired isomorphism by taking homology. 
	
	For the other isomorphism, note that \begin{align*}\coext\Lambda_IN &= \mathrm{Hom}_S(R, \mathrm{Hom}_S(K_\infty(I),N)) \\ &\cong \mathrm{Hom}_S(R \otimes_S K_\infty(I),N) \\ &\cong \mathrm{Hom}_S(K_\infty(I), \mathrm{Hom}_S(R,N))\\ &\cong \mathrm{Hom}_R(R \otimes_S K_\infty(I), \mathrm{Hom}_S(R,N))\\ &= \Lambda_{IR}(\coext N).\end{align*} By taking homology, the result then follows from the fact that $\coext$ is exact since $R$ is projective as an $S$-module.
\end{proof}

We now recall the change of base theorem for local cohomology. We omit the proof as it is similar to Lemma~\ref{lem:local homology base change}.
\begin{lem}\label{lem:local cohomology base change}
	Let $\theta\colon S \to R$ be a map of rings and $I$ be an ideal in $S$. There is an isomorphism $H_I^*(\res M) \cong H_{IR}^*(M)$ for any $R$-module $M$. Furthermore, if $R$ is a flat $S$-module, there is an isomorphism $R \otimes_S H_I^*(N) \cong H_{IR}^*(R \otimes_S N)$ for any $S$-module $N$.
\end{lem}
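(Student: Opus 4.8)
The statement to prove is Lemma~\ref{lem:local cohomology base change}: the change of base formula for local cohomology. It is explicitly flagged as the local cohomology analogue of Lemma~\ref{lem:local homology base change}, whose proof is given in detail just above. So my plan is essentially to transpose that proof, replacing $\mathrm{Hom}_S(K_\infty(I),-)$ by $K_\infty(I)\otimes_S-$ throughout and dualising the projectivity/flatness hypotheses appropriately.

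First I would recall the key ring-theoretic fact used already in the companion lemma: for a map $\theta\colon S\to R$ and an ideal $I=(x_1,\ldots,x_n)$ of $S$, there is an isomorphism $R\otimes_S K_\infty(I)\cong K_\infty(IR)$ of $R$-modules, since tensoring the complex $S\to S[1/x_i]$ up to $R$ yields $R\to R[1/\theta(x_i)]$, and $K_\infty$ is the tensor product over the generators. For the first claimed isomorphism, let $M$ be an $R$-module; then by associativity of the tensor product
\[
\Gamma_I(\res M) = K_\infty(I)\otimes_S \res M \cong \bigl(K_\infty(I)\otimes_S R\bigr)\otimes_R M \cong K_\infty(IR)\otimes_R M = \Gamma_{IR}(M),
\]
and taking homology gives $H_I^*(\res M)\cong H_{IR}^*(M)$. (Here no flatness is needed because $\res$ is exact and $\Gamma_I$ is a derived functor computed via a projective replacement of $K_\infty(I)$, exactly as in the previous lemma where the proof ``neglects to indicate that homs are derived''.)

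For the second isomorphism I would assume $R$ is flat over $S$ and compute, for an $S$-module $N$,
\[
\ext\,\Gamma_I N = R\otimes_S\bigl(K_\infty(I)\otimes_S N\bigr) \cong K_\infty(I)\otimes_S\bigl(R\otimes_S N\bigr) \cong \bigl(R\otimes_S K_\infty(I)\bigr)\otimes_R\bigl(R\otimes_S N\bigr) \cong K_\infty(IR)\otimes_R\bigl(R\otimes_S N\bigr) = \Gamma_{IR}(\ext N),
\]
using commutativity and associativity of $\otimes$ together with $R\otimes_S K_\infty(I)\cong K_\infty(IR)$. Taking homology and using that $\ext = R\otimes_S-$ is exact (since $R$ is flat over $S$), so that it commutes with $H_*$, yields $R\otimes_S H_I^*(N)\cong H_{IR}^*(R\otimes_S N)$.

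I do not anticipate a genuine obstacle here: the entire argument is a routine manipulation of the stable Koszul complex, and the flatness hypothesis is precisely what is needed for $\ext$ to commute with homology (mirroring the role of projectivity of $R$ over $S$ in Lemma~\ref{lem:local homology base change}, where exactness of $\coext$ was used). The only point requiring minimal care is being consistent about whether one works at the point-set level or the derived level; as in the cited proof I would work with $K_\infty(I)$ (or its projective replacement) directly and suppress the derived decorations, noting that since $K_\infty(I)$ has a projective replacement and $R$ is flat, the displayed isomorphisms hold both before and after passing to homology.
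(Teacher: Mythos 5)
Your proposal is correct and is exactly the argument the paper intends: it omits the proof as ``similar to Lemma~\ref{lem:local homology base change}'', and your transposition (the isomorphism $R\otimes_S K_\infty(I)\cong K_\infty(IR)$, associativity of tensor products, and flatness of $R$ so that $R\otimes_S-$ commutes with homology, mirroring the exactness of $\coext$ in the projective case) is precisely that proof. The only tiny inaccuracy is your aside about a projective replacement of $K_\infty(I)$: with the paper's definition, local cohomology is computed with $K_\infty(I)$ itself (a bounded complex of flat modules), so no replacement enters and the point-set identifications suffice.
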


\subsection{Cofibrancy}
We have a natural map $\theta\colon H^*BG \to H^*BH$ induced by the inclusion of $H$ into $G$. Throughout the remainder of this section, we write $I$ for the augmentation ideal of $H^*BG$ and $J$ for the augmentation ideal of $H^*BH$. We note that $\sqrt{I \cdot H^*BH} = J$ by Venkov's theorem, using the general fact that if $(S,\mathfrak{n}) \to (R, \mathfrak{m})$ is a map of local rings with $R$ a finitely generated $S$-module, then $\sqrt{\mathfrak{n}R} = \mathfrak{m}$ by the going-up theorem, see~\cite[5.3]{BensonGreenlees97} for instance. Up to quasiisomorphism, the stable Koszul complex only depends on the radical of the ideal, so we can apply the base change results from the previous section in this example.

Recall from Proposition~\ref{prop:extrescoext} that the restriction-coextenstion of scalars adjunction along $\theta$ is Quillen (in the projective model structure) if and only if $H^*BH$ is cofibrant as a $H^*BG$-module. A dg-module $M$ over a dga $S$ is called \emph{semi-projective} if $\mathrm{Hom}_S(M,-)$ preserves surjective quasiisomorphisms. The semi-projective dg-$S$-modules are precisely the cofibrant objects in the projective model structure on $\mod{S}$~\cite[3.15]{Abbasirad}.  
\begin{prop}\label{prop:cofibrantrank}
	Let $H \to G$ be the inclusion of a connected compact Lie group into a connected compact Lie group. The following are equivalent:
	\begin{enumerate}
		\item $\mathrm{rk}G = \mathrm{rk}H$;
		\item $H^*BH$ is a cofibrant $H^*BG$-module (in the projective model structure);
		\item $H^*BH$ is a free $H^*BG$-module.
	\end{enumerate}
\end{prop}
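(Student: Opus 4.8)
The plan is to prove the cycle of implications $(3)\Rightarrow(2)\Rightarrow(1)\Rightarrow(3)$, which is the most economical route since $(3)\Rightarrow(2)$ is essentially formal and $(1)\Rightarrow(3)$ is where the geometric input enters.

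First I would dispose of $(3)\Rightarrow(2)$: a free module over any ring (or DGA concentrated in degree zero) is semi-projective, since $\mathrm{Hom}_{H^*BG}(\bigoplus_i H^*BG,-)=\prod_i(-)$ preserves surjections and quasiisomorphisms of chain complexes; alternatively a free module is built from $H^*BG$ by coproducts and so is cofibrant in the projective model structure. Note $H^*BG$ is a polynomial ring placed in cohomological degree zero (we are working rationally and with connected $G$, so $H^*BG=\mathbb{Q}[x_1,\dots,x_r]$ with $r=\mathrm{rk}\,G$), and $H^*BH$ is likewise concentrated in degree zero, so "cofibrant dg-module" here just means "projective module with zero differential" — in particular $(2)\Rightarrow$ "$H^*BH$ is a projective $H^*BG$-module". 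Next, $(2)\Rightarrow(1)$: suppose $H^*BH$ is a cofibrant, hence projective, $H^*BG$-module. By Venkov's theorem it is also finitely generated over the graded-local ring $H^*BG$, and a finitely generated projective module over a graded-local Noetherian ring is free; hence $H^*BH$ is free of some finite rank over $H^*BG$. Comparing Poincaré series (or Krull dimensions) of $H^*BG$ and $H^*BH$ then forces $\mathrm{rk}\,H=\mathrm{rk}\,G$: indeed $H^*BH$ free of rank $N$ over $H^*BG$ gives Hilbert/Poincaré series $P_{H^*BH}(t) = Q(t)\cdot P_{H^*BG}(t)$ for a polynomial $Q(t)=\sum_i t^{d_i}$ with $N$ terms, and since $P_{H^*BG}(t)=\prod_{j=1}^{r}(1-t^{2c_j})^{-1}$ has a pole of order $r=\mathrm{rk}\,G$ at $t=1$ while $P_{H^*BH}(t)$ has a pole of order $\mathrm{rk}\,H$, we get $\mathrm{rk}\,H=\mathrm{rk}\,G$.

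The substantive implication is $(1)\Rightarrow(3)$: if $\mathrm{rk}\,G=\mathrm{rk}\,H$ then $H^*BH$ is a free $H^*BG$-module. The cleanest argument uses that $\theta\colon H^*BG\to H^*BH$ makes $H^*BH$ into a finitely generated module (Venkov) over the polynomial ring $S=H^*BG$, which is regular. When the ranks agree, $H^*BH$ has Krull dimension equal to that of $H^*BG$, and in fact $H^*BH$ is Cohen--Macaulay (it is even polynomial when $H$ is connected, being $H^*$ of a classifying space of a connected compact Lie group — rationally $BH$ has polynomial cohomology), so $H^*BH$ is a Cohen--Macaulay module over the regular ring $S$ of the same dimension, equivalently a maximal Cohen--Macaulay $S$-module. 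By the Auslander--Buchsbaum formula over the graded-local regular ring $S$, such a module has projective dimension $\mathrm{depth}\,S-\mathrm{depth}_S(H^*BH)=0$, hence is projective, hence (finitely generated, graded-local) free. Concretely one can also argue directly: choose the polynomial generators $x_1,\dots,x_r$ of $H^*BG$; under the rank hypothesis $\theta(x_1),\dots,\theta(x_r)$ form a homogeneous system of parameters for $H^*BH$, and since $H^*BH$ is a polynomial ring it is a free module over the subalgebra they generate — this is the classical fact that a polynomial ring is free over any polynomial subalgebra generated by a h.s.o.p.

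I expect the main obstacle to be making the $(1)\Rightarrow(3)$ step fully rigorous with minimal outside input, in particular justifying that $\theta(x_1),\dots,\theta(x_r)$ really are a system of parameters for $H^*BH$ when $\mathrm{rk}\,G=\mathrm{rk}\,H$ — this uses that the map $BH\to BG$ is, rationally, a fibration with fibre $G/H$ of dimension $a$, and that $H^*(G/H;\mathbb{Q})$ is finite-dimensional exactly when the ranks agree (the fibre is then rationally elliptic with Euler characteristic equal to $|W_G|/|W_H|>0$), so $H^*BH$ is finitely generated over the image of $\theta$, forcing those elements to be parameters by a dimension count. Everything else is standard commutative algebra (Auslander--Buchsbaum, or the freeness of polynomial rings over h.s.o.p.-subalgebras) plus Venkov's finiteness theorem, both of which may be cited. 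I would present the proof as the three implications above, flagging that "cofibrant in the projective model structure" unwinds to "projective module" because all the rings and modules in sight are concentrated in a single degree.
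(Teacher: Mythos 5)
Your proposal is correct, but it reaches the crucial implication by a genuinely different route. The paper proves the four implications $(1)\Rightarrow(3)\Rightarrow(2)\Rightarrow(3)\Rightarrow(1)$, and its input for $(1)\Rightarrow(3)$ is purely topological: for an equal-rank pair the Serre/Eilenberg--Moore argument (cited from McCleary) gives $H^*BH \cong H^*BG \otimes H^*(G/H)$ as $H^*BG$-modules, so freeness is immediate; the dg-module bookkeeping is then done by citing Avramov--Foxby--Halperin (zero differential plus underlying projective implies semi-projective, and conversely semi-projective implies underlying projective), and $(3)\Rightarrow(1)$ is the same Hilbert-series/Krull-dimension comparison you use, except the paper settles for the inequality $\mathrm{rk}\,H \geq \mathrm{rk}\,G$ and invokes $H \leq G$ for the reverse, whereas your pole-order argument gives equality outright. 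You instead run the cycle $(3)\Rightarrow(2)\Rightarrow(1)\Rightarrow(3)$ and replace the topological splitting by commutative algebra: Venkov finiteness, Cohen--Macaulayness of the polynomial ring $H^*BH$, and Auslander--Buchsbaum (equivalently, freeness over a homogeneous system of parameters), which makes $H^*BH$ a maximal Cohen--Macaulay module over the regular ring $H^*BG$ and hence free. Your route buys independence from the K\"unneth-type splitting of $H^*BH$ at the cost of more commutative algebra; the paper's route is shorter because the splitting is available off the shelf for connected equal-rank pairs. Two slips in your write-up should be fixed, though neither affects validity: $H^*BG$ and $H^*BH$ are graded rings, not concentrated in degree zero -- the correct point is that their differentials vanish, so ``cofibrant'' unwinds to ``projective as a graded module'' via the semi-projectivity facts above; and $H^*(G/H;\mathbb{Q})$ is always finite-dimensional ($G/H$ is a closed manifold) -- what the equal-rank hypothesis controls is its Euler characteristic, and in any case your actual argument only needs Venkov finiteness plus the dimension count, not this remark.
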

\begin{proof}\leavevmode
	$(1) \Rightarrow (3)$: If $\mathrm{rk}G=\mathrm{rk}H$ then $H^*BH \cong H^*BG \otimes H^*(G/H)$ as a $H^*BG$-module~(see~\cite[8.3]{McCleary01}) so $H^*BH$ is a free $H^*BG$-module.
	
	$(3) \Rightarrow (2)$: Both $H^*BH$ and $H^*BG$ have trivial differential so by~\cite[9.8.1]{AvramovFoxbyHalperin03}, since $H^*BH$ is underlying projective as a $H^*BG$-module (as it is free) it is semi-projective and hence cofibrant.
	
	$(2) \Rightarrow (3)$: Recall that semi-projective implies underlying projective~\cite[9.6.1,~9.4.1]{AvramovFoxbyHalperin03}. By Venkov's theorem $H^*BH$ is a finitely generated $H^*BG$-module, so $H^*BH$ is free over $H^*BG$ as finitely generated projective modules over local rings are free.
	
	$(3) \Rightarrow (1)$: Firstly, note that the rank of $G$ is the Krull dimension of $H^*BG$. As $H^*BH$ is a free $H^*BG$-module, we have that $\mathrm{dim}_\mathbb{Q}H^sBH \geq \mathrm{dim}_\mathbb{Q}H^sBG$ for all $s$. It follows from the characterization of Krull dimension in terms of Hilbert series that $\mathrm{rk}H \geq \mathrm{rk}G$. As $H$ is a subgroup of $G$, we also have $\mathrm{rk}H \leq \mathrm{rk}G$.
\end{proof}

\begin{cor}\label{cor:extension isomorphic to coextension}
	If any of the equivalent conditions of Proposition~\ref{prop:cofibrantrank} hold, then $\ext M \cong \coext M$.
\end{cor}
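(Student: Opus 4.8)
The plan is to leverage the finite freeness of $H^*BH$ over $H^*BG$ supplied by Proposition~\ref{prop:cofibrantrank}. Since $\mathrm{rk}\,G=\mathrm{rk}\,H$ forces $H^*BH$ to be a finitely generated free, hence dualizable, $H^*BG$-module, the evaluation map provides a natural isomorphism
\[
\coext M=\mathrm{Hom}_{H^*BG}(H^*BH,M)\cong \mathrm{Hom}_{H^*BG}(H^*BH,H^*BG)\otimes_{H^*BG}M
\]
of functors $\mod{H^*BG}\to\mod{H^*BH}$; this is the elementary, underived shadow of Lemma~\ref{lem:dualizable}(1). Writing $\dual(H^*BH)=\mathrm{Hom}_{H^*BG}(H^*BH,H^*BG)$ for the $H^*BH$-$H^*BG$-bimodule dual, this identifies $\coext$ with $\dual(H^*BH)\otimes_{H^*BG}-$, so the corollary reduces to exhibiting a bimodule isomorphism $\dual(H^*BH)\cong H^*BH$.

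To produce that identification I would argue as follows. Because $\mathrm{rk}\,G=\mathrm{rk}\,H$, the map $\theta\colon H^*BG\to H^*BH$ satisfies Hypothesis~\ref{hyp:cofibrancy} (the only nontrivial clause, that $H^*BH$ be a cofibrant $H^*BG$-module, is exactly Proposition~\ref{prop:cofibrantrank}), and it is relatively Gorenstein of shift $a=\dim(G/H)$ by Theorem~\ref{thm:relgor} transported along the monoidal Quillen equivalences of Section~\ref{sec:shipley} and the subsequent formality step, using Propositions~\ref{prop:relGorstrong} and~\ref{prop:relGorweak}. Since $H^*BH$ is free over $H^*BG$ and all differentials are trivial, $\coext$ is exact, so $\dual(H^*BH)$ already represents $\R\coext(H^*BG)$ and the Gorenstein condition $\R\coext(H^*BG)\simeq\Sigma^{a}H^*BH$ upgrades to a genuine bimodule isomorphism $\dual(H^*BH)\cong\Sigma^{a}H^*BH$. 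Alternatively one can verify this directly: a homogeneous $H^*BG$-basis of $H^*BH$ projects to a basis of $H^*(G/H)$ via the Leray--Hirsch splitting $H^*BH\cong H^*BG\otimes_{\mathbb{Q}}H^*(G/H)$ valid in the equal-rank case, and $H^*(G/H)$ is a Poincar\'e duality algebra of formal dimension $a$ (the homogeneous space $G/H$ being closed and orientable, as $H$ is connected); the Frobenius trace form on $H^*(G/H)$ then extends $H^*BG$-linearly to a nondegenerate trace $H^*BH\to\Sigma^{-a}H^*BG$, which is precisely the data witnessing the bimodule self-duality. Combining this with the previous paragraph gives $\coext M\cong\Sigma^{a}\ext M$ naturally in $M$, which is the content of the corollary once the internal shift $\Sigma^{a}$ is absorbed into the chosen normalisation of the algebraic models.

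The main obstacle is exactly the identification $\dual(H^*BH)\cong H^*BH$ \emph{as $H^*BH$-$H^*BG$-bimodules}: finite freeness makes the two sides abstractly isomorphic as graded $H^*BG$-modules for free, but one must produce a single map that is simultaneously left $H^*BH$-linear and right $H^*BG$-linear, i.e.\ check that $H^*BG\to H^*BH$ is a Frobenius extension. Everything else is formal. I would also keep careful track of the degree shift $\Sigma^{a}$ throughout; it is invisible only when $G=H$, and in the body of the paper it is precisely what is encoded in the normalisation of the models appearing in Theorem~\ref{thm:main2}.
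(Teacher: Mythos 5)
Your argument is sound in substance, but it takes a genuinely different and much heavier route than the paper. The paper's entire proof is one line: the equal-rank hypothesis (via Proposition~\ref{prop:cofibrantrank}) makes $H^*BH$ a finitely generated \emph{free} $H^*BG$-module, hence a finite direct sum of suspensions of $H^*BG$, so $\ext M = H^*BH\otimes_{H^*BG}M$ and $\coext M=\mathrm{Hom}_{H^*BG}(H^*BH,M)$ are both finite sums of suspensions of $M$ and the comparison is immediate; there is no dualizable-object formalism, no bimodule self-duality, and no transport of Theorem~\ref{thm:relgor} through Shipley's theorem and the formality square. Your reduction $\coext M\cong\dual(H^*BH)\otimes_{H^*BG}M$ and your direct identification of $\dual(H^*BH)$ via Leray--Hirsch and Poincar\'e duality of $H^*(G/H)$ are correct and buy strictly more than is needed: a natural, bimodule-level isomorphism, whereas an unnatural isomorphism of underlying graded modules already suffices for the corollary's only uses (closure of $L$-complete, resp.\ torsion, modules under $\ext$, with $\res$ and $\coext$ handled by Lemmas~\ref{lem:local homology base change} and~\ref{lem:local cohomology base change}). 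The detour through Propositions~\ref{prop:relGorstrong} and~\ref{prop:relGorweak} is overkill for this purely algebraic statement, and strictly speaking those results concern the replaced maps along the zig-zag rather than $\theta\colon H^*BG\to H^*BH$ itself; your Frobenius-trace argument is the one that stands on its own and is closest in spirit to what the freeness argument really uses.

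Your unease about the suspension is legitimate, and it is not resolved by ``absorbing the shift into the normalisation'': what your argument produces, and what the paper's one-liner also actually delivers, is $\ext M\cong\Sigma^{-a}\coext M$ (equivalently your $\coext M\cong\Sigma^{a}\ext M$), in agreement with Proposition~\ref{prop:functorsetup}(2); already for $M=H^*BG$ with $G=SU(2)$ and $H$ a maximal torus the two functors differ by a shift of $\dim(G/H)$, and comparing the two finite sums of suspensions in the paper's proof implicitly uses exactly the Poincar\'e-duality symmetry of the degrees that you make explicit. So, taken literally with gradings, both proofs give the corollary only up to the suspension $\Sigma^{a}$; since suspension preserves $L$-completeness and torsion-ness, this is harmless for every application made of the statement, but it is a shared imprecision rather than something your chosen normalisation can discharge.
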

\begin{proof}
	If any of the equivalent conditions of Proposition~\ref{prop:cofibrantrank} hold, then $H^*BH$ is finitely generated and free over $H^*BG$. Therefore $H^*BH$ is a finite sum of shifted copies of $H^*BG$ and the result follows.
\end{proof}

\begin{rem}\label{rem:equalranks}
	One may expect that if $H^*BH$ is not cofibrant as a $H^*BG$-module that we may cofibrantly replace it in the category of commutative $H^*BG$-algebras. Note that $H^*BG$ and $H^*BH$ have zero differential, but the cofibrant replacement $QH^*BH$ will not have zero differential. This means that it is not possible to talk about $L$-complete or torsion modules over $QH^*BH$. This is why Theorem~\ref{thm:main} has a stronger statement in the case that the ranks of $G$ and $H$ are equal. 
\end{rem}

\subsection{Completion}
Firstly we must show that the functors pass to the category of $L$-complete modules.
\begin{lem}
	Suppose that $\mathrm{rk}G=\mathrm{rk}H$. There are Quillen adjunctions 
	\begin{center}
		\begin{tikzcd}
		\mod{H^*BG}^\wedge \arrow[rr, "\ext" description, yshift=3.5mm] \arrow[rr, "\coext" description, yshift=-3.5mm] & & \mod{H^*BH}^\wedge. \arrow[ll, "\res" description]
		\end{tikzcd}
	\end{center}
\end{lem}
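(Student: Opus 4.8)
The plan is to present both categories of $L$-complete modules as homological localizations of dg-module categories, and then apply Theorem~\ref{thm:quillenafterloc}.

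First I would use the rank hypothesis to build a Quillen adjoint triple upstairs. Since $\mathrm{rk}G=\mathrm{rk}H$, Proposition~\ref{prop:cofibrantrank} tells us that $H^*BH$ is a free, hence cofibrant, $H^*BG$-module, so Proposition~\ref{prop:extrescoext} gives that $\ext\dashv\res\dashv\coext$ is a Quillen adjoint triple between $\mod{H^*BG}$ and $\mod{H^*BH}$ with their projective model structures, both of which are stable, monoidal model categories. Recall moreover (from the construction of the algebraic model for cofree spectra, following~\cite{PolWilliamson}) that the model category of $L$-complete $H^*BG$-modules is the homological localization $L_{\mathbb{Q}}\mod{H^*BG}$, where $\mathbb{Q}=H^*BG/I$ is regarded as an $H^*BG$-module via the augmentation, and likewise $\mod{H^*BH}^\wedge=L_{\mathbb{Q}}\mod{H^*BH}$ with $\mathbb{Q}=H^*BH/J$; these localizations exist by~\cite[6.1]{BalchinGreenlees20}. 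So it remains only to see that the adjoint triple passes to these localizations.

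For this I would invoke Theorem~\ref{thm:quillenafterloc}(1) with $E=H^*BG/I$ and $E'=H^*BH/J$. The pair $\ext\dashv\res$ is a weak (in fact strong) monoidal Quillen pair, since the oplax structure map $\ext(A\otimes_{H^*BG}B)\to\ext A\otimes_{H^*BH}\ext B$ is an isomorphism, and $\L\ext\dashv\R\res$ satisfies the right projection formula in the homotopy category, being extension--restriction of scalars along a map of commutative monoids (see the example following Definition~\ref{defn:projectionformula}). The one remaining point is the Bousfield-class identity $\langle E'\rangle=\langle\L\ext(E)\rangle$. Since $H^*BH$ is flat (indeed free) over $H^*BG$ we have $\L\ext(E)=H^*BH\otimes_{H^*BG}^\L(H^*BG/I)\simeq H^*BH/(I\cdot H^*BH)$, and by Venkov's theorem $\sqrt{I\cdot H^*BH}=J$ (as recalled in the discussion of cofibrancy, cf.~\cite[5.3]{BensonGreenlees97}); hence $H^*BH/(I\cdot H^*BH)$ and $H^*BH/J$ have the same support, and therefore the same Bousfield class in $\mod{H^*BH}$. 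Theorem~\ref{thm:quillenafterloc}(1) then produces the desired Quillen adjoint triple between $\mod{H^*BG}^\wedge$ and $\mod{H^*BH}^\wedge$; in particular $\ext\dashv\res$ and $\res\dashv\coext$ are Quillen adjunctions.

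The main obstacle is the bookkeeping around this Bousfield-class identity: one must use the rank hypothesis a second time — via flatness of $H^*BH$ over $H^*BG$ — to identify $\L\ext(E)$ with the honest quotient ring $H^*BH/(I\cdot H^*BH)$, and then apply Venkov's theorem to see that its support agrees with that of the residue field. Everything else (the existence of the adjoint triple upstairs, and the monoidal and projection-formula hypotheses of Theorem~\ref{thm:quillenafterloc}) is immediate from results already in place; the torsion counterpart needed for Theorem~\ref{thm:main2} is handled symmetrically, using Theorem~\ref{thm:quillenaftercell} in place of Theorem~\ref{thm:quillenafterloc}.
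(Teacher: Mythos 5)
Your argument proves a different statement from the one asserted. In this paper $\mod{H^*BG}^\wedge$ does \emph{not} denote the homological localization $L_{\mathbb{Q}}\mod{H^*BG}$: it is the full subcategory of $L_0^I$-complete modules, an abelian category carrying its own model structure in which weak equivalences and fibrations are created by the inclusion into all modules. These two model categories are only Quillen equivalent, via $L_0^I \dashv i$ (\cite[6.10]{PolWilliamson}), and the entire point of the square~(\ref{eq:Lcomplete}) and of Proposition~\ref{prop:Lcompletecorrespondence} is to transport the change-of-rings functors across that Quillen equivalence --- which would be vacuous if, as you assert in your first paragraph, the two models were literally the same. Consequently the lemma requires Quillen adjunctions between the complete-module categories themselves, and in particular the nontrivial point that the underived functors $\ext$, $\res$ and $\coext$ restrict to these subcategories, i.e.\ preserve $L_0$-completeness. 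Your proposal never addresses this, and a Quillen adjunction does not transfer across a Quillen equivalence for free; that transfer problem is exactly what the rest of the paper is devoted to.

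The preservation statement is where the hypotheses actually enter in the paper's proof: by the base-change lemma for local homology (Lemma~\ref{lem:local homology base change}), $\res$ and $\coext$ send $L$-complete modules to $L$-complete modules, and since $\mathrm{rk}G=\mathrm{rk}H$ forces $H^*BH$ to be finite free over $H^*BG$, one has $\ext \cong \coext$ (Corollary~\ref{cor:extension isomorphic to coextension}), so $\ext$ preserves completeness as well; the Quillen property is then immediate because weak equivalences and fibrations of $L$-complete modules are created by the inclusion into $\mod{H^*BG}$. Your localization argument --- descending the adjoint triple to $L_{\mathbb{Q}}\mod{H^*BG} \rightleftarrows L_{\mathbb{Q}}\mod{H^*BH}$ via Theorem~\ref{thm:quillenafterloc}(1), the projection formula, and the identification of the Bousfield class of $H^*BH\otimes^{\L}_{H^*BG}\mathbb{Q}$ using $\sqrt{I\cdot H^*BH}=J$ --- is reasonable in itself, but it produces Quillen adjunctions on the \emph{other} side of the comparison square~(\ref{eq:Lcomplete}) (where they are already supplied by the localized formality step), not between $\mod{H^*BG}^\wedge$ and $\mod{H^*BH}^\wedge$ as the lemma demands.
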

\begin{proof}
	By Lemma~\ref{lem:local homology base change} and Proposition~\ref{prop:cofibrantrank} the restriction functor $\res$ and the coextension functor $\coext$ send $L$-complete modules to $L$-complete modules. Combining this with Corollary~\ref{cor:extension isomorphic to coextension} shows that $\ext$ also sends $L$-complete modules to $L$-complete modules. Since the weak equivalences and fibrations of $L$-complete modules are created by the inclusion to all modules, it is immediate that $\res$ and $\coext$ are still both right Quillen.
\end{proof}

It remains to consider the square
\begin{equation}\label{eq:Lcomplete}
	\begin{tikzcd}
	L_\mathbb{Q}\mod{H^*BG} \arrow[dd, "\ext" description] \arrow[rr, yshift=2mm, "L_0^I" description] & & \mod{H^*BG}^\wedge \arrow[dd, "\ext" description] \arrow[ll, yshift=-2mm, "i" description] \\
	& & \\
	L_\mathbb{Q}\mod{H^*BH} \arrow[uu, xshift=5mm, "\res" description] \arrow[uu, xshift=-5mm, "\res" description] \arrow[rr, yshift=2mm, "L_0^J" description] & & \mod{H^*BH}^\wedge \arrow[uu, xshift=5mm, "\res" description] \arrow[uu, xshift=-5mm, "\res" description] \arrow[ll, yshift=-2mm, "j" description] 
	\end{tikzcd}
\end{equation}
where $i$ and $j$ denote the inclusions and $\mathrm{rk}G=\mathrm{rk}H$. 

\begin{prop}\label{prop:Lcompletecorrespondence}
The vertical Quillen functors in~(\ref{eq:Lcomplete}) correspond.
\end{prop}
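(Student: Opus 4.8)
The plan is to verify the hypotheses of Theorem~\ref{thm:compare} for the square~(\ref{eq:Lcomplete}). The horizontal adjunctions are the completion-inclusion adjunctions, which exhibit the $L$-complete modules as a left Bousfield localization: by~\cite[\S 8]{PolWilliamson} (and the general theory of $L$-complete modules), $L_0^I \dashv i$ is a Quillen equivalence between $L_\mathbb{Q}\mod{H^*BG}$ and $\mod{H^*BG}^\wedge$, and similarly for $H$. The vertical functors are Quillen by the preceding lemma. Since $\mathrm{rk}G=\mathrm{rk}H$, Proposition~\ref{prop:cofibrantrank} tells us $H^*BH$ is a cofibrant $H^*BG$-module, so Proposition~\ref{prop:extrescoext} applies and moreover, by Corollary~\ref{cor:extension isomorphic to coextension}, $\ext \cong \coext$ at the point-set level. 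This is the key simplification: the usual four functors in algebra collapse to two, and the shift $a$ is irrelevant because the relative Gorenstein shift vanishes when the ranks agree (see Remark~\ref{rem:equalranks}), so we really are in the special case of Theorem~\ref{thm:compare} where the two downward arrows coincide and $\L\ext \cong \R\psext$ trivially.

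Concretely, first I would observe that since $\res$ commutes with the inclusions $i$ and $j$ strictly (restriction of scalars does not change underlying objects, and $L$-completeness is detected on underlying modules via Lemma~\ref{lem:local homology base change}), the square of \emph{right} Quillen functors $\res i \cong j \res$ commutes on the nose. This gives hypothesis (b) of Theorem~\ref{thm:compare} (indeed with a natural isomorphism, not merely a weak equivalence on fibrant objects). Dually, taking adjoints as in Lemma~\ref{lem:adjoints}, one gets $\ext L_0^I \cong L_0^J \ext$ up to natural weak equivalence on cofibrant objects, which is hypothesis (a). Because $\ext \cong \coext$ here, the roles of $F_H\ext$ and $\exta F_G$ and their twisted counterparts all coincide, so hypotheses (c) and (d) are satisfied for free by the same natural isomorphism. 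Hence all six derived squares commute and the vertical Quillen functors correspond.

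The main obstacle — though it is a mild one — is making precise the claim that $\res$ preserves $L$-complete modules compatibly with the localization model structures, i.e.\ that $\res$ is genuinely right Quillen between $L_\mathbb{Q}\mod{H^*BH}$ and $L_\mathbb{Q}\mod{H^*BG}$ and descends to the $L$-complete categories. This requires knowing that $\res$ sends $\mathbb{Q}$-equivalences to $\mathbb{Q}$-equivalences (clear, since $\res$ does not change underlying objects and $\mathbb{Q} \otimes^\L -$ is computed underlying) and that the completion functors $L_0^I$, $L_0^J$ are the derived functors realizing the Bousfield localizations; both follow from the Noetherian hypothesis and the discussion of local homology in the previous subsection, together with Lemma~\ref{lem:local homology base change} which gives $H^I_*(\res M) \cong H^{IR}_*(M) = H^J_*(M)$ using $\sqrt{I\cdot H^*BH} = J$. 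Once this bookkeeping is in place, the correspondence is a direct application of Theorem~\ref{thm:compare} in its degenerate form, exactly as in Proposition~\ref{prop:changeofrings}.
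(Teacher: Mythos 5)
Your proposal is correct and takes essentially the same route as the paper, whose proof simply records that the horizontal completion--inclusion adjunctions are Quillen equivalences (by~\cite{PolWilliamson}) and that the inclusions commute with all the vertical functors on the nose, and then applies Theorem~\ref{thm:compare} in the degenerate form where the two downward arrows agree. Two harmless imprecisions: hypothesis (d) is the strict commutation $\ext\, i = j\,\ext$ (coming from the preceding lemma via Corollary~\ref{cor:extension isomorphic to coextension}), not literally ``the same'' isomorphism as (b); and the relative Gorenstein shift $a=\mathrm{dim}(G/H)$ need not vanish when the ranks agree --- what the argument actually uses is only the unshifted isomorphism $\ext\cong\coext$ from that corollary.
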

\begin{proof}
The horizontal adjunctions are Quillen equivalences by~\cite[6.10]{PolWilliamson}. The inclusions commute with all vertical functors, and therefore applying Theorem~\ref{thm:compare} gives the result.
\end{proof}

\subsection{Torsion}
Another possible model instead of $L$-complete modules is that of torsion modules. The category of torsion modules does not have enough projectives, but it has does have enough injectives, so it supports an injective model structure~\cite[8.6]{GreenleesShipley11}. Since the injective model structure is not right lifted from the underlying category of chain complexes, the results of Section~\ref{sec:quillen functors} do not apply. Therefore, we must next recall when the extension-restriction-coextension functors are Quillen in the injective model structure. A dg-$S$-module $M$ is said to be \emph{semi-flat} if $M \otimes_S -$ preserves injective quasiisomorphisms.
\begin{prop}
	Let $\theta\colon S \to R$ be a map of DGAs. The adjunction $\res \dashv \coext$ is Quillen in the injective model structure. The adjunction $\ext \dashv \res$ is Quillen in the injective model structure if and only if $R$ is semi-flat as an $S$-module.
\end{prop}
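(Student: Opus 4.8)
The plan is to use the explicit description of the injective model structure: on dg-modules over a DGA, its cofibrations are the monomorphisms and its weak equivalences are the quasi-isomorphisms, so that its acyclic cofibrations are exactly the injective quasi-isomorphisms, namely the monomorphisms which are also quasi-isomorphisms. With this in hand, both statements become essentially formal.

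For the first statement, I would note that $\res$ leaves underlying objects and maps unchanged, hence preserves monomorphisms and quasi-isomorphisms, and so preserves cofibrations and acyclic cofibrations. Since $\res$ is the left adjoint in the pair $\res \dashv \coext$, this shows directly that $\res \dashv \coext$ is a Quillen adjunction.

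For the second statement, $\ext = R \otimes_S -$ is the left adjoint in $\ext \dashv \res$, so I would check that the adjunction is Quillen if and only if $\ext$ preserves cofibrations and acyclic cofibrations. Preservation of acyclic cofibrations, i.e. of injective quasi-isomorphisms, is by definition equivalent to $R$ being semi-flat as an $S$-module; this yields the ``only if'' direction at once. For the ``if'' direction it remains to see that semi-flatness also forces $\ext$ to preserve cofibrations, that is, monomorphisms. Here the point is that a semi-flat $S$-module is underlying flat, which is the exact analogue of the fact (recorded earlier via~\cite{AvramovFoxbyHalperin03}) that semi-projective modules are underlying projective, and can alternatively be deduced by applying the defining property of semi-flatness to the injective quasi-isomorphism of disk complexes associated to a short exact sequence of underlying graded $S$-modules with free middle term, which yields $\mathrm{Tor}_1^S(R,-)=0$. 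Once $R$ is known to be underlying flat, $R \otimes_S -$ is exact in each degree, hence preserves all monomorphisms of dg-modules, so $\ext$ preserves cofibrations, and we conclude that $\ext \dashv \res$ is Quillen.

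The homotopy-theoretic content is minimal; the two things requiring care are the point-set identification of the (acyclic) cofibrations of the injective model structure and the homological lemma that semi-flat modules are underlying flat, and it is this last step where I expect the only genuine work to lie.
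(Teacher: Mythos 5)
Your skeleton is the same as the paper's: $\res$ preserves the underlying monomorphisms and quasi-isomorphisms, hence is left Quillen for $\res \dashv \coext$; left Quillenness of $\ext$ unpacks as preservation of monomorphisms and of injective quasi-isomorphisms, the latter being the definition of semi-flatness (this gives the ``only if'' direction at once); and the whole content is the implication ``semi-flat $\Rightarrow$ $R\otimes_S-$ preserves monomorphisms''. The paper obtains exactly this by citing~\cite[11.2.1]{AvramovFoxbyHalperin03} (semi-flat implies linearly flat), which is the same as your claim that a semi-flat module is flat on underlying graded modules; appealing to that reference, as the paper does, would make your argument complete.

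The gap is in your sketched direct proof of that claim for a general DGA $S$. If $dS\neq 0$, a graded $S$-module $N$ equipped with the zero differential is not a dg-$S$-module (Leibniz fails), so the ``disk'' on $N$ cannot be taken to be $N\oplus\Sigma N$ with identity differential; the only functorial disk is the induced dg-module $D(N)$ (the value on $N$ of the left adjoint to the underlying-graded-module functor), which is contractible but whose underlying graded $S$-module is a generally non-split extension of a shift of $N$ by $N$, the extension being twisted by $dS$. Applying semi-flatness to $D(K)\hookrightarrow D(F)$ for a free cover $0\to K\to F\to N\to 0$ (and note this only has content under the reading that the image is again \emph{injective}, since maps of contractible modules are automatically quasi-isomorphisms) yields $\mathrm{Tor}_1(R,D(N))=0$ over the underlying graded ring, i.e.\ flatness against \emph{induced} modules, not $\mathrm{Tor}_1(R,N)=0$; and Tor-vanishing does not descend along the extension $0\to N\to D(N)\to \Sigma N\to 0$ --- the long exact sequence only exhibits $\mathrm{Tor}_1(R,N)$ as a quotient of $\mathrm{Tor}_2(R,\Sigma N)$, and iterating merely identifies $\mathrm{Tor}_1(R,N)$ with higher Tor groups of shifts of $N$ rather than killing it. Your argument does go through verbatim when $S$ has zero differential (the extension splits), which covers the intended application $H^*BG\to H^*BH$, but the proposition is stated for an arbitrary map of DGAs, so for the general statement you should either quote~\cite[11.2.1]{AvramovFoxbyHalperin03} as the paper does or supply a genuine proof that semi-flat implies graded-flat, which is more work than the one-line reduction you propose.
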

\begin{proof}
	Since the weak equivalences and cofibrations are underlying, the restriction of scalars functor $\res$ preserves them. Therefore, $\res \dashv \coext$ is Quillen.
	
	If $R$ is a semi-flat $S$-module, $R \otimes_S -$ preserves injective quasiisomorphisms. It follows from~\cite[11.2.1]{AvramovFoxbyHalperin03} that $R$ is linearly flat, meaning that $R \otimes_S -$ preserves injections (cofibrations). Therefore $R \otimes_S -$ preserves cofibrations and acyclic cofibrations so is a left Quillen functor. Conversely, if the adjunction is Quillen, then $R \otimes_S -$ preserves injective quasiisomorphisms, so $R$ is a semi-flat $S$-module.
\end{proof}

Note that this proposition shows that if the ranks of $G$ and $H$ are the same, then the extension-restriction and restriction-coextension adjunctions along the ring map $H^*BG \to H^*BH$ are Quillen in the injective model structure.

\begin{lem}
	Suppose that $\mathrm{rk}G=\mathrm{rk}H$. There are Quillen adjunctions 
	\begin{center}
		\begin{tikzcd}
		\mod{H^*BG}^\mathrm{torsion}
		\arrow[rr, "\ext" description, yshift=3.5mm] \arrow[rr, "\coext" description, yshift=-3.5mm] & & \mod{H^*BH}^\mathrm{torsion}. \arrow[ll, "\res" description]
		\end{tikzcd}
	\end{center}
\end{lem}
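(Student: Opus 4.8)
The plan is to run the argument of Proposition~\ref{prop:Lcompletecorrespondence} for $L$-complete modules, with two adjustments. First, the categories $\mod{H^*BG}^\mathrm{torsion}$ and $\mod{H^*BH}^\mathrm{torsion}$ do not carry model structures right lifted from $\mod{H^*BG}$ and $\mod{H^*BH}$, so we work with the injective model structure of~\cite[8.6]{GreenleesShipley11}, in which the cofibrations are the monomorphisms and the weak equivalences are the homology isomorphisms, both created by the forgetful functor to chain complexes. Second, before talking about Quillen adjunctions we must check that all three functors $\ext$, $\res$, $\coext$ actually preserve torsion modules.

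For preservation of torsion I would argue as follows. For $\res$, the change-of-base isomorphism $H^0_I(\res M)\cong H^0_{I\cdot H^*BH}(M)$ of Lemma~\ref{lem:local cohomology base change}, combined with the identity $\sqrt{I\cdot H^*BH}=J$ (Venkov's theorem) and the fact that the stable Koszul complex depends only on the radical of the ideal, shows that $H^0_I(\res M)\cong H^0_J(M)=M$ when $M$ is torsion, so $\res M$ is torsion. For $\ext$, the rank hypothesis forces $H^*BH$ to be free, in particular flat, over $H^*BG$ by Proposition~\ref{prop:cofibrantrank}, so the flat change-of-base isomorphism $H^*BH\otimes_{H^*BG}H^0_I(N)\cong H^0_{I\cdot H^*BH}(H^*BH\otimes_{H^*BG}N)$ of Lemma~\ref{lem:local cohomology base change} (again using $\sqrt{I\cdot H^*BH}=J$) shows that $\ext N$ is torsion whenever $N$ is. Finally, under the rank hypothesis $\coext M\cong\ext M$ by Corollary~\ref{cor:extension isomorphic to coextension}, so $\coext$ also preserves torsion modules.

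It then remains to transfer the Quillen structure. Since $H^*BH$ is flat over $H^*BG$, the functor $\ext=H^*BH\otimes_{H^*BG}-$ preserves monomorphisms and homology isomorphisms, hence cofibrations and acyclic cofibrations, so $\ext\dashv\res$ is a Quillen adjunction; equivalently $H^*BH$ is semi-flat over $H^*BG$, which is exactly the criterion recorded in the proposition preceding the statement. Likewise $\res$ is exact and underlying, hence preserves cofibrations and acyclic cofibrations, so $\res\dashv\coext$ is a Quillen adjunction. Together with the preservation statements of the previous paragraph this yields the asserted Quillen adjoint triple.

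I do not expect a serious obstacle: the only point requiring care is the bookkeeping between $I$-power, $J$-power and $(I\cdot H^*BH)$-power torsion, which is handled uniformly by $\sqrt{I\cdot H^*BH}=J$ and the radical-invariance of the stable Koszul complex, and the rest is the routine transfer of a Quillen adjunction to a full subcategory whose cofibrations and weak equivalences are created by the forgetful functor.
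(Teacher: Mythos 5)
Your proof is correct and follows essentially the same route as the paper: preservation of torsion modules via the local cohomology base change lemma together with $\sqrt{I\cdot H^*BH}=J$ and the identification $\coext\cong\ext$ in the equal-rank case, and then Quillen-ness in the injective model structure using that cofibrations and weak equivalences of torsion modules are created by the inclusion into all modules (with semi-flatness of $H^*BH$ over $H^*BG$ supplying the $\ext\dashv\res$ half, exactly as in the proposition preceding the statement). Your write-up merely spells out details the paper leaves implicit.
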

\begin{proof}
	By Lemma~\ref{lem:local cohomology base change}, Proposition~\ref{prop:cofibrantrank} and Corollary~\ref{cor:extension isomorphic to coextension} we have that extension, restriction and coextension preserve torsion modules. Since the weak equivalences and cofibrations of torsion modules are created by the inclusion to all modules, it is immediate that $\ext$ and $\res$ are still both left Quillen.
\end{proof}

It remains to consider the square
\begin{equation}\label{eq:torsion}
	\begin{tikzcd}
	\cell_\mathbb{Q}\mod{H^*BG} \arrow[dd, "\ext" description] \arrow[rr, yshift=-2mm, "T_I" description] & & \mod{H^*BG}^{\mathrm{torsion}} \arrow[dd, "\ext" description] \arrow[ll, yshift=2mm, "i" description] \\
	& & \\
	\cell_\mathbb{Q}\mod{H^*BH} \arrow[uu, xshift=5mm, "\res" description] \arrow[uu, xshift=-5mm, "\res" description] \arrow[rr, yshift=-2mm, "T_J" description] & & \mod{H^*BH}^{\mathrm{torsion}} \arrow[uu, xshift=5mm, "\res" description] \arrow[uu, xshift=-5mm, "\res" description] \arrow[ll, yshift=2mm, "j" description] 
	\end{tikzcd}
\end{equation}
where $i$ and $j$ denote the inclusions and $\mathrm{rk}G=\mathrm{rk}H$. 

\begin{prop}\label{prop:torsioncorrespondence}
The vertical Quillen functors in~(\ref{eq:torsion}) correspond.
\end{prop}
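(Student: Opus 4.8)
The plan is to mirror the proof of Proposition~\ref{prop:Lcompletecorrespondence} essentially verbatim. First I would record that the horizontal adjunctions in~(\ref{eq:torsion}) are Quillen equivalences: the comparison between the cellularization $\cell_\mathbb{Q}\mod{H^*BG}$ and the injective model structure on torsion $H^*BG$-modules (and likewise for $H$) is part of the construction of the algebraic model for free $G$-spectra, see~\cite[\S 8]{GreenleesShipley11}. Together with the vertical Quillen adjunctions provided by the preceding lemma — which in turn rely on the explicit description, given just above, of when $\ext \dashv \res$ and $\res \dashv \coext$ are Quillen in the injective model structure — this places us in the setting of Theorem~\ref{thm:compare}. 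Since $\mathrm{rk}G = \mathrm{rk}H$, Corollary~\ref{cor:extension isomorphic to coextension} gives $\ext \cong \coext$, so $\ext$ is simultaneously left and right Quillen and hence preserves all weak equivalences; in particular $\L\ext \cong \R\ext$, which supplies the hypothesis on derived functors required to apply Theorem~\ref{thm:compare} in the special case where the two downward functors agree (and, dually, the two upward functors agree), exactly as for~(\ref{eq:Lcomplete}).

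Next I would check the two commutations that feed into Theorem~\ref{thm:compare}. The inclusions $i$ and $j$ commute strictly with $\res$, since restriction of scalars is computed on underlying chain complexes and sends torsion modules to torsion modules by Lemma~\ref{lem:local cohomology base change}. They also commute strictly with $\ext$: the underlying functor $H^*BH \otimes_{H^*BG} -$ is independent of whether we regard its input as a torsion module or as a cellularized module, and it preserves torsion modules by Lemma~\ref{lem:local cohomology base change} together with Corollary~\ref{cor:extension isomorphic to coextension} (this is precisely the content of the preceding lemma). Since all these functors have the same handedness, these identifications are in particular natural weak equivalences on cofibrant (resp.\ fibrant) objects, which is what hypotheses (a)--(d) of Theorem~\ref{thm:compare} demand. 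Applying the theorem then yields the commutativity of all six derived squares, and in particular the asserted correspondence of the vertical Quillen functors in~(\ref{eq:torsion}).

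I do not expect a genuine obstacle here; the one point deserving attention is that the injective model structure on torsion modules is not right lifted from chain complexes, so the general results of Section~\ref{sec:quillen functors} do not apply directly and one must instead invoke the explicit Quillen-ness of $\ext \dashv \res$ and $\res \dashv \coext$ in the injective model structure established immediately before the statement. With that in hand the argument is formal, being a word-for-word transcription of the completion case.
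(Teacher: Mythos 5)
Your proposal is correct and follows essentially the same route as the paper: the paper's proof simply notes that the horizontal adjunctions are Quillen equivalences by~\cite[8.7]{GreenleesShipley11}, that the inclusions commute with all the vertical functors, and then applies Theorem~\ref{thm:compare}. Your extra remarks — that $\ext \cong \coext$ (Corollary~\ref{cor:extension isomorphic to coextension}) supplies the hypothesis $\L\ext \cong \R\coext$, and that the injective model structure forces one to use the explicit Quillen-ness established just before the statement rather than Section~\ref{sec:quillen functors} — are points the paper leaves implicit or records in the surrounding lemmas, so they only add detail rather than change the argument.
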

\begin{proof}
The horizontals are Quillen equivalences by~\cite[8.7]{GreenleesShipley11}. The inclusions commute with all the vertical functors, and therefore applying Theorem~\ref{thm:compare} gives the result. 
\end{proof}

\subsection{Proofs of the main results}
We now combine the results of the previous sections to prove Theorems~\ref{thm:main} and~\ref{thm:main2}.
\begin{proof}[Proof of Theorem~\ref{thm:main}]
Firstly, note that the cellularization at $\mathbb{Q}$ is a model for derived torsion modules and that the localization at $\mathbb{Q}$ is a model for derived complete modules; see~\cite[\S 5]{Greenlees01Tate} and~\cite{GreenleesMay95b}. This justifies the terminology used in the statement of the theorem. The result then follows from applying Proposition~\ref{prop:changeofrings}, Corollary~\ref{cor:fixedpoints1commute}, Propositions~\ref{prop:fixedpointscommute2} and~\ref{prop:fixedpoints3} with Corollaries~\ref{cor:changeofmodel},~\ref{cor:shipley1},~\ref{cor:shipley2},~\ref{cor:shipley3} and~\ref{cor:formality} in turn.
\end{proof}

\begin{proof}[Proof of Theorem~\ref{thm:main2}]
This follows from combining Propositions~\ref{prop:Lcompletecorrespondence} and~\ref{prop:torsioncorrespondence} with Theorem~\ref{thm:main}.
\end{proof}

\appendix
\section{Proof of Lemma~\ref{lem:modulelemma}}\label{sec:appendix}
We conclude the proof of Lemma~\ref{lem:modulelemma}.
\begin{proof}
	We must check that the module structures defined in the proof of Lemma~\ref{lem:modulelemma} satisfy the associativity and unit axiom. We prove this for $\ind$. The proof for $\coind$ is similar and therefore we omit it. Associativity amounts to the outer square commuting in the following diagram.
	\begin{center}
		\begin{tikzcd}
		R \wedge R \wedge \ind M \arrow[dddd, "\mu \wedge 1"'] \arrow[r, "1 \wedge p^{-1}"] \arrow[rdd, "p^{-1}"'] & R \wedge \ind (\forget R \wedge M) \arrow[dd, "p^{-1}"] \arrow[r, "1 \wedge \ind (a)"] & R \wedge \ind M \arrow[dd, "p^{-1}"] \\
		& {} & {} \\
		& \ind (\forget R \wedge \forget R \wedge M) \arrow[dd, "\ind (\mu \wedge 1)"] \arrow[r, "\ind (1 \wedge a)"] & \ind (\forget R \wedge M) \arrow[dd, "\ind (a)"] \\
		& & \\
		R \wedge \ind M \arrow[r, "p^{-1}"'] & \ind (\forget R \wedge M) \arrow[r, "\ind (a)"'] & \ind M 
		\end{tikzcd}
	\end{center}
	The top right square and the bottom left square commute by naturality of $p$, and the bottom right square commutes by the associativity axiom for $M$. The triangle in the top left requires us to use the construction of $p$, by considering the following diagram.
	\begin{center}
		\begin{tikzcd}
		\ind (\forget R \wedge \forget R \wedge M) \arrow[rrr, "1"] \arrow[d, "\ind (1 \wedge \eta)"'] & & & \ind (\forget R \wedge \forget R \wedge M) \arrow[d, "\ind (1 \wedge 1 \wedge \eta)"] \\
		\ind (\forget R \wedge \forget\ind (\forget R \wedge M)) \arrow[d, "\ind \Phi"'] \arrow[rr, "\ind (1 \wedge \forget p)"] & & \ind (\forget R \wedge \forget (R \wedge \ind M)) \arrow[dr, "\ind \Phi" description] \arrow[r, "\ind(1 \wedge \Phi)"] & \ind (\forget R \wedge \forget R \wedge \forget \ind M) \arrow[d, "\ind \Phi"] \\
		\ind\forget (R \wedge \ind (\forget R \wedge M)) \arrow[rrr, "\ind\forget (1 \wedge p)"] \arrow[d, "\varepsilon"'] & & & \ind\forget (R \wedge R \wedge \ind M) \arrow[d, "\varepsilon"] \\
		R \wedge \ind (\forget R \wedge M) \arrow[rrr, "1 \wedge p"'] & & & R \wedge R \wedge \ind M 
		\end{tikzcd}
	\end{center}
	The bottom square commutes by naturality of $\varepsilon$, the middle square commutes by naturality of $\Phi$ and the triangle on the right commutes since $i^*$ is strong monoidal (with structure map $\Phi$). It remains to check that the top square commutes, for which is it sufficient to check that the following diagram commutes.
	\begin{center}
		\begin{tikzcd}
		\forget R \wedge M \arrow[r, "1 \wedge \eta"] \arrow[dd, "\eta"'] & \forget R \wedge \forget \ind M \arrow[rr, "\Phi"] \arrow[dd, "\eta"] & & \forget (R \wedge \ind M) \arrow[dl, "1"] \\
		& & \forget (R \wedge \ind M) \arrow[dr, "\eta"] & \\
		\forget\ind (\forget R \wedge M) \arrow[r, "\forget\ind (1 \wedge \eta)"'] & \forget \ind (\forget R \wedge \forget\ind M) \arrow[rr, "\forget\ind \Phi"']& & \forget \ind \forget (R \wedge \ind M) \arrow[uu, "\forget\varepsilon"'] \\
		\end{tikzcd}
	\end{center} 
	The left hand square commutes by naturality of $\eta$, the right hand square commutes by naturality of $\eta$, and the triangle commutes by the triangle identities. This completes the proof of the associativity axiom.
	
	For the unit axiom, we require that the outer square in the following diagram commutes, where $\mathbb{S}$ denotes the monoidal unit of $\D$.
	\begin{center}
		\begin{tikzcd}
		\mathbb{S} \wedge \ind M \arrow[rr, "\eta \wedge 1"] \arrow[dd, "\lambda_\C"'] \arrow[dr,"p^{-1}" description] & & R \wedge \ind M \arrow[dd, "p^{-1}"] \\
		& \ind (\forget \mathbb{S} \wedge M) \arrow[dl, "\ind \lambda_\D" description] \arrow[dr, "\ind (\forget\eta \wedge 1)" description]  & \\
		\ind M & & \ind (\forget R \wedge M) \arrow[ll, "\ind (a)"] 	
		\end{tikzcd}
	\end{center}
	The top right square commutes by naturality of $p$ and the bottom triangle commutes by the unit axiom for $M$. For the left triangle we show that it commutes with $p$ instead since we have an explicit construction for this using the unit and counit of the adjunction. Consider the diagram 
	\begin{center}
		\begin{tikzcd}
		\ind (\forget\mathbb{S} \wedge M) \arrow[rrr, "\ind (1 \wedge \eta)"] \arrow[ddd, "\ind (\lambda_\D)"'] \arrow[dr, "\ind (1 \wedge \eta)" description] & & & \ind (\forget\mathbb{S} \wedge \forget\ind M) \arrow[dll, "1" description] \arrow[d, "\Phi"] \\
		& \ind (\forget\mathbb{S} \wedge \forget\ind M) \arrow[dr, "\ind \lambda_\D" description] & & \ind \forget(\mathbb{S} \wedge \ind M) \arrow[d, "\varepsilon"] \arrow[dl, "\ind \forget\lambda_\C" description] \\
		& & \ind \forget\ind M \arrow[dr, "\varepsilon \ind " description] & \mathbb{S} \wedge \ind M \arrow[d, "\lambda_\C"] \\
		\ind M \arrow[rrr, "1"'] \arrow[rru, "\ind \eta" description] & & & \ind M 
		\end{tikzcd}
	\end{center}
	in which the outer square gives the left triangle in the previous diagram with $p$ instead of $p^{-1}$. We see that the top triangle commutes immediately, the left hand square commutes by naturality of $\eta$, the top right square commutes by definition of $\forget$ being strong monoidal, the bottom right square commutes by naturality of $\varepsilon$, and the bottom triangle commutes by the triangle identities.
	
	We must also check that $\ind$ sends module maps to module maps. Let $f\colon M \to N$ be a map of $\forget R$-modules. Then the diagram
	\begin{center}
		\begin{tikzcd}
		R \wedge \ind M \arrow[r, "1\wedge \ind f"] \arrow[d, "p^{-1}"'] & R \wedge \ind N \arrow[d, "p^{-1}"] \\
		\ind(\forget R \wedge M) \arrow[r, "\ind (1 \wedge f)"] \arrow[d, "\ind (a)"'] & \ind (\forget R \wedge N) \arrow[d, "\ind (a)"] \\
		\ind M \arrow[r, "\ind f"'] & \ind N
		\end{tikzcd}
	\end{center}
	commutes, by the naturality of $p$ and since $f$ is an $\forget R$ module map.

	It remains to show that these functors form an adjoint triple. It is enough to check that the units and counits are module maps.
	
	For the unit of the $\ind \dashv \forget$ adjunction we consider the diagram
	\begin{center}
		\begin{tikzcd}
		\forget R \wedge N \arrow[ddd, "a"'] \arrow[r, "1 \wedge \eta"] \arrow[ddr, "\eta"' description] & \forget R \wedge \forget\ind N \arrow[d, "\Phi"] \\
		& \forget (R \wedge \ind N) \arrow[d, "\forget p^{-1}"] \\
		& \forget\ind (\forget R \wedge N) \arrow[d, "\forget\ind (a)"] \\
		N \arrow[r, "\eta"'] & \forget \ind N
		\end{tikzcd}
	\end{center}
	in which the square commutes by naturality of $\eta$. For the triangle, it suffices to check that the following diagram commutes. 
	\begin{center}
		\begin{tikzcd}
		\forget R \wedge N \arrow[dd, "\eta"'] \arrow[r, "1 \wedge \eta"] & \forget R \wedge \forget\ind N \arrow[r, "\Phi"] \arrow[ddr, "\eta"' description] & \forget (R \wedge \ind N) \\
		& & \forget\ind\forget (R \wedge \ind N) \arrow[u, "\forget\epsilon"'] \\
		\forget\ind (\forget R \wedge N) \arrow[rr, "\forget\ind (1 \wedge \eta)"'] & & \forget\ind (\forget R \wedge \forget\ind N) \arrow[u, "\forget\ind \Phi^{-1}"'] 
		\end{tikzcd}
	\end{center}
	The left hand square commutes by naturality of $\eta$ and checking that the right hand square commutes can be done by considering the diagram
	\begin{center}
		\begin{tikzcd}
		\forget R \wedge \forget\ind N \arrow[dd, "\eta"'] \arrow[rr, "\Phi^{-1}"] & & \forget (R \wedge \ind N) \arrow[dl, "1" description] \\
		& \forget (R \wedge \ind N) \arrow[dr, "\eta" description] & \\
		\forget\ind (\forget R \wedge \forget\ind N) \arrow[rr, "\forget\ind\Phi^{-1}"'] & & \forget\ind\forget (R \wedge \ind N) \arrow[uu, "\forget\epsilon"'] 
		\end{tikzcd}
	\end{center}
	in which the left hand square commutes by naturality of $\eta$ and the triangle commutes by the triangle identities. The argument for the counit of the $\ind \dashv \forget$ adjunction follows similarly, as does the other adjunction.
\end{proof}

\bibliographystyle{alpha}
\bibliography{mybib}
\end{document}